\renewcommand\ge\geqslant
\renewcommand\le\leqslant
\renewcommand\succeq\succcurlyeq
\renewcommand\preceq\preccurlyeq
\newcommand{\succprec}{%
  \mathrel{\stackon[-2pt]{\prec}{\succ}}%
}
\newcommand{\nsuccprec}{%
  \mathrel{\raisebox{-0.25ex}{%
    \ooalign{%
      $\succprec$\cr
      \hidewidth\raisebox{.4ex}{\rotatebox{-20}{\scalebox{1.1}{/}}}\hidewidth\cr
    }%
  }}%
}
\newcommand{\skipover}[1]{}
\newcommand{\Z}{\mathbb{Z}}
\newcommand{\defect}{\mathrm{def}}
\newcommand{\Ext}{\operatorname{Ext}}
\newcommand{\Rad}{\operatorname{Rad}}
\newcommand{\Soc}{\operatorname{Soc}}
\newcommand{\downoperation}{\operatorname{down}}
\newcommand{\upoperation}{\operatorname{up}}
\newcommand{\roof}{\operatorname{roof}}
\newcommand{\base}{\operatorname{base}}
\newcommand{\core}{\operatorname{core}}
\newcommand{\St}{\mathcal{S}}
\newcommand{\R}{\mathcal{R}}
\newcommand{\AB}{\mathscr{A}^2}
\newcommand{\Bc}{\mathscr{B}^{\circ}}
\newcommand{\Bl}{\mathscr{B}}
\newcommand{\Cl}{\mathscr{C}}
\newcommand{\Tl}{\mathscr{T}}
\newcommand{\cB}{\hat{B}}
\newcommand{\cC}{\hat{C}}
\newcommand{\cD}{\hat{D}}
\newcommand{\hk}{{\rm h}}
\newcommand{\nn}{{\rm n}}
\newcommand{\pp}{{\rm p}}
\newcommand{\TT}{{\rm T}}
\newcommand{\tb}{{\rm t}}
\newcommand{\Pn}{\mathscr{P}}
\newcommand{\Kl}{\mathscr{K}^{\bs}_n}
\DeclareMathOperator{\Shh}{Sh}
\newcommand{\ScA}{\rightarrow_{\text{Sc}^\ast}}
\newcommand{\Sw}{\equiv_{\text{Sw}}}
\newcommand{\Sh}{\equiv_{\text{Sh}}}
\newcommand{\Sc}{\equiv_{\text{Sc}}}
\newcommand{\SU}{\equiv_{\text{S}}}
\newcommand{\nB}[1][\cB]{\|#1\|}
\DeclareMathOperator{\Res}{Res}
\renewcommand\phi\varphi
\newcommand{\balph}{\boldsymbol{\alpha}}
\newcommand{\bbeta}{\boldsymbol{\beta}}
\newcommand{\bgam}{\boldsymbol{\gamma}}
\newcommand{\bdelta}{\boldsymbol{\delta}}
\newcommand{\beps}{\boldsymbol{\varepsilon}}
\newcommand{\bzeta}{\boldsymbol{\zeta}}
\newcommand{\bla}{\boldsymbol{\lambda}}
\newcommand{\bmu}{\boldsymbol{\mu}}
\newcommand{\bnu}{\boldsymbol{\nu}}
\newcommand{\bsig}{\boldsymbol{\sigma}}
\newcommand{\btau}{\boldsymbol{\tau}}
\newcommand{\ba}{\boldsymbol{a}}
\newcommand{\bs}{\boldsymbol{s}}
\newcommand{\vn}{\varnothing}
\newcommand{\ra}{\raisebox{0.1cm}{
\begin{tikzpicture}
\draw [<-] (0,0) to [out =60, in = 120] (.4,0);
\end{tikzpicture}}}
\def\mod{\mathsf{mod}\hspace{.05in}}
\def\proj{\mathsf{proj}\hspace{.05in}}
\def\Kb{\mathsf{K}^{\rm b}}
\def\add{\operatorname{\mathsf{add}}}
\newcommand{\tautilt}{\mbox{\sf $\tau$-tilt}\hspace{.05in}}
\newcommand{\stautilt}{\mbox{\sf s$\tau$-tilt}\hspace{.05in}}
\definecolor{codegreen}{rgb}{0,0.6,0}
\definecolor{codegray}{rgb}{0.5,0.5,0.5}
\definecolor{codepurple}{rgb}{0.58,0,0.82}
\definecolor{backcolour}{rgb}{0.95,0.95,0.92}
\lstdefinestyle{mystyle}{
  backgroundcolor=\color{backcolour}, commentstyle=\color{codegreen},
  keywordstyle=\color{magenta},
  numberstyle=\tiny\color{codegray},
  stringstyle=\color{codepurple},
  basicstyle=\ttfamily\footnotesize,
  breakatwhitespace=false,         
  breaklines=true,                 
  captionpos=b,                    
  keepspaces=true,                 
  numbers=left,                    
  numbersep=5pt,                  
  showspaces=false,                
  showstringspaces=false,
  showtabs=false,                  
  tabsize=2
}
\begin{document}

\title{Schurian-finiteness of blocks of type $\ttb$ Hecke algebras}

\author{%
\begin{tabular}{c c}
\begin{tabular}{c}
Susumu Ariki \\
\texttt{ariki@ist.osaka-u.ac.jp}
\end{tabular}
&
\begin{tabular}{c}
Sin\'ead Lyle \\
\normalsize University of East Anglia \\
\normalsize Norwich NR4 7TJ, UK \\
\texttt{s.lyle@uea.ac.uk}
\end{tabular}
\\[3em]
\begin{tabular}{c}
Liron Speyer \\
\normalsize Okinawa Institute of Science and Technology \\
\normalsize Okinawa, Japan 904-0495 \\
\texttt{liron.speyer@oist.jp}
\end{tabular}
&
\begin{tabular}{c}
Qi Wang \\
\normalsize Dalian University of Technology \\
\normalsize Dalian, China 116024 \\
\texttt{wang2025@dlut.edu.cn}
\end{tabular}
\end{tabular}
}

\renewcommand\auth{Susumu Ariki, Sin\'ead Lyle, Liron Speyer, \& Qi Wang}

\runninghead{Schurian-finiteness of blocks of type $\ttb$ Hecke algebras}

\msc{20C08, 05E10, 16G10, 81R10}

\toptitle

\begin{abstract}
Schurian-finiteness, also known as $\tau$-tilting finiteness, is equivalent to the finiteness of various representation theoretic objects such as wide subcategories. The first three authors classified Schurian-finite blocks of type $\tta$ Hecke algebras in~\cite{als23}. Here we study the Schurian-finiteness of blocks of type $\ttb$ Hecke algebras, and determine the Schurian-finiteness of all blocks if the Hecke algebra is `non-integral', and for almost all blocks in the integral case.
The only remaining cases are a small number of blocks in defect $3$ when $e=3$, and a family of blocks in defects $3$ and $4$ for $e\geq4$.
The classification is mostly achieved by methods using decomposition numbers, with many degenerate cases requiring direct study using standard methods from the representation theory of quivers.
\end{abstract}

\tableofcontents

\section{Introduction}

Let $\bbf$ denote an algebraically closed field.
For an $\bbf$-algebra $A$, an $A$-module $M$ is called Schurian, or a brick, if $\End_A(M) \cong \bbf$.
The algebra $A$ is called Schurian-finite (or brick-finite) if it has finitely many isoclasses of Schurian modules, and Schurian-infinite (or brick-infinite) otherwise.
Since Schurian modules are indecomposable, determining the Schurian-finiteness of an algebra may be seen as a refinement of determining whether it has finite representation type -- if $A$ has finite representation type, it is certainly Schurian-finite, but the converse need not hold.
In fact, whether $A$ is tame or wild, it may still be Schurian-finite or Schurian-infinite.
By \cite[Theorem~4.2]{DIJ}, Schurian-finiteness coincides with $\tau$-tilting finiteness.

Experts studying $\tau$-tilting theory usually start with an algebra given by its bound quiver presentation.
For those algebras where such a presentation is unavailable, results are quite sparse.
One such family of algebras without a known presentation by quiver and relations is the family of type $\tta$ Hecke algebras, which includes the group algebras of symmetric groups.
A major difficulty here is that, although a standard method for reducing the rank of a block algebra is to use induction and restriction functors, Schurian-finiteness does not behave well under those functors.

In \cite{als23} the first three authors determined the Schurian-finiteness of blocks of type $\tta$ Hecke algebras under the assumption that the quantum characteristic $e$ is not equal to $2$.
There, it was shown that, in fact, a block is Schurian-finite if and only if it has finite representation type, which is known by older work of Erdmann and Nakano~\cite{enreptype} to hold if and only if the block has \emph{defect} $0$ or $1$.
While some edge cases there required some ad hoc methods, our main tool was the study of graded decomposition matrices, coming from the Hecke algebras' incarnation as level 1 cyclotomic Khovanov--Lauda--Rouquier algebras in affine type $\tta$, thanks to the famous Brundan--Kleshchev isomorphism~\cite{bkisom}.
We showed that finding a certain submatrix of the graded decomposition matrix of a block suffices to show that the block is Schurian-infinite, and were able to find the necessary submatrices in most cases, completely avoiding the need to explicitly compute bound quiver presentations. 
One may use graded dimensions for that purpose, but the advantage of using decomposition numbers is that we may use many reduction techniques, including row-removal and diagonal cut results in order to find decomposition numbers.

The third author later used similar methods to extend the Schurian-finiteness result in \cite{lsstrictlywild}, showing that in almost all cases, these blocks of infinite representation type, previously known to be wild, are not only Schurian-infinite, but are in fact \emph{strictly wild}, an even stronger condition.

In this paper, we study the type $\ttb$ Hecke algebras $\hhh=\hhh(q,Q_1,Q_2)$, i.e., deformations of the type $\ttb$ Weyl groups, again assuming that we are not in quantum characteristic $e=2$ and that $q\ne 1$.
(See \cref{subsec:hecke} for the presentation of $\hhh$.)
Here, we note that $q$ and $Q_1, Q_2$ are non-zero elements in our ground field $\bbf$.
We say that $\hhh$ is integral if $Q_i = q^{a_i}$ for $a_i\in \Z$, $i=1,2$, and non-integral otherwise.
By \cite{bkisom}, the integral algebras are level 2 cyclotomic Khovanov--Lauda--Rouquier algebras in affine type $\tta$.
Our main tool to show that most blocks are Schurian-infinite is via graded decomposition matrices again -- see \cref{prop:matrixtrick}.
The situation is more delicate for these type $\ttb$ Hecke algebras, however.
Just as in type $\tta$, there is a block invariant known as the defect, which roughly indicates the complexity of the block, and a block $B$ of a type $\ttb$ Hecke algebra has finite representation type if and only if it has defect $0$ or $1$, which can be seen by interpreting \cite[Theorem 7.5]{arikirep} in terms of defect. 
One major new feature in this type $\ttb$ work is that we find some blocks of infinite representation type that are Schurian-finite, which was not the case in type $\tta$.

Those Schurian-finite wild blocks are worth studying in more detail, although we do not pursue that direction in this paper, because there has been considerable interest in the study of various kinds of subcategories of module categories~\footnote{Our previous paper \cite{als23} was the first instance of such a study for Hecke algebras.} such as torsion classes and wide subcategories, to name just two,  from the viewpoint of representation theory of algebras. 
The number of wide subcategories is finite for those blocks and they can be analysed in detail.

An obstacle we must overcome is that explicit computation of graded decomposition numbers is more difficult in type $\ttb$, and there are fewer existing results from which we can draw.
The graded decomposition numbers are now known to be characteristic-free for blocks of defect 2 and 3, by \cite{fay06,fayput24}, though only those in defect 2 have been computed explicitly.
Despite this difficulty, we are able to completely determine the Schurian finiteness of blocks of non-integral Hecke algebras, and we almost completely determine the Schurian-finiteness of blocks of integral Hecke algebras for $e\geq 3$, with some exceptions in defects $3$ and $4$ that remain to be resolved. Our main results for non-integral Hecke algebras and for integral Hecke algebras for $e=3$ are the following.

\begin{thm}\label{thm:mainnonintegral}
    Let $B$ be a block of a non-integral Hecke algebra $\hhh$.
    Then in any characteristic, $B$ is Schurian-infinite if and only if it has defect at least $2$.
    In particular, $B$ is Schurian-finite if and only if it has finite representation type.
\end{thm}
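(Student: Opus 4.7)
The plan is to reduce the non-integral case to type $\tta$ via the Morita equivalence of Dipper--Mathas. For a non-integral Hecke algebra $\hhh(q,Q_1,Q_2)$, this equivalence identifies every block $B$ with a tensor product $B_1\otimes B_2$ in which each $B_i$ is a block of a type $\tta$ Hecke algebra $\hhh_{m_i}(q)$; under this identification the defect is additive, i.e.\ $\defect(B)=\defect(B_1)+\defect(B_2)$.

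The easy direction is then immediate. If $\defect(B)\le 1$, at least one of the two factors is semisimple (defect $0$) and the other has defect $0$ or $1$, so $B_1\otimes B_2$ is either semisimple or Morita equivalent to a single defect-$1$ block of a type $\tta$ Hecke algebra, which is a Brauer tree algebra and hence representation-finite. In either case $B$ has finite representation type and is therefore Schurian-finite, establishing the ``only if'' direction together with the ``in particular'' clause.

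For the main direction I would split on how the defect distributes between the factors. If one factor, say $B_1$, has defect $\ge 2$, then $B_1$ is Schurian-infinite by the main theorem of \cite{als23}, and a short lemma disposes of this case: given a family $\{M_\lambda\}$ of pairwise non-isomorphic bricks of $B_1$ and any simple $B_2$-module $S$, the modules $M_\lambda\otimes S$ are pairwise non-isomorphic bricks of $B_1\otimes B_2$, because $\operatorname{End}(M_\lambda\otimes S)\cong \operatorname{End}(M_\lambda)\otimes\operatorname{End}(S)\cong\bbf$. The remaining subcase is that both factors have defect exactly $1$, so $B_1$ and $B_2$ are Brauer line algebras and one must show their tensor product is Schurian-infinite.

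This last subcase is the main obstacle. My preferred approach is to apply \cref{prop:matrixtrick} to the Kronecker product of the two type $\tta$ defect-$1$ graded decomposition matrices (which are very well understood ``staircases'' with entries $1$ and powers of $v$); from this Kronecker product one can extract by inspection a submatrix of the shape required by the matrix trick, yielding Schurian-infiniteness and keeping the proof in the same decomposition-matrix spirit as the rest of the paper. Alternative fallbacks would be a direct construction of a one-parameter family of bricks using the biserial structure of Brauer line algebras, or an idempotent-truncation argument reducing to a small hand-computable example such as the tensor product of the two smallest Brauer tree algebras. The most delicate point is likely to be verifying the matrix-trick hypothesis uniformly across all pairs of defect-$1$ type $\tta$ blocks.
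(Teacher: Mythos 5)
Your overall structure matches the paper's: reduce via the Morita equivalence to a tensor product $A\otimes B$ of two type $\tta$ blocks, dispose of total defect $\le 1$ by representation-finiteness, and split the remaining cases by how the defect distributes. The two interesting cases are handled differently from the paper, and both of your main ideas are sound. For the case where one factor has defect $\ge 2$, your argument that $M_\lambda\otimes S$ is a brick (via $\End_{A\otimes B}(M\otimes N)\cong\End_A(M)\otimes\End_B(N)$, with pairwise non-isomorphism by Krull--Schmidt after restricting to $A$) is correct and arguably cleaner than the paper's route, which instead truncates by a primitive idempotent $e\in B$ to make the second factor local and then quotients $A\otimes eBe$ onto $A$, invoking \cref{reduction} twice. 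For the case where both factors have defect $1$, the paper truncates each Brauer line algebra by an idempotent so that the Gabriel quiver of $eAe$ is the two-vertex double arrow, passes to the radical-square-zero quotient, and exhibits the zigzag $\tta^{(1)}_3$ path algebra as a factor of the tensor product, concluding by \cref{cor:Schurianinfinitequiver}; this is a Gabriel-quiver/K\"unneth argument rather than a decomposition-matrix one.

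The one genuine soft spot is your preferred route for the defect-$(1,1)$ case. \cref{prop:matrixtrick} is formulated for blocks of the type $\ttb$ Hecke algebra and its proof leans on Maksimau's graded basic algebra $M^\La(\beta)$ coming from the cyclotomic $q$-Schur algebra; a tensor product of two type $\tta$ defect-$1$ blocks is not such a block, so you cannot quote the proposition as stated, even though the Kronecker product of the two staircase matrices does contain the pattern $(\ddag)$. To make that route rigorous you would have to redo the argument for the tensor product of two level-one covers, which amounts to the K\"unneth computation $\Ext^1(D_i\otimes E_j,D_{i'}\otimes E_{j'})\cong\Ext^1(D_i,D_{i'})\otimes\Hom(E_j,E_{j'})\oplus\Hom(D_i,D_{i'})\otimes\Ext^1(E_j,E_{j'})$ showing the Gabriel quiver of $A\otimes B$ is the box product of the two Gabriel quivers, which contains the zigzag $\tta^{(1)}_3$. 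At that point you are doing the paper's argument anyway, and indeed your stated fallback (idempotent truncation to a small hand-computable tensor product) is precisely what the paper does, so the gap is one of justification rather than of substance.
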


The above theorem appears as \cref{thm:mainnonintegralrepeated} in the body of the paper.
If $\hhh$ is integral, we may assume, up to isomorphism, that $Q_1 = q^0$ and $Q_2 = q^s$ for some $s\in \{0,1,\dots, e-1\}$.
When $e=3$, we may further assume that $s=0$ or $1$ -- see \cref{L:01}. The next theorem appears as \cref{thm:e3def2b,thm:e3def3s1,thm:2ormorehookscasesNot2,thm:2ormorehooksbadcases}.

\begin{thm}\label{thm:maine3}
    Suppose $e=3$, and that $\hhh$ is integral.
    If $B$ is a block of $\hhh$ of defect at least 4, then $B$ is Schurian-infinite in any characteristic.
    
    If $B$ is a block of defect $2$ and $s=0$, then $B$ is Schurian-finite.
    If $s=1$, then there are six Scopes classes.
    Two of these are Schurian-infinite, and the remaining four are Schurian-finite.
    
    If $B$ is a block of defect $3$ and $s=0$, then there are ten Scopes classes of blocks.
    Two of them are Schurian-infinite, and the other eight remain unsolved.     
    If $B$ is a block of defect $3$ and $s=1$, then $B$ is Schurian--infinite.
\end{thm}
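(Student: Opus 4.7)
The plan is to reduce everything to a manageable collection of cases by Scopes equivalence and then dispatch each case either through \cref{prop:matrixtrick} applied to the graded decomposition matrix, or through an explicit structural analysis of the block as a finite-dimensional algebra given by quiver and relations. Throughout, I would rely on the fact that when $e=3$ the graded decomposition numbers in defects $2$ and $3$ are characteristic-free (\cite{fay06,fayput24}), so that one computation suffices for all characteristics. The first step is thus to list the Scopes equivalence classes of blocks of each defect for $s=0$ and $s=1$ (using crystal-theoretic language on $\ell$-partitions, together with the reduction from \cref{L:01}), which gives the six classes in defect $2$, $s=1$ and the ten classes in defect $3$, $s=0$.

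For defect at least $4$, I would propagate Schurian-infiniteness upward from the defect-$3$, $s=1$ case. Concretely, for every Scopes class of defect $\geq 4$, one can exhibit a pair of multipartitions whose rows/columns can be peeled off (via row-removal and similar decomposition-number reductions discussed in the introduction) to land in a defect-$3$ block where Schurian-infiniteness is already established; then \cref{prop:matrixtrick} lifts the conclusion. A convenient uniform way to do this is to show that every defect-$\geq 4$ block Scopes-dominates a defect-$3$, $s=1$ block up to induction/restriction equivalences that preserve the relevant $2\times 2$ (or slightly larger) submatrix of the graded decomposition matrix.

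For the Schurian-infinite cases in defects $2$ and $3$ (the two classes of defect $2$, $s=1$, the case $s=1$ of defect $3$, and two of the ten classes of defect $3$, $s=0$), I would identify, in each Scopes class representative, an explicit submatrix of the graded decomposition matrix that satisfies the hypothesis of \cref{prop:matrixtrick}; Fayers's explicit tables in defect $2$ and the characteristic-free results for defect $3$ make this computation purely combinatorial, and the entries can be read off from the relevant $3$-abacus displays of the bipartitions indexing the simple and standard modules.

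The main obstacle, and the most laborious part of the proof, is establishing Schurian-\emph{finiteness} in the positive cases: the single defect-$2$ class with $s=0$ and the four remaining defect-$2$, $s=1$ classes. Here \cref{prop:matrixtrick} gives no leverage, and one must understand the block algebra up to Morita equivalence. My plan is, for each Scopes class, to pick the minimal representative and compute the Gabriel quiver and relations either from the known graded decomposition matrix together with $\Ext^1$-calculations, or by identifying the block with a small algebra already appearing in the literature on $\tau$-tilting theory. Once a presentation is obtained, one proves Schurian-finiteness either by direct enumeration of bricks (for the smallest blocks) or by constructing a surjective algebra homomorphism from a known Schurian-finite algebra, or by invoking a classification of $\tau$-tilting finite algebras of small rank. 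The delicate issue is that several of these blocks are of infinite representation type, so one cannot simply quote representation-finiteness; a careful analysis of the two-term silting complexes, or equivalently of the semibricks, is what makes this step genuinely non-trivial and is where I expect most of the paper's technical work to lie.
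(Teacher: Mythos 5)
Your outline is essentially right for defects $2$ and $3$: list the Scopes classes via the minimal representatives, kill the infinite cases with \cref{prop:matrixtrick} using the characteristic-free decomposition matrices of \cite{fay06,fayput24}, and handle the finite cases by computing bound quiver presentations. One refinement worth noting on the finiteness step: the paper's workhorse is not a surjection \emph{from} a known Schurian-finite algebra but the reduction of \cref{ejr reduction} — quotienting the block by central elements of the radical preserves Schurian-(in)finiteness in both directions — after which the quotients become string or radical-square-zero algebras whose brick/support-$\tau$-tilting data can be enumerated directly (or by computer). Without an ``if and only if'' statement of this kind, merely exhibiting a Schurian-finite quotient proves nothing about the block itself.

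The genuine gap is in your treatment of defect $\geq 4$. You propose to ``propagate Schurian-infiniteness upward from the defect-$3$, $s=1$ case'' via ``induction/restriction equivalences,'' but Schurian-finiteness does not behave well under induction and restriction — this is precisely the obstruction the paper flags in its introduction — and there is no equivalence identifying a defect-$\geq 4$ block with a defect-$3$ block. What actually works is different: since $\hk(\cB)\geq 2$, the block contains bipartitions with at least two removable $3$-hooks, and one adds vertical $3e$-hooks to the \emph{second component only}; \cref{T:StillKlesh} guarantees the resulting bipartitions stay Kleshchev, and \cref{T:BowmanSpeyer2}/\cref{cor:BowmanSpeyerconjugate} show the relevant graded decomposition numbers equal those of a \emph{level-one} (type $\tta$) block of defect $\hk(\cB)\geq 2$, whose SI-subsets come from \cite{als23}. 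Moreover, your claim that one computation suffices in all characteristics fails here: decomposition numbers are not characteristic-free in defect $\geq 4$, and \cref{prop:matrixtrick} requires verifying $d^{p}_{\bla\bmu}(1)=d^{0}_{\bla\bmu}(1)$. This is harmless for $p\neq 2$, but for $p=2$ the level-one blocks Scopes-equivalent to the cores $(3,1^2)$ and $(6,4,2^2,1^2)$ admit no SI-subset, so the defect $4$–$7$ blocks whose core-block components land in those Scopes classes must be treated individually by Jantzen--Schaper computations combined with the row-removal and one-node-shift results. Your proposal does not anticipate this case analysis, which is where most of the work in the defect-$\geq 4$ part of the proof lies.
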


For $e\geq 4$ and $\hhh$ integral, we expect the following result.

\begin{conj}\label{conj:e>3}
    Suppose $e\geq 4$, $\hhh$ is integral, and that $B$ is a block of $\hhh$ of defect at least 2.
    Then $B$ is Schurian-infinite in any characteristic.
    In particular, $B$ is Schurian-finite if and only if it has finite representation type.
\end{conj}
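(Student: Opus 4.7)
The plan is to follow the template that already worked for type $\tta$ in \cite{als23} and for the $e=3$ integral case in \cref{thm:maine3}: reduce each block to a small Scopes representative, produce an explicit submatrix of its graded decomposition matrix, and then apply \cref{prop:matrixtrick} to force Schurian-infiniteness.

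First, I would exploit Scopes equivalences (which preserve Schurian-finiteness, being Morita equivalences of the cyclotomic KLR algebras) to cut the defect-$d$ problem for $e\ge 4$ and $\hhh$ integral down to a finite list of minimal representatives. For $e\ge 4$ the combinatorics is tighter than for $e=3$: with more runners available in the abacus display and only two charges $0,s$ with $s\in\{0,1,\dots,e-1\}$, most core configurations have free runners on which weight can be added or removed. I would then split into two regimes. For \emph{large} defect $d\ge 5$ the expectation is that runner-removal together with the type $\tta$ result of \cite{als23} lets us reduce: pick a runner carrying enough weight, remove it to land in a type $\tta$ block of defect $\ge 2$, pull back a bad submatrix given by \cref{prop:matrixtrick}, and use row-removal / diagonal-cut compatibility to check that the relevant decomposition numbers survive. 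For defect $d=2$, the characteristic-free decomposition matrices of \cite{fay06} are explicit, so I would enumerate the finitely many Scopes classes and in each one simply exhibit the $2\times 2$ (or larger) submatrix required by \cref{prop:matrixtrick}.

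The harder middle ground is defect $3$ and $4$, which is precisely where \cref{thm:maine3} still has gaps for $e=3$. For defect $3$ I would use the characteristic-free structure obtained in \cite{fayput24} to reduce the search for a bad submatrix to a finite combinatorial check on bipartitions of $3$-weight, organised by core. For defect $4$, where neither Fayers' tables nor a clean runner-removal reduction are directly available, I would try to combine the two: attach an extra bead to a defect-$3$ bad submatrix on a fresh runner, and verify by diagonal-cut results that the required entries persist. In both cases the output should be a uniform family of bricks, or of non-isomorphic Schurian modules parametrised by $\mathbb{P}^1(\bbf)$, witnessing Schurian-infiniteness independently of the characteristic.

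The main obstacle — and the reason the statement remains a conjecture in the paper — is precisely the degenerate core configurations in defects $3$ and $4$ where no $2\times 2$ submatrix of the required shape exists, so that \cref{prop:matrixtrick} cannot be applied directly. For these, one must fall back on the ad hoc quiver-and-relations style arguments used in the $e=3$, $s=1$ defect-$3$ analysis: compute a small piece of the basic algebra by hand (or via the KLR generators), identify a string or band module of infinite family, and show directly that its endomorphism ring is $\bbf$. Finding a method that handles all such edge cases uniformly for every $e\ge 4$, rather than case-by-case per Scopes class, is the genuine sticking point; without it, one is left essentially repeating the $e=3$ argument for each new $s$ and each new residual core, and there is no conceptual reason visible from the decomposition numbers alone why every such block should still be Schurian-infinite.
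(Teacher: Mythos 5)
The statement you are trying to prove is \cref{conj:e>3}, which the paper itself states only as a conjecture: the authors prove it for most blocks (\cref{thm:main}) but explicitly leave open the defect~$3$ blocks whose core block has defect~$1$, and (for $s=0$) the defect~$4$ blocks whose core block has defect~$2$. Your proposal does not close this gap, and your own final paragraph concedes as much. Concretely: your large-defect reduction is not quite the paper's (the paper does not use runner removal, but rather adds $e$-hooks to the second component and cuts to type $\tta$ via \cref{T:StillKlesh}, \cref{cor:BowmanSpeyerconjugate} and the row-removal results of \cite{bs16}, which is why the relevant hypothesis in \cref{thm:2ormorehooks} is $\defect(B)-\defect(C)\geq 4$ rather than $\defect(B)\geq 5$); your defect-$2$ and core-block cases do match \cref{T:Defect2}, \cref{T:CoreBlocksInf1} and \cref{T:nlarge}; but for the residual defect-$3$ and defect-$4$ families the knowledge that decomposition numbers are characteristic-free in defect $3$ \cite{fayput24} does not by itself produce a submatrix of the shape required by \cref{prop:matrixtrick}, and in defect $4$ the decomposition numbers are not even characteristic-free, so the ``attach a bead to a defect-$3$ bad submatrix'' step has no foundation.

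The deeper problem is that your fallback — computing bound quiver presentations case by case and exhibiting infinite families of bricks — cannot, as described, yield a proof of the conjecture for \emph{all} $e\geq 4$: the number of Scopes classes in the excluded families grows with $e$, and there is no uniform description of their basic algebras analogous to the $e=3$ computations in \cref{sec:e3}. Moreover, the paper shows that for $e=3$ some wild defect-$2$ blocks are genuinely Schurian-\emph{finite} (e.g.\ \eqref{SF-alg-2}, \eqref{SF-alg-4}), so one cannot argue purely from wildness or from the failure of \cref{prop:matrixtrick} that the remaining blocks must be Schurian-infinite; an affirmative argument is required for each unresolved family, and none is supplied. As it stands, your text is a research plan that reproduces the paper's partial result, not a proof of the conjecture.
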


We are able to prove the above conjecture outside of some classes of blocks in defects $3$ and $4$.
Our main theorem for $e\geq4$ is as follows, and is proved as \cref{T:CoreBlocksInf1,T:nlarge,thm:2ormorehooks,T:Defect2,} in the main body of the paper.

\begin{thm}\label{thm:main}
    Suppose $e\geq 4$, $\hhh$ is integral, and that $B$ is a block of $\hhh$ of defect at least 2.
    Unless the defect of $B$ is 3, with core block $C$ of defect $1$, or $s = 0$ and the defect of $B$ is $4$ with core block $C$ of defect $2$, $B$ is Schurian-infinite in any characteristic.
\end{thm}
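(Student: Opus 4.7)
The plan is to combine two ingredients: (a) reduction via Scopes equivalence to a finite list of representative blocks in each Scopes class, and (b) the graded decomposition matrix criterion of \cref{prop:matrixtrick}, by which exhibiting a suitable submatrix of the graded decomposition matrix of $B$ is enough to conclude that $B$ is Schurian-infinite.

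First, I would reduce to a minimal representative in each Scopes class. Scopes equivalence is Morita and so preserves Schurian-finiteness, so in each class we may replace $B$ by its most convenient member. Every block of defect $d$ is Scopes-equivalent either to a core block or to a ``small'' non-core representative whose abacus display contains only a controlled number of movable beads on the relevant runners. The proof then breaks into the four cases treated in \cref{T:CoreBlocksInf1,T:nlarge,thm:2ormorehooks,T:Defect2}, corresponding respectively to core blocks of defect $\geq 2$, blocks of large enough rank, non-core blocks with at least two movable hooks, and defect $2$.

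For defect $2$, I would appeal to the fully explicit graded decomposition matrices computed in \cite{fay06}: once the matrix is written down, a direct inspection produces the required submatrix for \cref{prop:matrixtrick}. For core blocks of defect $\geq 2$, I would construct, uniformly from the abacus datum of the core, a pair of bipartitions $\bla, \bmu$ and a further bipartition $\bnu$ whose graded decomposition numbers form a submatrix meeting the hypotheses of \cref{prop:matrixtrick}; the rigid combinatorial structure of cores makes it plausible that a single family of configurations can be made to work for all such defects. The non-core case with at least two movable hooks would then be handled by runner-removal and row-removal arguments, reducing it to a smaller (often core) block whose decomposition numbers can be accessed; the large-rank case plays a similar role in the regime where $n$ is big enough that abundant room on the abacus yields the needed submatrix essentially combinatorially.

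The hard part will be constructing the explicit decomposition submatrices in the higher-defect core case, since closed-form graded decomposition numbers are not available in general beyond defect $3$. One would need to extract only the entries required for \cref{prop:matrixtrick} via indirect techniques: runner removal, row removal, induction and restriction at the level of decomposition numbers, and regularisation to pin down the graded positions. The excluded exceptional cases listed in the theorem --- defect $3$ over a defect $1$ core, and the defect $4$, $s=0$ case over a defect $2$ core --- are exactly the configurations where these indirect techniques fail to supply the required submatrix, which is why they are left to \cref{conj:e>3} rather than being resolved here.
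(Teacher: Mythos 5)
Your skeleton matches the paper's: \cref{thm:main} is indeed assembled from \cref{T:CoreBlocksInf1,T:nlarge,thm:2ormorehooks,T:Defect2}, each piece resolved by exhibiting an SI-subset for \cref{prop:matrixtrick} after passing to a Scopes/shift-minimal representative, and your description of the non-core defect~$2$ case (Fayers's explicit matrices from \cite{fay06} plus the minimal representatives of \cref{S:ScopesSec}) is accurate. However, one of your claims is false and two of your four ingredients are mischaracterised. The false claim: a single family of configurations cannot work for \emph{all} core blocks of defect at least~$2$. The paper's uniform families (\cref{L:PMatrix,L:NMatrix}) require $\nn\ge2,\pp\ge3$ or $\nn\ge3,\pp\ge2$, i.e.\ defect at least~$3$ or defect~$2$ with $s_1\ne s_2$; the defect~$2$ core blocks with $s_1=s_2$ (so $\nn=\pp=2$) are shown in \cref{subsec:core-def-2-equal} to be Schurian-\emph{finite}, so no SI-subset exists for them and any decomposition-matrix argument must fail there. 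Relatedly, you identify the higher-defect core case as the hard one because closed-form graded decomposition numbers are unavailable beyond defect~$3$ --- but for core blocks they are completely known and characteristic-free in every defect by \cref{T:Flat}, which is precisely why that case is the easy one; the genuinely laborious part of the paper is the case analysis behind \cref{T:Defect2}.

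The mischaracterisations: \cref{T:nlarge} is not a ``large $n$, abundant room on the abacus'' argument. It lifts an SI-subset from a core block of defect $\ge3$ (or defect~$2$ with $s_1\ne s_2$) to every block above it by appending a vertical $le$-hook to the \emph{second component only}, and the two essential inputs are \cref{T:BowmanSpeyer2} (the relevant graded decomposition numbers are unchanged) and, crucially, \cref{T:StillKlesh} (the lifted bipartitions remain Kleshchev). Your proposal omits the Kleshchev-preservation step entirely; without it the lifted columns need not index simple modules and the submatrix argument collapses. Similarly, \cref{thm:2ormorehooks} does not reduce to a smaller type~$\ttb$ block by runner removal: it cuts to a \emph{level-one} (type~$\tta$) block of defect $\hk\ge2$ via \cref{cor:BowmanSpeyerconjugate}, applied at the minimal bipartition of the core block, and imports the SI-subsets already constructed in \cite{als23}. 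Finally, note that the four cited theorems never touch defect~$2$ blocks with $s_1=s_2$; those are handled in \cref{sec:def2} by explicit bound-quiver and $\tau$-tilting computations and turn out Schurian-finite, so a plan that tries to produce SI-subsets for them is attempting to prove something false.
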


The authors are working to resolve the missing cases in defects 3 and 4, although we note that defect 4 is made more difficult since the decomposition numbers are no longer characteristic-free in this case.

As we mentioned above, we find examples, e.g.~\eqref{SF-alg-2}, \eqref{SF-alg-3}, \eqref{SF-alg-4} of blocks of defect $2$ when $e=3$, which have wild representation type, but are Schurian-finite.
We also find the first example we know of where two blocks are Morita equivalent and the equivalence is not obtained by a composition of Scopes equivalences and twists by the sign automorphism -- see \cref{rem:newMorita}.

We are also able to determine analogous results for blocks of level 2 cyclotomic $q$-Schur algebras.
We have the following conjecture for the classification there.

\begin{conj}\label{conj:mainSchuralg}
Let $e\geq 3$ and let $B$ be a block of the cyclotomic $q$-Schur algebra $\mathcal{S}_n$.
Then $B$ is Schurian-infinite if and only if $\defect(B) \geq 2$.
\end{conj}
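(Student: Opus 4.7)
The strategy parallels the paper's approach for type $\ttb$ Hecke algebras, transposed to the level $2$ cyclotomic $q$-Schur algebra $\mathcal{S}_n$. Recall that $\mathcal{S}_n$ is quasi-hereditary, with Weyl modules $W(\bla)$ indexed by all bipartitions of $n$, simple modules $L(\bmu)$ indexed by Kleshchev bipartitions, and a decomposition matrix whose entries are the composition multiplicities $[W(\bla) : L(\bmu)]$. Blocks of $\mathcal{S}_n$ inherit the defect invariant from the underlying combinatorics of the weight.

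The ``if'' direction (defect $\leq 1 \Rightarrow$ Schurian-finite) should be essentially routine. Defect $0$ blocks are semisimple, and defect $1$ blocks of $\mathcal{S}_n$ have finite representation type by a well-known Brauer-tree-style classification, realisable as a quasi-hereditary cover of the corresponding defect $1$ Hecke algebra block; hence they are Schurian-finite. Care is needed only to assemble the existing literature in precisely the form required.

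For the ``only if'' direction (defect $\geq 2 \Rightarrow$ Schurian-infinite), the plan is to establish a Schur algebra analog of \cref{prop:matrixtrick}: identifying certain submatrices of the decomposition matrix, now with rows indexed by Weyl modules rather than Specht modules, should force Schurian-infiniteness. Since the Schur algebra decomposition matrix has strictly more rows than that of $\hhh$, but the same columns, every submatrix pattern exploited in the Hecke algebra proofs of \cref{thm:mainnonintegral,thm:maine3,thm:main} persists verbatim. This immediately handles every case where $\hhh$ is already known to be Schurian-infinite. For the defect $3$ and $4$ exceptions left open in \cref{conj:e>3}, one expects the extra rows coming from non-Kleshchev bipartitions to yield the required patterns even when the Hecke algebra does not supply them. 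An alternative route is to lift bricks through the Schur functor $F : \mathcal{S}_n\operatorname{-mod} \to \hhh\operatorname{-mod}$, exact and sending Weyl modules to Specht modules, exploiting the fact that a quasi-hereditary cover tends to inherit representation-theoretic pathology from its quotient.

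The main obstacle is twofold. First, one must confirm that the matrix trick genuinely transfers: the Hecke algebra argument relies on specific $\Ext^1$-vanishing and homomorphism-space properties of Specht modules, and one must verify the analogous statements for Weyl modules -- plausible, given that $W(\bla)$ is the standard object in a highest weight structure, but the details must be checked. Second, and more seriously, the defect $3$ and $4$ cases must be handled uniformly for $\mathcal{S}_n$, since the unresolved Hecke algebra exceptions cannot be cited as a black box; success will hinge on whether the additional Weyl-module rows supply enough leverage to rule out every remaining configuration, or whether direct computation of a few small ``boundary'' Schur algebras becomes unavoidable.
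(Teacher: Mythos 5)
First, be aware that the statement you are attacking is recorded in the paper only as a conjecture (\cref{conj:mainSchuralg}); the paper itself proves only the partial result \cref{thm:Schuralg}, which settles the conjecture for $e=3$ and, for $e\geq 4$, excludes the same defect $3$ and defect $4$ families that remain open for $\hhh$ in \cref{thm:main}. Measured against what the paper actually does, your plan is on target for the resolved cases, although the emphasis is inverted. The backbone of the paper's argument is what you relegate to ``an alternative route'': each block of $\hhh$ is an idempotent truncation of the corresponding block $B$ of $\mathcal{S}_n$ via the Schur functor, so \cref{reduction} transfers Schurian-infiniteness upward immediately whenever the Hecke block is known to be Schurian-infinite; no re-verification of the matrix trick for Weyl modules is needed in those cases. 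For the genuinely new cases the paper then argues exactly as you predict: for the $e=3$ defect $2$ blocks whose Hecke truncations are Schurian-finite (\cref{thm:e3def2b}) and the $e=3$ defect $3$, $s=0$ blocks left unresolved in \cref{thm:e3def3s0}, it exhibits SI-subsets of the Schur algebra decomposition matrix using rows labelled by non-Kleshchev bipartitions, with entries computed by Jantzen--Schaper, one-node shifts, and the row-removal results of \cite{bs16}, which apply to $\mathcal{S}_n$ directly and without the Kleshchev hypothesis on columns. Your worry about whether the matrix trick ``genuinely transfers'' is therefore moot for the truncation step, and for the new SI-subsets it is covered by the fact that \cref{prop:matrixtrick} is already proved through the graded Schur algebra $S^\La(\beta)$ and its idempotent truncation $M^\La(\beta)$.

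The genuine gap is the final step of your plan. You hope that the extra Weyl-module rows ``supply enough leverage to rule out every remaining configuration'', namely the defect $3$ blocks with core block of defect $1$ and the $s=0$ defect $4$ blocks with core block of defect $2$ for $e\geq 4$. The paper does not achieve this: those families are excluded from \cref{thm:Schuralg} exactly as they are excluded from \cref{thm:main}, and the conjecture remains open there. So your proposal cannot be completed as written; at best it reproduces the partial theorem. Closing the gap would require either new decomposition-number input in those defects (where, as the paper notes, characteristic-freeness already fails in defect $4$) or a direct bound-quiver/$\tau$-tilting analysis of the exceptional blocks, neither of which your outline supplies.
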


Towards this conjecture, we are currently able to prove the following result, which appears as \cref{thm:Schuralg} at the end of the paper.
We note that the exceptions not covered by the theorem are as in \cref{thm:main}.

\begin{thm}\label{thm:mainSchuralg}
Let $e\geq 3$ and let $B$ be a block of the cyclotomic $q$-Schur algebra $\mathcal{S}_n$.
If $e=3$, then $B$ is Schurian-infinite if and only if $\defect(B) \geq 2$.

Suppose $e\geq 4$, and that the corresponding block of $\hhh$ is not a defect $3$ block with core block $C$ of defect $1$, and is not a defect $4$ block with core block $C$ of defect $2$ with $s=0$.
Then $B$ is Schurian-infinite in any characteristic.
\end{thm}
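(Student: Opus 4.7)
The plan is to reduce the statement to the Hecke algebra classification, \cref{thm:maine3} and \cref{thm:main}. The bridge I would use is the well-known realisation of the Hecke algebra $\hhh$ as an idempotent truncation of the cyclotomic $q$-Schur algebra: there is an idempotent $e \in \mathcal{S}_n$ with $e\mathcal{S}_n e \cong \hhh$, and the same idempotent matches each block $B$ of $\mathcal{S}_n$ with a block $B' = eBe$ of $\hhh$ of the same defect.

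For the direction $\defect(B)\leq 1 \Rightarrow B$ is Schurian-finite, I would observe that blocks of defect $0$ are semisimple, while blocks of $\mathcal{S}_n$ of defect $1$ have finite representation type (they are Morita equivalent to Brauer tree algebras), so they are automatically Schurian-finite.

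For the direction $\defect(B)\geq 2 \Rightarrow B$ is Schurian-infinite in the cases stated, the crucial input is the result of Demonet--Iyama--Jasso that $\tau$-tilting finiteness is inherited by idempotent truncations: if $A$ is $\tau$-tilting finite, then so is $eAe$ for any idempotent $e$. Taking the contrapositive, Schurian-infiniteness of the Hecke block $B' = eBe$ implies Schurian-infiniteness of $B$. Applying \cref{thm:maine3} when $e = 3$ and \cref{thm:main} when $e\geq 4$ then yields the claim, with exactly the same list of exceptional cases.

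The main point requiring care is the precise block correspondence under the Schur functor and the preservation of the defect invariant; once these are in hand, the remainder of the proof is a one-step application of the Demonet--Iyama--Jasso principle to the Hecke algebra results already established. In particular, I would not expect to need any additional decomposition-matrix computations beyond those already performed for $\hhh$, which is consistent with the fact that the exceptional cases here are exactly those left open in \cref{thm:main}.
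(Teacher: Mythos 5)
There is a genuine gap. Your reduction via idempotent truncation only transfers Schurian-\emph{infiniteness} upwards: if $e\mathcal{S}_n e\cong\hhh$ and the Hecke block $F(B)=eBe$ is Schurian-infinite, then so is $B$ (this is \cref{reduction}, and it is exactly how the paper handles ``most blocks''). But the implication gives you nothing when the Hecke block is Schurian-\emph{finite} or undetermined, and for $e=3$ the theorem asserts the full biconditional: \emph{every} block of $\mathcal{S}_n$ of defect at least $2$ is Schurian-infinite. By \cref{thm:e3def2b}, when $e=3$ there are defect-$2$ Hecke blocks that are Schurian-finite (e.g.\ $R^{2\La_0}(\delta)$, $R^{\La_0+\La_1}(\delta)$, $R^{\La_0+\La_1}(\delta+\alpha_1)$, $R^{\La_0+\La_1}(\delta+2\alpha_0+2\alpha_1)$), and by \cref{thm:e3def3s0} there are defect-$3$ Hecke blocks whose status is unresolved. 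For the corresponding Schur-algebra blocks your argument simply terminates without a conclusion, so your claim that ``no additional decomposition-matrix computations'' are needed and that ``the exceptional cases here are exactly those left open in \cref{thm:main}'' is false for $e=3$.

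The paper closes this gap by working directly in $\mathcal{S}_n$ for those eight Morita classes: it constructs explicit SI-subsets from the decomposition matrix of the quasi-hereditary algebra $\mathcal{S}_n$, where every bipartition labels a simple module (so the Kleshchev restriction in \cref{T:BowmanSpeyer1} can be dropped), and verifies the submatrices (\ref{targetmatrix}) or (\ref{targetmatrixalt}) using the Jantzen--Schaper formula, \cref{L:OneNodeShift}, and row-removal. This is genuinely new computation at the Schur-algebra level, not a formal consequence of the Hecke results: the Schur algebra has strictly more simple modules than its idempotent truncation, which is precisely why it can be Schurian-infinite while $\hhh$'s block is Schurian-finite. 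Your argument for the $e\geq 4$ part and for the blocks where the Hecke block is already known to be Schurian-infinite is correct and matches the paper; the missing idea is the separate treatment of the $e=3$ blocks lying over Schurian-finite or unresolved Hecke blocks.
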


\begin{rem}
    There are related work on Schurian finiteness of the (entire) Schur algebras $S(n,r)$, and of blocks of the Schur algebras $S(n,r)$. See Wang~\cite{qiwang22} and Aoki--Wang~\cite{aokiwang21}.
\end{rem}

In order to find the necessary decomposition numbers to prove most of the cases in \cref{thm:maine3,thm:main}, we use the non-recursive characterisation of Kleshchev bipartitions in \cite{akt08} and row removal theorems from \cite{bs16} in order to reduce many cases to the type $\tta$ setting.
Many remaining cases, particularly those for defect 2 blocks when $e=3$, are resolved using standard methods from $\tau$-tilting theory. 
Indeed, almost all blocks are Schurian-infinite, and we do not know any other method than $\tau$-tilting theory to conclude that a block is Schurian-finite. Because of computational difficulties, we have succeeded only partially in those remaining cases. 

The paper is organised as follows.
In \cref{sec:hecke,sec:bipartitions and abacus}, we introduce the type $\ttb$ Hecke algebras and their blocks, defect of a block, special kinds of blocks known as core blocks, much of the necessary combinatorics of bipartitions, abacus displays, combinatorial blocks, combinatorial core blocks, and several combinatorial equivalence relations that induce Morita equivalences on blocks of our Hecke algebras.
We also quickly prove our main result, \cref{thm:mainnonintegral}, for blocks of non-integral type $\ttb$ Hecke algebras in \cref{subsec:nonintegral}.
Armed with this setup, we introduce Specht modules in \cref{sec:spechts} to relate blocks and combinatorial blocks, as well as the graded decomposition numbers, and explain how we use those to prove that blocks are Schurian-infinite, and explain the reduction techniques from \cite{bs16}.
In \cref{sec:Demazurecrystals}, we use \cite{akt08} to prove an important result in \cref{T:StillKlesh}, that allows us to generate Kleshchev bipartitions starting from one with an $e$-core in the second component and adding $e$-hooks to this second component only.

In \cref{sec:SIblocks}, we will prove most of the Schurian-infinite cases of \cref{thm:main}.
In particular, we start in \cref{subsec:SIcoreblocks}, showing that core blocks of defect at least $3$ are Schurian-infinite, as are those of defect $2$ if $s\neq0$; this can be found in \cref{T:CoreBlocksInf1}.
Next, we show in \cref{subsec:SIblockswithlargecoreblock} that any blocks built from those core blocks by adding $e$-hooks are also Schurian-infinite, which appears in \cref{T:nlarge}.
In \cref{subsec:ManyRemHooks}, we prove \cref{thm:2ormorehooks}, which gives us that outside of the case $(e,p)=(3,2)$, any block that contains some bipartitions with at least two removable $e$-hooks is Schurian-infinite in any characteristic.
Finally, in \cref{S:SIC2}, we prove \cref{T:Defect2}, which shows that defect 2 non-core blocks are Schurian-infinite when $s\neq0$.

In \cref{sec:def2}, we handle the remaining cases of \cref{thm:main}.
Namely, the remaining defect $2$ core blocks for $s=0$ are resolved in \cref{subsec:core-def-2-equal}, and the defect $2$ non-core blocks with $s=0$ are resolved in \cref{subsec:def2s1=s2}.

In \cref{sec:e3}, we handle the $e=3$ cases of \cref{thm:maine3}.
We first consider the blocks of defect $2$ and defect $3$. The results for defect $2$ are summarised in \cref{thm:e3def2b} while the results for defect $3$ are summarised in \cref{thm:e3def3s0,thm:e3def3s1}.
We resolve the blocks of larger defect in \cref{sec:e=3defect4+} -- in most cases we are able to reduce to type $\tta$ and solve these cases easily, using \cref{T:StillKlesh} and some `diagonal cuts' from \cite{bs16}.
For RoCK blocks of defect $4,5,6$, or $7$ when $p=2$, more care is needed and we handle these in the rest of the section.
We end the paper with \cref{sec:Schuralg}, in which we prove \cref{thm:mainSchuralg}.


\begin{ack}
The first author is partially supported by JSPS Kakenhi grant number \linebreak 21K03163. He thanks Kavli Institute for the Physics and Mathematics of the Universe for the current visiting position, and Okinawa Institute of Science and Technology for his two one month long visits by TSVP (Theoretical Sciences Visiting Program) in the academic year 2024. 
The third author is partially supported by JSPS Kakenhi grant number 23K03043.
The fourth author is partially supported by National Natural Science Foundation of China grant number 12401048 and Fundamental Research Funds for the Central Universities grant number DUT25RC(3)132.
We thank Peng Shan for telling us about \cite{Mak14}, which we needed for the proof of \cref{prop:matrixtrick}, and thank Matt Fayers for making his LaTeX style file for abacus displays publicly available.
We also thank Ben Webster for sharing his Sage code which computes Scopes classes of blocks, which we initially used for \cref{sec:e=3defect4+}. 
We thank Tsinghua University and Yu Qiu for their hospitality during a visit in April 2025, where much progress on the project took place.
\end{ack}

\section{The Hecke algebra of type $\ttb$}\label{sec:hecke}

\subsection{The Hecke algebra}\label{subsec:hecke}

Throughout, we let $\bbf$ denote an algebraically closed field of characteristic $p\geq 0$, and all our modules are left modules unless otherwise stated.
Take $q,Q_1,Q_2 \in \bbf^\times$ and suppose $n \geq 0$.   
The Hecke algebra $\hhh=\hhh(q,Q_1,Q_2)$ of type $\ttb$ is the unital associative $\bbf$-algebra with generators $T_0,T_1,\dots,T_{n-1}$ and relations 
\begin{gather*}
(T_0-Q_1)(T_0-Q_2)=0, \quad (T_i-q)(T_i+1)=0\;\;(1\le i\le n-1),\\
T_0T_1T_0T_1=T_1T_0T_1T_0, \quad T_iT_{i+1}T_i=T_{i+1}T_iT_{i+1}\;\;(1\le i\le n-2),\\
T_iT_j=T_jT_i\;\; (0\le i\le j-2\le n-3).
\end{gather*}

The algebra deforms the group algebra of the Weyl group of type $\ttb_n$. If $q\ne 1$, it has graded algebra structure through the Brundan--Kleshchev isomorphism theorem \cite{bkisom}. 
When $q=1$, we instead must work with the \emph{degenerate} cyclotomic Hecke algebra of type $\ttb$ for introducing the graded algebra structure, as the above presentation gives an algebra with very different behaviour in this case.
For example, it has more simple modules (c.f.~\cite[Theorem~3.7]{m98}).
As we use the graded algebra structure in an essential way, we assume \emph{$q\ne 1$} throughout the paper.\footnote{Computations in this paper which use only the defining relations of cyclotomic quiver Hecke algebras are valid even when $q=1$ and may be viewed as 
computations for degenerate cyclotomic Hecke algebras.}

\begin{rem}
   There exists a unified presentation for these algebras given by Hu and Mathas~\cite[Definition~2.2]{hmseminorm}, and details can also be found in \cite[\S1.1]{m14surv}. 
\end{rem}

The quantum characteristic of $\hhh$ is the smallest positive integer $e$ such that 
\[
1 + q + q^2 + \dots + q^{e-1} = 0, 
\]
if such an $e$ exists, and we set $e = \infty$ otherwise.
In fact, the results of this paper only hold for $e \geq 3$, so we will later include this further assumption.
Throughout the paper, we will often identify $\Z/e\Z$ and $\{0,1,\dots,e-1\}$ if $e$ is finite. 
We note that if $e=\infty$, every block of $\hhh$ is a core block, and our main results for these cases are resolved in \cref{subsec:SIcoreblocks,subsec:core-def-2-equal}.

We will call $\hhh$ \emph{integral} if $Q_1 = q^s Q_2$ for some $s \in \Z$, and \emph{non-integral} if $Q_1 \neq q^s Q_2$ for all $s \in \Z$.
In~\cite[Theorem~4.17]{dj92}, Dipper and James showed that if $\hhh$ is non-integral, then it is Morita equivalent to a sum of tensor products of type $\tta$ Hecke algebras; we note that the Schurian-infinite type $\tta$ Hecke algebras for $e \ne 2$ were classified by the first three authors~\cite{als23}, and we may analyse the Schurian-finiteness of the tensor product.
We resolve this quickly in \cref{subsec:nonintegral}.
Hence for most of the paper, we may assume that $Q_1=q^a Q_2$ for some $a \in \Z$; then we can rescale so that there exist $a_1, a_2\in \Z$ such that $Q_k=q^{a_k}$ for $k=1,2$.

\begin{rem}
We note that if $q=1$, the definition of integral (for the degenerate Hecke algebra) is a little less clean; however, an analogous Morita equivalence theorem can be derived by following the arguments of \cite{dm02} using the aforementioned unified presentation of Hu--Mathas, as mentioned in \cite[\S1.2]{m14surv}.
\end{rem}

\begin{defn}
    A Hecke algebra with bicharge is a pair $(\hhh, \ba)$, where $\ba=(a_1,a_2)\in \Z^2$ and $\hhh=\hhh(q,q^{a_1},q^{a_2})$.
\end{defn}

Note that the bicharge $\ba=(a_1,a_2)$ is an ordered pair. 
For $k=1,2$, we often let $s_k \in \Z/e\Z$ denote the equivalence class of $a_k$ modulo $e$.
Under the identification of $\Z/e\Z$ and $\{0,1,\dots,e-1\}$ above, we may assume $0 \leq s_1,s_2 <e$.

\begin{rem}
With further rescaling, we could assume that $s_1=0$. We do not make this assumption, firstly because we are motivated by the study of the Ariki--Koike algebras~\cite{ak94}, and secondly because in some later sections it will be convenient to also allow $1 \le s_1 <e$. 
\end{rem}

\subsection{Blocks}\label{subsec:blocks}

A block (of $\hhh$) is an indecomposable two-sided ideal which is a direct summand of the $(\hhh,\hhh)$-bimodule $\hhh$. 
Lyle and Mathas \cite{lm07} proved that 
blocks of $\hhh$ are labelled by the weight poset of an integrable highest weight module over $\mathfrak{g}(\tta^{(1)}_{e-1})$. Let $P$ be the weight lattice for the Kac--Moody Lie algebra $\mathfrak{g}(\tta^{(1)}_{e-1})$, and let $\La_s$, for $s\in \Z/e\Z$, be the fundamental weights. We set $\La=\La_{s_1}+\La_{s_2}$ here. 
We denote by $V(\La)$ the integrable highest weight module with highest weight $\La$, and by $V(\La)_\mu$, for $\mu\in P$, weight spaces. The weight poset of $V(\La)$ is 
\[
P(\La)=\{ \mu \in P \mid V(\La)_\mu\neq0 \}. 
\]
Let $Q_+=\Z_{\ge0}\alpha_0+\dots+\Z_{\ge0}\alpha_{e-1}$ be the positive cone of the root lattice. Then
\[
P(\La) \subset \{ \Lambda-\beta \mid \beta \in Q_+\}.
\]
Let $\delta$ be the null root. 
The set of maximal weights is defined to be 
\[
\max(\La)=\{ \mu\in P(\La) \mid \mu+\delta\not\in P(\La)\}.
\]

\begin{defn} \label{D:Core1}
   We call blocks labelled by $\max(\La)$ core blocks.
   Let $B$ be the block labelled by $\mu\in P(\La)$.
   Then, there exists a non-negative integer $k\in\Z$ such that $\mu+k\delta\in \max(\La)$. Let $C$ be the block labelled by $\mu+k\delta$. We call $C$ the core block of $B$.
\end{defn}

The Brundan--Kleshchev isomorphism theorem~\cite[Main Theorem]{bkisom} tells us that each block is isomorphic to a cyclotomic quiver Hecke algebra, also known as a cyclotomic Khovanov--Lauda--Rouquier algebra, which we denote by $R^\La(\beta)$ if the block is labelled by $\La-\beta\in P(\La)$.
We do not give the defining relations of $R^\La(\beta)$ here, but we will need the explicit defining relations for $e=3$.
See \cref{Cyclotomic quiver Hecke algebras}. 
Further, the isomorphism theorem shows that blocks are graded algebras, and we may define graded decomposition numbers. We will utilise the graded algebra structure and graded decomposition numbers in essential ways. 

\begin{rem}
    We can show that $R^\La(\beta)=0$ when $\La-\beta\not\in P(\La)$. Thus, $R^\La(\beta)$, for $\La-\beta\in P(\La)$, is a core block if and only if $R^\La(\beta-\delta)=0$. Note that the label $(\La, \beta)$ defines the block directly, and the definition of core block does not require any additional structure such as cellular algebra structure. 
\end{rem}

\begin{defn}
The defect of the block labelled by $\La-\beta\in P(\La)$
is defined to be the defect of $\beta$: 
\[
\defect_\La(\beta)=(\La \mid \beta)-\frac{1}{2}(\beta \mid \beta).
\]
We denote the defect of a block $B$ by $\defect(B)$.
\end{defn}

Then $\defect(B)=0$ if and only if the block $B$ is simple.
In general, the higher the defect, the more complicated the block tends to be.
From the perspective of quiver Hecke algebras, it is clear that a Hecke algebra with quantum characteristic $e=\infty$ is just isomorphic to one with finite $e$, so long as $e$ is chosen to be large enough, since the presentation of $R^\La(\beta)$ is identical in these two instances.
One standard idea for studying representations of Hecke algebras is to use a combinatorial block, which is a set of bipartitions together with a bicharge.
For this, we need an additional structure on blocks. Namely, we
will need the notion of block for Hecke algebras with bicharge, which is defined as follows. 
The link from blocks of the Hecke algebra with bicharge to combinatorial blocks will be explained in \cref{L:CoresMatch}.

\begin{defn}
    Let $(\hhh,\ba)$ be a Hecke algebra with bicharge. A block of $(\hhh,\ba)$ is a pair $(B,\ba)$ where $B$ is a block of $\hhh$ in the above sense. 
\end{defn}

\subsection{Non-integral Hecke algebras}\label{subsec:nonintegral}

Suppose $e\ge3$, and that $\hhh$ is a non-integral type $\ttb$ Hecke algebra, that is, $Q_1 \ne q^\Z Q_2$.
Then $\hhh$ is Morita equivalent to the direct sum of tensor products of Hecke algebras of type $\tta$, so that any block is Morita equivalent to the tensor product of one block $A$ from the Hecke algebra of the symmetric group $S_m$ and another block $B$ from the Hecke algebra of the symmetric group $S_{n-m}$. 

There is a notion of weight, or defect, of a block of a type $\tta$ Hecke algebra, and in this non-integral case the defect of a type $\ttb$ block is naturally the sum of the defects of the corresponding type $\tta$ blocks.
We are now ready to prove \cref{thm:mainnonintegral}, which we restate below for the reader's convenience.

\begin{thm}\label{thm:mainnonintegralrepeated}
    Let $B$ be a block of a non-integral Hecke algebra $\hhh$.
    Then in any characteristic, $B$ is Schurian-infinite if and only if it has defect at least $2$.
    In particular, $B$ is Schurian-finite if and only if it has finite representation type.
\end{thm}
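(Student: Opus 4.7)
The plan is to combine the Dipper--James Morita equivalence \cite[Theorem~4.17]{dj92} recalled just above with the type~$\tta$ classification of \cite{als23}. By that equivalence, any block $B$ of the non-integral $\hhh$ is Morita equivalent to a tensor product $B_1 \otimes_\bbf B_2$ of blocks of type~$\tta$ Hecke algebras, and as already observed $\defect(B)=\defect(B_1)+\defect(B_2)$. Since Schurian-finiteness is a Morita invariant, it suffices to analyse $B_1 \otimes_\bbf B_2$. If $\defect(B)\leq 1$ then at most one factor has positive defect, and that factor has finite representation type by \cite{enreptype} (being a defect~$1$ Brauer tree algebra); tensoring with the semisimple other factor preserves finite representation type, so $B$ is Schurian-finite.

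Next suppose $\defect(B)\geq 2$. If some factor, say $B_1$, satisfies $\defect(B_1)\geq 2$, then the main result of \cite{als23} applies (using $e\geq 3$) and forces $B_1$ to be Schurian-infinite. I would conclude from the standard observation that if $M$ is a brick of $B_1$ and $S$ is any simple $B_2$-module, then $\End_{B_1\otimes B_2}(M\otimes_\bbf S)=\End_{B_1}(M)\otimes_\bbf \End_{B_2}(S)=\bbf$, so $M\otimes_\bbf S$ is a brick of $B_1\otimes B_2$, and distinct $M$ yield pairwise non-isomorphic $M\otimes S$. Hence $B_1\otimes B_2$ has infinitely many bricks in this subcase.

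The remaining subcase, $\defect(B_1)=\defect(B_2)=1$, is the main obstacle. Under $e\geq 3$ each $B_k$ is a Brauer tree algebra with $e-1\geq 2$ simples, hence non-local, with Gabriel quiver containing a non-trivial arrow $\alpha$ in $B_1$ (resp.\ $\gamma$ in $B_2$) between distinct primitive idempotents. I would prove that $B_1\otimes B_2$ is Schurian-infinite by exhibiting an explicit one-parameter family of pairwise non-isomorphic bricks, working inside the $2\times 2$ block of the Ext-quiver cut out by $\alpha$ and $\gamma$ on the four simples of the form $S\otimes T$. The key reason one must go beyond a naive one-dimensional component at each vertex is that in the one-dimensional case the tensor-product commutation $(\alpha\otimes 1)(1\otimes\gamma)=(1\otimes\gamma)(\alpha\otimes 1)$ reduces to a scalar identity, and a count of rescaling parameters against the resulting variety shows every module is rigid up to isomorphism. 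The remedy is to take components of dimension at least two, so that this relation becomes a genuine matrix identity with room for a free scalar parameter~$\lambda$. The hard part will then be to verify that (i) the construction extends consistently over all defining relations of $B_1\otimes B_2$, including those involving the $\beta$-arrows and the radical relations of each $B_k$; (ii) $\End=\bbf$ for generic~$\lambda$, so that the modules are bricks; and (iii) the natural rescaling action on the four bases does not absorb $\lambda$, so that distinct generic values give non-isomorphic bricks.
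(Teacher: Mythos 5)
Your reduction via Dipper--James, the defect count, and the first two cases are fine. The representation-finite case is the same as the paper's, and your handling of $\defect(B_1)\ge 2$ (tensoring a brick $M$ of $B_1$ with a simple $B_2$-module $S$ and using $\End_{B_1\otimes B_2}(M\otimes S)\cong\End_{B_1}(M)\otimes\End_{B_2}(S)$) is a valid, slightly more direct variant of the paper's argument, which instead truncates $B_2$ by a primitive idempotent to a local algebra and uses \cref{reduction}.

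The genuine gap is in the case $\defect(B_1)=\defect(B_2)=1$, which is the real content of the theorem, and it is not merely that you defer the verification: the strategy as stated cannot work. If you restrict attention to the four simples $S_i\otimes T_j$ and only the arrows $\alpha\otimes 1$ and $1\otimes\gamma$, the relevant quotient is the commutative-square algebra, which is representation-finite; more pointedly, no brick can carry two commuting non-scalar operators, since if $(\alpha\otimes1)$ and $(1\otimes\gamma)$ commute on a module $M$ then the subalgebra they generate in $\End(M)$ is commutative, and $\End(M)=\bbf$ forces both to act as scalars on each composition of components. So there is no ``free scalar parameter $\lambda$'' to be extracted from the commutation relation, in any dimension: one-parameter families of bricks cannot live over the commutative square alone. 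To get Schurian-infiniteness you must use the arrows in \emph{both} directions coming from the Brauer line structure of each defect-one factor. This is exactly what the paper does: it truncates each factor by an idempotent to a two-vertex algebra whose Gabriel quiver is $\circ\rightleftarrows\circ$, passes to the tensor product of the radical-square-zero quotients, and exhibits the path algebra of the affine $\tta^{(1)}_3$ $4$-cycle with zigzag orientation as a factor algebra; the infinitely many bricks are then the regular bricks of that tame hereditary algebra, and \cref{cor:Schurianinfinitequiver} together with \cref{reduction} finishes the argument. If you want to salvage your explicit-family approach, you would need to build your modules over that zigzag $4$-cycle (alternating sources and sinks among the four vertices $S_i\otimes T_j$), not over the commutative square.
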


\begin{proof}
First, observe that $A\otimes B$ as above is representation-finite if and only if the sum of the defect of $A$ and $B$ is less than or equal to $1$.
Thus, the blocks in those cases are Schurian-finite.

If the defect of $A$ is greater than or equal to $2$, we choose a primitive idempotent $e\in B$ and consider the local algebra $eBe$. 
Then $A\otimes eBe$ is Schurian-infinite by \cite{als23}.
It follows that $A\otimes B$ is Schurian-infinite. 

Finally, we consider the case both $A$ and $B$ are defect $1$ blocks. Since those blocks are derived equivalent to the principal block of the Hecke algebra of the symmetric group $S_e$, the number of simple $A$-modules and the number of simple $B$-modules are both $e-1\ge 2$. 
Moreover, it is known that defect $1$ blocks are Morita equivalent to Brauer line algebras. Thus, we may assume $A=B$. 
We choose an idempotent $e\in A$ 
such that the Gabriel quiver of $eAe$ is the following quiver.
\begin{center}
$\vcenter{\xymatrix@C=1cm{\circ \ar@<0.5ex>[r]&\circ \ar@<0.5ex>[l]}}$
\end{center}
The tensor product of the path algebra of affine Lie type $\tta^{(1)}_1$ 
modulo the radical square with itself is a factor algebra of $eAe\otimes eAe$, and the path algebra of affine Lie type $\tta^{(1)}_3$ with zigzag orientation 
is a factor algebra of that tensor product of the radical square algebras, so that $eAe\otimes eAe$ is Schurian-infinite.
\end{proof}

\section{Bipartitions}\label{sec:bipartitions and abacus}
In this section, we introduce the combinatorics that we will use in the paper. This section is purely combinatorial; the items introduced will be shown to have algebraic significance later. Throughout~\cref{sec:bipartitions and abacus}, we fix $e \ge 2$ and we continue to identify the sets $\Z/e\Z$ and $\{0,1,\dots,e-1\}$. 

\subsection{Bipartitions and abacus combinatorics}\label{subsec:bipartition}
A partition is a weakly decreasing sequence $\la=(\la_1,\la_2,\dots)$ of non-negative integers such that $\la_i=0$ for all sufficiently large values of $i$. If $\sum_{i \geq 1} \la_i=n$, then we say that $\la$ is a partition of $n$ and write $|\la|=n$. Let $\Pn_n$ denote the set of partitions of $n$ and let $\Pn$ denote the set of all partitions. A bipartition $\bla=(\la^{(1)},\la^{(2)})$ is then an ordered pair of partitions. We define $|\bla|=|\la^{(1)}|+|\la^{(2)}|$, and if $|\bla|=n$, then we say that $\bla$ is a bipartition of $n$. Let $\Pn^2_n$ denote the set of bipartitions of $n$ and let $\Pn^2$ denote the set of all bipartitions. We define a partial order $\dom$ on $\Pn^2_n$ by saying that
\[
\bla \dom \bmu \iff \sum_{u=1}^s |\la^{(u)}| + \sum_{i=1}^t \la^{(s+1)}_i \geq \sum_{u=1}^s |\mu^{(u)}| +  \sum_{i=1}^t \mu^{(s+1)}_i \text{ for all } s \in \{0,1\} \text { and } t \geq 0.
\]
Suppose $\bla \in \Pn^2$. We define the Young diagram of $\bla$ to be the set
\[
[\bla]=\{(r,c,k) \in \Z_{>0} \times \Z_{>0}\times \{1,2\} \mid c \leq \la^{(k)}_{r}\}.
\]
We call the elements of $\Z_{>0} \times \Z_{>0}\times \{1,2\}$ nodes and say that a node $\mathfrak{n}\notin [\bla]$ is an addable node of $[\bla]$ if $[\bla] \cup \{\mathfrak{n}\}$ is the Young diagram of a bipartition.  
The rim of $\bla$ is the set of nodes $(r,c,k)\in [\bla]$ such that $(r+1,c+1,k) \notin [\bla]$.
For $h \geq 1$, an $h$-rim hook is a connected set of $h$ nodes contained in the rim of $\bla$ such that when they are removed from $[\bla]$ the remaining nodes give the Young diagram of a bipartition.  

Now let $\ba=(a_1,a_2)\in \Z^2$. The $e$-residue diagram of $\bla$ with respect to $\ba$ is the diagram obtained by replacing each node $\mathfrak{n}=(r,c,k) \in [\bla]$ with $\res_{\ba}(\mathfrak{n})=c-r+a_k \pmod e$. If $\res_{\ba}(\mathfrak{n})=i$, we call $\mathfrak{n}$ an $i$-node. 
Define $\Res_{\ba}(\bla)$ to be the multiset $\{\res_{\ba}(\mathfrak{n}) \mid \mathfrak{n} \in [\bla]\}$ and define an equivalence relation $\sim_{\ba}$ on $\Pn^2$ by saying that 
\[\bla \sim_{\ba} \bmu \quad\text{ if and only if }\quad \Res_{\ba}(\bla)=\Res_{\ba}(\bmu).\]
Note that $\bla \sim_{\ba} \bmu$ implies that $|\bla|=|\bmu|$. 

\begin{eg}
Let $e=3$ and $\ba=(0,2)$. Take $\bla=((6,5,4,1),(3,1))$ and $\bmu=((6,3),(5,4,2))$. Then the $e$-residue diagrams of $\bla$ and $\bmu$ are given by 
\[\bla: \; \young(012012,20120,1201,0)
\quad \young(201,1)\;, \qquad \qquad \bmu: \; \young(012012,201)\quad \young(20120,1201,01)\;, \] 
and $\Res_{\ba}(\bla)=\Res_{\ba}(\bmu)=\{0,0,0,0,0,0,0,1,1,1,1,1,1,1,2,2,2,2,2,2\}$ so that $\bla \sim_{\ba} \bmu$.  
\end{eg}

If $\la \in \Pn$, define its conjugate to be the partition $\la'=(\la'_1,\la'_2,\dots)$ where $\la'_i=\#\{j \geq 1 \mid \la_j \geq i\}$, so that we obtain $[\la']$ from $[\la]$ by reflecting along the main diagonal. 
If $\bla=(\la^{(1)},\la^{(2)}) \in \Pn^2$, define its conjugate to be the bipartition $\bla'=({\la^{(2)}}',{\la^{(1)}}')$. 

Suppose that $\la\in \Pn$ and $a \in \Z$. Define the $\beta$-set 
\[\beta_a(\la)=\{\la_i-i+a \mid i \ge 1\}.\]
We define the abacus display of $\la$ with respect to $a$ as follows. Draw an abacus with $e$ runners labelled $0,1,\dots,e-1$ from left to right extending infinitely far in both directions. Label the positions on the abacus with the elements of $\Z$, going from left to right across the runners and then top to bottom down the runners so that runner $i$ contains the positions congruent to $i$ modulo $e$. Finally put a bead in each of the positions corresponding to an element of $\beta_a(\la)$. Our convention when drawing the abacus configurations is that all rows above the portion of the diagram given contain a bead on each runner, and there are no beads below the portion of the diagram given. It will often be helpful to indicate position $0$ on the abacus, and we do so by drawing a line that separates the negative and non-negative positions.

\begin{eg} Let $\la=(6,4^2,3,2,1)$ and $a=14$. Then $\beta_a(\la)=\{19,16,15,13,11,9,7,6,\dots\}$.
Take $e=5$.
Below we label the positions on the abacus with $5$ runners, and then give the abacus configuration of $\la$ with respect to $a$. 

\begin{center}
\begin{tikzpicture}[xscale=0.4,yscale=0.4]
\foreach \x in {0,...,4}{
\draw(\x,-1)--(\x,5.1);
\foreach \y in {0,1,...,5}{
\draw(\x-0.1,\y-.5)--(\x+0.1,\y-.5);
\pgfmathsetmacro{\z}{int(\x+5*(5-\y)-5)};
\node at(\x+.3,\y-.25){\tiny{\z}};}}
\end{tikzpicture}
\qquad
\raisebox{1.06cm}{\abacusline(5,0,bbbbb,bbbbb,bbbnb,nbnbn,bbnnb,nnnnn)}
\end{center}
\end{eg}

Now suppose that $\bla=(\la^{(1)},\la^{(2)}) \in \Pn^2$ and $\ba=(a_1,a_2) \in\Z^2$. Define the abacus configuration of $\bla$ with respect to $\ba$ to be the pair of abacus configurations consisting of the abacus configuration of $\la^{(k)}$ with respect to $a_k$ for $k=1,2$. 
Let $\AB$ denote the set of pairs of abacus configurations. Then we have a bijection
\[\Pn^2 \times \Z^2 \longleftrightarrow \AB.\]
Henceforth, we identify the two sets, so that we may write $(\bla,\ba) \in \AB$. 
We define an equivalence relation $\sim$ on $\AB$ by saying that
\[
(\bla,\ba) \sim (\bmu,\ba') \quad \text{ if }\quad  \ba=\ba' \text{ and } \bla \sim_{\ba} \bmu.
\]
Let $\Bl$ denote the set of $\sim$-equivalence classes of $\AB$.

\begin{defn}
We refer to the elements of $\Bl$ as combinatorial blocks. If $\cB\in \Bl$, then each element of $\cB$ is equal to $(\bla,\ba)$ for some fixed $\ba \in \Z^2$; we say that $\ba$ is the bicharge of $\cB$. In fact, if $\cB$ is a combinatorial block and its bicharge $\ba$ is understood, we will often abuse notation by writing $\bla \in \cB$ rather than $(\bla,\ba) \in \cB$. If $\bla,\bmu \in \cB$ then $|\bla|=|\bmu|$ and we write
$\nB=|\bla|$. 
\end{defn}

\subsection{The notion of core blocks and defect for $\cB$}
Let $(\bla,\ba) \in \AB$.
It is well-known that removing an $e$-rim hook from $[\bla]$ corresponds to moving a bead up one position in the abacus configuration for $\bla$.
We now introduce the notion of core blocks for combinatorial blocks.
In \cref{L:CoresMatch}, we will see that \cref{D:Core1,D:Core2} are compatible.

\begin{defn} \label{D:Core2}
We say that $\bla \in \Pn^2$ is a bicore if it has no removable $e$-rim hooks, or equivalently, if there is no bead in the abacus configuration of $\bla$ with an empty space above it.
We say that $\cC \in \Bl$ is a (combinatorial) core block if every $(\bla,\ba) \in \cC$ is such that $\bla$ is a bicore. 
\end{defn}

We note that if $\cB \in \Bl$ then it is possible to have $(\bla,\ba),(\bmu,\ba) \in \cB$ with $\bla$ a bicore but $\bmu$ not. 

\begin{defn} \label{D:Core3}
For $\bla \in \Pn^2$, define $\bar{\bla}$ to be the bipartition whose Young diagram is obtained by repeatedly removing $e$-rim hooks from $[\bla]$ until we obtain a bicore and set $\hk(\bla)$ to be the number of hooks removed in this process; thus $|\bla|=|\bar{\bla}|+\hk(\bla)e$. For $\cB \in \Bl$, define
\[
\hk(\cB) = \max\{\hk(\bla) \mid (\bla,\ba) \in \cB\}.
\]
\end{defn}

\begin{lemc}{fay07core}{Theorem~3.1}
Suppose that $\cB\in \Bl$. Then 
\[\cC=\{(\bar{\bla},\ba) \mid (\bla,\ba) \in \cB \text{ and } \hk(\bla)=\hk(\cB)\}\]
is a core block. We refer to $\cC$ as the core block of $\cB$.
\end{lemc}

Note that if $\cB \in \Bl$ with core block $\cC$ then
\[\nB= \nB[\cC]+\hk(\cB)e.\]

\begin{defn}
Suppose $(\bla,\ba) \in \AB$ with $\bla$ a bicore. Let $\beta_{a_k}(\la^{(k)})$ denote the $\beta$-set of $\la^{(k)}$ with respect to $a_k$, for $k=1,2$. 
For $i \in \{0,1,\dots,e-1\}$ and $k \in \{1,2\}$, set
\[
b^{\ba}_{ik}(\bla) = \frac{\max\{b \in \beta_{a_k}(\la^{(k)}) \mid b \equiv i \pmod e\} - i}{e}.
\]
Thus $b^{\ba}_{ik}(\bla)$ is the lowest row of the abacus configuration for $\la^{(k)}$ with respect to $a_k$ that contains a bead on runner $i$.
\end{defn}

The next definition is due to Fayers~\cite[\S2.1]{fay06wts}.  

\begin{defn}\label{def:defect}
Take a combinatorial block $\cB \in \Bl$ with bicharge $\ba$. If $(\bla,\ba),(\bmu,\ba) \in \cB$, then $\Res_{\ba}(\bla)=\Res_{\ba}(\bmu)$. For $a\in\bbz$, let $c_a$ be the number of times that $a \pmod e$ occurs in $\Res_{\ba}(\bla)$, for $(\bla,\ba) \in \cB$. We define the defect of $\cB$ to be
\[
\defect(\cB) = c_{a_1} + c_{a_2} - \frac{1}{2} \sum_{i=0}^{e-1} (c_i-c_{i+1})^2.
\]
\end{defn}

Recall that $s_k$ is the equivalence class of $a_k$ modulo $e$ for $k=1,2$ and blocks are labelled by weights in $P(\La)$. If we set $\beta=\sum_{i=0}^{e-1} c_i\alpha_i$ and $\La =\La_{s_1}+\La_{s_2}$ then 
\[
\defect(\cB)=\defect_\La(\beta)=\defect(R^\La(\beta)).
\]
In \cref{sec:spechts} below, we use Specht modules to relate blocks, which are algebras, and combinatorial blocks, which are sets of bipartitions. Then, the block $B=R^\La(\beta)$ corresponds to $\cB$ and this assignment preserves defect.

\begin{lemc}{fay06wts}{Corollary~3.4}
Suppose $\cB \in \Bl$ and that $\cC$ is the core block of $\cB$. Then
\[\defect(\cB) = \defect(\cC) + 2 \hk(\cB).\] 
\end{lemc}

\subsection{Equivalences} \label{SS:Equivs}
In this section, we define four equivalence relations on $\Bl$. These are
\begin{itemize}
\item[$\Sw$:] The swap equivalence,
\item[$\Sh$:] The shift equivalence,
\item[$\Sc$:] The Scopes equivalence,
\item[$\SU$:] The equivalence relation generated by $\Sh$ and $\Sc$.
\end{itemize}

The first two relations are easy to describe. 

\begin{defn}
Suppose that $\cB,\cB' \in \Bl$. Then $\cB \Sw \cB'$ if and only if $\cB=\cB'$ or $\cB'=\{(\bla^{\text{Sw}},\ba^{\text{Sw}}) \mid (\bla,\ba) \in \cB\}$, where $(\la^{(1)},\la^{(2)})^{\text{Sw}}=(\la^{(2)},\la^{(1)})$ and $(a_1,a_2)^{\text{Sw}}=(a_2,a_1)$.
\end{defn}

\begin{defn}
Suppose that $\cB,\cB' \in \Bl$ and that their respective bicharges are $\ba=(a_1,a_2)$ and $\ba'=(a'_1,a'_2)$. Then $\cB \Sh \cB'$ if and only if $a_1-a_2 \equiv a'_1-a'_2 \pmod e$ and $\cB'=\{(\bla,\ba') \mid (\bla,\ba) \in \cB\}$. 
\end{defn}

We now move on to Scopes equivalence. 

\begin{defn} \label{D:Scopes}
Let $\la \in \Pn$, $a \in \Z$ and suppose $0 \le i <e$. Let $\beta=\beta_{a}(\la)$ and define 
\[
\begin{aligned}
\beta^i=\{b & \not \equiv i-1,i \pmod e \mid b \in \beta\} \\
&\cup \{b \equiv i \pmod e \mid b-1 \in \beta, b \notin \beta\}\\
&\cup 
\{b \equiv i-1 \pmod e \mid b \notin \beta, b+1 \in \beta\}.
\end{aligned}
\]

Define $\Phi_i^a(\la)$ to be the partition with $\beta$-set $\beta^i$ (with respect to $a$).
Now suppose $(\bla,\ba) \in \AB$ where $\bla=(\la^{(1)},\la^{(2)})$ and $\ba=(a_1,a_2)$. Define 
\[
\Phi_i(\bla,\ba)=((\Phi_i^{a_1}(\la^{(1)}),\Phi_i^{a_2}(\la^{(2)})),\ba).
\]
\end{defn}

The next result follows from~\cite[Proposition~4.6]{fay06wts}.

\begin{lem}
Suppose $(\bla,\ba),(\bmu,\ba') \in \AB$. Then 
\[
(\bla,\ba) \sim (\bmu,\ba') \iff \Phi_i(\bla,\ba) \sim \Phi_i(\bmu,\ba')
\]
that is, if $\cB \in \Bl$ then $\Phi_i(\cB) =\{\Phi_i(\bla,\ba) \mid (\bla,\ba) \in \cB\}\in \Bl$. 
\end{lem}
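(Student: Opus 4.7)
The plan is to verify that $\Phi_i$ sends $\sim$-classes to $\sim$-classes via a well-defined and invertible transformation of residue multisets. The crucial observation is that, by construction, $\Phi_i^a$ leaves all positions of $\beta_a(\la)$ on runners other than $i-1$ and $i$ unchanged. Hence for any residue $j\notin\{i-1,i\}$, the multiplicity of $j$ in $\Res_a(\Phi_i^a(\la))$ equals that in $\Res_a(\la)$, and the claim reduces to tracking how the multiplicities $c_{i-1}(\la,a)$ and $c_i(\la,a)$ of residues $i-1$ and $i$ transform.

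The heart of the argument is as follows. I would express $c_{i-1}(\la,a)$ and $c_i(\la,a)$ as counts indexed by adjacent pairs of positions $(b-1,b)$ with $b\equiv i\pmod e$ in the abacus of $\la$ with respect to $a$, classified by the bead--hole pattern at each such pair. Using \cref{D:Scopes}, I would then perform a case analysis to determine the bead--hole pattern after applying $\Phi_i^a$, and compute the new counts $c_{i-1}(\Phi_i^a(\la),a)$ and $c_i(\Phi_i^a(\la),a)$. The central combinatorial identity to verify is that these transformed counts are determined entirely by the original pair $(c_{i-1}(\la,a),c_i(\la,a))$. Assembling the two components of a bipartition and arguing componentwise, we conclude that $\Res_{\ba}(\Phi_i(\bla,\ba))$ is a function of $\Res_{\ba}(\bla)$ alone, which gives the forward implication $\bla\sim_{\ba}\bmu\Rightarrow\Phi_i(\bla,\ba)\sim\Phi_i(\bmu,\ba')$. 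For the converse, one exhibits an inverse operation $\Psi_i$ constructed via the complementary swap, moving beads from runner $i$ to runner $i-1$, and verifies that $\Psi_i\circ\Phi_i$ acts trivially on residue multisets; equivalently, one checks that the induced transformation on the pair $(c_{i-1},c_i)$ is a bijection on the set of pairs realisable from genuine $\beta$-sets.

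The main obstacle is the bookkeeping in the case analysis, particularly handling the semi-infinite structure of $\beta$-sets (with all sufficiently small positions occupied by beads and all sufficiently large positions empty) so that only finitely many pairs are actually affected, and so that $\beta^i$ is itself a valid $\beta$-set. Once this bookkeeping is carefully set up, the verification that the transformed counts depend only on the residue multiset, and not on the detailed bead configuration on runners $i-1$ and $i$, is a routine finite check.
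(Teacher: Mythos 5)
The paper does not actually prove this lemma: it is stated with the remark that it ``follows from~\cite[Proposition~4.6]{fay06wts}''. Your proposal is therefore a genuinely different, self-contained route, and its overall architecture is viable: show that $\Res_{\ba}(\Phi_i(\bla,\ba))$ is a function of $\Res_{\ba}(\bla)$ (together with $\ba$, $i$, $e$), and deduce the converse from the fact that $\Phi_i$ is an involution on $\AB$ (swapping the same pair of runners twice, with the compensating shift when $i=0$, returns the original configuration, so your ``inverse'' $\Psi_i$ is just $\Phi_i$ itself). In effect you would be verifying directly that $\Phi_i$ implements the simple reflection $\sigma_i$ on the block label $\La-\sum_j c_j\alpha_j$, which is exactly the content of the cited result.

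Two of your intermediate steps, however, are incorrect as formulated, and the ``central combinatorial identity'' you propose to verify is false. First, $c_{i-1}(\la,a)$ and $c_i(\la,a)$ cannot be ``expressed as counts indexed by adjacent pairs of positions $(b-1,b)$ with $b\equiv i\pmod e$'': these multiplicities depend on the entire abacus configuration, not only on runners $i-1$ and $i$. What is local is their \emph{change}. Writing $N_{>m}(\la)=\#\{b\in\beta_a(\la)\mid b>m\}$, one has $c_j(\la,a)=\sum_{m\equiv j-1\pmod e}\bigl(N_{>m}(\la)-N_{>m}(\varnothing)\bigr)$, since each single rightward bead move $m\mapsto m+1$ adds one node of residue $m+1$ and increments exactly $N_{>m}$. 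Exchanging the occupancies of the adjacent positions $b-1$ and $b$ perturbs $N_{>m}$ only at $m=b-1$, so in fact only $c_i$ changes (a stronger statement than your claim that $c_j$ is preserved for $j\notin\{i-1,i\}$), and its change equals $\sum_{b\equiv i}\bigl(\chi_\la(b-1)-\chi_\la(b)\bigr)=c_{i-1}-2c_i+c_{i+1}$ plus $1$ if $a\equiv i\pmod e$ and $0$ otherwise. In particular the transformed counts are \emph{not} determined by the pair $(c_{i-1},c_i)$ alone: they depend on $c_{i+1}$ as well, so the identity you intend to check would fail. This does not sink the argument, because $c_{i+1}$ is also read off from $\Res_{\ba}(\bla)$, so the conclusion you actually need---that the new residue multiset is a function of the old one---survives; summing over the two components gives the bipartition statement with correction term counting $k\in\{1,2\}$ with $a_k\equiv i\pmod e$. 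But the key identity must be corrected to the one above before your ``routine finite check'' can go through.
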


\begin{defn} \label{D:ScopesMove}
If $\cB,\cB' \in \Bl$ and $0 \leq i < e$, then we write 
$\cB \rightarrow_{\text{Sc}_i} \cB'$ if the following conditions are satisfied. 
\begin{enumerate}
\item If $(\bla,\ba) \in \cB$ then $[\bla]$ has no addable $i$-nodes; and 
\item $\cB'=\Phi_i(\cB)$.
\end{enumerate}
We write $\cB \rightarrow_{\text{Sc}} \cB'$ if there exists $0 \leq i <e$ such that $\cB \rightarrow_{\text{Sc}_i} \cB'$. 
Let $\Sc$ be the equivalence relation generated by $\rightarrow_{\text{Sc}}$.
\end{defn}

The easiest way to think about $\Phi_i(\bla,\ba)$ is to work with the abacus. Consider the abacus configuration for $(\bla,\ba)$. If $1 \leq i <e-1$, then we obtain the abacus configuration for $\Phi_i(\bla,\ba)$ by swapping runners $i$ and $i-1$; the condition that $[\bla]$ has no addable $i$-nodes means that every bead on runner $i-1$ of the abacus configuration has a bead directly to its right. We obtain the abacus configuration for $\Phi_0(\bla,\ba)$ by swapping runners $0$ and $e-1$ and then moving all beads on the new runner $0$ down one position and all beads on the new runner $e-1$ up one position. The condition that $[\bla]$ has no addable $0$-nodes means that, for every bead on runner $e-1$ of the abacus configuration, there is a bead in the row one lower on runner $0$.  

In fact, it has been shown by Dell'Arciprete that when considering the first condition for $\cB \rightarrow_{\text{Sc}_i} \cB'$ above, it is sufficient to consider only a subset of $\cB$.

\begin{lemc}{d'ascopes}{Theorem~5.2} \label{L:DARed}
Let $\cB \in \Bl$ with bicharge $\ba$.
Let $\cB_\hk=\{(\bmu,\ba) \in B \mid \hk(\bmu)=\hk(B)\}$. If there exists $(\bla,\ba)\in \cB$ such that $[\bla]$ has an addable $i$-node, then there exists $(\bmu,\ba) \in \cB_\hk$ such that $[\bmu]$ has an addable $i$-node.  
\end{lemc}

Recall that the core block of $\cB$ is equal to $\{(\bar{\bmu},\ba)\mid \hk(\bmu)=\hk(B)\}$, so that when we test condition (i) above, it is sufficient to consider only bipartitions which are obtained by adding hooks to bipartitions in the core block. 

\begin{cor} \label{C:DARed}
Let $\cB \in \Bl$ with bicharge $\ba$ and core block $\cC$. 
\begin{itemize}
\item Suppose $0 < i \leq e-1$ and let $\cB'=\Phi_i(\cB)$. Then $\cB \rightarrow_{\text{Sc}_i} \cB'$ if and only if there do not exist $(\bmu,\ba) \in \cC$ and $k \in \{1,2\}$ with
\[b^{\ba}_{ik}(\bmu) - b^{\ba}_{i-1k}(\bmu) < \hk(\cB).\] 
\item Suppose $i=0$ and let $\cB'=\Phi_i(\cB)$. Then $\cB \rightarrow_{\text{Sc}_i} \cB'$ if and only if there do not exist $(\bmu,\ba) \in \cC$ and $k \in \{1,2\}$ with
\[b^{\ba}_{0k}(\bmu) - b^{\ba}_{e-1k}(\bmu) < \hk(\cB)+1.\] 
\end{itemize}
\end{cor}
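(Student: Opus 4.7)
The plan is to use \cref{L:DARed} to reduce the question of addable $i$-nodes in $\cB$ to one about $\cB_\hk$, and then to translate this into an explicit abacus calculation in the bicores that make up the core block $\cC$.

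By assumption $\cB' = \Phi_i(\cB)$, so $\cB \to_{\text{Sc}_i} \cB'$ is equivalent to the absence of any addable $i$-node in every bipartition of $\cB$; by \cref{L:DARed} it suffices to check this for $\bnu \in \cB_\hk$. Every $\bnu \in \cB_\hk$ is obtained by adding $\hk(\cB)$ $e$-hooks to its $e$-core $\bar\bnu \in \cC$ (distributed across the two components), and conversely every such addition returns an element of $\cB_\hk$, because adding an $e$-hook adjoins one node of each residue and hence preserves the combinatorial block. So the corollary reduces to the following claim: for each $i$, an addable $i$-node can be created on some component $k$ of some $\bmu \in \cC$ by adding at most $\hk(\cB)$ hooks on that component (and the remainder on the other component) if and only if the stated inequality holds for some $\bmu,k$.

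The remaining abacus calculation goes as follows. Fix $\bmu \in \cC$ and $k$, and write $b_j := b^{\ba}_{jk}(\bmu)$; since $\bmu$ is a bicore, runner $j$ of $\mu^{(k)}$ carries beads exactly at rows $\leq b_j$, and adding an $e$-hook slides a bead down one row on its runner. For $0 < i \leq e-1$, an addable $i$-node corresponds to some row $r$ carrying a bead on runner $i-1$ and an empty position on runner $i$; for $i=0$, to a bead on runner $e-1$ at some row $r$ together with an empty position on runner $0$ at row $r+1$. I would argue via a cascade-of-beads construction that the minimum number of hook moves needed on component $k$ is $\max(0,\, b_i - b_{i-1} + 1)$ for $0 < i \leq e-1$ and $\max(0,\, b_0 - b_{e-1})$ for $i=0$; any leftover hooks can always be absorbed by the other component by sliding its lowest bead on some runner down further. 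Rewriting ``at most $\hk(\cB)$'' for these minima yields the inequalities $b_i - b_{i-1} < \hk(\cB)$ and $b_0 - b_{e-1} < \hk(\cB)+1$ respectively, and negating gives the corollary.

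The main obstacle will be proving optimality in the cascade calculation. The upper bound is an explicit construction: drag the bead at row $b_{i-1}$ on runner $i-1$ down past row $b_i$ (for $i > 0$), or cascade the lowest $b_0 - b_{e-1}$ beads on runner $0$ down by one row each to vacate row $b_{e-1}+1$ (for $i = 0$). Showing that no combination of moves on runners $i-1$ and $i$ (or $e-1$ and $0$) can do better requires a short case analysis on whether the required bead and the required gap at the addable-node position each come from the bicore or are instead created by cascading, observing that the total cascading cost is invariant across the four cases and equal to the advertised minimum.
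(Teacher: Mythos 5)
Your proposal is correct and follows essentially the route the paper intends: the corollary is deduced from \cref{L:DARed} (reducing the addable-$i$-node test to $\cB_\hk$, whose elements are exactly the bicores of $\cC$ with $\hk(\cB)$ $e$-hooks added) together with the abacus description of hook addition, and your cascade formulas $\max(0,b_i-b_{i-1}+1)$ and $\max(0,b_0-b_{e-1})$ for the minimal number of bead moves, including the case analysis for optimality, are exactly right.
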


\begin{defn}
We define the equivalence relation $\SU$ on $\Bl$ to be the relation generated by $\Sh$ and $\Sc$. 
\end{defn}

\begin{prop} \label{P:defectpreserve}
Suppose that $\cB,\cB' \in \Bl$ and that $\cB \SU \cB'$ or $\cB \Sw \cB'$. Then $\defect(\cB)=\defect(\cB')$.  
\end{prop}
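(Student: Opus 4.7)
The plan is to check preservation separately for each of the three generating moves: $\Sw$, $\Sh$, and the Scopes move $\rightarrow_{\text{Sc}_i}$. Throughout I use the alternative expression $\defect(\cB) = (\La \mid \beta) - \tfrac12(\beta \mid \beta)$ noted after \cref{def:defect}, where $\La = \La_{s_1} + \La_{s_2}$ and $\beta = \sum_j c_j \alpha_j$ are the weight data associated to $\cB$.

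For $\Sw$, the residue multiset $\Res_{\ba}(\bla)$ is independent of the order of the two components, so $\beta$ is unchanged, and $\La$ is unchanged as it is the unordered sum $\La_{s_1} + \La_{s_2}$. For $\Sh$, set $t \equiv a'_1 - a_1 \equiv a'_2 - a_2 \pmod e$. Then $\La' = \La_{s_1 + t} + \La_{s_2 + t}$ is the image of $\La$ under the rotational diagram automorphism $\rho_t$ of $\tta^{(1)}_{e-1}$; each residue in $\Res_{\ba}$ also shifts by $t$ modulo $e$, giving $c'_j = c_{j-t}$ and hence $\beta' = \rho_t(\beta)$. Since $\rho_t$ preserves the Cartan form, both $(\La' \mid \beta') = (\La \mid \beta)$ and $(\beta' \mid \beta') = (\beta \mid \beta)$, so defect is preserved.

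For the Scopes move $\cB \rightarrow_{\text{Sc}_i} \cB'$, the bicharge $\ba$ (and hence $\La$) is unchanged, so it suffices to show $\beta' = \beta - r_i \alpha_i$ for some non-negative integer $r_i$ and then to compute
\[
\defect(\cB') - \defect(\cB) = -r_i(\La \mid \alpha_i) + r_i(\beta \mid \alpha_i) - \tfrac12 r_i^2 (\alpha_i \mid \alpha_i) = -r_i(\mu \mid \alpha_i) - r_i^2,
\]
where $\mu = \La - \beta$. Since $(\alpha_i \mid \alpha_i) = 2$ in affine type $\tta^{(1)}_{e-1}$, we have $(\mu \mid \alpha_i) = \langle \mu, \alpha_i^\vee \rangle$, and the quantity above vanishes precisely when $r_i = -\langle \mu, \alpha_i^\vee \rangle$. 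The main step is therefore to verify this identity: the condition of no addable $i$-nodes on any $\bla \in \cB$ forces $\langle \mu, \alpha_i^\vee \rangle \le 0$, and the number of removable $i$-nodes of each such $\bla$ equals $-\langle \mu, \alpha_i^\vee \rangle$; a direct analysis of swapping runners $i-1$ and $i$ on the abacus (with the standard twist at $i = 0$) shows that $\Phi_i$ removes precisely this many residue-$i$ nodes from each bipartition and changes no other residue counts. This abacus bookkeeping is the principal obstacle; alternatively one may invoke the well-known fact that Scopes moves realise the simple reflections $s_i$ on the weight lattice, so $\mu' = s_i(\mu)$ and preservation of $(\mu \mid \mu)$ under the Weyl group gives the conclusion directly.
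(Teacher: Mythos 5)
Your proof is correct and follows the same generator-by-generator strategy as the paper, which simply declares the $\Sw$ and $\Sh$ cases obvious and cites \cite[Proposition~4.6]{fay06wts} for the Scopes move. Your treatment of the Scopes case (identifying $\Phi_i$ with the simple reflection $s_i$ on the weight $\mu=\La-\beta$, so that Weyl-invariance of the form preserves $(\mu\mid\mu)$ and hence the defect) is exactly the content of that cited result, so you have supplied the details the paper outsources.
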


\begin{proof}
It is obvious that the relations $\Sw$ and $\Sh$ preserve the defect and it follows from~\cite[Proposition~4.6]{fay06wts} that $\Sc$ also does.   
\end{proof}

We return to the relations $\Sc$ and $\SU$ in~\cref{S:ScopesSec}, where we find a set of $\Sc$-representatives and $\SU$-representatives in $\Bl$. The utility of the relations is shown in~\cref{ScopesEquivBlocks,T:Equiv1}.

\section{Specht modules associated with bicharge}\label{sec:spechts}
Throughout this section, we assume that $(\hhh, \ba)$ is a Hecke algebra with bicharge defined over a field of characteristic $p \ge 0$ and that $\hhh$ has quantum characteristic $e \ge 2$. Let $\bs=(s_1,s_2)$, where $s_k$ is the equivalence class of $a_k$ modulo $e$ for $k=1,2$. 

\subsection{Specht modules}
For $(\hhh, \ba)$, we may define a cellular algebra structure on $\hhh$ in the sense of Graham and Lehrer~\cite{GL98}. The cellular algebra structure depends only on $\bs$. 
For each bipartition $\bla \in \Pn^2_n$, one may construct a cell module $\rspe{\bla}$, known as a Specht module.
If $\hhh$ is semisimple, the set $\{\rspe\bla \mid \bla \in \Pn^2_n\}$ is a complete set of pairwise non-isomorphic irreducible $\hhh$-modules.
Otherwise, there exists a subset $\Kl\subseteq \Pn^2_n$ such that for each $\bmu \in \Kl$, the Specht module $\rspe{\bmu}$ has a simple head, denoted by $\rD{\bmu}$; furthermore the set $\{\rD{\bmu} \mid \bmu \in \Kl\}$ is a complete set of pairwise non-isomorphic irreducible $\hhh$-modules. We call $\Kl$ the set of Kleshchev bipartitions of $n$. Note that the set $\Kl$ depends only on $e$ and $\bs$. 

Recall that if $A=B_1\oplus\dots\oplus B_s$ is the block decomposition and $M$ is an indecomposable $A$-module, then 
there is a unique $i$ such that $B_iM\neq0$ and $B_jM=0$, for $j\neq i$. This does not hold for decomposable modules in general, but if it holds for a decomposable $A$-module $M$, we say that $M$ belongs to $B_i$. 

Specht modules may be decomposable.  
For type $\tta$ Hecke algebras, Specht modules are indecomposable if $e\geq3$, but it is no longer true for higher level cyclotomic Hecke algebras. 
Replacing the quadratic relation $(T_0-Q_1)(T_0-Q_2)=0$ with $(T_0-Q_1) \dots (T_0-Q_m)=0$, we obtain the cyclotomic Hecke algebra associated with the complex reflection group $G(m,1,n)$. 
For cyclotomic Hecke algebras with $Q_i=q^{a_i}$, all Specht modules are indecomposable if $e\neq 2$ and $a_i\not\equiv a_j$ $\pmod e$, for all $i \neq j$, by \cite[Corollary 3.12 and Theorem 3.13]{fs16}. It can also be deduced from \cite[Theorem 6.6]{rouq08}.
If there is a repeat in the parameters, then the third author, Muth, and Sutton found many decomposable Specht modules in \cite{ss20,mss22}. 
Nevertheless, every Specht module belongs to a unique block because they are cell modules and \cite[Theorem 3.7(iii)]{GL98} implies that if 
$\rD{\bla}$ and $\rD{\bmu}$ appear in $\rspe{\bnu}$, for some $\bnu$, then the entry $c_{\bla\bmu}$ of the Cartan matrix is non-zero, implying that $\rD{\bla}$ and $\rD{\bmu}$ belong to the same block.

\begin{thmc}{lm07}{Theorem~2.11}\label{block=combblock}
Suppose that $\bla,\bmu \in \Pn^2_n$. Then $\rspe\bla$ and $\rspe\bmu$ lie in the same block of $\hhh$ if and only if $\Res_{\bs}(\bla)=\Res_{\bs}(\bmu)$.  
\end{thmc} 

\medskip
Define an equivalence relation $\equiv_e$ on $\Z^2$ by saying that $(a_1,a_2) \equiv_e (a'_1,a'_2)$ if $a_k\equiv a'_k \pmod e$ for $k=1,2$. The next result follows from \cref{block=combblock}. 

\begin{lem}\label{L:Matches}
Let $(\hhh, \ba)$ be a Hecke algebra with bicharge. If 
$B$ is a block of $\hhh$, then  
\[\{(\bla,\ba) \mid \text{$\rspe{\bla}$ belongs to $B$} \} \in \Bl.\]
\end{lem}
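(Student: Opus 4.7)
The plan is to apply Theorem~\ref{block=combblock} directly and unwind the definitions. First I would note that because $\res_{\ba}(\mathfrak{n}) = c - r + a_k \pmod e$ depends only on the class $s_k$ of $a_k$ modulo $e$, we have $\Res_{\ba}(\bla) = \Res_{\bs}(\bla)$ for every $\bla \in \Pn^2$. Hence the relation $\sim$ on $\AB$, when restricted to pairs with the same bicharge $\ba$, coincides with equality of residue multisets with respect to $\bs$. This already matches the condition appearing in Theorem~\ref{block=combblock}.

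Next, set $S = \{(\bla,\ba) \mid \rspe{\bla}\text{ belongs to }B\}$. I would check that $S$ is a single $\sim$-equivalence class in $\AB$. For non-emptiness, the cellular structure on $\hhh$ has the Specht modules as cell modules, so every block of $\hhh$ contains at least one Specht module, and thus $S \ne \emptyset$. For forward closure, if $(\bla,\ba), (\bmu,\ba) \in S$, then $\rspe{\bla}$ and $\rspe{\bmu}$ lie in the same block $B$, so Theorem~\ref{block=combblock} yields $\Res_{\bs}(\bla) = \Res_{\bs}(\bmu)$, and therefore $(\bla,\ba) \sim (\bmu,\ba)$. For reverse closure, suppose $(\bla,\ba) \in S$ and $(\bmu,\ba) \sim (\bla,\ba)$; then automatically $|\bmu| = |\bla| = n$, so $\rspe{\bmu}$ is an $\hhh$-module, and since $\Res_{\bs}(\bmu) = \Res_{\bs}(\bla)$, Theorem~\ref{block=combblock} places $\rspe{\bmu}$ in the same block as $\rspe{\bla}$, which is $B$. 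Thus $(\bmu,\ba) \in S$, and $S$ is a $\sim$-equivalence class.

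There is no substantive obstacle here: the whole point of the definitions of $\AB$, the relation $\sim$, and the set $\Bl$ of combinatorial blocks is to arrange matters so that the Lyle--Mathas classification of blocks (Theorem~\ref{block=combblock}) makes the lemma a one-line consequence. The only mild care required is in the identification $\Res_{\ba} = \Res_{\bs}$ and in observing that $\sim$-equivalence forces the two bipartitions to have the same size $n$, so that both of the Specht modules in question are genuine $\hhh$-modules to which the block criterion can be applied.
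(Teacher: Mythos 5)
Your proof is correct and takes the same route as the paper, which simply asserts that the lemma "follows from \cref{block=combblock}"; your write-up just spells out the routine verification (non-emptiness plus closure in both directions under $\sim$) that the paper leaves implicit.
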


Using~\cref{block=combblock} together with Fayers's equivalent characterisations of a core block of $\hhh$, that is, conditions 4 and 5 in \cite[Theorem 3.1]{fay07core}\footnote{See \cite[Remark 3.3.(2)]{bfs13} for the fact that the hub of the label $\lambda\in P(\Lambda)$ of a block is $(\langle\alpha_i^\vee, \lambda\rangle)_{i\in \Z/e\Z}$. It is clear from this that if two weights have the same hub, then they coincide modulo $\Z\delta$, where $\delta$ is the null root. }, we have the next lemma. 

\begin{lem} \label{L:CoresMatch}
    We fix a bicharge $\ba$ such that $Q_1=q^{a_1}$ and $Q_2=q^{a_2}$.
    Then the map from the set of blocks of $\oplus_{n\geq0} \hhh(q,Q_1,Q_2)$ to the set of combinatorial blocks with bicharge $\ba$ given by 
    \[
    B \longmapsto \cB=\{ (\bla,\ba) \mid \text{$\rspe{\bla}$ belongs to $B$} \}
    \]
    is bijective.
    Moreover, core blocks in the sense of~\cref{D:Core1} correspond to core blocks in the sense of~\cref{D:Core2}.
\end{lem}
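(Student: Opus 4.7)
The plan is to establish bijectivity using \cref{L:Matches} and \cref{block=combblock}, and then derive the correspondence of the two notions of core block via Fayers's equivalent characterisations in \cite[Theorem 3.1]{fay07core}, together with the hub interpretation of the footnote.

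For the bijection, well-definedness is immediate from \cref{L:Matches}: any block $B$ of $\oplus_{n\ge 0}\hhh(q,Q_1,Q_2)$ is sent to a combinatorial block with bicharge $\ba$. Injectivity follows because each Specht module lies in a unique block, and every block contains at least one Specht module by the cellular structure, so distinct blocks yield disjoint, non-empty subsets of $\AB$. For surjectivity, I take $\cB \in \Bl$ with bicharge $\ba$, pick any $(\bla,\ba)\in\cB$, and let $B$ denote the block of $\hhh(|\bla|)$ to which $\rspe{\bla}$ belongs. Then \cref{block=combblock} shows that $\rspe{\bmu}$ belongs to $B$ precisely when $\Res_{\bs}(\bmu)=\Res_{\bs}(\bla)$, which is the same as $(\bmu,\ba)\sim(\bla,\ba)$; the image of $B$ is therefore exactly $\cB$.

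For the second assertion, a block $B$ with label $\La-\beta\in P(\La)$ is a core block in the sense of \cref{D:Core1} if and only if $R^\La(\beta-\delta)=0$, i.e.~no lower-rank block has the same weight modulo $\delta$. The footnote in the setup identifies the weight $\La-\beta$ (up to a $\Z\delta$-shift) with its hub; under the bijection established above, subtracting $\delta$ from the weight corresponds on the combinatorial side to the removal of an $e$-rim hook from a bipartition of $\cB$. Hence $R^\La(\beta-\delta)=0$ if and only if no $(\bmu,\ba)\in\cB$ admits a removable $e$-rim hook, which is exactly \cref{D:Core2}. This is the content of the equivalence of conditions 4 and 5 in \cite[Theorem 3.1]{fay07core}, once we translate between labels, hubs, and abacus data.

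The main obstacle is the bookkeeping required to align three different parametrisations of the block---weights $\La-\beta\in P(\La)$, their hubs $(\langle \alpha_i^\vee,\La-\beta\rangle)_{i\in\Z/e\Z}$, and the residue-multiset data packaged into $\Bl$. Once these are reconciled using the footnote and the observation that removing an $e$-rim hook shifts the residue content by exactly $\delta$, the two definitions of core block match up directly and the cited characterisation of Fayers completes the argument.
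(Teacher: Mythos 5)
Your argument is correct and follows essentially the same route as the paper, which justifies the lemma by combining \cref{block=combblock} (so that the blocks of each $\hhh_n$ partition $\Pn^2_n$ exactly into the $\sim_{\ba}$-classes) with the equivalence of conditions 4 and 5 of \cite[Theorem~3.1]{fay07core} and the hub/null-root observation from the footnote. You have simply filled in the routine bijectivity bookkeeping that the paper leaves implicit.
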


Given a Hecke algebra with bicharge, 
two Specht modules $\rspe\bla$ and $\rspe\bmu$ lie in the same block of $\hhh$ if and only if $(\bla,\bs) \sim (\bmu,\bs)$, the equivalence defined in~\cref{subsec:bipartition}. From now on, if there is no confusion which Hecke algebra with bicharge we are considering,
we identify a block of $\hhh$ with the set of bipartitions indexing the Specht modules that lie in it.
We will often equate such a block of $\hhh$ with $R^\La(\beta)$, where $\La = \La_{s_1} + \La_{s_2}$ and $\beta = \sum_{i =0}^{e-1} c_i \alpha_i$ for any $\bla$ in the block, thanks to the Brundan--Kleshchev isomorphism. Here, $c_i$ denotes the number of times that $i \pmod e$ occurs in $\Res_{\ba}(\bla)$, as in \cref{def:defect}.

\subsection{Decomposition matrices} \label{S:Blocks}

\begin{defn}
For $\bla \in \Pn^2_n$ and $\bmu \in \Kl$, define $d_{\bla\bmu}^{p,\bs}=[\rspe\bla:\rD{\bmu}]$ to be the multiplicity of the simple module $\rD{\bmu}$ as a composition factor of the Specht module $\rspe{\bla}$. If $p$ and $\bs$ are understood or unnecessary, we may omit one or both of them. We define the decomposition matrix of $\hhh$ to be the matrix whose rows are indexed by the elements of $\Pn^2_n$ and whose columns are indexed by the elements of $\Kl$, with the $(\bla,\bmu)$-entry equal to $d_{\bla\bmu}$. 
\end{defn}

The Scopes equivalence relation on combinatorial blocks translates to a Morita equivalence on blocks that also preserves decomposition matrices. 

\begin{thm}[{\cite[Corollary 4.3, Theorem 6.3]{jost_typeBHeckeAlg}}]\label{ScopesEquivBlocks}
    Suppose that $(B,\ba)$ and $(B',\ba')$ are blocks of two Hecke algebras with bicharge and let 
     $\cB,\cB' \in \Bl$ be the corresponding combinatorial blocks. If $\cB \Sc \cB'$ then $(B,\ba)$ and $(B',\ba')$ have the same decomposition matrices via the maps $\Phi_i$ introduced in~\cref{D:Scopes}. Moreover, $B$ and $B'$ are Morita equivalent. 
\end{thm}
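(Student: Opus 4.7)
The plan is to use the Brundan--Kleshchev isomorphism to realise $B$ and $B'$ as cyclotomic KLR algebras $R^\La(\beta)$ and $R^\La(\beta')$ with $\beta' = s_i(\beta)$ in the affine Weyl group of type $\tta^{(1)}_{e-1}$, and then deduce the Morita equivalence from the $\mathfrak{sl}_2$-categorification structure that the $i$-induction and $i$-restriction functors $F_i, E_i$ impose on $\bigoplus_\gamma R^\La(\gamma)\text{-mod}$. This is a Chuang--Rouquier style reflection argument, and the combinatorial Scopes condition is exactly what is needed to make it work.

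More concretely, let $k$ be the common number of removable $i$-nodes of any bipartition in $\cB$. By \cref{C:DARed}, the Scopes move $\cB \rightarrow_{\text{Sc}_i} \cB'$ means no bipartition in $\cB$ has an addable $i$-node, which on the abacus says swapping runners $i-1$ and $i$ (with the wrap-around shift when $i=0$) is a well-defined involution at the level of bead configurations. Equivalently, on the weight $\mu = \La - \beta$ labelling $\cB$, we have $\langle \alpha_i^\vee, \mu\rangle = -k \leq 0$, so $s_i(\mu) = \mu + k\alpha_i$ labels $\cB'$. I would then invoke the Chuang--Rouquier reflection functor: the divided power $E_i^{(k)}$ gives an equivalence $R^\La(\beta)\text{-mod} \xrightarrow{\sim} R^\La(\beta - k\alpha_i)\text{-mod}$, with quasi-inverse $F_i^{(k)}$, because the categorical $\widehat{\mathfrak{sl}}_e$-action of Kang--Kashiwara and Rouquier on $\bigoplus_\gamma R^\La(\gamma)\text{-mod}$ is an $\mathfrak{sl}_2$-categorification on each $i$-string, and the no-addable-nodes hypothesis places us at one end of that string.

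To match decomposition matrices via $\Phi_i$, I would verify that $E_i^{(k)}$ sends $\rspe\bla$ to $\rspe{\Phi_i(\bla)}$, up to a grading shift, using the graded branching rules that express $E_i \rspe\bla$ in terms of Specht modules obtained by removing removable $i$-nodes. Under the Scopes hypothesis these rules collapse: removing all $k$ removable $i$-nodes in any order yields precisely $\Phi_i(\bla)$ at the level of abacus configurations, and iterating gives the claim. Since an equivalence sends simples to simples and $\Phi_i$ restricts to a bijection between Kleshchev bipartitions (which can be seen from the regularisation characterisation, or a posteriori from the equivalence itself), the decomposition numbers then match entry-by-entry.

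The main obstacle is executing the Chuang--Rouquier argument correctly at the level of cyclotomic KLR algebras in this level-$2$ setting, and in particular handling the $i=0$ case, where $\Phi_0$ involves not merely swapping runners $e-1$ and $0$ but also shifting beads between rows, reflecting the affine rather than finite nature of $s_0$. A secondary subtlety is that the combinatorial condition is really an assertion over all bipartitions in $\cB$, and one needs to know that it can be detected from the core $\cC$ alone---this is precisely where \cref{L:DARed} of Dell'Arciprete enters. Jost's cited Corollary~4.3 and Theorem~6.3 presumably perform exactly these verifications in the type $\ttb$ cyclotomic KLR setting.
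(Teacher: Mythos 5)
The paper does not prove this statement at all: it is imported wholesale from Jost \cite{jost_typeBHeckeAlg}, whose argument is the classical Scopes-style one carried out inside the Hecke algebras themselves. There one takes the $i$-induction and $i$-restriction functors between $\hhh_n$ and $\hhh_{n-k}$ (cut down to the two blocks), shows by Specht-filtration and dimension counts that the resulting bimodules are projective on both sides and mutually inverse, and reads off that Specht modules go to Specht modules via $\Phi_i$ --- elementary, ungraded, and with no categorification input. Your route through the Kang--Kashiwara categorical $\widehat{\mathfrak{sl}}_e$-action and the Chuang--Rouquier divided-power equivalences $E_i^{(k)}$, $F_i^{(k)}$ at the ends of an $i$-string is genuinely different, and it is essentially the proof given in the graded references \cite{d'ascopes,websterScopes} that the paper cites immediately after the theorem; what it buys is the graded refinement and uniformity in the level, at the cost of substantially heavier machinery. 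Both proofs reduce, as you do, to a single move $\cB \rightarrow_{\text{Sc}_i} \cB'$, and your identification of $E_i^{(k)}\rspe\bla$ with $\rspe{\Phi_i(\bla)}$ via the collapsed branching rule is the right mechanism for matching decomposition matrices.

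One point in your sketch needs to be shored up. For $E_i^{(k)}$ and $F_i^{(k)}$ to be mutually inverse \emph{abelian} (not merely derived) equivalences, you need \emph{both} weight categories to be extremal on the $i$-string, i.e.\ $R^\La(\beta+\alpha_i)=0$ as well as $R^\La(\beta-(k+1)\alpha_i)=0$; saying that the hypothesis ``places us at one end of the string'' is not quite enough as written. This does follow from the no-addable-$i$-node condition, but by a small argument: if $R^\La(\beta+\alpha_i)\neq0$ there is a bipartition $\bnu$ of content $\beta+\alpha_i$; since $\langle\alpha_i^\vee,\La-\beta-\alpha_i\rangle=-k-2<0$, $\bnu$ must have a removable $i$-node, and removing it produces a bipartition in $\cB$ with an addable $i$-node, a contradiction; the vanishing at the other end then follows from the $s_i$-invariance of $P(\La)$. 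With that inserted, your plan goes through.
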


In fact, it is possible to define the graded decomposition matrix of $\hhh$. Khovanov and Lauda~\cite{kl09}, and, independently, Rouquier~\cite{rouq}, introduced quiver Hecke algebras, or KLR algebras, in order to categorify the negative halves of quantum groups.
Khovanov and Lauda also introduced cyclotomic quotients in their paper, which we refer to as cyclotomic quiver Hecke algebras, that categorify highest weight irreducible modules over quantum groups~\cite{kk12}.
Subsequently, Brundan and Kleshchev showed in \cite{bkisom} that $\hhh$ is isomorphic to a level 2 cyclotomic quiver Hecke algebra of type $\tta^{(1)}_{e-1}$. As we mentioned in \cref{subsec:blocks}, blocks are $R^\Lambda(\beta)$, for $\beta\in Q_+$.

We will not recall the presentation of the cyclotomic quiver Hecke algebras until \cref{sec:e3}, as we will not be directly working with the definition except for a few cases that we consider in the last two sections. 
For now, it will suffice to note that this framework allows us to study the graded representation theory of $\hhh$, which is further developed in \cite{bk09,bkw11}.
Specifically, it allows us to define graded decomposition numbers since the Specht modules admit a grading. 
For a graded $\hhh$-module $D$, let $D\langle d \rangle$ denote the graded shift (by $d$) of the module $D$ -- in other words $D\langle d \rangle_r = D_{r-d}$.
Then the corresponding graded decomposition number is the Laurent polynomial
\[
d^{p,\bs}_{\bla\bmu}(v)=[\rspe\bla : \rD\bmu]_v = \sum_{d\in\bbz} [\rspe\bla : \rD\bmu \langle d \rangle] v^d \in \bbn[v,v^{-1}],
\]
and we define the graded decomposition matrix in the obvious way. Again, we may omit $p$ or $\bs$ if they are understood or if the result we are quoting holds for all possible $p$ and $\bs$.  

The next result follows, for instance, from the graded cellular basis of Hu--Mathas~\cite[Lemma~2.13, Theorem~6.11 and Section~6.4]{hm10}.

\begin{lem} \label{L:DomOrder}
Keeping the notation above, we have that $d_{\bmu\bmu}(v) = 1$ and $d_{\bla\bmu}(v) \neq 0$ only if $\bla \dom \bmu$. 
\end{lem}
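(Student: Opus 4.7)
The plan is to deduce both statements directly from the fact that $\hhh$ is a graded cellular algebra in the sense of Hu--Mathas, with cell datum in which the weight poset is $(\Pn_n^2, \dom)$ and the graded cell modules are precisely the (graded) Specht modules $\rspe{\bla}$. Once this structural input is invoked, both parts of the lemma follow from the general theory of graded cellular algebras exactly as in the ungraded case treated by Graham--Lehrer, so there is essentially no new computation to perform; the work is just in organising the invocation correctly.

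More concretely, first I would recall from \cite[Theorem~6.11]{hm10} that $\hhh$ admits a homogeneous cellular basis indexed by pairs of standard bitableaux, and that the associated cell modules coincide (up to grading shift and isomorphism) with the graded Specht modules $\rspe{\bla}$. The cellular machinery equips each $\rspe{\bla}$ with a canonical homogeneous bilinear form; by \cite[Lemma~2.13]{hm10} the radical of this form is a graded submodule, and the quotient $\rD{\bla} = \rspe{\bla}/\mathrm{rad}(\rspe{\bla})$ is either zero or a graded simple module, with the non-zero case precisely being $\bla \in \Kl$. This is exactly the graded analogue of the standard cellular setup of Graham--Lehrer.

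For the first claim, $d_{\bla\bmu}(v) \neq 0 \Rightarrow \bla \dom \bmu$, I would argue as in the classical cellular algebra proof: the cellular basis produces a filtration of $\hhh$ (as a bimodule) by ideals indexed by $\dom$-down-sets, and this forces any composition factor $\rD{\bmu}\langle d\rangle$ of $\rspe{\bla}$ to have its label $\bmu$ satisfying $\bla \dom \bmu$, since otherwise $\rD{\bmu}$ would arise from a strictly larger piece of the filtration than that used to construct $\rspe{\bla}$. In the graded cellular setting this is spelled out in \cite[\S6.4]{hm10}.

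For the second claim, $d_{\bmu\bmu}(v) = 1$, I would observe that by construction $\rD{\bmu}$ occurs as the graded simple head of $\rspe{\bmu}$ in degree shift $0$, contributing a single $v^0$ to $d_{\bmu\bmu}(v)$. Any further composition factor in the radical is, by the previous paragraph, of the form $\rD{\bnu}\langle d\rangle$ with $\bmu \dom \bnu$; but since $\bmu$ itself is the maximal such label occurring, the only way such a factor could contribute again to $d_{\bmu\bmu}(v)$ would be $\bnu = \bmu$ with some nonzero shift $d$, and this is excluded because the bilinear form pairs $\rD{\bmu}$ non-degenerately with itself only in degree $0$, so its socle-layer copy (if any) cannot be isomorphic to the head. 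The only step that requires any care is this last point about exclusion of further $\rD{\bmu}$-factors in nonzero degree, which is precisely what the non-degeneracy of the homogeneous form encodes; I do not foresee any genuine obstacle here, as this is the standard graded-cellular rigidity statement.
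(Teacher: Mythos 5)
Your proposal is correct and follows exactly the route the paper takes: the paper's entire proof is a citation of the graded cellular basis of Hu--Mathas (\cite[Lemma~2.13, Theorem~6.11 and \S6.4]{hm10}), and you are simply unwinding that same citation into the standard (graded) Graham--Lehrer argument. The only cosmetic remark is that for $d_{\bmu\bmu}(v)=1$ the cleaner standard statement is that \emph{every} composition factor of the radical of the bilinear form on $\rspe\bmu$ is labelled by a bipartition strictly comparable to (and distinct from) $\bmu$, so no copy of $\rD\bmu$ in any degree can recur there; your degree-$0$ non-degeneracy phrasing is a slightly roundabout but harmless way of saying the same thing.
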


Graded analogues of~\cref{ScopesEquivBlocks} which apply to more general cyclotomic Hecke algebras can be found in~\cite{d'ascopes,websterScopes}.

\begin{thm}\label{T:Equiv1}
Suppose that $(\mathscr{H}_m, \ba)$ and $(\mathscr{H}_{m'}, \ba')$ are Hecke algebras with bicharge and that $B$ and $B'$ are blocks of $\mathscr{H}_m$ and $\mathscr{H}_{m'}$ respectively. 
If the corresponding combinatorial blocks have the property that $\cB \SU \cB'$ or $\cB \Sw \cB'$ then 
$B'$ is Schurian-infinite if and only if $B$ is Schurian-infinite.  
\end{thm}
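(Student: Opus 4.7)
The plan is to decompose the problem by the generating equivalences. Since $\SU$ is by definition generated by $\Sc$ and $\Sh$, it suffices to verify the claim separately for each of $\cB \Sc \cB'$, $\cB \Sh \cB'$, and $\cB \Sw \cB'$. In each case, I will in fact establish that $B$ and $B'$ are Morita equivalent (and in the $\Sh$ and $\Sw$ cases even isomorphic as $\bbf$-algebras). Since $\tau$-tilting finiteness --- equivalently, Schurian-finiteness --- is a Morita invariant by \cite[Theorem~4.2]{DIJ}, the desired equivalence will then be immediate.

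The Scopes case follows directly from \cref{ScopesEquivBlocks}. For the shift case, I would exploit the condition $a_1-a_2\equiv a'_1-a'_2\pmod{e}$, which means the parameter pairs $(q^{a_1},q^{a_2})$ and $(q^{a'_1},q^{a'_2})$ differ by a common scalar $q^c$ for some $c\in\Z$. Rescaling the generator $T_0\mapsto q^c T_0$ (and keeping the remaining generators fixed) respects all defining relations of $\hhh_m$, and yields an algebra isomorphism $\hhh_m(q,q^{a_1},q^{a_2})\xrightarrow{\sim}\hhh_m(q,q^{a'_1},q^{a'_2})$ that intertwines the cellular structures associated with $\ba$ and $\ba'$, thereby sending $B$ to $B'$.

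For the swap case, I would observe that the algebras $\hhh_m(q,Q_1,Q_2)$ and $\hhh_m(q,Q_2,Q_1)$ coincide, since the quadratic relation $(T_0-Q_1)(T_0-Q_2)=0$ is symmetric in $Q_1$ and $Q_2$. Moreover, swapping both the components of a bipartition and the bicharge preserves its residue content, i.e.\ $\Res_{\ba^{\text{Sw}}}(\bla^{\text{Sw}})=\Res_{\ba}(\bla)$ as multisets, so the Specht modules $\rspe{\bla}$ under $\ba$ and $\rspe{\bla^{\text{Sw}}}$ under $\ba^{\text{Sw}}$ have the same central character and hence belong to the same block of $\hhh_m$; consequently $B=B'$ as blocks of the same algebra.

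The main technical subtlety I anticipate is, in the shift case, verifying that the rescaling isomorphism genuinely transports the cellular data (and therefore the Specht labelling) from $\ba$ to $\ba'$, so that the $\ba$-labelled block $B$ is mapped onto the $\ba'$-labelled block $B'$ and not to some other summand. This amounts to a bookkeeping check using the explicit definition of the cellular basis in terms of the generators $T_i$, and is routine rather than substantive.
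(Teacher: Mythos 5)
Your proposal is correct and follows the same overall architecture as the paper's proof: reduce to the three generating moves, handle Scopes via \cref{ScopesEquivBlocks}, and observe that swap and shift do not change the Morita class. The swap case is treated identically (the algebra is literally unchanged; only the cellular labelling moves). The one place you diverge is the shift case: the paper realises the shift as the Dynkin diagram automorphism of $\tta^{(1)}_{e-1}$ acting on the cyclotomic quiver Hecke presentation and cites \cite[Proposition~3.2]{arikirep} for the resulting isomorphism of blocks, whereas you rescale $T_0\mapsto q^cT_0$ directly in the Hecke presentation. Your argument is sound --- since $q\neq1$ forces $q^e=1$ when $e<\infty$, the congruence $a_1-a_2\equiv a'_1-a'_2\pmod e$ does give $(q^{a'_1},q^{a'_2})=(q^cq^{a_1},q^cq^{a_2})$, and the rescaling respects all relations --- and it has the virtue of being elementary and self-contained, at the cost of the bookkeeping you rightly flag (checking that the Murphy-type cellular basis, built from the $L_i$, transports so that the $\ba$-labelled block lands on the $\ba'$-labelled one). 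The paper's route outsources exactly that bookkeeping to the cited reference. Either way the conclusion is the same, so the proposal stands.
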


\begin{proof}
By definition, $\cB'$ may be obtained from $\cB$ by a series of swaps, shifts, and Scopes equivalences.
The swap does not change the algebra: it only changes the cellular algebra structure of the algebra. The shift is induced by an automorphism of the Dynkin diagram for the type $\tta^{(1)}_{e-1}$. Then \cite[Proposition 3.2]{arikirep} shows that 
if $\cB \Sh \cB'$ then $B$ and $B'$ are isomorphic. On the other hand, if $\cB \Sc \cB'$, then $B$ and $B'$ are Morita equivalent by \cref{ScopesEquivBlocks}.
The result follows immediately.
\end{proof}

\subsection{Using decomposition numbers to prove a block is Schurian-infinite}\label{subsec:Schur functor}

For the convenience of the reader, we refer to \cite{als23} for the next basic results. 

\begin{propc}{als23}{Proposition~2.11}\label{reduction}
Let $A$ be a Schurian-finite algebra.
\begin{enumerate}[label=(\roman*)]
\item
If $B$ is a factor algebra of $A$, then $B$ is Schurian-finite.
\item
If $B=eAe$, for an idempotent $e\in A$, then $B$ is Schurian-finite.
\end{enumerate}
\end{propc}

\begin{corc}{als23}{Corollary~2.12}\label{cor:Schurianinfinitequiver}
If the Gabriel quiver of a finite-dimensional algebra $A$ over $\bbf$ contains the quiver of an affine Dynkin diagram with zigzag orientation (i.e.~such that every vertex is a sink or a source) as a subquiver, then $A$ is Schurian-infinite.
\end{corc}

Fix $\hhh$ as in the last section and suppose that $\bla \in \Pn^2_n$ and $\bmu\in\Kl$. Note that $d_{\bla\bmu}(v)=0$ unless $(\bla,\ba)\sim (\bmu,\ba)$. The decomposition matrix of $\hhh$ can therefore be decomposed into a product of block decomposition matrices. 

\medskip
For higher levels, we have the cyclotomic $q$-Schur algebra $\mathcal{S}_n$ associated with the cyclotomic Hecke algebra $\mathcal{H}_n$ \cite{m04surv} and the Schur functor $F: \mathcal{S}_n\text{-mod} \to \mathcal{H}_n\text{-mod}$. 
As is explained in \cite[\S2.9]{Mak14}, the cyclotomic $q$-Schur algebra is quasi-hereditary, and we have standard modules (also called Weyl modules) $\Delta(\bla)$ \cite[Definition 4.12]{m04surv}, which have simple heads $L(\bla)$.
Following Mathas's book \cite{Mathas}, 
we denote $\rspe\bla=F(\Delta(\bla))$ and $\rD\bla=F(L(\bla))$, respectively.
The modules $\rspe\bla$ are `row Specht modules' in the terminology of \cite{kmr}.
$\rD\bla$ is non-zero precisely when $\bla$ is a Kleshchev bipartition. 

Suppose that the characteristic of the base field is zero.
Using a graded lift of a block of the cyclotomic $q$-Schur algebra introduced by Rouquier, Shan, Varagnolo and Vasserot, which we denote by $S^\La(\beta)$ \footnote{See \cite[Theorem 3.13]{Mak14} and the first paragraph of \cite[\S3.5]{Mak14}.}, Maksimau defines an analogue of the cyclotomic quiver Hecke algebra as an idempotent truncation of $S^\La(\beta)$, which we denote by $M^\La(\beta)$, and he proves that $M^\La(\beta)$ is isomorphic to the graded basic algebra of $R^\La(\beta)$ as graded algebras \cite[\S3.9]{Mak14}.
Hence, we may utilise graded representation theory of $M^\Lambda(\beta)$ to study Schurian-finiteness of blocks of $\mathcal{H}_n$. 

The algebra $S^\La(\beta)$ admits a Koszul grading \cite[Theorem 3.12]{Mak14}. 
Using this grading, we have natural graded lifts of standard modules and simple modules, which we also denote by $\Delta(\bla)$ and $L(\bla)$ by abuse of notation.
The Schur functor which sends graded $S^\La(\beta)$-modules to graded $M^\Lambda(\beta)$-modules is a graded lift of the Schur functor $F$.
Then, we have graded $M^\Lambda(\beta)$-modules $\rspe\bla$ and $\rD\bla$ as the image of 
$\Delta(\bla)$ and $L(\bla)$ respectively, and 
we define graded decomposition numbers $d^M_{\bla\bmu}(v)$, where 
$\bla$ and $\bmu$ are multipartitions such that $\bmu$ is Kleshchev. 

Since $M^\Lambda(\beta)$ is $\Z_{\ge0}$-graded and the degree zero part is semisimple, we have the following lemma, which is similar to \cite[Lemma 2.23]{Mak14}. 
Note that unlike $S^\La(\beta)$, the algebra $M^\La(\beta)$ is not generated by degree one elements, and is therefore \emph{not} Koszul.

\begin{lem}\label{lem:Maksimau}
    Suppose that an $M^\Lambda(\beta)$-module $\rspe\bla$ has simple head $\rD{\bla}$ and the coefficient of $v$ in $d^M_{\bla\bmu}(v)$ 
    is non-zero. Then, $[\Rad \rspe\bla/\Rad^2 \rspe\bla: \rD{\bmu}]\ne0$. 
\end{lem}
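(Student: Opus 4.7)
The plan is to leverage the positive grading on $A := M^\Lambda(\beta)$ together with the semisimplicity of $A_0$, which together force a tight interplay between the grading and the Loewy filtration on $\rspe{\bla}$.

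First, since $A$ is $\Z_{\ge 0}$-graded with $A_0$ semisimple, the graded Jacobson radical satisfies $J(A)=A_{>0}$, and every graded simple $A$-module is concentrated in a single degree. I would fix a graded lift of $\rspe{\bla}$ so that its simple head $\rD{\bla}$ sits in degree $0$. By graded Nakayama, the surjection $\rspe{\bla}\twoheadrightarrow\rD{\bla}$ restricts to a surjection on degree $0$ components, and hence $\rspe{\bla}$ is generated (as a cyclic module) in degree $0$.

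Next, I would check the containment $\Rad^{i}\rspe{\bla}\subseteq (\rspe{\bla})_{\ge i}$ for all $i\ge 0$. Since $\rspe{\bla}$ is generated in degree $0$, one has $\Rad\rspe{\bla}=A_{>0}\cdot\rspe{\bla}=(\rspe{\bla})_{>0}$, and then $\Rad^{i}\rspe{\bla}=A_{>0}^{\,i}\cdot\rspe{\bla}\subseteq(\rspe{\bla})_{\ge i}$ by iteration. Consequently the $i$-th Loewy layer $\Rad^{i}\rspe{\bla}/\Rad^{i+1}\rspe{\bla}$ is concentrated in degrees $\ge i$.

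To conclude, observe that $\rD{\bmu}\langle 1\rangle$ lives purely in degree $1$, and by hypothesis it appears as a graded composition factor of $\rspe{\bla}$. Such a factor must appear in some Loewy layer $\Rad^{i}\rspe{\bla}/\Rad^{i+1}\rspe{\bla}$; the degree bound just established rules out $i\ge 2$, while $i=0$ gives $\rspe{\bla}/\Rad\rspe{\bla}=\rD{\bla}$, which is concentrated in degree $0$. Hence the composition factor lies in $\Rad\rspe{\bla}/\Rad^{2}\rspe{\bla}$, which in particular yields the ungraded statement $[\Rad\rspe{\bla}/\Rad^{2}\rspe{\bla}:\rD{\bmu}]\neq 0$.

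The argument is essentially formal once the graded structure of $A$ is invoked, and closely parallels the proof of \cite[Lemma~2.23]{Mak14}. The only subtlety worth flagging is that $A$ is not generated in degree one (as the paper emphasises), which is harmless here because we use only the containment $\Rad^{i}\rspe{\bla}\subseteq(\rspe{\bla})_{\ge i}$ and never the reverse, so Koszulity plays no role.
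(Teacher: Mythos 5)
Your proof is correct and follows exactly the route the paper intends: the lemma is justified there by the single observation that $M^\Lambda(\beta)$ is $\Z_{\ge0}$-graded with semisimple degree-zero part (so $\Rad^i\rspe\bla\subseteq(\rspe\bla)_{\ge i}$, forcing a degree-one composition factor into the first radical layer), and you have simply filled in the details of that argument, including the correct remark that only the one containment is needed so the failure of Koszulity is irrelevant.
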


\begin{rem}
Under the equivalence of (ungraded) module categories 
\[
EH:R^\Lambda(\beta)\text{-mod}\simeq M^\Lambda(\beta)\text{-mod},
\]
Specht modules and simple modules correspond \cite[page 116]{Mak14}. 
Therefore, if the assumption of \cref{lem:Maksimau} is satisfied, then we have $\Ext^1(\rD\bla, \rD\bmu) = \Ext^1(\rD\bmu,\rD\bla)\neq 0$, for simple $R^\Lambda(\beta)$-modules.
Furthermore, \cite[Lemma 3.22]{Mak14} proves that the graded decomposition numbers of $d^M_{\bla\bmu}(v)$ coincide with the decomposition numbers $d^0_{\bla\bmu}(v)$ of $R^\Lambda(\beta)$ given by Brundan and Kleshchev \cite{bk09}, which are exactly those graded decomposition numbers introduced in \cref{S:Blocks}.%
\footnote{Both are given by explicit formulas in terms of parabolic Kazhdan--Lusztig polynomials and they coincide. The appearance of parabolic Kazhdan--Lusztig polynomias  goes back to Uglov's work on canonical bases in Fock spaces.} 
Hence, we may check the assumption in \cref{lem:Maksimau} by using 
the decomposition matrices for $R^\Lambda(\beta)$. 
\end{rem}

In this paper, however, we also use `column Specht modules' $\spe\bla$ (in the terminology of \cite{kmr}), their simple heads $\D\bmu$ (for $\bmu$ a conjugate-Kleshchev multipartition), and graded decomposition numbers $[\spe\bla:\D\bmu]_v$. 
Recall the graded algebra isomorphism 
\[
R^{\La}(\beta) \cong R^{\sigma(\La)}(\sigma(\beta)) 
\]
from \cite[\S3.3]{kmr}, 
where $\sigma: \La_i\mapsto \Lambda_{-i}$ and $\sigma: \alpha_i\mapsto \alpha_{-i}$, for $i\in \bbz/e\bbz$.
By twisting an $R^{\sigma(\Lambda)}(\sigma(\beta))$-module $M$, we obtain an $R^\Lambda(\beta)$-module, 
which we denote by $M\otimes{\rm sgn}$.
Then, 
\[
\rspe\bla\cong \spe{\bla'}\otimes{\rm sgn}, \quad 
\spe\bla\cong \rspe{\bla'}\otimes{\rm sgn},
\]
by \cite[Theorem 8.5]{kmr}\footnote{The graded dual of $\rspe\bla$ is isomorphic to $\spe\bla\langle -{\rm def}(\beta)\rangle$ and 
the graded dual of $\spe\bla$ is isomorphic to 
$\rspe\lambda\langle -{\rm def}(\beta)\rangle$ \cite[Theorem 7.25]{kmr}.}, where $\bla'=({\lambda^{(2)}}', {\lambda^{(1)}}')$ is the conjugate bipartition of $\bla=(\lambda^{(1)}, \lambda^{(2)})$ \cite[\S2.2]{kmr}. 
Hence, $d_{\bla\bmu}(v)=[\rspe\bla:\rD\bmu]_v$, for Kleshchev bipartitions $\bmu$, and 
$[\spe{\bla'}: D_{\bmu'}]_v$, for conjugate-Kleshchev bipartitions $\bmu'$, coincide. Indeed, we have
\[
\rD\bmu:= \mathrm{hd} \rspe\bmu \cong \mathrm{hd} (\spe{\bmu'} \otimes \mathrm{sgn}) = \D{\bmu'} \otimes \mathrm{sgn} \cong \D{m(\bmu')},
\]
where $m$ is the Mullineux map, and that $\D{m(\bmu')} \otimes \mathrm{sgn} \cong \D{\bmu'}$.
Then applying this, we have
\begin{equation}\label{eq:decompnumbersandduals}
d_{\bla\bmu}(v)=[\rspe\bla : \rD\bmu]_v = [\rspe\bla : \D{m(\bmu')}] = [\spe{\bla'}: D_{\bmu'}]_v.
\end{equation}

Since $R^{\sigma(\Lambda)}(\sigma(\beta))$ is Schurian-infinite if and only if $R^\Lambda(\beta)$ is Schurian-infinite, 
the following proposition using column Specht modules instead of row Specht modules also holds.  

\begin{prop}\label{prop:matrixtrick}
Suppose that $e\geq2$ and $\bbf$ has characteristic $p\geq 0$.
If a submatrix of the graded block decomposition matrix in characteristic $0$ is one of the following matrices, and $d_{\bla\bmu}^{p}(1) = d_{\bla\bmu}^{0}(1)$ holds, for all Kleshchev bipartitions $\bla, \bmu$ that label rows of the submatrix, then the block in which those partitions belong is Schurian-infinite.

\begin{multicols}{2}
\begin{equation*}\label{targetmatrix}\tag{\(\dag\)}
\begin{pmatrix}
1\\
v & 1\\
0 & v & 1\\
v & v^2 & v & 1
\end{pmatrix}
\end{equation*}
\begin{equation*}\label{targetmatrixalt}\tag{\(\ddag\)}
\begin{pmatrix}
1\\
v & 1\\
v & 0 & 1\\
v^2 & v & v & 1
\end{pmatrix}
\end{equation*}
\begin{equation*}\label{targetmatrixaltsquare}\tag{\(\clubsuit\)}
\begin{pmatrix}
1\\
0 & 1\\
v & v & 1\\
v & v & 0 & 1
\end{pmatrix}
\end{equation*}
\begin{equation*}\label{targetmatrixstar}\tag{\(\spadesuit\)}
\begin{pmatrix}
1\\
0 & 1\\
v & v & 1\\
0 & v^2 & v & 1\\
v^2 & 0 & v & 0 &1
\end{pmatrix}
\end{equation*}
\end{multicols}

\end{prop}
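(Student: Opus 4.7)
The plan is to use \cref{lem:Maksimau} to extract the $\Ext^1$-quiver of an idempotent truncation of $B$ directly from the given submatrix, and then apply \cref{cor:Schurianinfinitequiver}. First observe that in each of the four displayed matrices every diagonal entry equals $1$. Since $d_{\bla\bmu}(v)\ne 0$ forces $\bla\dom\bmu$ by \cref{L:DomOrder} and the column labels are always Kleshchev, the $1$'s on the diagonal force each row label to coincide with the column label in the same position. In particular, all labels are Kleshchev bipartitions $\bmu_1,\dots,\bmu_k$, where $k=4$ for \eqref{targetmatrix}, \eqref{targetmatrixalt}, \eqref{targetmatrixaltsquare} and $k=5$ for \eqref{targetmatrixstar}. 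Set $e=\sum_{i=1}^k e_{\bmu_i}$, the sum of the primitive idempotents in $B$ corresponding to $\rD{\bmu_1},\dots,\rD{\bmu_k}$. By the contrapositive of \cref{reduction}(ii) it suffices to show that $eBe$ is Schurian-infinite, and the Gabriel quiver of $eBe$ is exactly the Ext-quiver of $\rD{\bmu_1},\dots,\rD{\bmu_k}$.

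In characteristic $0$ each $v$-entry of the submatrix records an adjacency of this Ext-quiver: if the coefficient of $v$ in $d^0_{\bmu_i\bmu_j}(v)$ is nonzero then \cref{lem:Maksimau} places $\rD{\bmu_j}$ in $\Rad(\rspe{\bmu_i})/\Rad^2(\rspe{\bmu_i})$, and the remark following \cref{lem:Maksimau} gives $\Ext^1(\rD{\bmu_i},\rD{\bmu_j})\cong\Ext^1(\rD{\bmu_j},\rD{\bmu_i})\ne 0$. Reading off the $v$-entries of each matrix in turn, the underlying graph of the Ext-quiver is the $4$-cycle $\widetilde A_3$ (on vertices $1{-}2{-}3{-}4{-}1$, $1{-}2{-}4{-}3{-}1$, $1{-}3{-}2{-}4{-}1$ respectively) in cases \eqref{targetmatrix}, \eqref{targetmatrixalt} and \eqref{targetmatrixaltsquare}, and the star $\widetilde D_4$ with centre $\bmu_3$ in case \eqref{targetmatrixstar}. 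Both $\widetilde A_3$ and $\widetilde D_4$ are bipartite, hence admit a zigzag orientation (alternating around the $4$-cycle, or centre-versus-leaves for the star). Since the Ext-quiver has arrows in both directions across every such edge, this zigzag orientation appears as a subquiver of the Gabriel quiver of $eBe$, and \cref{cor:Schurianinfinitequiver} yields that $eBe$, and hence $B$, is Schurian-infinite in characteristic $0$.

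To reach arbitrary characteristic $p$, I would use the hypothesis $d^p_{\bmu_i\bmu_j}(1)=d^0_{\bmu_i\bmu_j}(1)$ via the James-type adjustment matrix factorisation $D^p=D^0A^p$, in which $A^p$ is lower-triangular in dominance order with $1$'s on the diagonal and non-negative integer entries. For $i>j$ in the submatrix, the identity $d^p_{\bmu_i\bmu_j}(1)=\sum_\bnu d^0_{\bmu_i\bnu}(1)\,a^p_{\bnu\bmu_j}$ combined with the matching condition and non-negativity forces $a^p_{\bmu_i\bmu_j}=0$ and $d^0_{\bmu_i\bnu}(1)\,a^p_{\bnu\bmu_j}=0$ for all other Kleshchev $\bnu$. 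Consequently, the composition factors $\rD{\bmu_j}$ ($j\le i$) of $\rspe{\bmu_i}$ occur with the same multiplicities in both characteristics, and no composition factor from outside the submatrix can absorb the extensions indexed by those $\bmu_j$; so the same affine Dynkin subquiver with zigzag orientation embeds in the Gabriel quiver of $eBe$ in characteristic $p$, and \cref{cor:Schurianinfinitequiver} again concludes.

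The main obstacle is precisely this characteristic-$p$ transfer: \cref{lem:Maksimau} rests on Maksimau's Koszul-type grading of the cyclotomic $q$-Schur algebra, established only in characteristic $0$, so one cannot directly read off $\Rad/\Rad^2$ information from graded decomposition numbers when $p>0$. The delicate point is to ensure that composition factors $\rD{\bnu}$ coming from bipartitions $\bnu$ outside the chosen submatrix cannot contribute to the relevant first radical layer in a way that hides the needed $\Ext^1$-adjacencies; the matching hypothesis on ungraded decomposition numbers, pushed through the adjustment matrix as above, is precisely the input needed to rule this out.
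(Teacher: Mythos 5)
Your characteristic-zero argument is essentially the paper's: the diagonal entries identify the labels as Kleshchev bipartitions, \cref{lem:Maksimau} (via Maksimau's grading on $M^\La(\beta)$) converts each $v$-entry into a nonzero $\Ext^1$, the four matrices give the graphs $\widetilde A_3$ and $\widetilde D_4$, and the symmetry of $\Ext^1$ between simples lets you pick a zigzag orientation and apply \cref{cor:Schurianinfinitequiver} after truncating by the idempotent. Your adjustment-matrix computation is also the correct use of the hypothesis $d^{p}_{\bla\bmu}(1)=d^{0}_{\bla\bmu}(1)$: it shows $a^p_{\bmu_i\bmu_j}=0$ and that no characteristic-zero composition factor $\rD\bnu$ of $\rspe{\bmu_i}$ other than $\rD{\bmu_j}$ itself can contribute a copy of $\rD{\bmu_j}_\bbf$ upon reduction.

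The gap is in the final inference for $p>0$. Equality of decomposition numbers between characteristics does not determine $\Ext^1$ groups or radical layers, so ``the multiplicities agree, hence the same subquiver embeds'' is not a proof; your closing paragraph flags exactly this issue and then resolves it only by reasserting the adjustment-matrix computation, which controls multiplicities but says nothing about where $\rD{\bmu_j}_\bbf$ sits in the radical filtration of $\rspe{\bmu_i}_\bbf$. The missing construction, which is how the paper closes this gap, is a modular reduction: take the characteristic-zero module $\rspe{\bmu_i}/\Rad^2(\rspe{\bmu_i})$ --- whose head is $\rD{\bmu_i}$ and whose radical is semisimple, containing $\rD{\bmu_j}$ precisely when the coefficient of $v$ in $d^0_{\bmu_i\bmu_j}(v)$ is nonzero (the $v^2$-entries contribute nothing to degrees at most $1$) --- and reduce it modulo $p$. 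The reduction is a quotient of $\rspe{\bmu_i}_\bbf$ with head $\rD{\bmu_i}_\bbf$, and by your adjustment-matrix computation every composition factor of its radical is either some $\rD{\bmu_j}_\bbf$ with nonzero $v$-coefficient (occurring exactly once) or is labelled by a bipartition outside the chosen set. Truncating by the idempotent then kills the outside factors, using $\Rad(tB_\bbf t)=t\Rad(B_\bbf)t$, and yields a $tB_\bbf t$-module with simple head $t\rD{\bmu_i}_\bbf$ whose radical is built only from the $t\rD{\bmu_j}_\bbf$; from this $\Ext^1(t\rD{\bmu_i}_\bbf,t\rD{\bmu_j}_\bbf)\ne0$ follows for each relevant $j$. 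This is the step your proposal asserts but does not supply.
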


\begin{proof}
Let $B$ and $B_\bbf$ be the block in characteristic $0$ or $p$ in which the four or the five partitions that label the rows and columns of the submatrix belong, and denote the Kleshchev bipartitions by $\bla^{(i)}$, for $1\leq i\leq 4$ or $1\leq i\leq 5$.
Then, interpreting the submatrix as the submatrix of graded decomposition numbers of $M^\Lambda(\beta)$, we have an $A^{(1)}_3$ quiver (square) or a $D^{(1)}_4$ quiver (4-pointed star) with zigzag orientation as a subquiver of the Gabriel quiver of $B$.
By the exact same argument used in the proof of \cite[Lemma~2.2]{als23}, 
$d_{\bla\bmu}^{p}(v) = d_{\bla\bmu}^{0}(v)$.
We abuse notation and use the same $d_{\bla\bmu}^{p}(v)$ and $d_{\bla\bmu}^{0}(v)$ for the corresponding graded decomposition numbers of $M^\La(\beta)$.

Let $\rspe\bla$ and $\rD\bmu$ be Specht $M^\La(\beta)$-modules and simple $M^\La(\beta)$-modules, respectively.
As those are images of standard modules and simple modules over $S^\La(\beta)$ under the Schur functor, $\rspe\bla$ has a composition series whose layers are compatible with the dominance order on bipartitions. 
Let $\rD\bmu$ and $\rD\bmu_\bbf$ be the simple $B$-module and the simple $B_\bbf$-module labelled by $\bmu$, respectively.
We denote by $\rspe\bla_\bbf$ the Specht $B_\bbf$-module labelled by $\bla$.
Now we consider the modular reduction of $\rspe\la/\Rad^2(\rspe\la)$ as a factor module of $\rspe\la_\bbf$.
If $d_{\bla\bmu}^{0}(v) = v^2$, then $[\rspe\bla/\Rad^2(\rspe\bla):\rD\bmu] = 0$.
Then, $d_{\bla\bmu}^{p}(1) = d_{\bla\bmu}^{0}(1)\in \{0,1\}$ implies that $\rD\bmu_\bbf$ does not appear in the modular reduction of $\rspe\bla/\Rad^2(\rspe\bla)$ as a composition factor.

The same argument shows that $\rD\bla_\bbf$ appears with multiplicity $1$ as the head of the modular reduction of $\rspe\bla/\Rad^2(\rspe\bla)$, and $\rD\bmu_\bbf$ appears with multiplicity $1$ as one of the composition factors of the modular reduction if $d^{0}_{\bla\bmu}(v) = v$, 
and all the other composition factors of the modular reduction are $\rD\nu_\bbf$ where $\nu$ is not among the four or five partitions $\bla^{(i)}$.
Therefore, we may obtain an indecomposable $B_\bbf$-module that has the unique head $\rD\bla_\bbf$ and submodules $\rD\bmu_\bbf$ for $\bmu$ with $d^{0}_{\bla\bmu}(v)=v$, and all the other composition factors of the module are $\rD\nu_\bbf$ for bipartitions $\nu$ that are not among those we have labelled by $\bla^{(i)}$.

Let $P_\bbf^{\bmu}$ be the projective cover of $\rD{\bmu}_\bbf$ and let $t$ be the sum of the idempotents in the basic algebra $B_\bbf$ that are projectors to $P_\bbf^{\bla^{(i)}}$, summing over all $i$.
Then $t$ kills every simple module $\rD\bnu_\bbf$ that is not labelled by some $\bla^{(i)}$.
Since $\Rad(t{B_\bbf}t)=t\Rad(B_\bbf)t$, it follows that we have 
$\Ext^1(t\rD\bla_\bbf, t\rD\bmu_\bbf)\ne0$, for simple $t{B_\bbf}t$-modules $t\rD\bla_\bbf$ and $t\rD\bmu_\bbf$ with $d^{0}_{\bla\bmu}(v) = v$.
This implies that the Gabriel quiver of $t{B_\bbf}t$ contains either $A^{(1)}_3$ or $D^{(1)}_4$ quiver with zigzag orientation, so that the Gabriel quiver of $B_\bbf$ contains one of them as a subquiver.
We apply \cref{cor:Schurianinfinitequiver} to conclude that $B_\bbf$ is Schurian-infinite.
\end{proof}

\subsection{Diagonal cuts} \label{S:DiagonalCuts}
\cref{prop:matrixtrick} gives us a way to prove that certain blocks of a Hecke algebra $\hhh$ are Schurian-infinite. However, computing (graded) decomposition numbers for Hecke algebras is a notoriously hard problem. In this section, we show equality of certain decomposition numbers. This will later enable us to use decomposition numbers which have been computed elsewhere in order to find submatrices that satisfy the conditions of~\cref{prop:matrixtrick}.

Graded decomposition numbers depend only on the parameters $n,e,p$ and $\bs$; any pair of Hecke algebras that share those parameters will have the same decomposition matrix. Given that $e$ will be fixed, we may therefore talk about decomposition numbers $d^{p,\bs}_{\bla\bmu}(v)$ on the understanding that they are decomposition numbers for such a Hecke algebra without needing to specify the algebra in question.  

The following are special cases of \cite[Main Theorem]{bs16}, which applies more generally to various quasi-hereditary covers of our Hecke algebras, coming with different associated partial orders on the set of multipartitions.
In the `well-separated' case, with the usual dominance order, the (level 2) diagrammatic Cherednik algebras of \cite{bs16} are quasi-hereditary covers of $\calh_n$, whose standard modules $\Delta(\bla)$ map to the (row-) Specht modules $\rspe\bla$ under the Schur functor, and whose simple modules $L(\bmu)$ map to our simple modules $\rD\bmu$ if $\bmu$ is Kleshchev, and $0$ otherwise.

The representation theory of the Hecke algebras of type $\tta$ is analogous to that of type $\ttb$. For a type $\tta$ Hecke algebra $\mathcal{H}_n$ of quantum characteristic $e$, we have a set of cell modules $\rspe{\la}$, known as Specht modules, that are indexed by partitions of $n$, with the simple modules $\rD\mu$ indexed by a subset of $\Pn_n$ known as $e$-restricted partitions.
We refer to~\cite{Mathas} for details. Again, by results of~\cite{bkw11} following the work of Khovanov and Lauda~\cite{kl09} and Rouquier~\cite{rouq}, the Specht modules admit a grading and we define $d^{p}_{\la\mu}(v) = [\rspe\la : \rD\mu]_v$ to be the graded multiplicity of $\rD\mu$ as a composition factor of $\rspe{\la}$.
These graded decomposition numbers depend only on $n$, $e$ and $p$ so that we can again talk about them without needing to specify the underlying algebra.  Note that if $\bla\in\Kl$ then $\la^{(1)}$ and $\la^{(2)}$ are both $e$-restricted. 

For the remainder of~\cref{S:DiagonalCuts}, fix $n \ge 0$ and $e \ge 2$ and let $\bs=(s_1,s_2)$ where $0 \le s_1,s_2<e$. 
Take $p = 0$ or $p$ a prime.

The next result will be used in \cref{sec:e=3defect4+}; it is a consequence of \cite[Theorem~4.30]{bs15}.

\begin{lem} \label{L:OneNodeShift}
Suppose that $\bla \in \Pn^2_n$ and $\bmu \in \Kl$ and that (the Young diagram of) $\bla$ is obtained from $\bmu$ by moving one node.
Then
\[
d^p_{\bla\bmu}(v)=d^0_{\bla\bmu}(v),
\]
that is, the graded decomposition number is independent of the characteristic of the field. 
\end{lem}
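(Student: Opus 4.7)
The plan is to reduce the statement to an immediate application of \cite[Theorem~4.30]{bs15}. First, I would set up the bridge from $\hhh$ to the (well-separated, level $2$) diagrammatic Cherednik algebra $A$ of \cite{bs15,bs16}, as described at the start of \cref{S:DiagonalCuts}: $A$ is a graded quasi-hereditary cover of the relevant block of $\hhh$, and its Schur functor sends the standard module $\Delta(\bla)$ to the row Specht module $\rspe\bla$ and the simple module $L(\bmu)$ to $\rD\bmu$ when $\bmu \in \Kl$ (and to $0$ otherwise). Consequently
\[
d^p_{\bla\bmu}(v)=[\rspe\bla:\rD\bmu]_v=[\Delta(\bla):L(\bmu)]_v,
\]
and it suffices to prove that the right-hand side is independent of the characteristic.

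Next, I would invoke \cite[Theorem~4.30]{bs15} directly. That theorem expresses the graded multiplicities $[\Delta(\bla):L(\bmu)]_v$ via a purely combinatorial, characteristic-independent path-counting construction internal to the diagrammatic Cherednik algebra. Once this is in place, the $p$-independence of $d^p_{\bla\bmu}(v)$ is immediate for any pair $\bla,\bmu$ to which the theorem applies; no modular reduction argument is needed.

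The remaining task is to check that the hypotheses of \cite[Theorem~4.30]{bs15} are met under our assumption that $\bla$ is obtained from $\bmu$ by moving a single node. Since by \cref{L:DomOrder} only terms with $\bla\dom\bmu$ contribute, a single-node move is a minimal such deformation, and the combinatorial input collapses to (at most) a single path contribution, which is exactly the regime in which \cite[Theorem~4.30]{bs15} gives a clean closed-form answer. The main obstacle is therefore not a calculation but a careful translation of the path/diagram conventions of \cite{bs15} into our bipartition/residue language; this bookkeeping is routine, given that the same framework is already being used for the row-removal results of \cite{bs16} invoked elsewhere in the paper.
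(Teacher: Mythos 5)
Your proposal is correct and is essentially the paper's own argument: the paper gives no proof beyond observing that the lemma is a direct consequence of \cite[Theorem~4.30]{bs15}, applied through the same quasi-hereditary-cover/Schur-functor dictionary you describe. The additional bookkeeping you outline about verifying the hypotheses is reasonable but not something the paper spells out.
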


The next results are special cases of~\cite[Main Theorem]{bs16}.

\begin{thm} \label{T:BowmanSpeyer1}
Suppose that $\bla = (\la^{(1)},\la^{(2)}) \in \Pn^2_n$ and $\bmu = (\mu^{(1)},\mu^{(2)}) \in \Kl$ and that $|\la^{(2)}| = |\mu^{(2)}|$.
Then 
\[
d^{p,\bs}_{\bla\bmu}(v) = d^{p}_{\la^{(1)}\mu^{(1)}}(v) \times d^{p}_{\la^{(2)}\mu^{(2)}}(v).
\]
In particular, if $\la^{(2)}=\mu^{(2)}$, then 
\[
d^{p,\bs}_{\bla\bmu}(v) = d^{p}_{\la^{(1)}\mu^{(1)}}(v).
\]
\end{thm}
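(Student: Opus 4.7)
The plan is to deduce this as a direct application of the Main Theorem of \cite{bs16}, which gives a general cut/removal factorisation for graded decomposition numbers of diagrammatic Cherednik algebras. In the well-separated regime, these algebras are quasi-hereditary covers of $\calh_n$, with standard modules $\Delta(\bla)$ and simples $L(\bmu)$ mapping under the Schur functor to the Specht modules $\rspe{\bla}$ and (for Kleshchev $\bmu$) to $\rD{\bmu}$. Since the Schur functor is exact and preserves composition multiplicities between standards and simples indexed by Kleshchev labels, we have $[\Delta(\bla) : L(\bmu)]_v = d^{p,\bs}_{\bla\bmu}(v)$, so it suffices to establish the factorisation at the level of the cover.

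First I would translate the hypothesis. Because $\bla$ and $\bmu$ label Specht modules in the same block, $|\bla|=|\bmu|$, so combined with $|\la^{(2)}|=|\mu^{(2)}|$ we also get $|\la^{(1)}|=|\mu^{(1)}|$. In the weight-diagram language of \cite{bs16}, this means that in the weight diagrams of $\bla$ and $\bmu$ the same number of strands lie in the region corresponding to the first component, and the same number lie in the region of the second. One may then choose (after possibly passing to a Scopes/shift-equivalent well-separated charge, which leaves graded decomposition numbers unchanged by \cref{T:Equiv1}) a horizontal diagonal that separates the two components and cuts neither $\bla$ nor $\bmu$. This is precisely the input required by the Main Theorem of \cite{bs16}, whose conclusion is that the decomposition number factorises across the cut as
\[
d^{p,\bs}_{\bla\bmu}(v) \;=\; d^{p,(s_1)}_{\la^{(1)}\mu^{(1)}}(v)\;\cdot\; d^{p,(s_2)}_{\la^{(2)}\mu^{(2)}}(v),
\]
where each factor is now a graded decomposition number in a level one diagrammatic Cherednik algebra.

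Next I would identify each level one factor with the usual graded decomposition numbers of a type $\tta$ Hecke algebra: a level one diagrammatic Cherednik algebra is a quasi-hereditary cover of $\mathcal{H}_m$ whose Schur quotient again recovers the Specht and simple modules, and the individual charge $s_k$ only globally shifts residues, leaving decomposition numbers unchanged. This gives the desired
\[
d^{p,\bs}_{\bla\bmu}(v) = d^{p}_{\la^{(1)}\mu^{(1)}}(v)\cdot d^{p}_{\la^{(2)}\mu^{(2)}}(v).
\]
The "in particular" statement is then immediate, since $\la^{(2)}=\mu^{(2)}$ gives $d^{p}_{\la^{(2)}\mu^{(2)}}(v)=1$ by \cref{L:DomOrder}.

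The main obstacle, and where the work of \cite{bs16} does all the heavy lifting, is the combinatorial verification that the equal-size condition $|\la^{(2)}|=|\mu^{(2)}|$ genuinely produces an admissible cut (between the two components) in their weight-diagram framework, and that the resulting subquotients of the cover are precisely the level one algebras attached to the charges $s_1$ and $s_2$. Once this identification is made, the factorisation is exactly the form of the Main Theorem of \cite{bs16}, specialised to the two-component case.
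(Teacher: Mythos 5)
Your proposal is correct and follows the same route as the paper, which justifies this theorem simply by declaring it a special case of the Main Theorem of \cite{bs16}; your extra detail about translating the equal-size hypothesis into an admissible cut in the weight-diagram framework and identifying the level one factors is exactly the verification implicit in that citation. The deduction of the ``in particular'' clause from $d^{p}_{\mu^{(2)}\mu^{(2)}}(v)=1$ is also fine.
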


Now suppose that $\la \in \Pn$ has exactly $k$ non-zero parts. For $l \geq 0$, define $\la^{l^+}$ to be the partition whose Young diagram is obtained by adding a vertical rim hook of length $le$ to the bottom of $[\la]$; that is, $\la^{l^+}=(\la_1,\la_2,\dots,\la_k,1,\dots,1)$, where there are $le$ entries equal to $1$ appended to $\la$. 

\begin{thm} \label{T:BowmanSpeyer2}
Suppose that $\bla = (\la^{(1)},\la^{(2)}) \in \Pn^2_n$ and $\bmu = (\mu^{(1)},\mu^{(2)}) \in \Kl$ and that $\la^{(2)}$ and $\mu^{(2)}$ have the same number of non-zero parts.
Take $l \ge 0$ and define $\bla^{l^+}=(\la^{(1)},(\la^{(2)})^{l^+})$ and $\bmu^{l^+}=(\mu^{(1)},(\mu^{(2)})^{l^+})$. 
If $\bmu^{l^+}\in \mathscr{K}^{\bs}_{n+le}$ then 
\[d^{p,\bs}_{\bla\bmu}(v) = d^{p,\bs}_{\bla^{l^+}\bmu^{l^+}}(v).\]
\end{thm}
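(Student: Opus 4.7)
The plan is to invoke the Bowman--Speyer main theorem \cite[Main Theorem]{bs16} directly, exactly as signalled in the preamble to these two theorems. That result is a general ``diagonal cut'' principle for decomposition numbers of well-separated diagrammatic Cherednik algebras, which in our level $2$ setting coincide, via the Schur functor, with the graded decomposition numbers $d^{p,\bs}_{\bla\bmu}(v)$ of blocks of $\hhh$ (as recalled at the start of \cref{S:DiagonalCuts}).

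First, I would identify the shared region that the diagonal cut is designed to remove. Passing from $\bla$ to $\bla^{l^+}$, and similarly from $\bmu$ to $\bmu^{l^+}$, appends a single column of $le$ boxes at the bottom of the second component. The hypothesis that $\la^{(2)}$ and $\mu^{(2)}$ have the same number $k$ of nonzero parts ensures that the appended boxes occupy exactly the same rows $k+1,\dots,k+le$ in both $\bla^{l^+}$ and $\bmu^{l^+}$; hence the added nodes carry identical residues (modulo $e$) and sit along identical diagonals. This is the alignment required to apply the cut: the two decorated bipartitions agree in the cut region, and differ only on the unchanged prefix $\bla$ versus $\bmu$.

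Next, I would verify that the cut is admissible, i.e.\ that the added column sits in a geometrically separated position so that it does not interact with the residue data of the existing diagram in a way that would alter the alcove-geometric interpretation of the decomposition numbers. Appending $l$ full $e$-periods of ones corresponds, on the $e$-abacus at charge $a_2$, to shifting the lowest beads by a uniform amount on every runner, yielding a clean translate of the residue pattern at the top of the diagram; this is the typical ``far enough'' configuration in the Bowman--Speyer framework. The hypothesis $\bmu^{l^+}\in\mathscr{K}^{\bs}_{n+le}$ ensures that $\rD{\bmu^{l^+}}$ is defined, so the right-hand side is well defined; the hypothesis $\bmu\in\Kl$ handles the left-hand side.

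The main obstacle will be the careful translation between the column-strict tableau combinatorics used by Bowman--Speyer and the bipartition/abacus conventions used here, together with the verification that we lie in the well-separated regime where standard modules for the diagrammatic Cherednik algebra descend to (row) Specht modules for $\hhh$ under the Schur functor. Once this bookkeeping is done, the equality $d^{p,\bs}_{\bla\bmu}(v) = d^{p,\bs}_{\bla^{l^+}\bmu^{l^+}}(v)$ follows as an immediate specialization of the Bowman--Speyer main theorem, since both sides compute the same graded multiplicity within the Cherednik-algebra picture before and after the cut.
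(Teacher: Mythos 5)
Your proposal is correct and takes essentially the same route as the paper: the paper states this result without proof as a special case of \cite[Main Theorem]{bs16}, which is exactly the reduction you carry out. Your additional bookkeeping (that the equal number of nonzero parts of $\la^{(2)}$ and $\mu^{(2)}$ forces the appended $le$ nodes to occupy identical rows, hence identical residues and diagonals, in both bipartitions) is the right verification that the cited diagonal-cut theorem applies.
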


\begin{cor}\label{cor:BowmanSpeyerconjugate}
Suppose $\bla = (\la^{(1)},\la^{(2)}), \bmu = (\mu^{(1)},\mu^{(2)}) \in \Pn^2_n$ with $\bmu$ conjugate-Kleshchev and that $\la^{(1)}=\mu^{(1)}$.
Then $[\spe{\bla}: \D{\bmu}]_v = [\spe{\la^{(2)}} : \D{\mu^{(2)}}]_v$.
\end{cor}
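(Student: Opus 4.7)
The plan is a direct application of the conjugation duality \eqref{eq:decompnumbersandduals} to reduce the claim to the row-Specht statement already obtained in \cref{T:BowmanSpeyer1}. Namely, identity \eqref{eq:decompnumbersandduals} asserts
\[
[\rspe{\bnu}:\rD{\boldeta}]_v \;=\; [\spe{\bnu'}:\D{\boldeta'}]_v
\]
whenever $\boldeta$ is Kleshchev, so substituting $\bnu \mapsto \bla'$ and $\boldeta \mapsto \bmu'$ and using that $\bmu$ conjugate-Kleshchev means $\bmu'$ is Kleshchev, I would first rewrite the left-hand side of the corollary as
\[
[\spe{\bla}:\D{\bmu}]_v \;=\; [\rspe{\bla'}:\rD{\bmu'}]_v.
\]

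Next, because conjugation of bipartitions swaps components, $\bla'=({\la^{(2)}}',{\la^{(1)}}')$ and $\bmu'=({\mu^{(2)}}',{\mu^{(1)}}')$, so the hypothesis $\la^{(1)}=\mu^{(1)}$ translates to ${\la^{(1)}}'={\mu^{(1)}}'$, i.e.\ equality of the second components of $\bla'$ and $\bmu'$. I would then invoke the second, simplified, assertion of \cref{T:BowmanSpeyer1} applied to $\bla'$ and $\bmu'$ to obtain
\[
[\rspe{\bla'}:\rD{\bmu'}]_v \;=\; d^{p}_{{\la^{(2)}}'\,{\mu^{(2)}}'}(v),
\]
a graded decomposition number for the type $\tta$ Hecke algebra of quantum characteristic $e$ in the sense discussed at the start of \cref{S:DiagonalCuts}.

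Finally, I would invoke the type $\tta$ analogue of the conjugation duality (the level $1$ specialisation of the KMR result that gave \eqref{eq:decompnumbersandduals}) to rewrite this last quantity as the type $\tta$ column-Specht multiplicity $[\spe{\la^{(2)}}:\D{\mu^{(2)}}]_v$; here I only need that ${\mu^{(2)}}'$ is $e$-restricted, which is automatic from $\bmu'$ being a Kleshchev bipartition. Chaining the three displayed equalities gives the claim. The argument is essentially a bookkeeping exercise, and I do not anticipate any genuine obstacle: all of the required identities are already in hand via \eqref{eq:decompnumbersandduals} and \cref{T:BowmanSpeyer1}, and the only minor care needed is to check that conjugate-Kleshchevness and Kleshchevness translate correctly under the bipartition conjugation used on both sides.
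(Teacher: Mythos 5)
Your proposal is correct and follows essentially the same route as the paper's proof: apply \eqref{eq:decompnumbersandduals} to pass to $[\rspe{\bla'}:\rD{\bmu'}]_v$, observe that $\la^{(1)}=\mu^{(1)}$ means the second components of $\bla'$ and $\bmu'$ agree so that \cref{T:BowmanSpeyer1} reduces the computation to $d^{p}_{{\la^{(2)}}'{\mu^{(2)}}'}(v)$, and then apply the level~$1$ conjugation duality to conclude. No gaps.
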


\begin{proof}
As in \cref{eq:decompnumbersandduals}, we have $[\spe{\bla}: \D{\bmu}]_v = [\rspe{\bla'}:\rD{\bmu'}]_v$ for any conjugate-Kleshchev bipartition $\bmu$.
Then since $\la^{(1)}=\mu^{(1)}$, we have that the second components of $\bla'$ and $\bmu'$ (that is, $(\la^{(1)})'$ and $(\mu^{(1)})'$) coincide, so that we may apply \cref{T:BowmanSpeyer1} to obtain 
\[
[\spe{\bla}: \D{\bmu}]_v = d^{p,\bs}_{\bla'\bmu'}(v) = d^p_{{\la^{(2)}}'{\mu^{(2)}}'}(v) 
 = [\spe{\la^{(2)}}:\D{\mu^{(2)}}]_v.\qedhere
\]
\end{proof}

We use~\cref{prop:matrixtrick} to show that certain blocks are Schurian-infinite. In order to do this, we need to find submatrices of the graded block decomposition matrices, and we would like to apply \cref{T:BowmanSpeyer1,T:BowmanSpeyer2} to reduce to known cases.
In order to apply~\cref{T:BowmanSpeyer2}, it is necessary to know whether the bipartition $\bmu^{l^+}$ is a Kleshchev bipartition. This is discussed in the next section.

\subsection{Demazure crystals and Kleshchev bipartitions}\label{sec:Demazurecrystals}

In \cite{akt08}, the authors give a non-recursive characterisation of Kleshchev bipartitions. For this, they introduced combinatorial operations `base' and `roof' on the set of $e$-restricted partitions.
Let $P$ be the weight lattice, and $V=P\otimes_\bbz \bbr$. 
In the Littelmann path model of the crystal $B(\Lambda)$, each element is represented by a piecewise linear path $[0,1] \to V$, and the first direction vector and the last direction vector of the path govern two kinds of Demazure crystals. 
The base and the roof are the incarnation of the last direction vector and the first direction vector of the path, respectively. 

Let us consider the Misra--Miwa model for $B(\Lambda_m)$, where $0\le m\le e-1$. 
Then, for each partition $\lambda=(\lambda_1,\lambda_2,\dots)$, we associate 
its beta set $\beta_m(\lambda)=\{ \lambda_i-i+m \mid i=1,2,\dots\}$.
We define $\downoperation(\lambda)$ and $\upoperation(\lambda)$ using $\beta_m(\lambda)$ as in 
\cite[Definition 2.6]{akt08} and \cite[Definition 2.2]{akt08}. 
We apply down operations and up operations as many times as possible. 
Then, we reach cores, which we denote by $\base(\la)$ and $\roof(\la)$, respectively. 

In \cite{akt08}, we developed another model which mixes the Misra--Miwa model and the Littelmann path model. In this model of $B(\La_m)$, elements are 
\[
(\kappa_1,\dots,\kappa_r:0=a_0<a_1<\dots<a_r=1),
\]
where $a_i$ are rational numbers and  $\kappa_i$ are $e$-cores. $\kappa_1$ corresponds to the first direction vector and $\kappa_r$ corresponds to the last direction vector in the Littelmann path model.%
\footnote{It is worth mentioning that the Mullineux map transposes $\kappa_i$ simultaneously. This agrees with the case where $e=\infty$, where Specht modules are irreducible and the Mullineux map transposes partitions.}
Kreiman, Lakshmibai, Magyar and Weyman's roof lemma implies that  
$\kappa_1=\roof(\lambda)$, and it is proven in \cite[Corollary 8.5]{akt08} that $\kappa_r=\base(\lambda)$. Let $M_i(\lambda)=\max\{ x\in \beta_m(\lambda) \mid x \equiv i \pmod e\}$, for $0\le i\le e-1$. We reorder them in the descending order $M_{i_1}(\lambda)>\dots>M_{i_e}(\lambda)$ and define $\tau_m(\lambda)$ by
\[
\beta_m(\tau_m(\lambda))=\beta_m(\lambda)\sqcup\{M_{i_1}(\lambda)+e,\dots, M_{i_m}(\lambda)+e\}.
\]
Then \cite[Theorem~9.5]{akt08} in our setting may be stated as follows. 

\begin{thmc}{akt08}{Theorem~9.5}\label{akt thm}
    Let $\Lambda=\Lambda_0+\Lambda_s$, for $0\le s\le e-1$. Then a bipartition  $(\lambda^{(1)},\lambda^{(2)})$ of bicharge $(0,s)$ is a Kleshchev bipartition if and only if
    \begin{itemize}
        \item[(a)] $\lambda^{(1)}$ and $\lambda^{(2)}$ are $e$-restricted; and
        \item[(b)] $\tau_s(\base(\la^{(2)}))\supseteq \roof(\la^{(1)})$.
    \end{itemize}
\end{thmc}

\noindent
The following lemma is useful for computing $\base(\lambda)$. Recall that $B(\Lambda_m)$ is the set of coloured $e$-restricted partitions, where the node ($r,c)$ is coloured $s-r+c \pmod e$. 

\begin{lemc}{akt08}{Corollary 7.9}\label{useful_lemma_for_base}
    Let $\la = (\la_1,\la_2,\dots)\in B(\Lambda_m)$ and 
    suppose that $\mu[r]=(\la_r,\la_{r+1},\dots)$ is an $e$-core. 
    For $k=r-1,\dots,1$, let $\mu[k]=(\la_k,\base(\mu[k+1]))$, which is the partition obtained from $\base(\mu[k+1])$ by appending a row of length $\la_k$ as its first row.
    Then each $\mu[k]$ is $e$-restricted and $\base(\lambda)=\base(\mu[1])$.
\end{lemc}

\begin{lem}\label{tau_m lemma}
 Suppose that $\mu, \nu\in B(\Lambda_0)$ are $e$-cores. If $\mu\supseteq \nu$, then 
 $\tau_m(\mu)\supseteq \tau_m(\nu)$, for $0\le m\le e-1$.     
\end{lem}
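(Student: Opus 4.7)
The plan is to rephrase both the hypothesis and the conclusion in terms of bead-counts in the charge-$m$ beta sets, and then to reduce to a pointwise inequality of step functions on $\Z$. Writing $G_\lambda(v):=|\{z\in\beta_m(\lambda):z\ge v\}|$ for the number of beads of $\lambda$ at positions $\ge v$, the hypothesis $\mu\supseteq\nu$ is equivalent to $G_\mu(v)\ge G_\nu(v)$ for every integer $v$, since this is exactly the statement that the sorted descending beta sets of $\mu$ and $\nu$ are dominated term-by-term. The same equivalence applies to the conclusion: $\tau_m(\mu)\supseteq\tau_m(\nu)$ is equivalent to $N_\mu(v)\ge N_\nu(v)$ for all $v$, where $N_\lambda(v):=|\{z\in\beta_m(\tau_m(\lambda)):z\ge v\}|$.

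Using the defining formula $\beta_m(\tau_m(\lambda))=\beta_m(\lambda)\sqcup\{M_{i_1}(\lambda)+e,\dots,M_{i_m}(\lambda)+e\}$, the count decomposes as $N_\lambda(v)=G_\lambda(v)+H_\lambda(v)$, where $H_\lambda(v):=|\{j\in\{1,\dots,m\}:M_{i_j}(\lambda)+e\ge v\}|$ counts the new beads at positions $\ge v$. Since $G_\mu\ge G_\nu$ always, the remaining case to handle is $H_\mu(v)<H_\nu(v)$: there the deficit in the new beads of $\mu$ must be compensated by a surplus in the original beads, so we must show $G_\mu(v)-G_\nu(v)\ge H_\nu(v)-H_\mu(v)$ whenever the right side is positive.

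The technical core will be establishing this compensation. When $d:=H_\nu(v)-H_\mu(v)>0$, at least $d$ of the top-$m$ runner-maxes $M_{i_j}(\nu)$ lie in $[v-e,\infty)$ while the corresponding $M_{i_j}(\mu)$ do not. Our plan is to show that this forces at least $d$ beads of $\beta_m(\mu)$ at positions $\ge v$ which are absent from $\beta_m(\nu)$, by constructing a runner-swapping matching: starting from the termwise domination of the sorted descending beta sets and using that each runner of an $e$-core carries beads contiguously at $M_i, M_i-e, M_i-2e,\dots$, we can pair each ``missing'' top runner of $\mu$ with a distinct surplus bead of $\mu$ on a different runner at some position $\ge v$.

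The main obstacle is making this matching precise, because the runner-maxes $M_i(\mu)$ and $M_i(\nu)$ are not comparable runner-by-runner: a partition can contain another even when on certain individual runners the contained partition has a strictly higher top bead. It would be tempting to deduce the lemma from the weaker condition that the sorted sequences of runner-maxes of $\mu$ and $\nu$ satisfy a majorisation inequality, and such an implication does hold in one direction; however small examples with $e=4$ show that majorisation is strictly weaker than containment of $e$-cores, so this route does not suffice on its own. The matching must therefore use the full strength of termwise sorted domination of the beta sets rather than any partial-sum comparison of the $M_i$.
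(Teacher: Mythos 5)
Your route is genuinely different from the paper's: the paper writes $\mu=w\varnothing$ and $\nu=y\varnothing$ for distinguished coset representatives of $S_e^{\mathrm{aff}}/S_e$, converts $\mu\supseteq\nu$ into $y\le w$ in the Bruhat order, observes $w\tau_m(\varnothing)=\tau_m(w\varnothing)$, and runs an induction on $\ell(w)$ via the subword property. Your reduction to bead-counting is correct as far as it goes: for beta sets of equal charge, containment is equivalent to $G_\mu(v)\ge G_\nu(v)$ for all $v$, the decomposition $N_\lambda=G_\lambda+H_\lambda$ is right, and the compensation inequality $G_\mu(v)-G_\nu(v)\ge H_\nu(v)-H_\mu(v)$ is exactly what remains to be shown.

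However, as written the proposal has a genuine gap: the ``runner-swapping matching'' that is supposed to prove that compensation inequality is never constructed, and you explicitly acknowledge that you cannot make it precise. That inequality is the entire content of the lemma --- everything before it is bookkeeping --- so what you have is a plan, not a proof. The gap is fillable within your framework, and more easily than by a matching. For an $e$-core the beads on runner $i$ sit at $M_i, M_i-e, M_i-2e,\dots$, so the window $[v-e,v)$ contains a bead of runner $i$ precisely when $M_i\ge v-e$, and at most one such bead; hence $\#\{i: M_i(\lambda)\ge v-e\}=G_\lambda(v-e)-G_\lambda(v)$, and since the $M_{i_j}(\lambda)$ are the $m$ largest runner maxima, $H_\lambda(v)=\min\bigl(m,\,G_\lambda(v-e)-G_\lambda(v)\bigr)$. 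Therefore $N_\lambda(v)=\min\bigl(G_\lambda(v)+m,\,G_\lambda(v-e)\bigr)$, and both arguments of the minimum are monotone under the pointwise domination $G_\mu\ge G_\nu$, so $N_\mu(v)\ge N_\nu(v)$ follows immediately, with no matching and no comparison of individual runner maxima needed. Until some such argument is supplied, the proposal does not establish the lemma.
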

\begin{proof}
Recall that the set of $e$-cores is the Weyl group orbit of the empty partition. 
We denote the Coxeter generators of the Weyl group $S_e^{\rm aff}$ 
by $\{\sigma_i \mid i\in \bbz/e\bbz\}$. The action of $\sigma_i$ on an $e$-core either adds all addable $i$-nodes, or deletes all removable $i$-nodes. 
The stabiliser of $\Lambda_0$ is 
$S_e$ which is generated by $\{\sigma_i \mid i\ne0 \}$.
We write $\mu=w\varnothing$ and $\nu=y\varnothing$, where $y$ and $w$ are distinguished coset representatives of $S_e^{\rm aff}/S_e$. Then, $\mu\supseteq \nu$ is equivalent to 
$y\le w$ in the Bruhat order by \cite[Proposition 4.4]{akt08}. 

Let $w=\sigma_{i_1}\dots \sigma_{i_\ell}$ be a reduced expression. Then, by looking at the abacus, we see that swaps of runners $i_k$ and $i_{k+1}$ always increase the size of the partition.
It is also clear from the abacus picture that $w\tau_m(\varnothing) = \tau_m(w\varnothing)$.
Namely, after adding one bead to the largest $m$ runners, this sequence is still feasible, i.e.~if the length of runner $i$ exceeds the length of runner $i+1$, it remains the case after we add beads. 
 
Now, take $y<w$.
Equivalently, we have a subsequence 
$\sigma_{j_1}\dots \sigma_{j_r}$ of $\sigma_{i_1}\dots \sigma_{i_\ell}$ which is a reduced expression of $y$.
Then, we have a sequence of swaps starting from the beta set of $\tau_m(\varnothing)=(e-m)^m$, and each step increases the size of the partition. 
We prove $\tau_m(\mu)\supseteq \tau_m(\nu)$ by induction on $\ell=\ell(w)$. 
Suppose first that $i_1\ne j_1$. Then $y\le \sigma_{i_1}w$ and the induction hypothesis implies $\tau_m(\nu)\subseteq \sigma_{i_1}\tau_m(\mu)\subseteq \tau_m(\mu)$. 
Suppose that $i_1=j_1$. Then $\sigma_{i_1}y\le \sigma_{i_1}w$ and the induction hypothesis implies $\sigma_{i_1}\tau_m(\nu)\subseteq \sigma_{i_1}\tau_m(\mu)$. Then, 
it follows that $\tau_m(\nu)\subseteq\tau_m(\mu)$. 
\end{proof}

In order to prove \cref{thm:base-corerel}, we recall the definition of the down operation, and prove a preliminary lemma.
For $\la\in B(\La_0)$, we take the beta set $\beta=\beta_0(\la)$, and define 
\[
U(\beta)=\{ x\in \beta \mid x-e\not\in \beta \}.
\]
If $U(\beta)=\varnothing$, then we have $\base(\la) = \la$. Suppose that $U(\beta)\ne\varnothing$.
We define $p'=\min U(\beta)$ and consider
\[
W(\beta)=\{ x>p'-e \mid x\in \beta, x+e\not\in \beta \}\sqcup\{p'\}. 
\]
Set $q'=\min W(\beta)$.
Then we obtain $\downoperation(\beta)$ by moving the bead $q'$ to $p'-e$.
By applying the down operation repeatedly, we eventually obtain an $e$-core, at which point the down operation acts as the identity, and this $e$-core is the base of $\la$, denoted $\base(\la)$. 
Now we describe this down operation in Young diagrammatic terms.

First we recall how to read $\beta=\beta_0(\la)$ from $\la\in B(\La_0)$, and it is best explained by an example.
Let $\la =(3,2)$.
Then, we walk the rim of $\la$ from the bottom-left vertical line  to the top-right  horizontal line as follows.
We get a sequence of $\uparrow$ and $\rightarrow$.
Then, the numbers labelled on $\uparrow$ give us $\beta$. 
Namely, $\beta=\{2,0,-3,-4,\dots\}$. 

\medskip
\hspace*{2cm}
\begin{xy}
(15,17) *{2}="A",(17,20) *{3}="B",(22,20) *{4}="C", (27,20) *{5}="J",
(12,15) *{1}="D",(10,12) *{0}="E",(8,11) *{-1}="F",
(3,11) *{-2}="G", (-2,7) *{-3}="H", (-2,2) *{-4}="I",

\ar@{..} (0,20);(15,20)
\ar@{..} (0,20);(0,10)
\ar@{..} (0,15);(10,15)
\ar@{..} (5,20);(5,10)
\ar@{..} (10,20);(10,15)

\ar@{->} (0,-5);(0,0)
\ar@{->} (0,0);(0,5)
\ar@{->} (0,5);(0,10)
\ar@{->} (0,10);(5,10)
\ar@{->} (5,10);(10,10)
\ar@{->} (10,10);(10,15)
\ar@{->} (10,15);(15,15)
\ar@{->} (15,15);(15,20)
\ar@{->} (15,20);(20,20)
\ar@{->} (20,20);(25,20)
\ar@{->} (25,20);(30,20)
\ar@{->} (30,20);(35,20)
\end{xy}

Here, the top-left corner is at  coordinate $(0,0)$, 
and the tail of a $\uparrow$ on the left border matches its $y$-coordinate if the number is sufficiently small, the tail of a $\rightarrow$ on the top border matches its $x$-coordinate if the number is sufficiently large, respectively.

We denote the $e$-core of $\la$ by $\core(\la)$, and introduce the following lemma in order to prove \cref{thm:base-corerel}.

\begin{lem}\label{lem:downopvscore}
    Suppose $\mu$ and $\mu'$ are two partitions such that $\mu \supseteq \mu'$.
    Then either $\downoperation(\mu) \supseteq \mu'$, or $\mu'$ has a removable $e$-rim hook such that removing it yields a partition $\widehat{\mu'}$ satisfying $\downoperation(\mu) \supseteq \widehat{\mu'}$.
\end{lem}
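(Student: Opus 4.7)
The plan is to work entirely in terms of $\beta$-sets on the abacus and track how $\downoperation$ perturbs beads. Write $\beta = \beta_0(\mu)$ and $\beta' = \beta_0(\mu')$, list their elements in decreasing order as $\beta = \{b_1 > b_2 > \cdots\}$ and $\beta' = \{b'_1 > b'_2 > \cdots\}$, and note that $\mu \supseteq \mu'$ is equivalent to $b_i \geq b'_i$ for every $i \geq 1$. If $U(\beta) = \varnothing$ then $\downoperation(\mu) = \mu \supseteq \mu'$ and the first alternative is immediate, so assume $U(\beta) \neq \varnothing$, set $p' = \min U(\beta)$ and $q' = \min W(\beta)$, and recall $q' \leq p'$, so that $\downoperation$ replaces $\beta$ by $\beta'' = (\beta \setminus \{q'\}) \cup \{p'-e\}$.

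Writing $q' = b_k$ and picking $j \geq 0$ with $b_{k+j} > p'-e > b_{k+j+1}$, the new sorted sequence $\{b''_i\}$ agrees with $\{b_i\}$ outside the range $[k, k+j]$, while $b''_i = b_{i+1}$ for $k \leq i < k+j$ and $b''_{k+j} = p'-e$. Consequently $\downoperation(\mu) \supseteq \mu'$ is equivalent to the $j+1$ inequalities $b_{i+1} \geq b'_i$ for $k \leq i < k+j$ together with $p'-e \geq b'_{k+j}$. If all of these hold, the first alternative of the lemma is verified; otherwise let $i_0$ denote the smallest failing index, and the aim is to construct a removable $e$-rim hook of $\mu'$ at the bead $b'_{i_0}$ of $\beta'$.

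The key claim is that $b'_{i_0} - e \notin \beta'$, so that moving the bead $b'_{i_0}$ upward to position $b'_{i_0} - e$ on its runner is a legal step in $\beta'$ and corresponds to an $e$-rim hook removal from $\mu'$, producing $\widehat{\mu'}$. Its justification combines the bead inequalities $b_i \geq b'_i$ with the minimality of $p'$ (smallest top bead of $\beta$) and of $q'$ (smallest element of $W(\beta)$): under the sandwich $b_{i_0+1} < b'_{i_0} \leq b_{i_0}$ — or its endpoint analogue $p'-e < b'_{k+j} \leq b_{k+j}$ when $i_0 = k+j$ — assuming $b'_{i_0} - e \in \beta'$ would, via $b_m \geq b'_m$ applied at the index of $b'_{i_0}-e$, force either a bead of $\beta$ strictly between $q'$ and $p'$ or a bead just above $p'-e$ on its runner, contradicting the choice of $p'$ or of $q'$. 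A bead-by-bead comparison of the sorted sequence of $(\beta' \setminus \{b'_{i_0}\}) \cup \{b'_{i_0} - e\}$ against $\{b''_i\}$ then yields $\downoperation(\mu) \supseteq \widehat{\mu'}$, using $b_i \geq b'_i$ away from the shift and the explicit $-e$ adjustment at $i_0$. The hardest step is this emptiness claim, whose proof will require a short case split on whether $i_0 = k+j$ or $i_0 < k+j$ and on whether $q' = p'$ (the ``vertical'' case where $\downoperation$ simply removes an $e$-rim hook of $\mu$) or $q' < p'$ (where the moving bead changes runners), with the diagonal case expected to be the most delicate to handle.
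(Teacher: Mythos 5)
Your reduction to $\beta$-sets, the identification $\mu\supseteq\mu'\iff b_i\ge b'_i$ for all $i$, and the bookkeeping leading to the conditions $b_{i+1}\ge b'_i$ for $k\le i<k+j$ together with $p'-e\ge b'_{k+j}$ are all correct, and this is a genuinely different route from the paper, which works with edge sequences of Young diagrams and removes from $\mu'$ the hook occupying the interval $[p'-e,p']$ rather than a hook located at the first failing index. However, the step you yourself identify as the crux --- that $b'_{i_0}-e\notin\beta'$ --- is false, and the $j\ge 1$ branches of your argument cannot be closed. Take $e=3$, $\mu=(3,2,2)$, $\mu'=(2,1,1)$. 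Then $\beta=\{2,0,-1,-4,-5,\dots\}$, $U(\beta)=\{0\}$, so $p'=q'=0$ and $k=2$; the bead $-1$ lies strictly between $p'-e=-3$ and $q'$, so $j=1$ and $\downoperation(\mu)=(3,1)$. With $\beta'=\{1,-1,-2,-4,-5,\dots\}$ the first failing index is $i_0=k+j=3$ (since $b''_3=p'-e=-3<b'_3=-2$), and $b'_3-e=-5\in\beta'$. The contradiction you sketch does not materialise: the bead of $\beta$ sitting one step above $p'-e$ on its runner is $-6$, and $-6\in\beta$ is perfectly compatible with the minimality of both $p'$ and $q'$.

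Worse, this pair refutes the statement in the generality in which you (and the paper) have taken it: $\downoperation((3,2,2))=(3,1)\not\supseteq(2,1,1)$, and $(2,1,1)$ has no removable $3$-rim hook at all, since no bead of $\beta'$ has an empty position three places above it. The paper's own proof tacitly assumes that no bead of $\beta$ lies strictly between $p'-e$ and $q'$ --- your case $j=0$ --- so that $\downoperation$ deletes a horizontal strip from a single row; that is the situation in which the lemma is actually invoked in the proof of \cref{thm:base-corerel} (there $U(\beta)$ is a singleton containing the largest $\beta$-number), but it fails for a general $e$-restricted $\mu$ such as $(3,2,2)$. So the case split you defer to the end is not merely delicate: the $j\ge1$ cases are genuinely false, and a correct proof must either build in the extra hypothesis or restrict to $j=0$, where your list collapses to the single condition $p'-e\ge b'_k$ and one must still verify (as the paper asserts, for the bead at position $p'$ rather than at $b'_{i_0}$) that the relevant position $e$ steps up is empty in $\beta'$.
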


\begin{proof}
    Suppose $\downoperation(\mu)$ does not contain $\mu'$.
    Define $p'$ and $q'$ as in the description of $\downoperation(\mu)$.
    Then visualising $\beta = \beta_0(\mu)$ as explained above the lemma, $p'$ labels the top-most $\uparrow$, and $p'-e$ labels some $\rightarrow$.
    Then, $q'\in[p'-e+1,p']$ is the first number in the interval which labels a $\uparrow$.
    Moving $q'$ to $p'-e$ implies that we change the subsequence
    \[
    \rightarrow\;\cdots\;\rightarrow\;\uparrow
    \]
    labelled by $p'-e,\dots, q'$ to the subsequence
    \[
    \uparrow\;\rightarrow\;\cdots\;\rightarrow
    \]
    labelled by $p'-e,\dots, q'$. In other words, we delete a horizontal strip of length $q'-p'+e$ from that row to obtain $\downoperation(\mu)$.
    Otherwise, $\mu \supseteq \mu'$ implies that the only failure in the containment $\downoperation(\mu)\not\supseteq \mu'$ occurs in the row corresponding to the subsequence labelled by $p'-e,\dots, q'$. 
    Namely, the subsequence for $\mu'$ is
    \[
    \rightarrow\;\cdots\;\rightarrow\; \uparrow \;\rightarrow\;\cdots\;\rightarrow,
    \]
    which is contained in $\rightarrow\;\cdots\;\rightarrow\;\uparrow$, but not contained in 
    $\uparrow\;\rightarrow\;\cdots\;\rightarrow$. 
    Hence, if we consider the subsequence for $\mu'$, the label $p'-e$ appears on some $\rightarrow$ and $p'$ appears on some $\uparrow$. 
    This implies that we may remove an $e$-rim hook corresponding to the interval $[p'-e,p']$ from $\mu'$, yielding the partition $\widehat{\mu'}$ in the statement of the lemma. 
\end{proof}

\begin{thm}\label{thm:base-corerel}
    For an $e$-restricted partition $\la\in B(\La_0)$, $\tau_m(\base(\la)) \supseteq \tau_m(\core(\la))$, for $0\le m\le e-1$. 
\end{thm}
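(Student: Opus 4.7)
My plan is to prove the set-theoretic containment of Young diagrams $\base(\la) \supseteq \core(\la)$ and then invoke \cref{tau_m lemma}, since both $\base(\la)$ and $\core(\la)$ are $e$-cores. The theorem will then follow immediately.

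To establish $\base(\la) \supseteq \core(\la)$, I would construct inductively a sequence of partitions $\mu'_0, \mu'_1, \dots, \mu'_N$, where $N$ is the number of iterations of $\downoperation$ required to reach $\base(\la)$ from $\la$, such that $\mu'_0 = \la$; each $\mu'_k$ is obtained from $\mu'_{k-1}$ either by doing nothing or by removing a single $e$-rim hook; and $\downoperation^k(\la) \supseteq \mu'_k$ for every $0 \le k \le N$. The base case is trivial, and the inductive step is exactly a single application of \cref{lem:downopvscore} with $\mu = \downoperation^{k-1}(\la)$ and $\mu' = \mu'_{k-1}$: that lemma supplies either $\downoperation^k(\la) \supseteq \mu'_{k-1}$, in which case I set $\mu'_k = \mu'_{k-1}$, or an $e$-rim hook of $\mu'_{k-1}$ whose removal gives some $\mu'_k$ with $\downoperation^k(\la) \supseteq \mu'_k$.

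Since removing an $e$-rim hook preserves the $e$-core, this construction forces $\core(\mu'_N) = \core(\mu'_0) = \core(\la)$. Moreover, iterated rim-hook removal only deletes cells, so every partition contains its $e$-core as a Young diagram; in particular $\mu'_N \supseteq \core(\mu'_N) = \core(\la)$. Combining this with $\base(\la) = \downoperation^N(\la) \supseteq \mu'_N$ yields $\base(\la) \supseteq \core(\la)$, and then \cref{tau_m lemma} delivers $\tau_m(\base(\la)) \supseteq \tau_m(\core(\la))$ for all $0 \le m \le e-1$, as required.

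The only real content is the inductive step, which is handed to us by \cref{lem:downopvscore}; the rest is routine bookkeeping about rim-hook removal and the $e$-core. I do not anticipate any substantive obstacle, since the delicate combinatorial analysis of how a single application of $\downoperation$ can fail to preserve a subpartition has already been absorbed into \cref{lem:downopvscore}.
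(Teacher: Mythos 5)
Your reduction to $\base(\la)\supseteq\core(\la)$ via \cref{tau_m lemma}, and your closing bookkeeping ($\core(\mu'_N)=\core(\la)$ because $\mu'_N$ is reached from $\la$ by $e$-rim-hook removals, and $\mu'_N\supseteq\core(\mu'_N)$), are both fine and match the paper. The gap is in the inductive step: you apply \cref{lem:downopvscore} with $\mu=\downoperation^{k-1}(\la)$ for an \emph{arbitrary} $e$-restricted $\la$, but that lemma does not cover this generality. Its proof hinges on the claims that $p'$ labels the top-most bead of $\beta_0(\mu)$ and that $\downoperation(\mu)$ deletes a horizontal strip from a single row; both fail for general $\mu$. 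For instance, with $e=3$ and $\mu=(2,2,1)$ one has $p'=0$ while $\max\beta_0(\mu)=1$, and $\downoperation(\mu)=(2)$ removes the $3$-hook $\{(2,1),(2,2),(3,1)\}$ spanning two rows; similarly $\downoperation((3,3,2))=(3,1,1)$ removes $\{(2,2),(2,3),(3,2)\}$. Worse, the lemma is false as literally stated: take $e=3$, $\mu=(2,2,1)\supseteq\mu'=(1,1)$; then $\downoperation(\mu)=(2)\not\supseteq(1,1)$ and $(1,1)$ has no removable $3$-rim hook at all, so neither alternative holds. So you cannot invoke the lemma as a black box for all pairs $\mu\supseteq\mu'$; you would have to prove separately that the specific pairs $(\downoperation^{k-1}(\la),\mu'_{k-1})$ arising in your induction always admit the required repairing $e$-rim hook, and that is precisely the delicate combinatorial content you declare to have been ``absorbed'' already.

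This is why the paper does not iterate $\downoperation$ on $\la$ directly. It first invokes \cref{useful_lemma_for_base} to compute $\base(\la)$ row by row, so that $\downoperation$ is only ever applied to partitions of the form $(\la_k,\kappa)$ with $\kappa$ an $e$-core; for those, $U(\beta_0)$ is empty or the singleton $\{\la_k-1\}=\{\max\beta_0\}$, the deleted piece really is a single-row horizontal strip, and the argument in \cref{lem:downopvscore} goes through. That reduction is not routine bookkeeping — it is the device that keeps every application of the lemma inside the regime where its proof is valid. To salvage your route you would need either a correct general version of \cref{lem:downopvscore} (with an honest analysis of multi-row hook removals, e.g.\ via the edge-sequence counting functions $t\mapsto\#(\beta_0(\cdot)\cap[t,\infty))$), or to reinstate the row-by-row decomposition as in the paper.
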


\begin{proof}
    Since $\la\in B(\La_0)$, we take the beta set $\beta=\beta_0(\la)$.
    We define 
    \[
    U(\beta)=\{ x\in \beta \mid x-e\not\in \beta \}.
    \]
    It suffices to prove that $\base(\la) \supseteq \core(\la)$ by \cref{tau_m lemma}.
    We define $\mu[k]=(\la_k,\base(\mu[k+1]))$ as in \cref{useful_lemma_for_base}, and define $\mu'[k]=(\la_k,\core(\mu'[k+1]))$.
    We shall prove that $\base(\mu[k])\supseteq \core(\mu'[k])$ by downward induction on $k$.
    For the base case, we take $k$ large enough that $\mu[k] = \mu'[k] = \vn$, so that $\base(\mu[k]) = \core(\mu'[k]) = \vn$.

    Denote $\kappa=\base(\mu[k+1])\supseteq \kappa'=\core(\mu'[k+1])$.
    Then, $\mu[k]=(\la_k,\kappa)$ and $\mu'[k]=(\la_k,\kappa')$.
    Since $\kappa$ is an $e$-core, we have either $U(\beta_0(\mu[k]))=\varnothing$ or $U(\beta_0(\mu[k]))=\{p'\}$, where $p'=\la_k-1$. 
    In the former case, 
    \[
    \base(\mu[k])=\mu[k]\supseteq \mu'[k]\supseteq \core(\mu'[k]).
    \]
    Hence we consider the latter case.
    We iteratively define $F^r(\mu'[k])$ as follows:
    \[
    F^r(\mu'[k]) = \begin{cases}
        F^{r-1}(\mu'[k]) &\text{if } \downoperation^r(\mu[k]) \supseteq F^{r-1}(\mu'[k]),\\[6pt]
        \widehat{F^{r-1}(\mu'[k])} &\text{otherwise,}
    \end{cases}
    \]
    where $\widehat{F^{r-1}(\mu'[k])}$ is obtained from $F^{r-1}(\mu'[k])$ by removing the $e$-rim hook prescribed by \linebreak \cref{lem:downopvscore}.
    The by construction, and \cref{lem:downopvscore}, we necessarily have that $\downoperation^r(\mu[k]) \supseteq F^{r}(\mu'[k])$.
    For some large enough $r$, we have that
    \[
        \base(\mu[k]) = \downoperation^r(\mu[k]) \supseteq F^{r}(\mu'[k]) \supseteq \core(\mu'[k]).
    \]
    Hence, the downward induction on $k$ proceeds, and we obtain
    \[
    \base(\la) = \base(\mu[1]) \supseteq \core(\mu'[1]) = \core(\la)
    \]
    for an $e$-restricted partition $\la\in B(\La_0)$.
\end{proof}

\begin{thm} \label{T:StillKlesh}
Suppose that $(\lambda,\nu)$ is a bipartition of bicharge $(0,s)$, where $0 \le s \le e-1$, and that $\core(\nu)=\mu$. If $\nu \in B(\Lambda_s)$ and $(\lambda,\mu)$ is a Kleshchev bipartition then $(\lambda,\nu)$ is a Kleshchev bipartition.  \end{thm}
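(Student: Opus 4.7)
The plan is to verify the two conditions for Kleshchev-ness of $(\lambda,\nu)$ directly from \cref{akt thm}. The $e$-restrictedness condition is immediate from the hypotheses: $\lambda$ is $e$-restricted since $(\lambda,\mu)$ is Kleshchev, while $\nu$ is $e$-restricted since $\nu \in B(\Lambda_s)$. So the substantive task is to establish the containment
\[
\tau_s(\base(\nu)) \supseteq \roof(\lambda).
\]

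First I would unpack the hypothesis that $(\lambda,\mu)$ is Kleshchev through \cref{akt thm} to obtain $\tau_s(\base(\mu)) \supseteq \roof(\lambda)$. Since $\mu = \core(\nu)$ is an $e$-core, no bead of its abacus display has an empty position directly above it on the same runner, so $U(\beta_s(\mu)) = \varnothing$ and no down operation applies. Hence $\base(\mu) = \mu$, and the hypothesis reduces to $\tau_s(\mu) \supseteq \roof(\lambda)$.

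Next I would appeal to \cref{thm:base-corerel}, applied to $\nu$ in place of $\lambda$ and with $m = s$, to obtain $\tau_s(\base(\nu)) \supseteq \tau_s(\core(\nu)) = \tau_s(\mu)$. Chaining the two containments yields
\[
\tau_s(\base(\nu)) \supseteq \tau_s(\mu) \supseteq \roof(\lambda),
\]
which is precisely the second condition needed in \cref{akt thm} to conclude that $(\lambda,\nu)$ is Kleshchev.

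There is essentially no hard step in this argument, since all of the combinatorial content has been packaged into \cref{thm:base-corerel}. The one mild technical point is that \cref{thm:base-corerel} is stated for partitions in $B(\Lambda_0)$, whereas we wish to invoke it for $\nu \in B(\Lambda_s)$; however, both $\base$ and $\tau_s$ are well-defined as operations on $e$-restricted partitions, and the proof of \cref{thm:base-corerel} only uses properties of down operations and $e$-rim hook removal on the abacus, which are independent of the choice of bicharge up to a uniform shift of runners. Hence the proof adapts verbatim to the $B(\Lambda_s)$ setting, justifying the application.
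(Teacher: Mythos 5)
Your proof is correct and follows essentially the same route as the paper: both establish the chain $\tau_s(\base(\nu))\supseteq\tau_s(\core(\nu))=\tau_s(\mu)=\tau_s(\base(\mu))\supseteq\roof(\lambda)$ via \cref{thm:base-corerel} and the fact that the base operation is the identity on $e$-cores, then conclude with \cref{akt thm}. Your explicit check of $e$-restrictedness and your remark on transporting \cref{thm:base-corerel} from $B(\Lambda_0)$ to $B(\Lambda_s)$ are points the paper leaves implicit, but they do not change the argument.
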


\begin{proof}
Using \cref{thm:base-corerel,akt thm} and the assumptions, and that by definition the down operation (and therefore the base operation) is the identity on $e$-cores, we obtain 
\[
\tau_s(\base(\nu))\supset \tau_s(\core(\nu))=\tau_s(\mu)=\tau_s(\base(\mu))\supset \roof(\lambda).
\]
Thus, $(\lambda,\nu)$ is a Kleshchev bipartition by \cref{akt thm} again.
\end{proof}

\subsection{Core blocks} \label{subsec:coreblocks}
We end this section by looking at (combinatorial) core blocks of $(\hhh,\ba)$. 
The representation theory of the core blocks is well-understood and their decomposition matrices are easy to compute. The material and notation in this section is mostly taken from~\cite{Lyle24coreblocks}. Recall from~\cref{L:CoresMatch} that if $(B,\ba)$ is a block of the Hecke algebra with bicharge $(\hhh,\ba)$ then $\cB$ is the corresponding combinatorial block and that $B$ is a core block if and only if $\cB$ is a core block. 

\begin{defn}\label{Definition~4.22}
Let $\Cl \subseteq \Bl$ be the set of combinatorial blocks $\cC$ such that $\cC$ is a core block and if $\cC$ has bicharge $\ba$ then there exists $(b_0,b_1,\dots,b_{e-1}) \in \Z^e$ such that if $\bla \in \cC$ then
\[
b^{\ba}_{ik}(\bla) \in \{b_i,b_i+1\}
\]
for $0 \leq i <e$ and $k=1,2$.
We then call $(b_0,b_1,\dots,b_{e-1})$ a base tuple for $\cC$.
\end{defn}

\begin{eg}
Take $e=4$. Let $\bla_1=((5,2,1),(2,1^3))$, $\bla_2=((1^3),(4,3,2,1))$ and $\bla_3=((3,2,1),(4,1^3))$ and take $\ba=(9,10)$. Then $C=\{(\bla_i,\ba) \mid i=1,2,3\}$ is a core block. Furthermore $C \in \Cl$ with base tuple $(2,3,0,2)$ (or $2,3,1,2)$). 

\begin{align*}
\bla_1 & =\abacusline(4,1,bbbb,bbnb,nbnn,nbnn) \quad \abacusline(4,1,bbbb,bbnb,bbnb,nnnn)\;, & 
\bla_2& =\abacusline(4,1,bbbb,bbnb,bbnn,nnnn) \quad \abacusline(4,1,bbbb,bbnb,nbnb,nbnn)\;, & 
\bla_3& =\abacusline(4,1,bbbb,bbnb,nbnb,nnnn) \quad \abacusline(4,1,bbbb,bbnb,bbnn,nbnn)\;.
\end{align*}
However we take $\ba'=(9,6)$ and $\cC'=\{(\bla_i,\ba') \mid i=1,2,3\}$ then $C'$ is still a core block but $C' \notin \Cl$. 
\begin{align*}
\bla_1 & =\abacusline(4,1,bbbb,bbnb,nbnn,nbnn) \quad \abacusline(4,1,bbnb,bbnb,nnnn,nnnn)\;, & 
\bla_2& =\abacusline(4,1,bbbb,bbnb,bbnn,nnnn) \quad \abacusline(4,1,bbnb,nbnb,nbnn,nnnn)\;, & 
\bla_3& =\abacusline(4,1,bbbb,bbnb,nbnb,nnnn) \quad \abacusline(4,1,bbnb,bbnb,nbnn,nnnn)\;.
\end{align*}
\end{eg}

\begin{lemc}{fay07core}{Theorem~3.1} \label{L:basetuple1}
Suppose $\cB$ is a combinatorial block with bicharge $\ba$ and that $(\bla,\ba) \in \cB$ is a bicore.  If there exist $\ba' \in \Z^2$ with $\ba' \equiv_e \ba$ and $(b_0,b_1,\dots,b_{e-1}) \in \Z^e$ such that $b^{\ba'}_{ik}(\bla) \in \{b_i,b_i+1\}$ for all $i,k$ then $\cB$ is a core block. 

\end{lemc}

\begin{lemc}{fay07core}{Theorem~3.1} \label{L:basetuple2}
Let $C$ be a core block of $\hhh$. Then we may choose 
$\ba \in \Z^2$ such that 
$(C,\ba)$ is a block of the Hecke algebra with bicharge $(\hhh,\ba)$ and the corresponding core block $\cC$ lies in $\Cl$. 
\end{lemc}

Assume for the rest of this section that $C$ is a core block of a Hecke algebra with bicharge $(\hhh,\ba)$ such that $\cC \in \Cl$ and take $(b_0,b_1,\dots,b_{e-1})$ to be the corresponding base tuple. We note that the choice of base tuple is not unique, however all the results in this section hold for any choice. For $k=1,2$, let $s_k$ be the equivalence class of $a_k$ modulo $e$ and let $\bs=(s_1,s_2)$.  

\begin{defn} \label{D:delta}
Define a total order $\prec$ on $\{0,1,\dots,e-1\}$ by setting $i \prec j$ if and only if $b_i<b_j$ or $b_i=b_j$ and $i<j$. Suppose that $j_0 \prec j_1 \prec \dots \prec j_{e-1}$.
For $0 \leq i \leq e-1$, let 
\[\delta_i(\bla) = b^{\ba}_{j_i2}(\bla)-b^{\ba}_{j_i1}(\bla).\]
Note that $\delta_i(\bla)$ corresponds to runner $j_i$ of the abacus configuration, whereas $b_{ik}(\bla)$ corresponds to runner $i$.
\end{defn}

By definition, we have $\delta_i(\bla) \in \{-1,0,1\}$ for $\bla \in \cC$.
Following~\cite{Lyle24coreblocks}, we write $-$ for $-1$ and $+$ for $1$ and we define $\Delta=\{0,-,+\}^e$. For $\bla \in \cC$, set $\delta_{\bla}=(\delta_0(\bla),\dots,\delta_{e-1}(\bla)) \in \Delta$. 

\begin{defn} \label{D:SwapCore}
Suppose that $\delta_u(\bla)=-$ and $\delta_v(\bla)=+$ for some $u,v \in \{0,1,\dots,e-1\}$. Define $s_{uv}(\bla)$ to be the bicore $\bmu$ with 
\[b^{\ba}_{ik}(\bmu) = \begin{cases} 
b^{\ba}_{ik}(\bla) + 1, & k=1 \text{ and } i=j_v \text{ or } k=2 \text{ and } i=j_u,\\
b^{\ba}_{ik}(\bla) -1, & k=1 \text{ and } i=j_u \text{ or } k=2 \text{ and } i=j_v,\\
b^{\ba}_{ik}(\bla), & \text{otherwise}.
\end{cases}
\]
Namely, we move the last bead on runner $j_u$ of the abacus for $\la^{(1)}$ to end of the runner $j_v$ of that abacus and move the last bead on runner $j_v$ of the abacus for $\la^{(2)}$ to the end of runner $j_v$ of that abacus.
\end{defn}

\begin{lemc}{fay07core}{Proposition~3.7} \label{L:deltaseq}
Let $\bla \in \cC$ and suppose that $\delta_u(\bla)=-$ and $\delta_v(\bla)=+$. Let $\bmu=s_{uv}(\bla)$. Then $\bmu \in \cC$ and $\delta_{\bmu}$ is formed from $\delta_{\bla}$ by swapping the $-$ in position $u$ with the $+$ in position $v$. Moreover, any $\bnu \in \cC$ can be formed by repeatedly performing such swaps.     
\end{lemc}

\begin{defn} \label{D:np}
Define $\nn=\nn(\cC)$ and $\pp=\pp(\cC)$ by choosing $\bla \in \cC$ and setting
\[
\nn=|\{i \in \{0,1,\dots,e-1\} \mid \delta_{i}(\bla)=-\}|,
\qquad \pp=|\{i \in \{0,1,\dots,e-1\} \mid \delta_{i}(\bla)=+\}|.
\]
\end{defn}

By \cref{L:deltaseq}, $\nn$ and $\pp$ are independent of the choice of $\bla$. 

\begin{lem} \label{L:s}
We have $a_1-a_2 \equiv \nn-\pp \pmod e$ and $\defect(C)=\min\{\nn,\pp\}$. Moreover, $|\cC|=\binom{\nn+\pp}{\pp}$ and the number of Kleshchev bipartitions 
in $\cC$ is
\[
\binom{\nn+\pp}{\defect(C)}-\binom{\nn+\pp}{\defect(C)-1}.
\]
\end{lem}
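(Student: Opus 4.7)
My plan is to prove the four assertions separately. For the congruence $a_1 - a_2 \equiv \nn - \pp \pmod{e}$, I sum $\delta_i(\bla)$ over $i$. By the definitions of $\nn$ and $\pp$, $\sum_{i=0}^{e-1}\delta_i(\bla) = \pp - \nn$. Re-indexing via the total order $\prec$ gives $\sum_i \delta_i(\bla) = \sum_{j=0}^{e-1}\bigl(b^{\ba}_{j2}(\bla)-b^{\ba}_{j1}(\bla)\bigr)$. A direct abacus count -- using that every bicore arises from $\vn$ by permuting beads between runners, hence preserving the total number of beads -- shows that $\sum_{j=0}^{e-1}b^{\ba}_{jk}(\mu) = a_k - e$ for any bicore $\mu$ (the top $e$ beads of $\beta_{a_k}(\vn)$ sit at positions $a_k-1,\dots,a_k-e$). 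Consequently $\sum_j\bigl(b^{\ba}_{j2}(\bla)-b^{\ba}_{j1}(\bla)\bigr) = a_2 - a_1$, which combined with the above yields $\pp - \nn = a_2 - a_1$; in particular the required congruence holds (indeed as equality of integers).

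For the identity $|\cC| = \binom{\nn+\pp}{\pp}$, I appeal to \cref{L:deltaseq}: each swap $s_{uv}$ only interchanges a $-$ in some position with a $+$ in another, fixing the zero entries (and the underlying common values $b_{j_i}$ or $b_{j_i}+1$ at those zeros). Thus the zero positions are common across all elements of $\cC$, and $\bla \mapsto \delta_\bla$ is a bijection between $\cC$ and the $\binom{\nn+\pp}{\pp}$ arrangements of $\nn$ minuses and $\pp$ pluses in the fixed non-zero slots.

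For the defect formula $\defect(C) = \min\{\nn,\pp\}$, the key point is that, by \cref{def:defect}, the defect depends only on the residue multiset $\Res_\ba(\bla)$, hence is constant on $\cC$, so it suffices to evaluate it on any convenient $\bla\in\cC$. I would choose the element whose $\delta$-vector places all $+$'s before all zeros before all $-$'s (in the $\prec$-ordering), compute the $c_i$'s directly from the resulting abacus display, and plug them into $c_{s_1}+c_{s_2}-\tfrac{1}{2}\sum_i(c_i-c_{i+1})^2$. An alternative and likely cleaner route is to invoke Fayers's explicit formula for the weight of a core block from~\cite[\S4]{fay06wts}, which yields $\min\{\nn,\pp\}$ directly from the $\delta$-data.

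Finally, for the number of Kleshchev bipartitions I would specialise \cref{akt thm}. Since $\bla$ is a bicore, both $\la^{(1)}$ and $\la^{(2)}$ are $e$-cores and so automatically $e$-restricted, and $\base(\la^{(k)}) = \roof(\la^{(k)}) = \la^{(k)}$, so Kleshchev-ness reduces to $\tau_{s_2}(\la^{(2)}) \supseteq \la^{(1)}$. On the abacus, $\tau_{s_2}$ shifts up the beads on the $s_2$ runners of $\la^{(2)}$ with the largest $b^{\ba}_{\cdot,2}$-values, and the inclusion $\tau_{s_2}(\la^{(2)}) \supseteq \la^{(1)}$ translates (after bookkeeping with the base tuple) into a ballot condition on the non-zero entries of $\delta_\bla$ read in $\prec$-order. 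The number of sequences of $\nn$ minuses and $\pp$ pluses satisfying this ballot condition is $\binom{\nn+\pp}{d} - \binom{\nn+\pp}{d-1}$ for $d = \min\{\nn,\pp\}$ by the reflection principle. The main obstacle will be carefully verifying this translation, which requires tracking the effect of $\tau_{s_2}$ on runners that are already ``saturated'' at level $b_i+1$; once that bookkeeping is done, the count follows from a standard combinatorial argument.
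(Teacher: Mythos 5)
The paper's own proof is a one-line appeal to \cite{Lyle24coreblocks} and \cite{fay06wts}, so any self-contained argument is necessarily a different route; with that in mind, your first two parts are genuinely good. The congruence argument is correct and in fact proves the stronger integer identity $\nn-\pp=a_1-a_2$: your claim $\sum_{i}b^{\ba}_{ik}(\bmu)=a_k-e$ holds for every $e$-core because each elementary move of a bead from the top of one runner to the first gap of another preserves both the charge and the sum of the $b^{\ba}_{ik}$, so summing $\delta_i$ over all runners does give $\pp-\nn=a_2-a_1$. The count $|\cC|=\binom{\nn+\pp}{\pp}$ via \cref{L:deltaseq} is exactly the paper's argument, and your remark that the zero slots (and their common values) are fixed across $\cC$ is the point needed for injectivity. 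For $\defect(C)=\min\{\nn,\pp\}$ you do not actually carry out either of your two proposed computations; the second option is literally the citation the paper uses, so this part is acceptable but adds nothing.

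The genuine gap is in the Kleshchev count. The entire content of that statement is the translation you defer: showing that, for a bicore $\bmu\in\cC$, the condition $\tau_{s_2}(\mu^{(2)})\supseteq\mu^{(1)}$ from \cref{akt thm} is equivalent to a ballot condition on the nonzero entries of $\delta_{\bmu}$ read in $\prec$-order. You explicitly leave this unverified, and it is not routine: one must track how $\tau_{s_2}$ interacts with runners already at level $b_i+1$, and moreover \cref{akt thm} is stated only for bicharges of the form $(0,s)$ with $0\le s\le e-1$, whereas the base-tuple normalisation of $\cC\in\Cl$ generally produces other integer bicharges, so a shift-and-check step is also missing. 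A cleaner repair within the paper's own toolkit is to use \cref{T:Flat}(i), which says $\bmu\in\Kl$ if and only if $\delta_{\bmu}\in\Delta_0$; the pairing procedure defining $\Delta_0$ is precisely the condition that every symbol of the minority sign can be matched to a later symbol of the majority sign, and the reflection principle then gives $\binom{\nn+\pp}{d}-\binom{\nn+\pp}{d-1}$ with $d=\min\{\nn,\pp\}$ exactly as you say. As written, though, the final assertion of the lemma is not established by your argument.
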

\begin{proof}
The first statement is~\cite[Lemmas~3.4]{Lyle24coreblocks} and the second statement follows from~\cite[Proposition~3.8]{fay06wts}. The third statement follows from~\cref{L:deltaseq} and the last is~\cite[Theorem~3.20(2)]{Lyle24coreblocks}.
\end{proof}

The values $\nn$ and $\pp$ are well-defined for a combinatorial core block, but not well-defined for the underlying core block. The next corollary clarifies that two combinatorial core blocks, one with $\min\{\nn,\pp\}=\nn$ and the other with $\min\{\nn,\pp\}=\pp$, correspond to a core block when $\min\{\nn,\pp\}>0$.

\begin{cor}
If $\defect(\cC)\ne 0$ then the set $\{\nn, \pp\}$ is determined by the underlying block $C$. 
\end{cor}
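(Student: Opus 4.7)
The plan is to recover the unordered pair $\{\nn,\pp\}$ from two quantities that depend only on the underlying block $C$: the defect $d=\defect(C)=\min\{\nn,\pp\}$ (supplied by \cref{L:s}), and the cardinality $|\cC|$, which I claim is also an invariant of $C$.

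First I would show that $|\cC|$ depends only on $C$, not on the particular combinatorial core block $\cC\in\Cl$ chosen to represent it. Suppose $\cC_1,\cC_2\in\Cl$ both correspond to the same underlying core block $C$, with bicharges $\ba_1\equiv_e\ba_2$. By \cref{L:Matches} and \cref{L:CoresMatch}, the bipartitions appearing in each $\cC_i$ are exactly those $\bla$ for which $\rspe\bla$ belongs to $C$; by \cref{block=combblock}, this set is determined by the residue tuple $\bs=\ba_i\pmod e$, which is common to $\cC_1$ and $\cC_2$. Hence $|\cC_1|=|\cC_2|$, and of course $\defect(\cC_1)=\defect(C)=\defect(\cC_2)=d$.

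Next I would combine this with \cref{L:s}, which gives $|\cC_i|=\binom{\nn_i+\pp_i}{\pp_i}$. By the symmetry $\binom{x}{k}=\binom{x}{x-k}$, this equals $\binom{\nn_i+\pp_i}{d}$ whether $\pp_i$ or $\nn_i$ realises the minimum. Since $d\ge1$ and $\nn_i+\pp_i\ge 2d$, the function $x\mapsto\binom{x}{d}$ is strictly increasing on the relevant range, so $|\cC_1|=|\cC_2|$ forces $\nn_1+\pp_1=\nn_2+\pp_2$. Combining with the common value $\min\{\nn_i,\pp_i\}=d$ determines $\max\{\nn_i,\pp_i\}=(\nn_i+\pp_i)-d$ as well, and therefore $\{\nn_1,\pp_1\}=\{\nn_2,\pp_2\}$.

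The main obstacle is the first step: cleanly identifying $|\cC|$ as a block-theoretic invariant rather than an artefact of the combinatorial representative chosen. Once that is secured via the residue characterisation of blocks in \cref{block=combblock}, the rest is a short monotonicity argument. The hypothesis $\defect(\cC)\ne 0$ is essential, since for $d=0$ we have $|\cC|=\binom{\nn+\pp}{0}=1$ irrespective of $\nn+\pp$, and strict monotonicity of $\binom{x}{d}$ collapses.
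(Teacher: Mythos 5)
Your overall strategy is sound and lands on the same two pillars as the paper's proof -- that the number of Specht modules in $C$ is an invariant of the underlying block, and that $\defect(C)=\min\{\nn,\pp\}$ is too -- but your final step differs. The paper recovers the second parameter by forming the ratio of the number of simple modules to the number of Specht modules, which by \cref{L:s} equals $1-\frac{\pp}{\nn+1}$ when $\defect(C)=\pp$, and solves for $\nn$; you instead observe that $|\cC|=\binom{\nn+\pp}{d}$ with $d=\defect(C)\ge1$, and use strict monotonicity of $x\mapsto\binom{x}{d}$ to recover $\nn+\pp$ and hence $\max\{\nn,\pp\}$. Your route is arguably cleaner in that it never needs to count simple modules. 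Both arguments correctly isolate why $\defect(\cC)\ne0$ is essential.

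There is, however, a soft spot in your first step. You assume the two representatives $\cC_1,\cC_2$ have bicharges with $\ba_1\equiv_e\ba_2$, but that is not forced: the relation $(T_0-Q_1)(T_0-Q_2)=0$ is symmetric in $Q_1,Q_2$, so the same underlying block $C$ is also represented by the swapped bicharge $(a_2,a_1)$, which is not congruent to $(a_1,a_2)$ unless $s_1=s_2$. Since \cref{L:ShiftCharge} shows that (for $\nn,\pp>0$) congruent bicharges in $\Cl$ give literally the same $(\nn,\pp)$, the swap is in fact the \emph{only} case in which anything could go wrong, and it is precisely the case your invariance argument skips. The repair is one sentence -- $\bla\mapsto\bla^{\text{Sw}}$ is a bijection between the two sets of bipartitions, so $|\cC|$ is unchanged (this is exactly the clause ``swap of $s_1$ and $s_2$ does not affect the number of bipartitions'' in the paper's proof), and $\binom{\nn+\pp}{\pp}$ is symmetric in $\nn$ and $\pp$ anyway -- but as written the case that motivates the corollary is not covered.
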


\begin{proof}
Recall that a block is labelled by $\Lambda-\beta\in P(\Lambda)$.
$\Res_{\ba}(\bla)$ depends on $\bs=\ba \pmod e$. 
If we write $\beta=\sum_{i=0}^{e-1} c_i\alpha_i$, the number of Specht modules in the block is the number of bipartitions $\bla$ such that the multiplicity of $i$ in $\Res_{\ba}(\bla)$ is $c_i$, for $0\le i\le e-1$, because swap of $s_1$ and $s_2$ does not affect the number of bipartitions.
It implies that the number of Specht modules does not depend on the choice of $\ba$, but the block only.

Suppose $\defect(C)=\pp$. The proof for the case $\defect(C)=\nn$ is similar. Note that $\pp=\defect(C)$ is determined by $C$. Since the number of Kleshchev bipartitions is the number of simple $C$-modules, 
   \[
   \frac{\text{the number of simple modules in $C$}}{\text{the number of Specht modules in $C$}}=1-\frac{\pp}{\nn+1}
   \]
implies that $\nn$ is also determined by $C$, since $\pp\neq 0$. 
\end{proof}

\begin{rem}
The corollary reflects the fact that, among the three equivalences introduced in \cref{SS:Equivs}, the swap equivalence is different from the other two, in the sense that it only concerns the additional structure given by bicharge and it does not affect the underlying block. The other two equivalences transform the underlying block to another block, but as was explained in the proof of \cref{T:Equiv1}, the shift equivalence does not change the isomorphism class of the algebra, and the Scopes equivalence does not change the Morita class of the algebra.
\end{rem}

We can use the sequences $\delta_{\bla}$ to describe which bipartitions in the block are Kleshchev bipartitions and to find the entries in the decomposition matrix. Take $\delta \in \Delta$. We begin by defining a sequence $\R(\delta)$ and a set $\St(\delta)$.

\begin{enumerate}
\item Take $\R(\delta)=\delta$ and $\St(\delta)=\emptyset$. 
\item If there do not exist $i,j \in \{0,1 \dots,e-1\}$ with $i < j$ and $\delta_i=-$ and $\delta_j=+$, end the process. Return $\R(\delta)$ and $\St(\delta)$. 
\item Otherwise, choose $i,j \in \{0,1,\dots,e-1\}$ with $i < j$ and $\delta_i=-$ and $\delta_j=+$ with the property that $\delta_m=0$ for all $i < m < j$. 
Add $(i,j)$ to $\St(\delta)$ and set $\R(\delta)_i = \R(\delta)_j = 0$. Go back to step (ii). 
\end{enumerate}

Set
\[
\Delta_0 = \{ \delta \in \Delta \mid - \notin \R(\delta) \text{ or } + \notin \R(\delta)\}.
\]
Suppose $\delta \in \Delta_0$.
For each $S = \{(i_1,j_1),\dots,(i_t,j_t)\} \subseteq \St(\delta)$, define $\delta^S \in \Delta$ by setting  
\[\delta^S_m = \begin{cases} +, & m=i_l \text{ for some } 1 \leq l \leq t, \\
-, & m=j_l \text{ for some } 1 \leq l \leq t, \\
\delta_m, & \text{otherwise}.
\end{cases}\]

If $\delta'=\delta^S$ for some $S \subseteq \St(\delta)$, we write $\delta' \ra \, \delta$ and set
$\ell(\delta',\delta) = |S|$.

\begin{thmc}{Lyle24coreblocks}{Proposition~3.10 \& Theorem~3.19} \label{T:Flat}
Suppose that $\bla,\bmu \in C$ with $|\bla|=n$. 
\begin{enumerate}
\item $\bmu \in \Kl$ if and only if $\delta_{\bmu} \in \Delta_0$.
\item Suppose $\bmu\in\Kl$.  
Then 
\[d_{\bla\bmu}(v) = \begin{cases} 
v^{\ell(\delta_{\bla},\delta_{\bmu})}, & \delta_{\bla} \ra \; \delta_{\bmu}, \\
0, & \text{otherwise}.
\end{cases}\]
\end{enumerate}
\end{thmc}

Note that these decomposition numbers are independent of the characteristic of the field. 

\begin{lem} \label{L:OrderCore}
Suppose that $\bla \in \cC$ with $\delta_{\bla} = (\delta_0,\delta_1,\dots,\delta_{e-1})$ and that there exist $0 \leq u<v <e$ such that $\delta_u=-$ and $\delta_v=+$. Let $\bmu=s_{uv}(\bla)$ as in~\cref{D:SwapCore}. Then $\bmu \doms \bla$. 
\end{lem}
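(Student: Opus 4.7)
The plan is to unpack the hypothesis explicitly at the level of the abacus, and then verify the two partial-sum conditions that define $\bmu \dom \bla$. Since $\cC \in \Cl$ with base tuple $(b_0,\dots,b_{e-1})$, each $b^{\ba}_{ik}(\bla)$ lies in $\{b_i,b_i+1\}$, so the assumption $\delta_u(\bla) = -$ forces $b^{\ba}_{j_u,1}(\bla)=b_{j_u}+1$ and $b^{\ba}_{j_u,2}(\bla)=b_{j_u}$, while $\delta_v(\bla) = +$ forces $b^{\ba}_{j_v,1}(\bla)=b_{j_v}$ and $b^{\ba}_{j_v,2}(\bla)=b_{j_v}+1$. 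Setting $p^{\ast} := j_u + (b_{j_u}+1)e$ and $q^{\ast} := j_v + (b_{j_v}+1)e$, \cref{D:SwapCore} says exactly that on component $1$ we remove $p^{\ast}$ from the $\beta$-set of $\la^{(1)}$ and insert $q^{\ast}$, while on component $2$ we do the opposite. Because $u<v$ implies $j_u \prec j_v$, a short check of the two sub-cases ($b_{j_u}<b_{j_v}$, or $b_{j_u}=b_{j_v}$ with $j_u<j_v$) gives $q^{\ast}>p^{\ast}$.

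I would then compare $\mu^{(k)}$ with $\la^{(k)}$ by sorting the $\beta$-sets in decreasing order. On component $1$, the sorted $\beta$-set of $\mu^{(1)}$ agrees with that of $\la^{(1)}$ outside a window starting at the position where $q^{\ast}$ is inserted and ending where $p^{\ast}$ was removed; comparing partial sums of the sorted $\beta$-sets term-by-term shows that for every $t\geq 0$ the partial sum for $\mu^{(1)}$ is at least that for $\la^{(1)}$, with the gap eventually stabilising at $q^{\ast}-p^{\ast}$. Since partial sums of a partition and of its sorted $\beta$-set differ only by a $t$-dependent constant common to both $\la^{(1)}$ and $\mu^{(1)}$, this translates into
\[
\sum_{k=1}^t \mu^{(1)}_k \;\geq\; \sum_{k=1}^t \la^{(1)}_k \qquad \text{for all } t\geq 0,
\]
and in particular $|\mu^{(1)}| - |\la^{(1)}| = q^{\ast} - p^{\ast} > 0$. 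The analogous analysis for component $2$ (now a bead moves from $q^{\ast}$ to the smaller position $p^{\ast}$) yields
\[
0 \;\leq\; \sum_{k=1}^t \la^{(2)}_k - \sum_{k=1}^t \mu^{(2)}_k \;\leq\; q^{\ast}-p^{\ast} \qquad \text{for all } t\geq 0.
\]

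These two inequalities together give $\bmu \dom \bla$: the $s=0$ clause in the definition of $\dom$ on $\Pn^2_n$ is the first display, and the $s=1$ clause, after cancelling common terms, becomes $|\mu^{(1)}|-|\la^{(1)}| \geq \sum_{k=1}^t(\la^{(2)}_k-\mu^{(2)}_k)$, which is precisely the upper bound in the second display. Strictness $\bmu \neq \bla$ is immediate from $|\mu^{(1)}| > |\la^{(1)}|$, so $\bmu \doms \bla$. The only real obstacle is the $\beta$-set bookkeeping on each component: one must make the index shifts precise as the moved bead slots into a new position in the sorted sequence. This is elementary, but it is where the bounds on the partial-sum differences come from, and so must be tracked carefully.
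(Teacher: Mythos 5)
Your proposal is correct and follows essentially the same route as the paper: the paper's proof likewise observes that $s_{uv}$ moves a bead from position $j_u+(b_{j_u}+1)e$ to the larger position $j_v+(b_{j_v}+1)e$ on the first abacus (and the reverse on the second), i.e.\ adds a rim hook to $\la^{(1)}$ and removes one from $\la^{(2)}$, and concludes $\bmu\doms\bla$ from there. You simply spell out the final "it follows" step via the partial-sum comparison of sorted $\beta$-sets, which is a correct (if more laborious) verification of the two clauses in the definition of $\dom$.
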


\begin{proof}
By~\cref{D:SwapCore}, the abacus configuration for $\bmu$ is obtained from that of $\bla$ by moving a bead from the end of runner $j_u$ to the end of runner $j_v$ on the first abacus and making the reverse move on the second abacus. Since $u<v$ we have $j_u \prec j_v$, that is, we increase a $\beta$-number in the first abacus configuration and decrease one in the second; this corresponds to adding a hook to $\la^{(1)}$ and removing a hook from $\la^{(2)}$. It follows that $\bmu \doms \bla$.
\end{proof}

\begin{cor} \label{C:MinCore}
Suppose that $\bla \in \cC$ and that $\delta_{\bla}=(\delta_0,\delta_1,\dots,\delta_{e-1})$ has the property that there do not exist $0 \leq u<v<e$ such that $\delta_u=+$ and $\delta_v=-$. If $\bmu \in \cC$ then $\bmu \dom \bla$.
\end{cor}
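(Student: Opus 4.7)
The plan is to prove $\bmu \dom \bla$ by induction on the number of \emph{positive inversions} of $\delta_{\bmu}$, namely the pairs $(i,j)$ with $0 \leq i < j \leq e-1$ such that $\delta_{\bmu,i} = +$ and $\delta_{\bmu,j} = -$. The hypothesis on $\bla$ says precisely that $\delta_{\bla}$ has zero positive inversions, so $\delta_{\bla}$ is the unique minimiser (amongst $\delta$-sequences with the prescribed positions of zeros) of this count.

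For the base case, suppose $\delta_{\bmu}$ has no positive inversions, so that in $\delta_{\bmu}$ all $-$s precede all $+$s among the nonzero coordinates. By \cref{L:deltaseq}, the set of positions where $\delta_{\bnu}$ equals $0$ is the same for every $\bnu \in \cC$ (since swaps only exchange $-$s and $+$s), so $\delta_{\bmu}$ and $\delta_{\bla}$ agree on the positions of $0$s, and then the no-inversion condition forces $\delta_{\bmu} = \delta_{\bla}$. By \cref{L:s}, $|\cC| = \binom{\nn + \pp}{\pp}$, which is exactly the number of $\delta$-sequences having the prescribed positions of zeros and the given counts $\nn$ and $\pp$; hence $\bnu \mapsto \delta_{\bnu}$ is a bijection from $\cC$ to these sequences, and $\bmu = \bla$, giving $\bmu \dom \bla$ trivially.

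For the inductive step, suppose $\delta_{\bmu}$ has at least one positive inversion. Choose such an inversion $(i,j)$ with $j-i$ minimal; the minimality forces $\delta_{\bmu,k} = 0$ for every $i<k<j$. Applying \cref{D:SwapCore} with $u=j$ and $v=i$ (legal since $\delta_{\bmu,j} = -$ and $\delta_{\bmu,i} = +$), set $\bmu' = s_{ji}(\bmu) \in \cC$, so that $\delta_{\bmu',i} = -$, $\delta_{\bmu',j} = +$, and all other coordinates of $\delta_{\bmu'}$ agree with those of $\delta_{\bmu}$. Then \cref{L:OrderCore}, applied to $\bmu'$ with the indices $i<j$ where $\delta_{\bmu',i} = -$ and $\delta_{\bmu',j} = +$, shows that $\bmu = s_{ij}(\bmu') \doms \bmu'$. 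A direct bookkeeping check shows that $\delta_{\bmu'}$ has strictly fewer positive inversions than $\delta_{\bmu}$: the $(i,j)$ inversion is eliminated, the zero positions strictly between $i$ and $j$ contribute no change, and for each $k<i$ (respectively $k>j$) the alterations at the pairs $(k,i),(k,j)$ (respectively $(i,k),(j,k)$) cancel in pairs. The inductive hypothesis gives $\bmu' \dom \bla$, and transitivity of $\dom$ yields $\bmu \dom \bla$.

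The argument presents no genuine obstacle: all the structural work is done by \cref{L:deltaseq}, \cref{L:s}, and \cref{L:OrderCore}. The only mild technical point is verifying that the chosen swap strictly decreases the positive inversion count, which is why we pick the minimal-gap inversion $(i,j)$ so that the intermediate coordinates are all $0$ and the cancellation outside $[i,j]$ is transparent.
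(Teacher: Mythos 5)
Your proof is correct and takes essentially the same route as the paper, whose proof is simply to cite \cref{L:deltaseq} and \cref{L:OrderCore}: every $\bmu\in\cC$ is connected to the "sorted" element $\bla$ by swaps, each of which is dominance-increasing by \cref{L:OrderCore}. Your induction on positive inversions (together with the counting via \cref{L:s} in the base case, which could be replaced by the simpler observation that $\delta_{\bnu}$ determines the abacus display and hence $\bnu$) is just the explicit bookkeeping that the paper leaves implicit.
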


\begin{proof}
This follows from~\cref{L:deltaseq} and~\cref{L:OrderCore}. 
\end{proof}

\subsection{Representatives of the $\Sc$ and $\SU$-equivalence classes of $\Bl$} \label{S:ScopesSec}

Recall the equivalence relations $\Sh$ and $\Sc$, and the relation $\SU$ generated by $\Sh$ and $\Sc$, which were defined in~\cref{SS:Equivs}.
In this section, we find sets of $\Sc$-representatives and $\SU$-representatives in $\Bl$. Recall that each combinatorial block $\cB$ is completely determined by its core block $\cC$ and the maximal number of removable hooks $\hk(\cB)$. 
Recall also that $\Cl$ is the set of core blocks $\cC$ whose bicharge $\ba$ admits a base tuple.

\begin{defn} \label{D:HookFixed}
Set
\[\Bc=\{\cB \in \Bl \mid \cB \text{ has core block } \cC \in \Cl\}.\]
Note that $\Bc$ is closed under the relations $\Sc$ and $\Sh$. Now for $\hk \ge 0$, set
\begin{align*}
\Bl_{\hk} &= \{\cB \in \Bl \mid \hk(\cB)=\hk\}, \\
\Bc_{\hk} &= \{\cB \in \Bc \mid \hk(\cB)=\hk\};
\end{align*}
these sets are also closed under $\Sc$ and $\Sh$. 
\end{defn}

\begin{lem} \label{L:BzOK}
Given $\cB \in \Bl$ with bicharge $\ba$, we can find $\cB' \in \Bc$ with bicharge $\ba'$ such that $\cB'=\{(\bla,\ba') \mid (\bla,\ba) \in \cB\}$ and 
$\cB \Sh \cB'$. 
\end{lem}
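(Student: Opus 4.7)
The plan is to reduce this to \cref{L:basetuple2} by passing through the underlying core block of $\hhh$. Let $\cC$ denote the core block of $\cB$ and let $C$ be the corresponding core block of $\hhh$ under the bijection of \cref{L:CoresMatch}. Then \cref{L:basetuple2} supplies a bicharge $\ba' \in \Z^2$ such that $(C, \ba')$ is a block of the Hecke algebra with bicharge $(\hhh, \ba')$ and the corresponding combinatorial core block lies in $\Cl$. Crucially, since $\ba'$ must still label the same Hecke algebra $\hhh = \hhh(q, q^{a_1}, q^{a_2})$, it is automatically the case that $\ba' \equiv_e \ba$.

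Set $\cB' = \{(\bla, \ba') \mid (\bla, \ba) \in \cB\}$. Because $\ba' \equiv_e \ba$, the residue multisets $\Res_{\ba'}(\bla)$ for $(\bla,\ba)\in \cB$ all agree (being obtained from the multisets $\Res_{\ba}(\bla)$ via a fixed bijection on $\Z/e\Z$), so $\cB'$ is indeed a well-defined combinatorial block, and $\cB \Sh \cB'$ directly from the definition of shift equivalence.

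It remains to verify $\cB' \in \Bc$, i.e.\ that the core block of $\cB'$ lies in $\Cl$. The key observation is that $e$-rim hook removal depends only on the underlying bipartition, not on the bicharge: both $\hk(\bla)$ and $\bar{\bla}$ are invariant under the shift. Consequently $\hk(\cB') = \hk(\cB)$ and the core block of $\cB'$ is exactly
\[
\{(\bar{\bla}, \ba') \mid (\bla, \ba) \in \cB,\ \hk(\bla) = \hk(\cB)\} = \{(\bmu, \ba') \mid (\bmu, \ba) \in \cC\},
\]
which under \cref{L:CoresMatch} is the combinatorial core block corresponding to $C$ with bicharge $\ba'$. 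By our choice of $\ba'$ this lies in $\Cl$, so $\cB' \in \Bc$.

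The argument is essentially bookkeeping once \cref{L:basetuple2} is invoked, and the only point that deserves care is ensuring that the bicharge it produces lies in the same residue class as $\ba$ modulo $e$. As indicated above, this is forced automatically since both bicharges must present the same Hecke algebra $\hhh$; no additional argument is needed.
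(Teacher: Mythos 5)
Your proof is correct and follows essentially the same route as the paper's, which is a one-line appeal to \cref{L:basetuple2} to produce a bicharge $\ba'\equiv_e\ba$ whose corresponding core block lies in $\Cl$. The only quibble is your claim that $\ba'\equiv_e\ba$ is ``automatic'' because both bicharges present the same algebra: the relation $(T_0-Q_1)(T_0-Q_2)=0$ is symmetric in $Q_1,Q_2$, so sameness of the algebra alone would also permit a swapped bicharge; the componentwise congruence is instead guaranteed by the source of \cref{L:basetuple2} (Fayers's characterisation of core blocks, cf.\ \cref{L:basetuple1}), which produces $\ba'$ with $\ba'\equiv_e\ba$ directly.
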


\begin{proof}
Following~\cref{L:basetuple2}, we can in fact find $\cB' \in \Bc$ with bicharge $\ba'$ such that $\cB'=\{(\bla,\ba') \mid (\bla,\ba) \in \cB\}$ and $\ba \equiv_e \ba'$.  
\end{proof}

Hence there is no loss in first studying combinatorial blocks $\cB \in \Bc$. In this section we see that each $\cB \in \Bl$ is Scopes-equivalent to a unique minimal combinatorial block and describe these representatives explicitly. 
Rather than working directly with the abacus, we show how to describe the blocks in $\Bc$ using elements of $\Z^e$.

\begin{defn} \label{D:I}
Suppose that $\tb=(\tb_0,\tb_1,\dots,\tb_{e-1}) \in \Z^e$. Set
\[
I (=I_{\tb})=\{i \in \{0,1,\dots,e-1\} \mid \tb_i \equiv 1 \pmod 2\}.
\]
We write our indices for $\tb$ modulo $e$; for example if $i=e-1$ then $\tb_{i+1}$ should be taken to mean $\tb_0$.
Define a total order $\prec (=\prec_{\tb})$ on $I$ by saying that $i \prec j$ if $\tb_i < \tb_j$ or if $\tb_i=\tb_j$ and $0 \le i<j \leq e-1$.
\end{defn}

\begin{defn}
Set \[\Tl=\{\TT=(\tb,\nn,\pp) \mid \tb=(\tb_0,\tb_1,\dots,\tb_{e-1}) \in \Z^e, \nn,\pp \in \Z_{\ge 0} \text{ and } \nn+\pp=|I_{\tb}|\}.\] 
\end{defn}

We now define maps $\phi^{\ast}:\Tl \rightarrow \AB$ and $\phi:\Tl \rightarrow \Bl$. 
 
\begin{defn}
Let $\TT=(\tb,\nn,\pp) \in \Tl$.  If $I=\{m_1,m_2,\dots,m_{|I|}\}$ where $m_l \prec m_{l+1}$ for $1 \leq l < |I|$ then set $I^-=\{m_1,\dots,m_{\nn}\}$ and $I^+=I \setminus I^-$. 
Now define $\phi^\ast(\TT)=(\bla,\ba) \in \AB$ to be the abacus configuration such that $\bla$ is a bicore and
\[b^{\ba}_{ik}(\bla) = \begin{cases} \frac{\tb_i}{2}, & i \notin I, \\
\frac{\tb_i+1}{2}, & k=1 \text{ and } i \in I^- \text{ or } k=2 \text{ and } i \in I^+,\\
\frac{\tb_i-1}{2}, & k=1 \text{ and } i \in I^+ \text{ or } k=2 \text{ and } i \in I^-,
\end{cases}\]
and define $\phi(\TT) \in \Bl$ to be the combinatorial block (that is, the $\sim$-equivalence class of $\AB$) containing $\phi^\ast(\TT)$. 
\end{defn}

\begin{eg}
Let $e=10$ and $\TT=((4,3,5,1,6,7,2,1,3,8),2,4)$. Then $I=\{1,2,3,5,7,8\}$ where $3 \prec 7 \prec 1 \prec 8\prec 2 \prec 5$ so that $I^-=\{3,7\}$ and $I^+=\{1,8,2,5\}$.  Then $\phi^\ast(\TT)$ is equal to
\[\abacusline(10,1,bbbbbbbbbb,bnbnbbnnnb,nnnnbbnnnb,nnnnnnnnnb)\; , \qquad
\abacusline(10,1,bbbnbbbnbb,bbbnbbnnbb,nnbnbbnnnb,nnnnnbnnnb)\;.\]
\end{eg}

Recall the set $\Cl$ of core blocks equipped with a base tuple from~\cref{Definition~4.22}.

\begin{lem} \label{L:AbMap}
Let $\TT=(\tb,\nn,\pp) \in \Tl$ and suppose $\phi^\ast(\TT) = (\bla,\ba)$ and $\phi(\TT)=\cC$. Then $\cC \in \Cl$ with $\nn(\cC)=\nn$ and $\pp(\cC)=\pp$. Moreover if $(\bmu,\ba) \in \cC$ then $\bmu \dom \bla$.  
\end{lem}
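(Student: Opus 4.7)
The plan is to check the three conclusions in order, each by unpacking the definitions and applying the appropriate cited result from \cref{subsec:coreblocks}.

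First, I would show that $\cC\in\Cl$ by producing an explicit base tuple. Set $b_i=\lfloor \tb_i/2\rfloor$ for $0\le i\le e-1$. By inspection of the formulae defining $\phi^\ast(\TT)$, we have $b_i^{\ba}(\bla)=\tb_i/2=b_i$ when $i\notin I$, and $b_i^{\ba}(\bla)\in\{(\tb_i-1)/2,(\tb_i+1)/2\}=\{b_i,b_i+1\}$ when $i\in I$. Moreover $\bla$ is a bicore by construction. So \cref{L:basetuple1} gives that $\cC$ is a core block with base tuple $(b_0,\dots,b_{e-1})$, hence $\cC\in\Cl$.

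Next I would compute the sequence $\delta_{\bla}\in\Delta$ associated to this base tuple, and read off $\nn(\cC)$ and $\pp(\cC)$. Note that on $I$, the order $\prec_{\tb}$ from \cref{D:I} coincides with the order $\prec$ from \cref{D:delta} (both are governed by the half-integer values $\tb_i/2$ since $\tb_i$ is odd on $I$, with ties broken by the index). The definition of $\phi^\ast(\TT)$ immediately gives $\delta_i(\bla)=0$ if $j_i\notin I$, $\delta_i(\bla)=-$ if $j_i\in I^-$, and $\delta_i(\bla)=+$ if $j_i\in I^+$. Consequently the number of $-$'s in $\delta_{\bla}$ is $|I^-|=\nn$ and the number of $+$'s is $|I^+|=\pp$, giving $\nn(\cC)=\nn$ and $\pp(\cC)=\pp$ by \cref{D:np}.

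For the dominance statement, the key observation is that the definition of $I^-$ as the initial segment of $I$ under $\prec$ forces $\delta_{\bla}$ to have all of its $-$'s strictly before all of its $+$'s (with $0$'s interspersed). Indeed, as one scans $j_0\prec j_1\prec\cdots\prec j_{e-1}$, the elements of $I^-$ are encountered before any element of $I^+$. This means there are no indices $0\le u<v<e$ with $\delta_u(\bla)=+$ and $\delta_v(\bla)=-$, and so \cref{C:MinCore} gives $\bmu\dom\bla$ for every $(\bmu,\ba)\in\cC$.

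There is no serious obstacle here; the whole lemma is a matter of translating the combinatorics of $\phi^\ast$ into the language of base tuples and $\delta$-sequences, and the only thing one must be attentive to is that the two orderings $\prec_{\tb}$ (on $I$) and $\prec$ (on $\{0,\dots,e-1\}$ using $b_i=\lfloor\tb_i/2\rfloor$) agree on $I$, so that the labelling of $I^-$ as `initial segment' is compatible with the indexing in \cref{D:delta}.
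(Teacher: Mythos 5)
Your proof is correct and follows the same route as the paper, which also deduces $\cC\in\Cl$ from \cref{L:basetuple1}, reads off $\nn(\cC)=\nn$ and $\pp(\cC)=\pp$ directly from the definition of $\phi^\ast$, and obtains the dominance statement from \cref{C:MinCore}. You have simply made explicit the base tuple $b_i=\lfloor\tb_i/2\rfloor$, the computation of $\delta_{\bla}$, and the compatibility of $\prec_{\tb}$ with $\prec$ on $I$, all of which the paper leaves implicit.
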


\begin{proof}
It follows from~\cref{L:basetuple1} that $\cC \in \Cl$, and it can be seen immediately from the definition that $\nn(\cC)=\nn$ and $\pp(\cC)=\pp$. The last statement follows from~\cref{C:MinCore}. 
\end{proof}

\begin{cor} \label{C:TCBij}
The map $\phi$ gives a bijection between $\Tl$ and $\Cl$. If $\cC \in \Cl$ then
$\phi^{-1}(\cC)$ is defined as follows: Using~\cref{C:MinCore} we may choose $(\bla,\ba) \in \cC$ such that $\bla$ is minimal. Define $\tb=(\tb_0,\tb_1,\dots,\tb_{e-1})$ by setting $\tb_i = b^{\ba}_{i1}(\bla)+b^{\ba}_{i2}(\bla)$. Then $\phi(\tb,\nn(\cC),\pp(\cC))=(\bla,\ba)$. 
\end{cor}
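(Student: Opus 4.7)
The plan is to verify that the formula in the second half of the corollary defines a well-defined map $\psi\colon \Cl \to \Tl$ which is a two-sided inverse to $\phi$. By \cref{L:AbMap} we already have $\phi(\Tl)\subseteq \Cl$, together with the fact that $\phi^\ast(\TT)$ is the dominance-minimum of $\phi(\TT)$ and satisfies $\nn(\phi(\TT))=\nn$, $\pp(\phi(\TT))=\pp$, so it will be enough to check that $\psi$ is well-defined and that the two compositions $\phi\circ\psi$ and $\psi\circ\phi$ are the identity.

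First, I would check $\psi$ is well-defined. Given $\cC\in \Cl$ with bicharge $\ba$, \cref{L:OrderCore} together with \cref{C:MinCore} produces a unique dominance-minimum element $\bla\in \cC$. Setting $\tb_i = b^{\ba}_{i1}(\bla)+b^{\ba}_{i2}(\bla)$, the fact that $\bla$ is a bicore forces $|b^{\ba}_{i1}(\bla)-b^{\ba}_{i2}(\bla)|\le 1$, so $\tb_i$ is odd precisely when $\delta_i(\bla)\ne 0$. By \cref{D:np} this gives $|I_\tb|=\nn(\cC)+\pp(\cC)$, as required. The identity $\psi\circ \phi=\id_\Tl$ is then immediate: for $\TT=(\tb,\nn,\pp)$, a three-case inspection of the formula defining $\phi^\ast$ (namely $i\notin I$, $i\in I^-$, and $i\in I^+$) yields $b^{\ba}_{i1}(\phi^\ast(\TT))+b^{\ba}_{i2}(\phi^\ast(\TT))=\tb_i$ in every case, while $\nn$ and $\pp$ are preserved by \cref{L:AbMap}.

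The delicate step is $\phi\circ \psi=\id_\Cl$. Starting from $(\tb,\nn,\pp)=\psi(\cC)$, I would compare $\phi^\ast(\tb,\nn(\cC),\pp(\cC))$ with $(\bla,\ba)$. This reduces to identifying the subsets $I^-,I^+$ coming out of $\phi^\ast$ with the sets of runners on which $\delta(\bla)$ equals $-$ and $+$ respectively. To this end I would choose the base tuple of $\cC$ so that $b_i=(\tb_i-1)/2$ for $i\in I_\tb$; with this choice, for $i,j\in I_\tb$ one has $\tb_i<\tb_j\iff b_i<b_j$ and $\tb_i=\tb_j\iff b_i=b_j$, so the restriction of the order $\prec$ from \cref{D:delta} to $I_\tb$ agrees with the order $\prec_\tb$ from \cref{D:I}. \cref{C:MinCore} then says the $\delta$-sequence of the minimum $\bla$ has all $-$'s preceding all $+$'s under $\prec$, so the first $\nn(\cC)$ elements of $I_\tb$ under $\prec_\tb$ are exactly the runners carrying $\delta=-$. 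This matches the definition of $I^-$ inside $\phi^\ast$ on the nose, and substituting through the three-case formula for $\phi^\ast$ recovers the abacus values $b^{\ba}_{ik}(\bla)$ exactly.

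The main obstacle will be this comparison of the two orderings $\prec$ and $\prec_\tb$, which are defined on different index sets and with different data. The resolution I have in mind is that a valid base tuple is not uniquely determined by $\cC$ on the runners \emph{outside} $I_\tb$, but this ambiguity is harmless: the base tuple is forced on $I_\tb$ itself, and only the relative order of the elements of $I_\tb$ (not their interleaving with the remaining $0$-positions) is needed to read off the minimality condition in \cref{C:MinCore}.
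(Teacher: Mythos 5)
Your proof is correct and follows essentially the same route as the paper's own (very terse) proof, which simply cites \cref{L:AbMap} for injectivity and the image, and \cref{C:MinCore} for the inverse formula; you fill in exactly the details the paper leaves implicit, in particular the comparison of the base-tuple order $\prec$ restricted to $I_{\tb}$ with the order $\prec_{\tb}$ and the harmlessness of the non-uniqueness of the base tuple. One small misattribution: the bound $|b^{\ba}_{i1}(\bla)-b^{\ba}_{i2}(\bla)|\le 1$ comes from the base-tuple condition defining $\Cl$, not from $\bla$ being a bicore, but the conclusion and the rest of the argument are unaffected.
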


\begin{proof}
The map $\phi^\ast$ is clearly injective and from~\cref{L:AbMap} it follows that $\phi$ is also injective and that $\im(\phi) \subseteq \Cl$. 
\cref{C:MinCore} shows that the last equality holds. 
\end{proof}

Since each $\cB \in \Bc$ is determined by its core block $\cC \in \Cl$ and $\hk(\cB)$, we also have the following correspondence. 

\begin{cor}
There is a bijection between $\Bc$ and $\Tl \times \Z_{\ge 0}$ given by $\cB \mapsto (\phi^{-1}(\cC), \hk(\cB))$. 
\end{cor}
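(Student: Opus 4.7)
The plan is to deduce this directly from~\cref{C:TCBij} by establishing a bijection $\Bc\to\Cl\times\Z_{\ge0}$ via $\cB\mapsto(\cC,\hk(\cB))$ and then post-composing with $\phi^{-1}\times\mathrm{id}$. The map is well-defined thanks to the lemma following~\cref{D:Core3}, which produces a well-defined core block $\cC$ for every combinatorial block, and this $\cC$ lies in $\Cl$ precisely because $\cB\in\Bc$.

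For injectivity, I would use the fact that a combinatorial block is determined by its bicharge and the residue multiset of any of its bipartitions. The bicharge of $\cB$ coincides with that of $\cC$; and for any $\bla\in\cB$ with $\hk(\bla)=\hk(\cB)$ we have $\bar{\bla}\in\cC$, with the multiplicity of each residue $i$ in $\Res_\ba(\bla)$ exceeding the corresponding multiplicity in $\Res_\ba(\bar{\bla})$ by exactly $\hk(\cB)$, since each removal of an $e$-rim hook deletes one node of each residue class. Hence $\Res_\ba(\bla)$, and with it $\cB$, is recoverable from the pair $(\cC,\hk(\cB))$.

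For surjectivity, given $(\cC,k)\in\Cl\times\Z_{\ge0}$, I would choose $(\bla,\ba)\in\cC$ and construct a bipartition $\bmu$ by successively adding $k$ $e$-rim hooks to $\bla$, for example by repeatedly lengthening the first row of $\la^{(1)}$ by $e$. Let $\cB$ be the combinatorial block of $(\bmu,\ba)$. Since the $e$-core of a bipartition is unique, $\bar{\bmu}=\bla$ and $\hk(\bmu)=k$, so $\hk(\cB)\ge k$. The main obstacle is ruling out $\hk(\cB)>k$: any $\bnu\in\cB$ with $\hk(\bnu)>k$ would reduce to a bicore whose residue multiplicities are strictly smaller than those of $\cC$, and the existence of such a strictly smaller bicore with the same bicharge would contradict the minimality built into the base tuple parametrisation of $\Cl$ furnished by~\cref{C:TCBij}. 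Granted $\hk(\cB)=k$, we have $\bla=\bar{\bmu}$ lying in the core block of $\cB$, forcing that core block to coincide with $\cC$.
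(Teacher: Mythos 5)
Your overall route is the same as the paper's: the corollary is presented there as an immediate consequence of \cref{C:TCBij} together with the observation, stated just before it, that each $\cB\in\Bc$ is determined by its core block $\cC\in\Cl$ and $\hk(\cB)$. Your well-definedness and injectivity arguments correctly flesh this out -- an $e$-rim hook contains exactly one node of each residue class, so $\Res_{\ba}(\bla)$ for $\bla\in\cB$ with $\hk(\bla)=\hk(\cB)$ is recovered from $\Res_{\ba}(\bar{\bla})$ and $\hk(\cB)$, and the bicharge is unchanged.

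The one step whose justification does not work as written is the surjectivity claim $\hk(\cB)=k$. You rightly flag it as the main obstacle, but the reason you give -- that a bicore whose residue multiplicities are smaller than those of $\cC$ by a positive multiple of $\delta$ would ``contradict the minimality built into the base tuple parametrisation'' -- is not what \cref{C:TCBij} provides. The minimality there is dominance-minimality of a bipartition \emph{within} the single core block $\cC$; it places no constraint on the existence of other combinatorial blocks whose residue content differs from that of $\cC$ by $-m\delta$, and bicores with strictly smaller residue content and the same bicharge always exist (for instance $(\vn,\vn)$). The correct contradiction comes from $\cC$ being a core block: if some $\bnu\in\cB$ had $\hk(\bnu)=k'>k$, then adding $k'-k$ $e$-rim hooks back to the bicore $\bar{\bnu}$ would produce a bipartition $\bsig$ with $\Res_{\ba}(\bsig)=\Res_{\ba}(\bla)$, hence $(\bsig,\ba)\in\cC$, yet $\bsig$ has a removable $e$-rim hook, contradicting \cref{D:Core2} (equivalently, contradicting the maximality of the weight labelling $\cC$). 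With that repair the argument is complete.
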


\begin{defn}
Let $\cB \in \Bc$ and suppose that $\cB$ has core block $\cC$ where $\cC=\phi(\TT)$. Set $\psi(\cB)=\TT$. 
\end{defn}

Thus $\cB \in \Bc$ is uniquely determined by $\psi(\cB)$ and $\hk(\cB)$. 
We now see how the action $\rightarrow_{\text{Sc}_i}$ on $\cB$ defined in~\cref{D:ScopesMove} relates to an action on $\psi(\cB)$. 
For the next two lemmas, we assume the following setup. Suppose that $\cB,\cB' \in \Bc$ and that their core blocks are respectively $\cC$ and $\cC'$. Suppose that $\psi(\cB)=(\tb,\nn,\pp)$ and $\psi(\cB')=(\tb',\nn',\pp')$. 

\begin{lem} \label{L:Scopest1}
Let $0 \leq i \leq e-1$. Suppose that $\min\{\nn,\pp\}=0$ or that $\tb_i \equiv 0 \pmod 2$ or that $\tb_{i+1} \equiv 0 \pmod 2$.
\begin{itemize}
\item Suppose $1 \leq i \leq e-1$. Then $\cB \rightarrow_{\text{Sc}_i}\cB'$ if and only if $\hk(\cB)=\hk(\cB')$, $\nn=\nn'$, $\pp=\pp'$, 
\[\tb'=(\tb_0,\dots,\tb_{i-2},\tb_i,\tb_{i-1},t_{i+1}\dots,\tb_{e-1})\] and $\tb_{i}-\tb_{i-1} \ge 2\hk(\cB)$. 
\item Suppose $i=0$. Then $\cB' \rightarrow_{\text{Sc}_i}\cB$ if and only if $\hk(\cB)=\hk(\cB')$, $\nn=\nn'$, $\pp=\pp'$, \[\tb=(\tb_{e-1}+2,\tb_1,\dots,\tb_{e-2},\tb_0-2)\] and $\tb_{0}-\tb_{e-1} \ge 2\hk(\cB)+2$. 
\end{itemize}
\end{lem}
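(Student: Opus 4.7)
My plan is to unpack $\rightarrow_{\text{Sc}_i}$ via the $\Phi_i$-description following \cref{D:Scopes} and the reduction in \cref{C:DARed}, and then translate everything back to $\tb$ using the explicit formula for $\phi^\ast(\TT)$. By \cref{C:DARed}, condition (i) of \cref{D:ScopesMove} becomes: for $1 \le i \le e-1$, one needs $b^{\ba}_{ik}(\bmu) - b^{\ba}_{i-1,k}(\bmu) \ge \hk(\cB)$ for every $(\bmu,\ba) \in \cC$ and $k \in \{1,2\}$; for $i=0$, one needs $b^{\ba}_{0k}(\bmu) - b^{\ba}_{e-1,k}(\bmu) \ge \hk(\cB)+1$.

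First I would show that, under the hypothesis of the lemma, these inequalities can be tested solely on the minimal bicore $\bla = \phi^\ast(\TT) \in \cC$. By \cref{L:deltaseq}, any $\bmu \in \cC$ differs from $\bla$ by a sequence of swaps $s_{uv}$, and each such swap moves beads only between runners in $I = I_\tb$ while preserving $\tb_j = b^{\ba}_{j,1}(\bmu) + b^{\ba}_{j,2}(\bmu)$ for every $j$. Hence the multiset of heights on each runner is constant across $\cC$, and only the assignment of the two values to the two components varies. A short case analysis on the parities of $\tb_{i-1}$ and $\tb_i$ (respectively $\tb_{e-1}$ and $\tb_0$ for the $i=0$ case), together with the explicit formula for $b^{\ba}_{ik}(\bla)$, then shows that the minimum of the gap $b^{\ba}_{ik}(\bmu) - b^{\ba}_{i-1,k}(\bmu)$ over $k \in \{1,2\}$ is $\lfloor (\tb_i - \tb_{i-1})/2 \rfloor$ for every $\bmu \in \cC$, provided the hypothesis holds. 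Since $\lfloor x/2 \rfloor \ge m$ is equivalent to $x \ge 2m$, this yields the claimed inequality $\tb_i - \tb_{i-1} \ge 2\hk(\cB)$; for $i=0$ the extra $+1$ in \cref{C:DARed} coming from the bead shift produces the form $\tb_0 - \tb_{e-1} \ge 2\hk(\cB) + 2$.

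Next I would verify that $\Phi_i(\phi^\ast(\TT))$ is the minimal bicore of the claimed target block, and compute the new tuple by tracking $b^{\ba}_{j,k}$ through the definition of $\Phi_i$ recorded after \cref{D:Scopes}. For $1 \le i \le e-1$ the swap of runners interchanges $\tb_{i-1}$ and $\tb_i$ and fixes the rest; for $i=0$ the additional bead shifts produce $\tb_0 = \tb'_{e-1}+2$ and $\tb_{e-1} = \tb'_0 - 2$. The hypothesis guarantees that the total order $\prec_{\tb'}$ agrees with $\prec_{\tb}$ after the transposition of positions, so that the decomposition $I = I^- \sqcup I^+$ is preserved by $\Phi_i$; in particular $\nn' = \nn$ and $\pp' = \pp$, and the image abacus is genuinely of the form $\phi^\ast(\TT')$. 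Finally, $\hk(\cB)=\hk(\cB')$ follows because $\Phi_i$ is a size-preserving bijection on bipartitions that preserves the defect (\cref{P:defectpreserve}), and $\hk$ is then determined via $\defect(\cB) = \defect(\cC) + 2\hk(\cB)$ \cite[Corollary~3.4]{fay06wts}.

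The main obstacle is the reduction to a single bipartition in the first step: the lemma's hypothesis is precisely what excludes the pathological configuration in which both runners involved in the Scopes move are split between $I^-$ and $I^+$, since this is the only case where a swap $s_{uv}$ from \cref{L:deltaseq} can reduce the minimum gap strictly below the value $\lfloor (\tb_i - \tb_{i-1})/2 \rfloor$ achieved on $\bla$. Once this case is ruled out, the minimum is attained simultaneously on every element of $\cC$, and the same hypothesis also guarantees that $\Phi_i$ intertwines the $\prec$-orderings cleanly so that $\nn$ and $\pp$ are preserved -- both obstructions are bypassed by exactly the same combinatorial input.
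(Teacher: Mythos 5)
Your proposal is correct and follows essentially the same route as the paper: reduce condition (i) of the Scopes move to the core block via \cref{C:DARed}, run the parity case analysis on runners $i-1,i$ (your uniform formula $\lfloor(\tb_i-\tb_{i-1})/2\rfloor$ for the minimal gap packages the paper's four cases, with the hypothesis ruling out exactly the case where both runners lie in $I$ with $\min\{\nn,\pp\}>0$), and read off $\tb'$ from the explicit form of $\phi^\ast$. The only cosmetic difference is that the paper identifies the new minimal bicore by citing that $\Phi_i$ preserves the dominance order (\cite[Lemma~5.5 and Theorem~5.2]{d'ascopes}), whereas you verify directly that $\Phi_i(\phi^\ast(\TT))=\phi^\ast(\TT')$ by checking the $I^-\sqcup I^+$ decomposition is preserved; both are valid.
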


\begin{proof}
Note that
\[\cB'=\Phi_i(\cB) \iff \cC'=\Phi_i(\cC) \text{ and } \hk(\cB)=\hk(\cB').\]
We consider only the case that $1 \leq i \leq e-1$; the case $i=0$ is similar.

We first show that if $\cB'=\Phi_i(\cB)$ then  $\cB\rightarrow_{\text{Sc}_i}\cB'$ if and only if $\tb_i-\tb_{i-1} \ge 2 \hk$, where $\hk=\hk(\cB)$. 
By~\cref{C:DARed}, this is equivalent to showing that there exist $(\bla,\ba) \in \cC$ and $k \in \{1,2\}$ with $b^{\ba}_{ik}(\bla)-b^{\ba}_{i-1k}(\bla)<\hk$ if and only if $\tb_i-\tb_{i-1}< 2\hk$.  
\begin{itemize}
\item Suppose that $i-1,i \notin I$. Take $(\bla,\ba) \in \cC$. By~\cref{L:deltaseq} and~\cref{C:TCBij}, $b^{\ba}_{jk}(\bla)=\frac{\tb_j}{2}$
for $j=i-1,i$ and $k=1,2$. Hence  
\[b^{\ba}_{ik}(\bla)-b^{\ba}_{i-1k}(\bla)=\frac{\tb_i}{2}-\frac{\tb_{i-1}}{2}\]
for $k=1,2$ and therefore $b^{\ba}_{ik}(\bla)-b^{\ba}_{i-1k}(\bla)<\hk$ if and only if $\tb_i-\tb_{i-1}<2\hk$.
\item Suppose $i-1 \notin I$ and $i \in I$. Take $(\bla,\ba) \in \cC$. By~\cref{L:deltaseq} and~\cref{C:TCBij},
 $b^{\ba}_{i-1k}(\bla)=\frac{\tb_{i-1}}{2}$ for $k=1,2$ and $\left\{b^{\ba}_{i1}(\bla),b^{\ba}_{i2}(\bla)\right\} = \left\{\frac{\tb_i+1}{2},\frac{\tb_{i}-1}{2}\right\}$. Hence
\[\left\{b^{\ba}_{i1}(\bla)-b^{\ba}_{i-11}(\bla), b^{\ba}_{i2}(\bla)-b^{\ba}_{i-12}(\bla)\right\}=  \left\{\frac{\tb_i+1}{2}-\frac{\tb_{i-1}}{2},\frac{\tb_i-1}{2}-\frac{\tb_{i-1}}{2}\right\}\]
and therefore $b^{\ba}_{ik}(\bla)-b^{\ba}_{i-1k}(\bla)<\hk$ for $k=1$ or $k=2$ if and only if $\tb_i-\tb_{i-1}<2\hk$.
\item Suppose $i-1 \in I$ and $i \notin I$. Take $(\bla,\ba) \in \cC$. By~\cref{L:deltaseq} and~\cref{C:TCBij},
$\left\{b^{\ba}_{i-11}(\bla),b^{\ba}_{i-12}(\bla)\right\} = \left\{\frac{\tb_{i-1}+1}{2},\frac{\tb_{i-1}-1}{2}\right\}$ and $b^{\ba}_{ik}(\bla)=\frac{\tb_{i}}{2}$ for $k=1,2$. Hence
\[\left\{b^{\ba}_{i1}(\bla)-b^{\ba}_{i-11}(\bla), b^{\ba}_{i1}(\bla)-b^{\ba}_{i-11}(\bla)\right\}=  \left\{\frac{\tb_i}{2}-\frac{\tb_{i-1}+1}{2},\frac{\tb_i}{2}-\frac{\tb_{i-1}-1}{2}\right\}\]
and therefore $b^{\ba}_{ik}(\bla)-b^{\ba}_{i-1k}(\bla)<\hk$ for $k=1$ or $k=2$ if and only if $\tb_i-\tb_{i-1}<2\hk$.
\item Suppose $i-1,i \in I$. Then $\min\{\nn,\pp\}=0$ so that $|\cC|=1$ by \cref{L:s}. 
Suppose that $\cC=\{(\bla,\ba)\}$. 
If $\nn=0$ then $b^{\ba}_{j1}(\bla)=\frac{\tb_j-1}{2}$ 
and $b^{\ba}_{j2}(\bla)=\frac{\tb_j+1}{2}$ for $j=i-1,i$; and if $\pp=0$ then $b^{\ba}_{j1}(\bla)=\frac{\tb_j+1}{2}$ 
and $b^{\ba}_{j2}(\bla)=\frac{\tb_j-1}{2}$ 
for $j=i-1,i$. In either case
\[b^{\ba}_{ik}(\bla)-b^{\ba}_{i-1k}(\bla)=\frac{\tb_i-\tb_{i-1}}{2}\]
for $k=1,2$ and therefore $b^{\ba}_{ik}(\bla)-b^{\ba}_{i-1k}(\bla)<\hk$ if and only if $\tb_i-\tb_{i-1}<2\hk$.
\end{itemize}
Hence if $\cB'=\Phi_i(\cB)$ then  $\cB\rightarrow_{\text{Sc}_i}\cB'$ if and only if $\tb_i-\tb_{i-1} \ge 2 \hk$.

Assume that $\cB\rightarrow_{\text{Sc}_i}\cB'$.  
Since $\Phi_i$ acts by swapping runners $i$ and $i-1$ on the abacus configurations of the elements of $\cB$ or $\cC$, it clearly preserves $\nn$, $\pp$ and $\hk(\cB)$. Since $\cB\rightarrow_{\text{Sc}_i}\cB'$,~\cite[Lemma~5.5 and Theorem~5.2]{d'ascopes} implies that $\Phi_i$ preserves the dominance order $\dom$, so that if $\bmu$ is minimal in $\cC$ then $\Phi_i(\bmu)$ is minimal in $\cC'$. 
Thus $\tb'$ is formed from $\tb$ by swapping the entries in positions $i-1$ and $i$. This proves one direction of the lemma. 

Conversely, suppose that $\hk(\cB)=\hk(\cB')$, $\nn=\nn'$, $\pp=\pp'$,  
$\tb'=(\tb_0,\dots,\tb_{i-2},\tb_i,\tb_{i-1},t_{i+1}\dots,\tb_{e-1})$ and $\tb_{i}-\tb_{i-1} \ge 2\hk(\cB)$. Then $\Phi_i(\cB)$ shares these properties, so $\Phi_i(\cB)=\cB'$ and therefore $\cB \rightarrow_{\text{Sc}_i} \cB'$. 
\end{proof}

\begin{lem} \label{L:Scopest2}
Let $0 \leq i \leq e-1$. Suppose that $\nn,\pp>0$ and $\tb_i \equiv  \tb_{i+1} \equiv 1 \pmod 2$. 
\begin{itemize}
\item Suppose $1 \leq i \leq e-1$. Then $\cB \rightarrow_{\text{Sc}_i}\cB'$ if and only if $\hk(\cB)=\hk(\cB')$, $\nn=\nn'$, $\pp=\pp'$, \[\tb'=(\tb_0,\dots,\tb_{i-2},\tb_i,\tb_{i-1},t_{i+1}\dots,\tb_{e-1})\] and $\tb_{i}-\tb_{i-1} \ge 2\hk(\cB)+2$. 
\item Suppose $i=0$. Then $\cB \rightarrow_{\text{Sc}_i} \cB'$ if and only if $\hk(\cB)=\hk(\cB')$, $\nn=\nn'$, $\pp=\pp'$, \[\tb=(\tb_{e-1}+2,\tb_1,\dots,\tb_{e-2},\tb_0-2)\] and $\tb_{0}-\tb_{e-1} \ge 2\hk(\cB)+4$. 
\end{itemize}
\end{lem}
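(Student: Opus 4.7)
The proof proceeds along the same lines as \cref{L:Scopest1}, now handling the subcase that was excluded there: both $\tb_{i-1}$ and $\tb_i$ odd with $\nn,\pp>0$ (for $1\le i\le e-1$), and analogously for $i=0$. The plan is again to observe that $\cB'=\Phi_i(\cB)$ is equivalent to $\cC'=\Phi_i(\cC)$ together with $\hk(\cB)=\hk(\cB')$, and then to apply \cref{C:DARed} to reduce the condition $\cB\rightarrow_{\text{Sc}_i}\cB'$ to the numerical requirement $b^{\ba}_{i,k}(\bmu)-b^{\ba}_{i-1,k}(\bmu)\ge\hk(\cB)$ (respectively $\ge\hk(\cB)+1$ for $i=0$, reading $i-1$ as $e-1$) for every $\bmu\in\cC$ and $k\in\{1,2\}$.

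The heart of the argument is to compute the minimum of $b^{\ba}_{i,k}(\bmu)-b^{\ba}_{i-1,k}(\bmu)$ over $\bmu\in\cC$ and $k$ in this new subcase. Using the bead description from \cref{L:AbMap} together with \cref{L:deltaseq}, for any $\bmu\in\cC$ and $j\in I$ the first abacus has its lowest bead on runner $j$ at row $b_j+1$ (and the second at $b_j$) precisely when the entry of $\delta_{\bmu}$ corresponding to $j$ is $-$; the opposite assignment holds when that entry is $+$, while for $j\notin I$ both abaci share row $b_j=\tb_j/2$ independently of $\bmu$. Since $\nn,\pp>0$, \cref{L:deltaseq} lets us select $\bmu\in\cC$ whose $\delta$-sequence carries a $-$ at the position corresponding to runner $i-1$ and a $+$ at the position corresponding to runner $i$. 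This choice yields $b^{\ba}_{i-1,1}(\bmu)=b_{i-1}+1$ and $b^{\ba}_{i,1}(\bmu)=b_i$, so the minimum value of the difference is $b_i-b_{i-1}-1$; by symmetry, the same minimum is attained for $k=2$.

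Because both $\tb_{i-1}$ and $\tb_i$ are odd, $\tb_i-\tb_{i-1}=2(b_i-b_{i-1})$, so the inequality $b_i-b_{i-1}-1\ge\hk(\cB)$ is equivalent to $\tb_i-\tb_{i-1}\ge 2\hk(\cB)+2$. The $i=0$ case is identical modulo the threshold $\hk(\cB)+1$ in place of $\hk(\cB)$, yielding $\tb_0-\tb_{e-1}\ge 2\hk(\cB)+4$. The remaining verifications mirror \cref{L:Scopest1}: $\Phi_i$ preserves $\nn$, $\pp$, $\hk$, and (in the Scopes regime) the dominance order on $\cC$, so the minimal element of $\cC$ is sent to the minimal element of $\cC'$. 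For $1\le i\le e-1$ the runner swap translates directly to exchanging the entries $\tb_{i-1}$ and $\tb_i$ in the base tuple; for $i=0$ one additionally tracks the $\pm 1$ vertical shifts induced by $\Phi_0$ on beads on runners $0$ and $e-1$, which yields the stated boundary formula. The main subtlety lies in this $i=0$ boundary bookkeeping; otherwise the argument is routine given the machinery of this section.
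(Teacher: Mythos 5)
Your proposal is correct and follows essentially the same route as the paper: reduce via \cref{C:DARed} to the numerical condition on $b^{\ba}_{ik}-b^{\ba}_{i-1k}$, use \cref{L:deltaseq} (with $\nn,\pp>0$) to pick $\bmu$ with a $-$ on runner $i-1$ and a $+$ on runner $i$ realising the minimum $\tfrac{\tb_i-\tb_{i-1}-2}{2}$, and convert using the parity of $\tb_{i-1},\tb_i$. The remaining bookkeeping ($\Phi_i$ preserving $\nn$, $\pp$, $\hk$ and the dominance-minimal element) matches the paper's appeal to the argument of \cref{L:Scopest1}.
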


\begin{proof}
Again, we prove only the case that $1 \leq i \leq e-1$. 
Following the argument in the proof of~\cref{L:Scopest1}, this is equivalent to showing that there exist $(\bla,\ba) \in \cC$ and $k \in \{1,2\}$ with $b^{\ba}_{ik}(\bla)-b^{\ba}_{i-1k}(\bla)<\hk$ if and only if $\tb_i-\tb_{i-1}<2\hk+2$. 
If $(\bla,\ba) \in \cC$ then 
\[\left\{b^{\ba}_{j1}(\bla),b^{\ba}_{j2}(\bla)\right\} = \left\{\frac{\tb_j+1}{2},\frac{\tb_j-1}{2}\right\}\]
for $j =i-1,i$. Furthermore, by~\cref{L:deltaseq}, there exists $(\bmu,\ba) \in \cC$ such that 
$b^{\ba}_{i-11}(\bmu)  = \frac{\tb_{i-1}+1}{2}$ and 
$b^{\ba}_{i1}(\bmu) = \frac{\tb_{i}-1}{2}$ 
so that \[b^{\ba}_{ik}(\bla)-b^{\ba}_{i-1k}(\bla) \ge\frac{\tb_i-\tb_{i-1}-2}{2}
\]
for all $(\bla,\ba) \in\cC$, with $(\bmu,\ba)$ and $k=1$ achieving the equality. 
Therefore there exist $(\bla,\ba) \in \cC$ and $k\in \{1,2\}$ such that $b^{\ba}_{ik}(\bla)-b^{\ba}_{i-1k}(\bla)<\hk$ if and only if $\tb_i-\tb_{i-1}<2\hk+2$.
\end{proof}

We note that if $\hk=0$ and $\tb_{i-1}=\tb_i \equiv 0 \pmod 2$ then~\cref{L:Scopest1} implies that $\cB \rightarrow_{\text{Sc}_i} \Phi_i(\cB)$. However, in this case $\rightarrow_{\text{Sc}_i}$ is just the identity map. We would like to rule out this relation, so we say that the relation $\rightarrow_{\text{Sc}_i}$ is non-trivial if $\cB \rightarrow_{\text{Sc}_i} \Phi_i(\cB)$ with $\cB \ne \cB'$. 

\begin{defn} \label{D:ScopesMin}
Let $\TT=(\tb,\nn,\pp) \in \Tl$ and suppose $\hk \ge 0$. For $0 \leq i \leq e-1$, set $d_i=2$ if $\min\{\nn,\pp\}>0$ and $\tb_{i-1},\tb_i \equiv 1 \pmod2$ and set $d_i=0$ otherwise. 
We say that $\TT$ is $\hk$-Scopes-minimal if
\begin{itemize}
\item For all $1 \leq i \leq e-1$ we have $\tb_i=\tb_{i-1}$ or $\tb_{i}-\tb_{i-1}<2\hk+d_i$; and
\item $\tb_0=\tb_{e-1}+2$ or $\tb_0-\tb_{e-1}<2\hk+2+d_0$. 
\end{itemize}
\end{defn}

\begin{defn}
If $\cB,\cB' \in \mathfrak{B}$, write $\cB \ScA \cB'$ if there exist $i_1,i_2,\dots,i_t \in \{0,1,\dots,e-1\}$ such that 
\[\cB \rightarrow_{\text{Sc}_{i_1}} \cB^{(1)} \rightarrow_{\text{Sc}_{i_2}} \cB^{(2)} \rightarrow_{\text{Sc}_{i_3}} \dots \rightarrow_{\text{Sc}_{i_t}} \cB'\]
with each map $\rightarrow_{\text{Sc}_{i_s}}$ non-trivial. 
Note that if $\cB \ScA \cB'$ and $(\bla,\ba) \in \cB$ and $(\bla',\ba) \in \cB'$ then $|\bla| \geq |\bla'|$ with equality if and only $t=0$.  
\end{defn}

\begin{prop} \label{P:ScopesMinExists}
Suppose that $\cB\in \Bc$ and $\hk(\cB)=\hk$. 
Then there exists a combinatorial block $\cB'\in \Bc$ such that $\cB \ScA \cB'$ and $\psi(\cB')$ is $\hk$-Scopes-minimal. 
\end{prop}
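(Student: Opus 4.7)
The plan is to proceed by strong induction on $\nB$, exploiting the key observation made in the remark immediately following the definition of $\ScA$: every non-trivial Scopes move strictly decreases $\nB$.

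If $\psi(\cB)$ is already $\hk$-Scopes-minimal, simply take $\cB' = \cB$; then $\cB \ScA \cB'$ holds with $t = 0$. Otherwise, write $\psi(\cB) = (\tb,\nn,\pp)$. Negating the conditions in \cref{D:ScopesMin}, at least one of the following must hold: there exists $1 \le i \le e-1$ with $\tb_i \ne \tb_{i-1}$ and $\tb_i - \tb_{i-1} \ge 2\hk + d_i$, or else $\tb_0 \ne \tb_{e-1}+2$ and $\tb_0 - \tb_{e-1} \ge 2\hk + 2 + d_0$. In either case, one of \cref{L:Scopest1,L:Scopest2} (distinguished by whether $d_i = 0$ or $d_i = 2$) supplies a non-trivial Scopes move $\cB \rightarrow_{\text{Sc}_i} \cB^{(1)}$, with $\cB^{(1)} \in \Bc$ because $\Bc$ is closed under $\Sc$-moves by \cref{D:HookFixed}.

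By the remark following the definition of $\ScA$, $\nB[\cB^{(1)}] < \nB$, so the inductive hypothesis furnishes $\cB' \in \Bc$ with $\cB^{(1)} \ScA \cB'$ and $\psi(\cB')$ being $\hk$-Scopes-minimal. Concatenating the single move $\cB \rightarrow_{\text{Sc}_i} \cB^{(1)}$ with the sequence witnessing $\cB^{(1)} \ScA \cB'$ produces $\cB \ScA \cB'$, as required.

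There is no substantive obstacle. The definition of $\hk$-Scopes-minimal in \cref{D:ScopesMin} is engineered precisely so that its failure matches the hypothesis of one of \cref{L:Scopest1,L:Scopest2}, and termination is immediate from the well-foundedness remark. The only mild bookkeeping concerns the special treatment of $i = 0$ (coming from the additional shift of runners $0$ and $e-1$ under $\Phi_0$, which produces the asymmetric clause $\tb_0 = \tb_{e-1}+2$ in \cref{D:ScopesMin}), but this is already encoded in the relevant definitions and lemmas.
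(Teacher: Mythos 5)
Your proof is correct and follows essentially the same route as the paper: negate the $\hk$-Scopes-minimality conditions, invoke \cref{L:Scopest1} or \cref{L:Scopest2} to produce a non-trivial move, and use the strict decrease of $\nB$ to terminate. The paper phrases this as a terminating procedure rather than an induction, but the argument is identical.
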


\begin{proof}
Suppose that $\psi(\cB)=(\tb,\nn,\pp)$. We repeatedly apply~\cref{L:Scopest1} and~\cref{L:Scopest2}. 
If there exists $1 \leq i \leq e-1$ such that $\tb_i-\tb_{i-1}\ge \max\{1,2\hk+d_i\}$ (resp.~$\tb_0-\tb_{e-1}\ge \max\{3,2\hk+2+d_0\}$) then $\cB \rightarrow_{\text{Sc}_i} \Phi_i(\cB)$ (resp.~$\cB\rightarrow_{\text{Sc}_0} \Phi_0(\cB)$). In either case, if $(\bla,\ba) \in \cB$ and $(\bla',\ba) \in \Phi_i(\cB)$ then $|\bla'|<|\bla|$, so this procedure must terminate and we end up with a combinatorial block $\cB'$ such that $\psi(\cB')$ is $\hk$-Scopes-minimal.   
\end{proof}

We now want to show that the block $\cB'$ described in~\cref{P:ScopesMinExists} is unique. 

\begin{lem} \label{L:MeetUp}
Suppose that $\cB \in \Bc$ and let $\hk = \hk(\cB)$. Suppose that there exist $0 \leq i,j \leq e-1$ with $i \ne j$ such that
$\cB \rightarrow_{\text{Sc}_i} \Phi_i(\cB)$ and $\cB \rightarrow_{\text{Sc}_j} \Phi_j(\cB)$ and such that these maps are non-trivial.
\begin{itemize}
\item If $j \not\equiv i \pm 1 \pmod e$ then
\[
\Phi_i(\cB) \rightarrow_{\text{Sc}_j} \Phi_j(\Phi_i(\cB)) \text{ and }\Phi_j(\cB) \rightarrow_{\text{Sc}_i} \Phi_i(\Phi_j(\cB))
\] and
$\Phi_j(\Phi_i(\cB))=\Phi_i(\Phi_j(\cB))$. 
\item If $j \equiv i \pm 1 \pmod e$ then 
\[
\Phi_i(\cB) \rightarrow_{\text{Sc}_{j}} \Phi_{j}(\Phi_i(\cB)) \rightarrow_{\text{Sc}_i}
\Phi_i(\Phi_{j}(\Phi_i(\cB))) \text{ and } \Phi_j(\cB) \rightarrow_{\text{Sc}_{i}} \Phi_{i}(\Phi_j(\cB)) \rightarrow_{\text{Sc}_j}
\Phi_j(\Phi_{i}(\Phi_j(\cB)))
\]
and $\Phi_j(\Phi_{i}(\Phi_j(\cB))) = \Phi_i(\Phi_{j}(\Phi_i(\cB)))$.
\end{itemize}
\end{lem}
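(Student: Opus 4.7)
The plan is to translate everything to tuple language via $\psi$. Using \cref{L:Scopest1,L:Scopest2}, each non-trivial move $\cB\rightarrow_{\text{Sc}_i}\Phi_i(\cB)$ corresponds, at the level of $\tb=(\tb_0,\dots,\tb_{e-1})$, to a swap of $\tb_{i-1}$ and $\tb_i$ (for $1\le i\le e-1$), or to the cyclic shift $(\tb_0,\dots,\tb_{e-1})\mapsto(\tb_{e-1}+2,\tb_1,\dots,\tb_{e-2},\tb_0-2)$ (for $i=0$), with $\nn,\pp,\hk$ and the multiset of parities of the $\tb_k$ all preserved; only the position of each parity moves. Write $\sigma_i$ for these operations on $\Z^e$. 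As abstract maps on $\Z^e$, the $\sigma_i$ satisfy the affine symmetric group relations $\sigma_i\sigma_j=\sigma_j\sigma_i$ when $j\not\equiv i\pm 1\pmod{e}$ and $\sigma_i\sigma_j\sigma_i=\sigma_j\sigma_i\sigma_j$ when $j\equiv i\pm1\pmod{e}$, as one checks directly on tuples, absorbing the $\pm 2$ shifts of $\sigma_0$ into the bookkeeping. So once each intermediate move is shown to be non-trivially realisable, the final equalities in both cases follow automatically from the Coxeter identities for $\sigma_i$.

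For the commuting case $j\not\equiv i\pm1\pmod{e}$, $\sigma_i$ acts only on positions $\{i-1,i\}$ (or $\{e-1,0\}$ if $i=0$), which are disjoint from the positions $\{j-1,j\}$ (or $\{e-1,0\}$) affected by $\sigma_j$. The criterion in \cref{L:Scopest1,L:Scopest2} for $\cB'\rightarrow_{\text{Sc}_j}\Phi_j(\cB')$ to be a non-trivial move depends only on $\tb_{j-1},\tb_j$ (or $\tb_{e-1},\tb_0$) and on their parities; since these entries and their parities are untouched by $\sigma_i$, non-triviality of $\cB\rightarrow_{\text{Sc}_j}\Phi_j(\cB)$ immediately yields non-triviality of $\Phi_i(\cB)\rightarrow_{\text{Sc}_j}\Phi_j(\Phi_i(\cB))$, and symmetrically. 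The equality $\Phi_j(\Phi_i(\cB))=\Phi_i(\Phi_j(\cB))$ is then just $\sigma_j\sigma_i=\sigma_i\sigma_j$ on tuples, together with the fact that $\cB\in\Bc$ is determined by $\psi(\cB)$ and $\hk(\cB)$, both of which agree.

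For the braid case $j\equiv i\pm1\pmod{e}$, after swapping $i,j$ we may assume (cyclically) $j=i+1$. From the hypotheses and \cref{L:Scopest1,L:Scopest2} we have $\tb_{i}-\tb_{i-1}\ge 2\hk+d_i>0$ and $\tb_{i+1}-\tb_i\ge 2\hk+d_{i+1}>0$. After $\sigma_i$ the triple in positions $i-1,i,i+1$ becomes $(\tb_i,\tb_{i-1},\tb_{i+1})$, and to apply $\sigma_{i+1}$ non-trivially one needs $\tb_{i+1}-\tb_{i-1}$ to exceed the new parity-dependent threshold; but $\tb_{i+1}-\tb_{i-1}=(\tb_{i+1}-\tb_i)+(\tb_i-\tb_{i-1})\ge 4\hk+d_i+d_{i+1}$, which comfortably dominates $2\hk+d$ for any $d\in\{0,2\}$. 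The resulting triple is $(\tb_i,\tb_{i+1},\tb_{i-1})$, and the final $\sigma_i$ requires $\tb_{i+1}-\tb_i\ge 2\hk+d'$ for the updated parity pair at positions $i-1,i$; since the parities at these positions now coincide with the original parities of $\tb_i,\tb_{i+1}$, the required bound is exactly the original hypothesis for $\sigma_{i+1}$ applied to $\cB$, which holds. The symmetric three-step sequence from the other side is verified identically, and the equality $\Phi_j(\Phi_i(\Phi_j(\cB)))=\Phi_i(\Phi_j(\Phi_i(\cB)))$ reduces to $\sigma_j\sigma_i\sigma_j=\sigma_i\sigma_j\sigma_i$ on tuples.

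The main obstacle is the bookkeeping when one of $i,j$ is $0$: the $\sigma_0$-shift $\tb_{e-1}\mapsto\tb_{e-1}+2,\tb_0\mapsto\tb_0-2$ interacts with the thresholds ($2\hk+2+d_0$ rather than $2\hk+d_0$) and with parities, so one must carefully verify the cyclic affine braid relation and the sharper bound at position $0$. However, the parities are preserved, the constants $d_k$ take only values in $\{0,2\}$, and each intermediate gap in the braid step is bounded below by a sum of two of the original hypothesised gaps, which makes the finite case-check routine.
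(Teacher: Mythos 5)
Your proposal follows essentially the same route as the paper: translate both moves into conditions on $\tb$ via \cref{L:Scopest1,L:Scopest2}, observe that in the commuting case the affected positions and their parities are disjoint, and in the braid case add the two gap inequalities to get the middle step, then note the last step reduces to the original hypothesis; the closing equalities are the Coxeter relations on tuples, exactly as in the paper.

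One step as written is not quite right, and it is precisely the point the paper treats with extra care. In the braid case you claim $\tb_{i+1}-\tb_{i-1}\ge 4\hk+d_i+d_{i+1}$ "comfortably dominates" the new threshold $2\hk+d'$. This fails when $\hk=0$, $\nn,\pp>0$, and $\tb_{i-1},\tb_{i+1}$ are odd while $\tb_i$ is even: then $d_i=d_{i+1}=0$ but $d'=2$, and $4\hk+d_i+d_{i+1}=0<2$. (Your inline claim $2\hk+d_i>0$ is also false here; positivity of the gaps comes from non-triviality, not from the threshold.) The rescue is the parity refinement the paper spells out: since $\tb_{i-1}\not\equiv\tb_i\pmod 2$ and the gap is a nonnegative integer exceeding $2\hk-1$, in fact $\tb_i-\tb_{i-1}\ge 2\hk+1$ and likewise $\tb_{i+1}-\tb_i\ge 2\hk+1$, whence $\tb_{i+1}-\tb_{i-1}\ge 2\hk+2=2\hk+d'$. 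You gesture at exactly this ("the parities are preserved ... the finite case-check is routine"), so the gap is easily repaired, but as stated the bound you rely on does not cover this case and the proof is incomplete without it.
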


\begin{proof}
\begin{itemize}
\item Suppose $j \not \equiv i \pm 1 \pmod e$. For simplicity we assume that $i,j \ne 0$. The operation $\Phi_i$ swaps runners $i$ and $i-1$ of the abacus configurations of the elements of $\cB$ and does not change runners $j$ and $j-1$, so that 
$\cB \rightarrow_{\text{Sc}_j} \cB' \implies\Phi_i(\cB) \rightarrow_{\text{Sc}_j} \Phi_j(\Phi_i(\cB))$; 
similarly $\Phi_j(\cB) \rightarrow_{\text{Sc}_i} \Phi_i(\Phi_j(\cB))$.
It is clear that the operations $\Phi_i$ and $\Phi_j$ commute.
\item Suppose that $j\equiv i+1 \pmod e$. (The case $j \equiv i-1 \pmod e$ is almost identical.) For simplicity, we also assume $1 \leq i \le e-1$. Let $\psi(\cB)=(\tb,\nn,\pp)$. 

First assume $\min\{\nn,\pp\}=0$ or that $\tb_{i-1} \equiv 0 \pmod 2$ or that $\tb_{i+1} \equiv 0 \pmod 2$. The operation $\Phi_i$ swaps runners $i$ and $i-1$ of the abacus configurations of the elements of $\cB$ so that $\Phi_i(\cB) \rightarrow_{\text{Sc}_{i+1}} \Phi_{i+1}(\Phi_i(\cB))$ if and only if $\tb_{i+1}-\tb_{i-1} \ge 2\hk$, by \cref{L:Scopest1}.
Since $\cB \rightarrow_{\text{Sc}_i} \Phi_i(\cB)$ and $\cB \rightarrow_{\text{Sc}_{i+1}} \Phi_{i+1}(\cB)$ we have $\tb_i-\tb_{i-1}\ge 2\hk$ and $\tb_{i+1}-\tb_{i}\ge 2\hk$, so this does indeed hold. Then 
$\Phi_{i+1}(\Phi_i(\cB)) \rightarrow_{\text{Sc}_i} \Phi_i(\Phi_{i+1}(\Phi_i(\cB)))$ if and only $\cB \rightarrow_{\text{Sc}_{i+1}} \Phi_{i+1}(\cB)$, which we know holds.  

Now assume that $\nn,\pp>0$ and that $\tb_{i-1},\tb_{i+1} \equiv 1 \pmod 2$. Then $\Phi_i(\cB) \rightarrow_{\text{Sc}_{i+1}} \Phi_{i+1}(\Phi_i(\cB))$ if and only if $\tb_{i+1}-\tb_{i-1} \ge 2\hk+2$. If $\tb_{i} \equiv 1 \pmod 2$ then $\tb_{i+1}-\tb_i \ge 2\hk +2$ and $\tb_{i}-\tb_{i-1} \ge 2 \hk+2$ so the condition holds. If $\tb_{i} \equiv 0 \pmod 2$ then $\tb_{i+1}-\tb_i \ge 2\hk$ and $\tb_{i}-\tb_{i-1} \ge 2 \hk$ by \cref{L:Scopest2}, but due to the congruences modulo $2$ we actually have $\tb_{i+1}-\tb_i,\tb_i-\tb_{i-1} \ge 2\hk+1$. Hence $\tb_{i+1}-\tb_{i-1} \ge 2\hk+2$ as required. As above, $\Phi_{i+1}(\Phi_i(\cB)) \rightarrow_{\text{Sc}_i} \Phi_i(\Phi_{i+1}(\Phi_i(\cB)))$ if and only $\cB \rightarrow_{\text{Sc}_{i+1}} \Phi_{i+1}(\cB)$.

We have shown that $\Phi_i(\cB) \rightarrow_{\text{Sc}_{i+1}} \Phi_{i+1}(\Phi_i(\cB)) \rightarrow_{\text{Sc}_i}
\Phi_i(\Phi_{i+1}(\Phi_i(\cB)))$ and a similar argument shows that $\Phi_{i+1}(\cB) \rightarrow_{\text{Sc}_{i}} \Phi_{i}(\Phi_{i+1}(\cB)) \rightarrow_{\text{Sc}_{i+1}}
\Phi_{i+1}(\Phi_{i}(\Phi_{i+1}(\cB)))$. Again, it is clear that $\Phi_{i+1}(\Phi_{i}(\Phi_{i+1}(\cB))) = \Phi_i(\Phi_{i+1}(\Phi_i(\cB)))$.\qedhere
\end{itemize}
\end{proof}

\begin{prop} \label{P:MinUnique}
Let $\mathfrak{B}$ be a $\Sc$-equivalence class of $\Bl$ such that $\cB \in \Bc$ and let $\hk = \hk(\cB)$ for $\cB \in \mathfrak{B}$.
Then there exists a unique $\cD \in \mathfrak{B}$ such that $\psi(\cD)$ is $\hk$-Scopes-minimal. 
\end{prop}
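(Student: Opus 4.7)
The plan is to establish existence of $\cD$ via \cref{P:ScopesMinExists} and uniqueness via a Newman-style diamond argument applied to the rewriting system $\ScA$ on $\mathfrak{B}$.

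For termination, a single non-trivial step $\cB\rightarrow_{\text{Sc}_i}\cB'$ strictly decreases $|\bla|$ for any $(\bla,\ba)\in\cB$, so no infinite descending chain of non-trivial steps exists. Comparing \cref{D:ScopesMin} with \cref{L:Scopest1,L:Scopest2} shows that $\psi(\cB)$ is $\hk$-Scopes-minimal if and only if no non-trivial $\rightarrow_{\text{Sc}_i}$ step is available at $\cB$; that is, the $\hk$-Scopes-minimal blocks are exactly the $\ScA$-normal forms in $\mathfrak{B}$.

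Local confluence is the heart of the argument. Suppose two non-trivial single steps $\cB\rightarrow_{\text{Sc}_i}\cB_1$ and $\cB\rightarrow_{\text{Sc}_j}\cB_2$ are simultaneously available at $\cB$, with $i\neq j$. Then \cref{L:MeetUp} produces a common block $\cB^\circ$ reached from each of $\cB_1$ and $\cB_2$ by one further $\rightarrow_{\text{Sc}}$-move when $j\not\equiv i\pm 1\pmod e$, and by two further $\rightarrow_{\text{Sc}}$-moves when $j\equiv i\pm 1\pmod e$. Each intermediate move is itself non-trivial, because $|\bla|$ strictly decreases along every step in the diamond. Hence $\cB_1\ScA\cB^\circ$ and $\cB_2\ScA\cB^\circ$, establishing local confluence.

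Newman's lemma then yields global confluence of $\ScA$, and since $\Sc$ is by definition the equivalence closure of $\rightarrow_{\text{Sc}}$, any two members of $\mathfrak{B}$ admit a common $\ScA$-descendant. Applied to two putative $\hk$-Scopes-minimal blocks $\cD_1,\cD_2\in\mathfrak{B}$, this descendant must equal both (each being a normal form), forcing $\cD_1=\cD_2$. The main obstacle in executing the plan will be the careful verification that every intermediate step produced by \cref{L:MeetUp} is genuinely non-trivial, particularly in the adjacent case $j\equiv i\pm 1\pmod e$, where the parities of $\tb_{i-1},\tb_i,\tb_{i+1}$ and the dependence of the shift $d_k$ on $\min\{\nn,\pp\}$ must be tracked through the successive swaps to confirm that the strict inequalities of $\hk$-Scopes-minimality are in fact strict at each intermediate stage.
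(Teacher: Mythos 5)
Your proof is correct and is essentially the paper's argument: both hinge on \cref{L:MeetUp} for local confluence and on the strict decrease of $\nB$ for termination, the paper merely unrolling Newman's lemma as a minimal-counterexample induction on $\nB$ rather than citing it by name. The non-triviality worry you flag at the end is harmless in any case, since a trivial intermediate step is the identity and can simply be excised from the chain without affecting the common descendant.
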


\begin{proof}
The existence of a block $\cD$ as above holds by~\cref{P:ScopesMinExists}.
Suppose the result is false, that is, $\cD$ is not unique. 
Then there exists $\cB \in \mathfrak{B}$ such that we can find distinct $\cB_1, \cB_2 \in \mathfrak{B}$ with respective core blocks $\cC_1$ and $\cC_2$ such that $\phi^{-1}(\cC_1)$ and $\phi^{-1}(\cC_2)$ are $\hk$-Scopes-minimal and $\cB \rightarrow_{\text{Sc}^\ast} \cB_1$ and $\cB \rightarrow_{\text{Sc}^\ast} \cB_2$. 
Choose $\cB$ satisfying this condition such that $\nB$ is minimal.
Then there exist $i\ne j$ such that 
$\cB \rightarrow_{\text{Sc}_i} \Phi_i(\cB)$ and $\cB \rightarrow_{\text{Sc}_j} \Phi_j(\cB)$.
By~\cref{L:MeetUp} we can find $\cB' \in \mathfrak{B}$ such that 
$\Phi_i(\cB)\rightarrow_{\text{Sc}^\ast} \cB'$ and $\Phi_j(\cB)\rightarrow_{\text{Sc}^\ast} \cB'$. 

If $\cB' \rightarrow_{\text{Sc}^\ast} \cB_1$ and $\cB' \rightarrow_{\text{Sc}^\ast} \cB_2$ then since $\nB[\cB']<\nB$ we have a contradiction to our choice of $\cB$. Otherwise 
$\cB' \rightarrow_{\text{Sc}^\ast} \cB''$ for some $\cB''$ with $\cB''\ne \cB_1$ or $\cB'' \ne\cB_2$ and $\phi^{-1}(\cB'')$ Scopes-minimal. Then $\Phi_i(\cB) \rightarrow_{\text{Sc}^\ast} \cB_1$ and $\Phi_i(\cB) \rightarrow_{\text{Sc}^\ast} \cB''$ and $\nB[\Phi_i(\cB)]<\nB$. 
Again this contradicts the choice of $\cB$. 
\end{proof}

Combining~\cref{P:ScopesMinExists} and~\cref{P:MinUnique}, we obtain the following theorem. 

\begin{thm} \label{T:ScopesReps}
Let $\hk \geq 0$. Recall that $\Bc_{\hk} = \{\cB \in \Bc \mid \hk(\cB)=\hk\}$. The set
\[\left\{\cB \in \Bc \mid \psi(\cB) \text{ is $\hk(\cB)$-Scopes-minimal} \right\}\]
is a complete set of representatives of the $\Sc$-equivalence classes
of $\Bc_{\hk}$. 
\end{thm}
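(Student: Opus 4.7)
The plan is to assemble Theorem \ref{T:ScopesReps} directly as a corollary of the two preceding propositions \ref{P:ScopesMinExists} and \ref{P:MinUnique}, both of which do the real work. Concretely, there are two things to check: every $\Sc$-equivalence class of $\Bc_\hk$ contains at least one combinatorial block of the claimed form, and no class contains two.

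For existence, I would start with an arbitrary $\cB \in \Bc_\hk$ and apply Proposition \ref{P:ScopesMinExists} to produce $\cB' \in \Bc$ with $\cB \ScA \cB'$ and $\psi(\cB')$ being $\hk$-Scopes-minimal. The minor verification is that $\cB'$ actually lies in $\Bc_\hk$ rather than just $\Bc$; this is immediate because each non-trivial Scopes move $\rightarrow_{\text{Sc}_i}$ preserves the number of removable hooks (explicitly built into the conditions of Lemmas \ref{L:Scopest1} and \ref{L:Scopest2}, which require $\hk(\cB) = \hk(\cB')$), and $\cB \ScA \cB'$ is a composition of such moves. Since $\ScA$ is a special case of the equivalence $\Sc$, the block $\cB'$ lies in the same $\Sc$-class as $\cB$.

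For uniqueness, suppose $\cD_1, \cD_2 \in \Bc_\hk$ both have $\psi(\cD_i)$ that is $\hk$-Scopes-minimal and lie in the same $\Sc$-equivalence class $\mathfrak{B}$. Because $\mathfrak{B}$ contains at least one element of $\Bc$ (namely $\cD_1$), Proposition \ref{P:MinUnique} applies to $\mathfrak{B}$ and produces a unique combinatorial block in $\mathfrak{B}$ whose $\psi$-image is $\hk$-Scopes-minimal. Hence $\cD_1 = \cD_2$.

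Since neither step requires new combinatorial input, there is no real obstacle beyond Propositions \ref{P:ScopesMinExists} and \ref{P:MinUnique} themselves; the only care needed is the bookkeeping that the whole discussion takes place inside $\Bc_\hk$, which is immediate from the $\hk$-preservation built into the definition of $\rightarrow_{\text{Sc}_i}$. The proof is therefore essentially a one-line citation of the two propositions.
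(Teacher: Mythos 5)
Your proof is correct and matches the paper's argument exactly: the theorem is obtained by combining \cref{P:ScopesMinExists} (existence) and \cref{P:MinUnique} (uniqueness), with the observation that Scopes moves preserve $\hk$ handling the bookkeeping. Nothing further is needed.
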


\begin{cor}
Let $\mathfrak{B}$ be a $\Sc$-equivalence class in $\Bl$. Then there exists a unique combinatorial block $\cB \in \mathfrak{B}$ such that if $\cB' \in \mathfrak{B}$ then $\nB\le \nB[\cB']$. We call $\cB$ a Scopes-minimal class.
\end{cor}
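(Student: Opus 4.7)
The plan is to reduce to \cref{T:ScopesReps} via shift equivalence, and then observe that any non-trivial Scopes move strictly decreases $\|\cdot\|$, so that the $\hk$-Scopes-minimal representative produced by that theorem is automatically $\|\cdot\|$-minimal.

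First, for an arbitrary $\cB \in \mathfrak{B}$ with bicharge $\ba$, \cref{L:BzOK} supplies a bicharge $\ba_0$ with $\ba_0 \equiv_e \ba$ such that the shift $(\bla,\ba) \mapsto (\bla,\ba_0)$ carries $\cB$ into $\Bc$. Since this only relabels the bicharge without touching bipartitions, it preserves $\|\cdot\|$; and since $\ba_0 \equiv_e \ba$, each abacus configuration for the shifted block is obtained from the original by translating beads on each runner by the same integer number of rows, so the condition for a move $\rightarrow_{\text{Sc}_i}$ to be available, and its outcome, is unchanged. Applying the shift to every member of $\mathfrak{B}$ therefore yields a $\|\cdot\|$-preserving bijection onto a Scopes class $\mathfrak{B}_0 \subseteq \Bc$, and it suffices to prove the corollary for $\mathfrak{B}_0$.

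Let $\hk$ denote the common value of $\hk(\cB')$ for $\cB' \in \mathfrak{B}_0$, which is constant because Scopes moves permute runners of the core block and hence preserve $\hk$. By \cref{T:ScopesReps} there is a unique $\cD \in \mathfrak{B}_0$ with $\psi(\cD)$ $\hk$-Scopes-minimal; by \cref{D:ScopesMin,L:Scopest1,L:Scopest2} this is equivalent to saying that no non-trivial Scopes move applies to $\cD$. For any other $\cB' \in \mathfrak{B}_0$, \cref{P:ScopesMinExists} produces a sequence of non-trivial reductions $\cB' \ScA \cD$ (the endpoint must be $\cD$ by \cref{P:MinUnique}); since each non-trivial $\rightarrow_{\text{Sc}_i}$ strictly decreases the common size of the member bipartitions, this gives $\|\cB'\| > \|\cD\|$, so $\cD$ is the unique $\|\cdot\|$-minimal element of $\mathfrak{B}_0$.

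The genuinely non-trivial ingredient is the uniqueness of the $\hk$-Scopes-minimal representative, already supplied by \cref{P:MinUnique}; the present corollary is essentially the translation of that uniqueness into a statement about $\|\cdot\|$, via the shift to $\Bc$ and the matching of $\hk$-Scopes-minimality with the absence of any non-trivial reducing Scopes move.
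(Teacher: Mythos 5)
Your proposal is correct and follows essentially the same route as the paper: pass to $\Bc$ by a uniform change of bicharge, invoke the existence and uniqueness of the $\hk$-Scopes-minimal representative from \cref{T:ScopesReps} (via \cref{P:ScopesMinExists,P:MinUnique}), and use the fact that every non-trivial $\rightarrow_{\text{Sc}_i}$ strictly decreases $\nB$ to identify that representative as the unique size-minimal block. The paper's proof is terser and leaves the last step implicit (it is recorded in the remark following the definition of $\ScA$), but the argument is the same.
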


\begin{proof}
Suppose that each $\cB \in \mathfrak{B}$ has bicharge $\ba=(a_1,a_2)$. 
For $\ba' \equiv_e \ba$ let $\cB^{\ba'}= \{(\bla,\ba') \mid (\bla,\ba) \in \cB\}$. Then there exists some such $\ba'$ with the further property that $\cB^{\ba'}\in \Bc$ for all $\cB \in \mathfrak{B}$. 
By~\cref{T:ScopesReps}, the set $\{\cB^{\ba'} \mid \cB \in \mathfrak{B}\}$ contains a Scopes-minimal class; hence does does $\mathfrak{B}$.  
\end{proof}

\begin{eg}
Take $e=8$. Let $\bla=((12,5,3,2,1^{13}),(11,9,8,6^2,5,4,2^2,1))$ and $\ba=(18,19)$ and let $\cB$ be the combinatorial block containing $(\bla,\ba)$. We find the Scopes-minimal block $\cB'$ with $\cB' \Sc \cB$.
We have $\bar{\bla}=((12,5,3,2,1^5),(11,9,8,6^2,5,4,2^2,1))$ so that 

\begin{align*}
\bla & = \abacusline(8,1,bnbbbbbb,bbbbbbbn,bnbnnbnn,nnnnnbnn), \quad \abacusline(8,1,bbbbbbbb,bnbnbbnn,bnbnbbnn,bnbnnbnn) \\
\bar{\bla} & = \abacusline(8,1,bbbbbbbb,bnbbbbbn,bnbnnbnn,nnnnnbnn), \quad \abacusline(8,1,bbbbbbbb,bnbnbbnn,bnbnbbnn,bnbnnbnn)
\end{align*}

Let $\cC$ be the combinatorial block containing $(\bar{\bla},\ba)$. By~\cref{L:basetuple1} we see that $\cC$ is a core block and $\cC \in \Cl$, so $\cB \in \Bc$ with $\hk(\cB)=1$.
Furthermore if we choose base tuple $(3,0,3,1,2,3,1,0)$, then $\delta_{\bar{\bla}} = (0,0,-,-,+,+,+,0)$.
It follows that $\bar{\bla}$ is minimal in $\cC$.
Thus we have $\psi(\cB) = ((7,2,7,3,5,8,3,2),2,3)$. 

To find the Scopes-minimal block, we apply the operations $\rightarrow_{\text{Sc}_i}$ to $(7,2,7,3,5,8,3,2)$ until we reach a term $\tb'$ such that $(\tb',2,3)$ is $1$-Scopes-minimal. (This is a slight abuse of notation; we are really applying the operations to $\cC$.) 
\begin{multline*}
(7,2,7,3,5,8,3,2) \rightarrow_{\text{Sc}_2} 
(7,7,2,3,5,8,3,2) \rightarrow_{\text{Sc}_5} 
(7,7,2,3,8,5,3,2) \rightarrow_{\text{Sc}_4} 
(7,7,2,8,3,5,3,2) \rightarrow_{\text{Sc}_3} \\
(7,7,8,2,3,5,3,2) \rightarrow_{\text{Sc}_0} 
(4,7,8,2,3,5,3,5) \rightarrow_{\text{Sc}_1} 
(7,4,8,2,3,5,3,5) \rightarrow_{\text{Sc}_2} 
(7,8,4,2,3,5,3,5)
\end{multline*}
Then $\phi^\ast((7,8,4,2,3,5,3,5),2,3)=(\bmu,\ba)$ where
$\bmu=((8,1^6),(7^3,6,5,3^3,2))$:
\[\bmu=\abacusline(8,1,bbbbbbbb,bbbnbbbb,bbnnnnnn,nbnnnnnn),\quad
\abacusline(8,1,bbbbbbbb,bbbnnbnb,bbnnnbnb,bbnnnnnn)\]

So the Scopes-minimal block in the $\Sc$-equivalence class containing $\cB$ is the combinatorial block $\cB'$ with $\hk(\cB')=1$ and core block containing $(\bmu,\ba)$; or equally, the combinatorial block containing $(((8,1^6),(7^3,6,5,3^3,2,1^8)),(18,19))$.
\end{eg}

Recall that the equivalence relation $\SU$ is the relation on $\Bl$ generated by $\Sc$ and $\Sh$.
Our next aim is find a set of $\SU$-representatives of $\Bl$. However, the relation $\Sh$ only changes the bicharge of a combinatorial block and so, given~\cref{T:ScopesReps}, this will be relatively straightforward. Given a $\SU$-equivalence class of $\Bl$, we just need to define a canonical choice of bicharge. Following the previous work in this section, we will choose the representative to be an element of $\Bc$.

In fact, we define two different sets of representatives. The first is probably the more natural, but the second makes it easier to describe the combinatorics needed in~\cref{S:SIC2}. 

\begin{lem} \label{L:ShiftCharge}
Suppose that $\cB \in\Bc$ has bicharge $\ba=(a_1,a_2)$. For $\ba'\equiv_e \ba$, set $\cB^{\ba'}=\{(\bla,\ba') \mid (\bla,\ba) \in \cB\}$. We assume $\ba'=(a'_1,a'_2)$.   
\begin{itemize}
\item Suppose that $\nn(\cB),\pp(\cB) >0$. Let $\ba' \equiv_e \ba$. Then $\cB^{\ba'} \in \Bc$ if and only if there exists $m \in \Z$ such that $a'_k=a_k+me$ for $k=1,2$.
\item Suppose that $\nn(\cB)=\pp(\cB)=0$. Let $\ba' \equiv_e \ba$. Then $\cB^{\ba'} \in \Bc$ if and only if there exists $m \in \Z$ such that $a'_k=a_k+me$ for $k=1,2$ or there exists $m_1 \in \Z$ such that $a'_1=a_1+(m_1+1)e$ and $a'_2=a_2+m_1e$ or there exists $m_2 \in \Z$ such that $a'_1=a_1+m_2e$ and $a'_2=a_2+(m_2+1)e$. In the second case we have $\nn(\cB^{\ba'})=e$ and $\pp(\cB^{\ba'})=0$ and in the third case we have $\nn(\cB^{\ba'})=0$ and $\pp(\cB^{\ba'})=e$. 
\item Suppose that $\nn(\cB)=0$ and $0<\pp(\cB)<e$. Let $\ba' \equiv_e \ba$. Then $\cB^{\ba'} \in \Bc$ if and only if there exists $m \in \Z$ such that $a'_k=a_k+me$ for $k=1,2$ or there exists $m_1 \in \Z$ such that $a'_1=a_1+(m_1+1)e$ and $a'_2=a_2+m_1e$. In the latter case, $\nn(\cB^{\ba'})=e-\pp(\cB)$ and $\pp(\cB^{\ba'})=0$. 
\item Suppose that $\pp(\cB)=0$ and $0 < \nn(\cB)<e$. Let $\ba'\equiv_e \ba$. Then $\cB^{\ba'} \in \Bc$ if and only if there exists $m \in \Z$ such that $a'_k=a_k+me$ for $k=1,2$ or there exists $m_1 \in \Z$ such that $a'_1=a_1+m_1e$ and $a'_2=a_2+(m_1+1)e$. In the latter case, $\nn(\cB^{\ba'})=0$ and $\pp(\cB^{\ba'})=e-\pp(\cB)$.
\end{itemize}
\end{lem}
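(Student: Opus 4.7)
The plan is to reduce the question to a runner-by-runner analysis on the abacus. Writing $a'_k = a_k + m_k e$, the key observation is that shifting $a_k$ by $m_k e$ translates every bead on abacus $k$ down by $m_k$ rows, so $b^{\ba'}_{ik}(\bla) = b^{\ba}_{ik}(\bla) + m_k$ for all $i, k$ and all $(\bla,\ba)\in\cC$. Since $\ba'\equiv_e\ba$, the underlying set of bipartitions in the block is unchanged and bicores remain bicores, so the core block of $\cB^{\ba'}$ is $\cC^{\ba'} := \{(\bla,\ba') \mid (\bla,\ba)\in\cC\}$; hence $\cB^{\ba'}\in\Bc$ iff $\cC^{\ba'}$ admits a base tuple. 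For the ensuing analysis I would also record the runner structure of $\cC \in \Cl$: on each runner $i \notin I$ (where $\tb_i$ is even) every $\bla \in \cC$ satisfies $b^{\ba}_{i1}(\bla) = b^{\ba}_{i2}(\bla) = b_i$; on each runner $i \in I$ the beads occupy $\{b_i, b_i+1\}$ in an order depending on $\bla$; and by \cref{L:deltaseq}, if $\nn(\cB) > 0$ then for every $i \in I$ some $\bla \in \cC$ has $b^{\ba}_{i1}(\bla) = b_i + 1$ (and $b^{\ba}_{i2}(\bla) = b_i$), while if $\pp(\cB) > 0$ some $\bla$ has the reverse arrangement.

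The bulk of the proof is then a case analysis on $m := m_1 - m_2$. When $m = 0$, the shifted tuple $(b_0 + m_1, \dots, b_{e-1} + m_1)$ is visibly a base tuple for $\cC^{\ba'}$. When $m = 1$: on a runner $i \notin I$ the shifted beads are $\{b_i + m_2, b_i + m_2 + 1\}$, accommodated by $b'_i = b_i + m_2$; on a runner $i \in I$ with $b^{\ba}_{i1}(\bla) = b_i$ the shifted beads coincide at $b_i + m_2 + 1$; but on a runner $i \in I$ with $b^{\ba}_{i1}(\bla) = b_i + 1$ the shifted beads separate to $\{b_i + m_2, b_i + m_2 + 2\}$, and no base tuple can accommodate this. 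Hence $m = 1$ is permissible exactly when no such runner-$\bla$ pair occurs, i.e.\ exactly when $\nn(\cB) = 0$; the case $m = -1$ is symmetric and requires $\pp(\cB) = 0$. For $|m| \geq 2$, either some even runner exists and gives a shifted bead difference of $|m| \geq 2$ (this covers every one of the four hypothesis regimes whenever $\nn + \pp < e$), or $\nn + \pp = e$ within the $\nn, \pp > 0$ regime, in which case the presence of both bead arrangements on some runner in $I$ forces a shifted bead difference of $|m|+1 \geq 3$. Either way no base tuple exists.

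To conclude, I would compute $(\nn', \pp') := (\nn(\cB^{\ba'}), \pp(\cB^{\ba'}))$ in the permitted non-trivial cases. When $m = 1$, the $e - \pp(\cB)$ even runners of $\cC$ become odd runners of $\cC^{\ba'}$ with bead $1$ above bead $2$ (contributing to $\nn'$), while the $\pp(\cB)$ odd runners of $\cC$ become even runners of $\cC^{\ba'}$; thus $\nn' = e - \pp(\cB)$ and $\pp' = 0$. The case $m = -1$ is symmetric with the roles of $\nn$ and $\pp$ swapped. Matching these counts against the four bulleted statements completes the proof. The main obstacle is clean bookkeeping: correctly identifying, in each of the four $(\nn, \pp)$-regimes, which shifts $m$ are permitted and how $(\nn, \pp)$ transforms. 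Once the runner-by-runner dictionary is fixed, each case is a short computation, but the case split itself must be organised carefully to match the statements of the four items.
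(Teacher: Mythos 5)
Your proof is correct: the identity $b^{\ba'}_{ik}(\bla)=b^{\ba}_{ik}(\bla)+m_k$ reduces everything to checking when a width-two window exists on each runner, and your case analysis on $m=m_1-m_2$ (using \cref{L:deltaseq} to guarantee both bead arrangements occur on runners of $I$ when $\nn,\pp>0$) correctly identifies the admissible shifts and the resulting $(\nn',\pp')$ in each regime. The paper states this lemma without proof, and your runner-by-runner verification is exactly the intended routine argument from the definition of a base tuple; note only that the paper's fourth bullet contains a typo ($\pp(\cB^{\ba'})$ should be $e-\nn(\cB)$, not $e-\pp(\cB)$), which your symmetric treatment implicitly corrects.
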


\begin{defn}
Suppose $\cB \in \Bc$ with bicharge $\ba=(a_1,a_2)$. Set $\Shh(\ba)=(a_1+1,a_2+1)$ and $\Shh(\cB)=\{(\bla,\ba') \mid (\bla,\ba) \in \cB \text{ and } \ba'=\Shh(\ba)\}$. 
\end{defn}

\begin{defn}
Define an operation $\rightarrow_{\text{Sh}}$ on $\Z^e$ as follows. For $\tb\in \Z^e$, say that $\tb \rightarrow_{\text{Sh}} \tb'$ if $\tb'=(\tb_{e-1}+2,\tb_{0},\dots,\tb_{e-2})$. Let $\Sh$ be the equivalence relation on $\Z^e$ generated by $\rightarrow_{\text{Sh}}$. 
\end{defn}

\begin{lem} \label{L:Shiftt}
Suppose that $\cB,\cB' \in \Bc$ with $\psi(\cB)=(\tb,\nn,\pp)$ and $\psi(\cB')=(\tb',\nn',\pp')$. Then $\cB \rightarrow_{\text{Sh}} \cB'$ if and only if $\hk(\cB)=\hk(\cB')$, $\nn=\nn'$, $\pp=\pp'$ and $\tb \rightarrow_{\text{Sh}} \tb'$. 
\end{lem}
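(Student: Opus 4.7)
The plan is a direct calculation tracking the transformation of the invariants under $\Shh$, followed by an appeal to the uniqueness of the parameterisation $\Bc \leftrightarrow \Tl \times \Z_{\geq 0}$.

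For the forward implication, I would assume $\cB \rightarrow_{\text{Sh}} \cB'$, i.e.\ $\cB' = \Shh(\cB)$ with bicharge $\ba' = (a_1+1, a_2+1)$. Since $\Shh$ does not alter the bipartitions, $\hk(\cB) = \hk(\cB')$ is immediate, and the dominance order is bicharge-independent, so the minimal bipartition $\bla \in \cC$ that defines $\tb$ via \cref{C:TCBij} remains minimal in the core block $\Shh(\cC)$. The key abacus computation is that increasing $a_k$ by $1$ translates $\beta_{a_k}(\la^{(k)})$ rightward by one position; on the $e$-runner abacus this moves beads on runner $i < e-1$ to runner $i+1$ in the same row, and moves beads on runner $e-1$ to runner $0$ one row lower. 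Therefore
\[
b^{\ba'}_{i,k}(\bla) = \begin{cases} b^{\ba}_{i-1,k}(\bla), & 1 \leq i \leq e-1, \\ b^{\ba}_{e-1,k}(\bla) + 1, & i = 0, \end{cases}
\]
and summing over $k \in \{1,2\}$ gives $\tb' = (\tb_{e-1}+2, \tb_0, \ldots, \tb_{e-2})$. A side product of this computation is that $(b_{e-1}+1, b_0, b_1, \ldots, b_{e-2})$ is a base tuple for $\Shh(\cC)$ (where $(b_0,\ldots,b_{e-1})$ was one for $\cC$), which verifies $\Shh(\cB) \in \Bc$ so that $\psi(\Shh(\cB))$ is defined.

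For the remaining invariants, observe that the same formulas show $\{b^{\ba'}_{i,2}(\bla)-b^{\ba'}_{i,1}(\bla)\}_{0\le i<e} = \{b^{\ba}_{i,2}(\bla)-b^{\ba}_{i,1}(\bla)\}_{0\le i<e}$ as multisets, so the multiset of values $\{\delta_i\}$ is unchanged; hence $\nn$ and $\pp$, being the counts of $-1$ and $+1$ in this multiset, are preserved. (Alternatively, $\nn + \pp = |I_{\tb}|$ is clearly preserved since $\tb'$ is a cyclic shift of $\tb$ with one entry altered by $2$, while $\nn - \pp \equiv a_1-a_2 \pmod e$ is preserved by \cref{L:s} because $a_1 - a_2$ is fixed by $\Shh$.)

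The reverse implication is now formal: assuming $\hk(\cB) = \hk(\cB')$, $\nn = \nn'$, $\pp = \pp'$, and $\tb \rightarrow_{\text{Sh}} \tb'$, applying the forward direction to $\Shh(\cB)$ yields $\psi(\Shh(\cB)) = (\tb', \nn, \pp) = \psi(\cB')$ and $\hk(\Shh(\cB)) = \hk(\cB')$. Since the map $\cB \mapsto (\psi(\cB), \hk(\cB))$ is a bijection between $\Bc$ and $\Tl \times \Z_{\geq 0}$ (the corollary following \cref{C:TCBij}), this forces $\Shh(\cB) = \cB'$, i.e.\ $\cB \rightarrow_{\text{Sh}} \cB'$. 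There is no serious obstacle here; the only point requiring care is checking that $\Shh(\cB)$ indeed lies in $\Bc$, which is settled by exhibiting the shifted base tuple above.
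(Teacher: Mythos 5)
Your proof is correct and rests on exactly the same computation as the paper's: the formula $b^{\ba^{\text{Sh}}}_{ik}(\bla)=b^{\ba}_{i-1,k}(\bla)$ for $1\le i\le e-1$ and $b^{\ba^{\text{Sh}}}_{0k}(\bla)=b^{\ba}_{e-1,k}(\bla)+1$, from which the paper simply asserts the claims "follow by definition." You have merely filled in the routine details it leaves implicit (preservation of $\nn,\pp$, and the converse via the bijection $\Bc\leftrightarrow\Tl\times\Z_{\ge0}$).
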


\begin{proof}
Suppose $\cC$ is the core block of $\cB$ and $\cC \rightarrow_{\text{Sh}} \cC'$. If $(\bla,\ba) \in \cC$ then $(\bla,\ba)^{\text{Sh}}=(\bla,\ba^{\text{Sh}})$ and 
\[b^{\ba^{\text{Sh}}}_{ik}(\bla) = \begin{cases} b^{\ba}_{i-1k}(\bla), & 1 \le i \leq e-1,\, k=1,2,\\  
b^{\ba}_{e-1k}(\bla)+1, & i=0, \,k=1,2.
\end{cases}\]
The claims in~\cref{L:Shiftt} then follow by definition. 
\end{proof}

\begin{defn} \label{D:TypeMin}
Let $\TT=(\tb,\nn,\pp) \in \Tl$ and suppose $\hk \ge 0$.
\begin{enumerate}
\item We say that $\TT$ is Type 1 $\hk$-minimal if
\begin{itemize}
\item $\TT$ is $\hk$-Scopes-minimal; and
\item if $\pp>0$ then $\nn>0$ and if $\pp=0$ then $\nn<e$; and 
\item $\min\{\tb_1,\tb_2,\dots,\tb_{e-1}\} \in \{1,2\}$; and
\item $\sum_{i=0}^{e-1}\tb_i \equiv \pp-\nn \pmod{2e}$.
\end{itemize}
The condition $\sum_{i=0}^{e-1}\tb_i \equiv \pp-\nn \pmod{2e}$ is equivalent to the condition that if $\phi(\tb,\nn,\pp)=(\bla,\ba)$ where $\ba =(a_1,a_2)$ then $a_1 \equiv 0 \pmod e$. 
\item We say that $\TT$ is Type 2 $\hk$-minimal if
\begin{itemize}
\item $\TT$ is $\hk$-Scopes-minimal; and 
\item if $\pp>0$ then $\nn>0$ and if $\pp=0$ then $\nn<e$; and 
\item $\tb_0 \in \{1,2\}$ and $\tb_i \geq 1$ for all $1 \leq i \leq e-1$.
\end{itemize}
\end{enumerate}
\end{defn}

For $\hk \ge 0$, recall the definitions of $\Bl_{\hk}$ and $\Bc_{\hk}$ from~\cref{D:HookFixed}. 

\begin{thm} \label{T:SUEquivReps}
Let $\hk \geq 0$.  
\begin{itemize}
\item The set
\[\{\cB \in \Bc_{\hk} \mid \psi(\cB) \text{ is Type $1$ $\hk$-minimal}\}\]
is a set of $\SU$-equivalence representatives of $\Bl_{\hk}$.
\item The set
\[\{\cB \in \Bc_{\hk} \mid \psi(\cB) \text{ is Type $2$ $\hk$-minimal}\}\]
is a set of $\SU$-equivalence representatives of $\Bl_{\hk}$.
\end{itemize}
Thus the $\SU$-equivalence classes of $\Bl_{\hk}$ are indexed by the Type $m$ $\hk$-minimal elements of $\Tl$, for $m=1$ and $m=2$. 
\end{thm}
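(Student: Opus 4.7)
The plan is to build on \cref{T:ScopesReps}, which gives a unique $\hk$-Scopes-minimal representative for each $\Sc$-equivalence class in $\Bc_{\hk}$, and then further quotient by the shift action. The first step is to verify that $\rightarrow_{\text{Sh}}$ on $\Z^e$ preserves $\hk$-Scopes-minimality: if $\tb$ is Scopes-minimal and $\tb' = (\tb_{e-1}+2, \tb_0, \ldots, \tb_{e-2})$, then the Scopes-minimality inequality at index $i$ for $\tb'$ reduces (under cyclic reindexing, with $d'_i = d_{(i-1)\bmod e}$ since the parities of the $\tb_j$ are preserved by the cyclic relabeling) to that at index $i-1$ for $\tb$. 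Hence $\tb'$ is also Scopes-minimal, and by \cref{L:Shiftt} shifts act on the set of Scopes-minimal representatives in $\Bc_{\hk}$.

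For existence, given $\cB \in \Bl_{\hk}$, apply \cref{L:BzOK} and \cref{P:ScopesMinExists} to reach a Scopes-minimal $\cB_1 \in \Bc_{\hk}$ in the $\SU$-orbit of $\cB$. For Type~2, set $k_0 = \min\{k \in \Z : \text{all entries of } \tb^{(k)} \text{ are } \geq 1\}$, where $\tb^{(k)} = \rightarrow_{\text{Sh}}^k(\psi(\cB_1))$; this minimum is attained since positive shifts eventually make all entries arbitrarily large. Computing the inverse shift $\tb^{(k_0-1)} = (\tb^{(k_0)}_1, \ldots, \tb^{(k_0)}_{e-1}, \tb^{(k_0)}_0 - 2)$, the minimality of $k_0$ forces $\tb^{(k_0)}_0 - 2 \leq 0$; combined with $\tb^{(k_0)}_0 \geq 1$, this gives $\tb^{(k_0)}_0 \in \{1,2\}$, so $\tb^{(k_0)}$ is Type~2. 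Type~1 existence is analogous: first apply an $\Sh$-move to achieve $a_1 \equiv 0 \pmod e$ (equivalent to the sum condition modulo $2e$), then apply further shifts $\tb \to \tb + (2, \ldots, 2)$ (preserving $a_1 \pmod e$) to satisfy the min condition on $\{\tb_1, \ldots, \tb_{e-1}\}$. The $(\nn, \pp)$ canonicalization ruling out $(\nn = 0, \pp > 0)$ and $(\nn = e, \pp = 0)$ is handled using the additional $\Sh$-moves of \cref{L:ShiftCharge}.

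For uniqueness, suppose $\cB, \cB' \in \Bc_{\hk}$ are both Type~$m$ $\hk$-minimal and $\SU$-equivalent. The commutation $\Phi_i \circ \Shh = \Shh \circ \Phi_{i-1}$---a direct consequence of how shifting bicharge by $(1,1)$ cyclically relabels residues on the abacus---together with the commutation of $\Phi_i$ with the other generators of the $\Sh$-group (which act on one abacus only by row-shifts and thus preserve runner structure), allows any $\SU$-path from $\cB$ to $\cB'$ to be rearranged with all shifts first. Since shifts preserve Scopes-minimality and $\cB'$ is Scopes-minimal, the intermediate Scopes moves are forced to be trivial by \cref{T:ScopesReps}, so $\cB'$ is obtained from $\cB$ by shifts alone. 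The canonicalization of $(\nn, \pp)$ restricts to shifts preserving $(\nn, \pp)$, which in the main case $\nn, \pp > 0$ are exactly $\Shh^{me}$ for $m \in \Z$, giving $\tb' = \tb + 2m$. For Type~2, $\tb_0 \in \{1,2\}$ and $\tb_0 + 2m \in \{1,2\}$ forces $m = 0$; the remaining edge cases of $(\nn, \pp)$ and all cases for Type~1 follow by analogous arguments using the sum condition (pinning down $m$ modulo $e$) and the minimum condition (pinning down the remaining factor). The main technical obstacle is correctly handling the $(\nn, \pp)$-changing shifts of \cref{L:ShiftCharge} inside the commutation argument, which is resolved by the canonicalization forcing a unique $(\nn, \pp)$ per $\SU$-orbit.
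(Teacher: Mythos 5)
Your overall strategy is the same as the paper's: reduce to an $\hk$-Scopes-minimal representative in $\Bc_{\hk}$ via \cref{L:BzOK} and \cref{T:ScopesReps}, normalise the bicharge by shifts (with the $(\nn,\pp)$ condition controlled by \cref{L:ShiftCharge}), and prove uniqueness by first reducing an $\SU$-path between two minimal representatives to a pure $\Sh$-move and then pinning that shift down using the Type~$m$ conditions. Your explicit commutation $\Phi_i\circ\Shh=\Shh\circ\Phi_{i-1}$ makes precise a reduction the paper leaves implicit, and your derivation of $\tb^{(k_0)}_0\in\{1,2\}$ from the minimality of $k_0$ is a nice way to get existence for Type~2.

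There is, however, a concrete misstep in the Type~2 uniqueness. In the main case $\nn,\pp>0$ the shifts connecting two elements of $\Bc$ are \emph{not} ``exactly $\Shh^{me}$'': \cref{L:ShiftCharge} only forces the $\equiv_e$ part of a general $\Sh$-move to be uniform, so the move is $\Shh^{t}$ for an \emph{arbitrary} $t\in\Z$ (and every $\Shh^t$ preserves $(\nn,\pp)$, cf.\ \cref{L:Shiftt}, so ``preserving $(\nn,\pp)$'' does not cut the family down). Your check that $\tb_0\in\{1,2\}$ and $\tb_0+2m\in\{1,2\}$ force $m=0$ therefore only rules out the subfamily $t\in e\Z$ and says nothing about, say, $\cB'=\Shh(\cB)$. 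For Type~1 your appeal to the sum condition modulo $2e$ does handle general $t$ (it forces $t\equiv 0\pmod e$ before the minimum condition kills the rest), but Type~2 has no sum condition; you must instead observe directly that for $t>0$ the $0$th entry of $\Shh^t(\tb)$ equals some $\tb_j$ plus at least $2$, hence is at least $3$ since all $\tb_j\ge 1$, contradicting $\tb'_0\in\{1,2\}$ (and $t<0$ is symmetric). A second, smaller issue: in the existence argument you perform the $(\nn,\pp)$-canonicalising move of \cref{L:ShiftCharge} \emph{after} arranging the $\tb_0\in\{1,2\}$ (resp.\ sum and minimum) conditions; that move adds $1$ to every $\tb_i$ and so destroys them. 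It must be done before the final uniform shift, which is the order the paper uses.
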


\begin{proof}
Suppose that $\cB_1\in \Bl_{\hk}$. Then by~\cref{L:BzOK} we can find $\cB_2 \in \Bc$ such that $\cB_1 \Sh \cB_2$; and since the set $\Bl_{\hk}$ is closed under $\Sh$, we have that $\cB_2 \in \Bc_{\hk}$. By~\cref{T:ScopesReps}, we can find $\cB_3 \in \Bc_{\hk}$ such that $\psi(\cB_3)$ is $\hk$-Scopes-minimal and $\cB_3 \Sc \cB_2$. 

Thus we have $\cB_1 \SU \cB_3$ for some $\cB_3 \in \Bc_{\hk}$ with $\psi(\cB_3)$ $\hk$-Scopes-minimal. Suppose that $\cB_3$ has bicharge $\ba=(a_1,a_2)$ and that $\psi(\cB_3)=(\tb,\nn,\pp)$ and recall the results of~\cref{L:ShiftCharge}.
If $\pp>0$ and $\nn=0$, let $\cB_4=\{(\bla,(a_1+e,a_2)) \mid (\bla,\ba) \in \cB_3\}$.
Then $\cB_4\in\Bc$; suppose that $\psi(\cB_4)=(\tb',\nn',\pp')$. Then $\pp'=0$ and $\nn'=e-\pp<e$ and $\cB_3 \Sh \cB_4$.
If $\pp=0$ and $\nn=e$, let $\cB'_4=\{(\bla,(a_1,a_2+e)) \mid (\bla,\ba) \in \cB_3\}$.
Again $\psi(\cB_4) \in \Bc$; suppose that $\psi(\cB_4)=(\tb',\nn',\pp')$.
Then $\pp'=\nn'=0$ and $\cB_3 \Sh \cB_4$.
Hence $\cB_4 \SU \cB_1$ and $\psi(\cB_4)$ satisfies the first two conditions for Type $m$ $\hk$-minimality, for $m=1,2$.

Note that if $\ba'=(a_1+t,a_2+t)$ for $t \in \Z$ and $\cB_5=\{(\bla,\ba') \mid (\bla,\ba) \in \cB_4\}$  then $\psi(\cB_5)$ satisfies the first two conditions for Type $m$ $\hk$-minimality, for $m=1,2$. Using~\cref{L:Shiftt}, we may then choose $t$ such that $\cB_5 \SU \cB_1$ satisfies all conditions to be either Type $1$ $\hk$-minimal or Type 2 $\hk$-minimal.   

Now let $m\in\{1,2\}$ and suppose that we have $\cB,\cB' \in \Bc_{\hk}$ with $\cB \SU \cB'$ and $\psi(\cB)$ and $\psi(\cB')$ both Type $m$ $\hk$-minimal. By~\cref{T:ScopesReps}, $\cB \Sh \cB'$. Suppose that $\cB$ has bicharge $\ba$ and $\cB'$ has bicharge $\ba'$. 
Write $a'_1-a_1=xe+i$ where $0 \le i<e$; by definition $a'_2-a_2=ye+i$ for some $y \in \Z$. 
However, following~\cref{L:ShiftCharge}, since $\psi(\cB)$ and $\psi(\cB')$ both satisfy the second condition to be Type $m$ $\hk$-minimal, we must also have $x=y$.
It then follows from~\cref{L:Shiftt} that since $\psi(\cB)$ and $\psi(\cB')$ both satisfy the remaining conditions to be Type $m$ $\hk$-minimal, we must actually have $\ba=\ba'$ and so $\cB=\cB'$.  
\end{proof}

As suggested by~\cref{T:ScopesReps}, the only difference between the Type $1$ and Type $2$ $\hk$-minimal representatives is a potential change in the bicharge. We state this more precisely.

\begin{cor}
Suppose that $\cB \in \Bl_{\hk}$ and $\cB_1,\cB_2 \in \Bc_{\hk}$ with $\psi(\cB_m)$ Type $m$ $\hk$-minimal for $m=1,2$ and that $\cB_1 \SU \cB \SU \cB_2$. Suppose that $\cB_1$ has bicharge $\ba$ and $\cB_2$ has bicharge $\ba'$. Then there exists $s \in \Z$ such that $a'_k=a_k+s$ for $k=1,2$ and 
\[\cB_2 = \{(\bla,\ba') \mid (\bla,\ba) \in \cB_1\}.\] 
\end{cor}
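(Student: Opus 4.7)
The plan is to leverage the uniqueness asserted in \cref{T:SUEquivReps}, together with the observation that passage between the Type $1$ and Type $2$ normalizations is effected purely by iterating the shift $\Shh$. By \cref{T:SUEquivReps}, $\cB_1$ and $\cB_2$ are respectively the unique Type $1$ and Type $2$ $\hk$-minimal representatives of the $\SU$-equivalence class of $\cB$, so both are $\hk$-Scopes-minimal members of $\Bc_\hk$ satisfying the common second condition of Type-$m$ minimality on $(\nn, \pp)$.

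First I would verify that for each $t \in \Z$, the block $\Shh^t(\cB_1)$ lies in $\Bc_\hk$, is $\SU$-equivalent to $\cB_1$, shares its $(\nn, \pp)$, and is itself $\hk$-Scopes-minimal. The $\SU$-equivalence and preservation of $\hk$ are immediate, preservation of $\Bc$-membership can be read off the abacus (under $\Shh$, the base tuple $(b_0, \dots, b_{e-1})$ becomes $(b_{e-1}+1, b_0, \dots, b_{e-2})$), and preservation of $(\nn, \pp)$ is \cref{L:Shiftt}. Preservation of $\hk$-Scopes-minimality is a direct check against \cref{D:ScopesMin}: the inequalities on consecutive entries of $\tb$ and the parities $\tb_i \bmod 2$ that define it are invariant under the cyclic shift with $+2$ wrap-around that $\Shh$ induces on $\tb$ via \cref{L:Shiftt}. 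Hence every $\Shh^t(\cB_1)$ satisfies the first two conditions of Type-$m$ minimality for both $m \in \{1,2\}$.

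Next I would establish that there is a unique $s \in \Z$ for which $\Shh^s(\cB_1)$ also satisfies the Type $2$ normalization conditions $\tb_0 \in \{1,2\}$ and $\tb_i \geq 1$ for $1 \leq i \leq e-1$. Existence follows by the same argument as the final paragraph of the proof of \cref{T:SUEquivReps}, using that $\Shh^e$ acts as componentwise translation by $(2,\dots,2)$ on $\tb$. For uniqueness, the formula $\Shh^r(\tb)_0 = \tb_{e-r} + 2$ shows that for $0 < r < e$, applying $\Shh^r$ to an already Type-$2$ normalized $\tb$ produces a position-$0$ value of at least $3$, while $\Shh^{\pm e}$ translates that value by $\pm 2$ out of $\{1,2\}$; combined, this rules out every nonzero shift. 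The resulting $\Shh^s(\cB_1)$ is therefore a Type $2$ $\hk$-minimal representative in the $\SU$-class of $\cB$, and by the uniqueness part of \cref{T:SUEquivReps} we conclude $\Shh^s(\cB_1) = \cB_2$. This yields $\ba' = \ba + (s, s)$, and the equality $\cB_2 = \{(\bla, \ba') \mid (\bla, \ba) \in \cB_1\}$ is immediate from the definition of $\Shh$.

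The main obstacle is the preservation step for $\Shh$, particularly that the $\hk$-Scopes-minimality inequalities in \cref{D:ScopesMin} transform covariantly under the combinatorial action of $\Shh$ on $\tb$. This involves careful position-by-position bookkeeping, especially for the wrap-around condition relating positions $0$ and $e-1$ and for the parity-dependent constant $d_i$, but reduces to a routine direct computation.
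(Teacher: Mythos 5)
Your proof is correct and follows the route the paper intends: the corollary is stated there without proof, as an immediate consequence of \cref{T:SUEquivReps}, and your argument---checking that $\Shh$ preserves membership of $\Bc_{\hk}$, the pair $(\nn,\pp)$ and $\hk$-Scopes-minimality, and then invoking the uniqueness of the Type $2$ $\hk$-minimal representative---is exactly the fleshed-out version of that observation. Note that uniqueness of the shift $s$ is not needed (the statement only asserts existence), which is just as well since your argument for it does not address shifts of the form $qe+r'$ with $q<0$ and $0<r'<e$.
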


One can find the Type $m$ $\hk$-minimal sequences by hand for small values of $e$ and $\hk$. In general, it is simple to write a computer program that will give the sequences.

\begin{eg}
Fix $e=3$. Suppose that we want to find all combinatorial blocks of defect $3$ up to $\SU$-equivalence. A combinatorial block $\cB$ of defect $3$ with core block $\cC$ must have $\defect(\cB)=3$ and $\defect(\cC)=1$; that is $\defect(\cC)=1$ and $\hk(\cB)=1$.

Suppose $\cC$ is a core block with $\defect(\cC)=1$. Let $\nn(\cC)=\nn$ and $\pp(\cC)=\pp$.
Then $\min\{\nn,\pp\}=1$, so we have
\[
(\nn,\pp) \in \{(1,1),(1,2),(2,1)\}.
\]
For each $(\nn,\pp)$ as above, we consider the Type $1$ $1$-minimal sequences $\TT=(\tb,\nn,\pp)$. That is, we want
\begin{itemize}
\item $\tb=(\tb_0,\tb_1,\tb_2) \in \Z^3$ with $\nn+\pp=|\{0 \leq i \leq 2 \mid \tb_i \equiv 1 \pmod 2\}$; and 
\item $\min\{\tb_0,\tb_1,\tb_2\}\in \{1,2\}$; and 
\item $\tb_0+\tb_1+\tb_2 \equiv \pp-\nn \pmod 6$; and
\item $\tb_1-\tb_0 < 2+d_1$ and $\tb_2-\tb_1 < 2+d_2$ and 
$\tb_0-\tb_2 <4+d_0$ where $d_i=2$ if $\tb_i,\tb_{i-1}\equiv 1 \pmod 2$ and $d_i=0$ otherwise. 
\end{itemize}

\underline{$(\nn,\pp)=(1,1)$:} We have $\tb$ equal to one of the following. 

\begin{align*}
&( 1, 2, 3), &&(1, 3, 2), &&(2, 1, 3), &&(2, 3, 1),&&(3, 1, 2), \\
&(3, 2, 1), &&(5, 5, 2), &&(4, 1, 1), &&(7, 2, 3),&& (5, 6, 1). 
\end{align*}

\underline{$(\nn,\pp)=(1,2)$:} We have $\tb$ equal to one of the following.

\begin{align*}
&(1, 3, 3), &&  (3, 1, 3), && (3, 3, 1), &&( 5, 1, 1), &&(5, 7, 1).   
\end{align*}

\underline{$(\nn,\pp)=(2,1)$:} We have $\tb$ equal to one of the following.

\begin{align*}
& (1, 1, 3), &&(1, 3, 1), && (3, 1, 1), && (5, 5, 1), & (7, 1, 3).
\end{align*}
Hence up to $\SU$-equivalence, there are 20 blocks of defect $3$ when $e=3$. 
\end{eg}

We will return to this example in~\cref{sec:e=3defect3}.

\section{Blocks that are Schurian-infinite: Using decomposition matrices}\label{sec:SIblocks}

Suppose $(\hhh,\ba)$ is a Hecke algebra with bicharge.
If $(B,\ba)$ is a block of $(\hhh,\ba)$ then $\cB=\{(\bla,\ba) \mid \text{$\rspe{\bla}$ belongs to $B$}\}$ will denote the corresponding combinatorial block. We will frequently abuse notation and write $\bla \in \cB$ rather than $(\bla,\ba) \in \cB$.

\begin{defn}
    We say that a set of bipartitions $S \subseteq \cB$ is an SI-subset of $B$ if each bipartition in $S$ is a Kleshchev bipartition and the submatrix of the block decomposition matrix given by the rows and columns labelled by these bipartitions is equal to one of the matrices (\ref{targetmatrix}), (\ref{targetmatrixalt}), (\ref{targetmatrixaltsquare}), (\ref{targetmatrixstar})
listed in \cref{prop:matrixtrick}.
\end{defn}

\begin{lem} \label{L:Swapsies}
Suppose that $(B,\ba)$ is a block of a Hecke algebra with bicharge $(\hhh,\ba)$ and that $\cB$ lies in $\Bc$ and has core block $\cC$. Let $\nn=\nn(\cC)$ and $\pp=\pp(\cC)$. Let \[\cB'=\{(\bla^{\text{Sw}},\ba^{\text{Sw}}) \mid (\bla,\ba) \in \cB\}\]
and suppose that $\cB'$ has core block $\cC'$. Take $B'$ to be the block of the Hecke algebra with bicharge $(\hhh,\ba^{\text{Sw}})$ corresponding to $\cB'$. Then
\begin{align*}
\nn(\cC')&=\pp(\cC), & \defect(\cB')&=\defect(\cB),\\
\pp(\cC')& =\nn(\cC),  & \defect(\cC')& =\defect(\cC),
\end{align*}
and $B$ is Schurian-infinite if and only if $B'$ is Schurian-infinite.
\end{lem}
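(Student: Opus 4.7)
The plan is to verify each claim by tracking how the swap acts on abacus displays and residue multisets, and then to observe that the swap leaves the underlying algebra unchanged. The crucial identity is that replacing $(\bla,\ba)$ by $(\bla^{\text{Sw}},\ba^{\text{Sw}})$ literally exchanges the two abaci, so $b^{\ba^{\text{Sw}}}_{ik}(\bla^{\text{Sw}}) = b^{\ba}_{i(3-k)}(\bla)$ for $k\in\{1,2\}$.

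First I would check that $\bla^{\text{Sw}}$ is a bicore exactly when $\bla$ is, so $\cC' = \{(\bla^{\text{Sw}},\ba^{\text{Sw}}) : (\bla,\ba) \in \cC\}$ is the core block of $\cB'$, and that any base tuple $(b_0,\dots,b_{e-1})$ for $\cC$ remains a base tuple for $\cC'$. In particular $\cB' \in \Bc$, and $\nn(\cC'), \pp(\cC')$ are defined with respect to the very same order $\prec$ and the same sequence $j_0 \prec \dots \prec j_{e-1}$ as for $\cC$ (see \cref{D:delta}). Combining this with the identity above gives $\delta_i(\bla^{\text{Sw}}) = b^{\ba}_{j_i 1}(\bla) - b^{\ba}_{j_i 2}(\bla) = -\delta_i(\bla)$, which exchanges the roles of $-$ and $+$ in $\delta_{\bla^{\text{Sw}}}$, yielding $\nn(\cC')=\pp(\cC)$ and $\pp(\cC') = \nn(\cC)$.

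For the defect equalities I would simply observe that $\Res_{\ba^{\text{Sw}}}(\bla^{\text{Sw}}) = \Res_{\ba}(\bla)$ as multisets, so the integers $c_i$ of \cref{def:defect} coincide for $\cB$ and $\cB'$ (and likewise for $\cC$ and $\cC'$). Hence $\defect(\cB) = \defect(\cB')$ and $\defect(\cC) = \defect(\cC')$, which is also a special case of \cref{P:defectpreserve}.

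For the Schurian-finiteness statement, the key point is that the quadratic relation $(T_0-Q_1)(T_0-Q_2)=0$ in the defining presentation of $\hhh$ is symmetric in $Q_1,Q_2$, so $\hhh(q,q^{a_1},q^{a_2})$ and $\hhh(q,q^{a_2},q^{a_1})$ are literally the same $\bbf$-algebra. By \cref{block=combblock}, the block decomposition depends only on the equivalence classes of residue multisets, and the map $(\bla,\ba) \mapsto (\bla^{\text{Sw}},\ba^{\text{Sw}})$ is a bijection between such classes for the two bicharges. Hence $B$ and $B'$ are the same two-sided ideal of the same algebra, so they are isomorphic, and the Schurian-finiteness equivalence is immediate. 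The only real subtlety — and the one potential pitfall in the argument — is recognising that the swap merely relabels the cellular structure and does not move us to a genuinely new algebra.
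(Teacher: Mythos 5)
Your proposal is correct and follows essentially the same route as the paper: the paper's proof reads the $\nn/\pp$ swap off the definition, quotes \cref{P:defectpreserve} for the defect equalities, and quotes \cref{T:Equiv1} for the Schurian-infiniteness claim, whose proof of the $\Sw$ case is exactly your observation that the swap does not change the underlying algebra but only its cellular structure. You have merely unpacked those two cited results inline, and your computations (the sign flip $\delta_i(\bla^{\text{Sw}})=-\delta_i(\bla)$ and the invariance of the residue multiset) are accurate.
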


\begin{proof}
It follows from the definition that $\nn(\cC')=\pp(\cC)$ and $\pp(\cC') =\nn(\cC)$. The formulae for the defects come from~\cref{P:defectpreserve} and the last result follows from~\cref{T:Equiv1}.
\end{proof}

\subsection{Core blocks of defect at least $2$} \label{subsec:SIcoreblocks}

Throughout this section, fix $(C,\ba)$ to be a core block of a Hecke algebra with bicharge $(\hhh,\ba)$. Following~\cref{T:Equiv1} and~\cref{L:basetuple2}, we may assume that $\cC \in \Cl$ and we take $\nn=\nn(\cC)$ and $\pp=\pp(\cC)$ to be the associated parameters as described in~\cref{D:np}. 
We also assume that $\defect(C)\geq 2$; note that by~\cref{L:s} we have $\defect(C)=\min\{\nn,\pp\}$ and we must therefore have that $e \ge 4$.
We note once again that this includes the $e=\infty$ situation, as we may instead choose a finite $e$ large enough to obtain an isomorphic algebra.

We identify a bipartition $\bla \in \cC$ with the sequence $\delta_{\bla}$ as given in~\cref{D:delta}. In the sequences $\delta_{\bla}$ below, we omit the $0$ terms.  
The next two lemmas are a consequence of \cref{T:Flat}.

\begin{lem} \label{L:PMatrix}
Suppose $\nn\geq 2$ and $\pp \ge 3$. Then the following bipartitions are an SI-subset of $C$ and the corresponding matrix is the matrix $(\ddag)$.
\[
\begin{matrix}
(-,\dots,-, -,+,-,+,+, +, \dots, + ) \\
(-,\dots,-, +,-,-,+,+, +, \dots, + ) \\
(-,\dots,-, -,+,+,-,+, +, \dots, + ) \\
(-,\dots,-, +,-,+,-,+, +, \dots, + ) \\
\end{matrix} \qquad 
\begin{pmatrix} 1 \\  v & 1 \\  v& 0 & 1 \\v^2 & v& v&1 \end{pmatrix} .
\]
\end{lem}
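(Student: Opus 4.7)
The plan is to invoke \cref{T:Flat} directly to verify that the four bipartitions are Kleshchev and to read off the prescribed graded decomposition numbers as explicit powers of $v$.

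First I would check the Kleshchev condition using \cref{T:Flat}(i). For each $\bla_i$ the pairing algorithm defining $\R(\delta_{\bla_i})$ first cancels the inner $(-,+)$ pairs in the five-entry middle block, then pairs leading $-$'s with trailing $+$'s from the outside in. The surviving $\R(\delta_{\bla_i})$ contains at most one sign (depending on whether $\nn<\pp$, $\nn=\pp$, or $\nn>\pp$), so $\delta_{\bla_i}\in\Delta_0$ and each $\bla_i$ is Kleshchev. The hypotheses $\nn\ge 2$ and $\pp\ge 3$ are used only to ensure that the five-entry middle block fits inside $\delta_{\bla_i}$ with the prescribed leading $-$'s and trailing $+$'s.

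Next I would compute the first column. Label the five middle positions by $m_1<\dots<m_5$. For $\bla_1$ the algorithm begins with the two adjacent inner $(-,+)$ pairs, so $(m_1,m_2),(m_3,m_4)\in\St(\delta_{\bla_1})$; the remaining elements of $\St(\delta_{\bla_1})$ are outer pairs joining leading $-$'s to trailing $+$'s. A direct check on subsets shows $\bla_1^{\emptyset}=\bla_1$, $\bla_1^{\{(m_1,m_2)\}}=\bla_2$, $\bla_1^{\{(m_3,m_4)\}}=\bla_3$, and $\bla_1^{\{(m_1,m_2),(m_3,m_4)\}}=\bla_4$, while any $S$ containing an outer pair introduces a $-$ in the trailing block or a $+$ in the leading block and therefore produces a sequence outside $\{\bla_1,\dots,\bla_4\}$. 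By \cref{T:Flat}(ii) this supplies the first column $(1,v,v,v^2)^\top$.

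A parallel computation handles the remaining three columns. For $\bla_2$ the pairing begins with $(m_3,m_4)$ and then $(m_2,m_5)$, so $(m_3,m_4)\in\St(\delta_{\bla_2})$ and $\bla_2^{\{(m_3,m_4)\}}=\bla_4$, giving $d_{\bla_4\bla_2}=v$; analogously $(m_1,m_2)\in\St(\delta_{\bla_3})$ and swapping it produces $\bla_4$, yielding $d_{\bla_4\bla_3}=v$. The diagonal entries $d_{\bla_i\bla_i}=1$ come from $S=\emptyset$. The main verification point — and the one that forces the zeros off the diagonal in the last three columns — is that $\bla_3\ne\bla_2^S$, $\bla_2\ne\bla_3^S$, and $\bla_i\ne\bla_4^S$ for $i=1,2,3$. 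Each such would-be transition requires a swap involving a pair $(i_l,j_l)$ with $i_l>j_l$, which is forbidden by the ordering constraint $i_l<j_l$ in the definition of $\St$. Assembling these computations gives exactly the matrix $(\ddag)$, so the four bipartitions form an SI-subset of $C$.
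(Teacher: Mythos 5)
Your proof is correct and is exactly the argument the paper intends: the paper states this lemma (together with \cref{L:NMatrix}) as an immediate consequence of \cref{T:Flat} without writing out any details, and your computation of $\St(\delta_{\bla_i})$ and of the subsets $S$ realising each nonzero entry supplies precisely those details. Two cosmetic imprecisions, neither affecting the conclusion: for $\bla_2,\bla_3,\bla_4$ the leftover middle $+$ pairs with a \emph{leading} $-$ rather than the leading $-$'s pairing with trailing $+$'s, and the vanishing of $d_{\bla_3\bla_2}$ is most cleanly justified by observing that any admissible $S$ avoiding a $+$ in the leading block must satisfy $S\subseteq\{(m_3,m_4),(m_2,m_5)\}$, none of whose four subsets yields $\bla_3$ (the single reversed pair $(m_2,m_1)$ is not the only a priori route).
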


\begin{lem} \label{L:NMatrix}
Suppose $\nn\geq 3$ and $\pp \geq 2$. Then the following bipartitions are an SI-subset of $C$ and the corresponding matrix is the matrix $(\ddag)$.
\[
\begin{matrix}
(-,\dots,-, -,-,+,-,+, +, \dots, + ) \\
(-,\dots,-, -,+,-,-,+, +, \dots, + ) \\
(-,\dots,-, -,-,+,+,-, +, \dots, + ) \\
(-,\dots,-, -,+,-,+,-, +, \dots, + ) \\
\end{matrix} \qquad 
\begin{pmatrix} 1 \\ v & 1 \\ v& 0 & 1 \\ v^2 & v& v&1 \end{pmatrix} .
 \]
\end{lem}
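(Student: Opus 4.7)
The proof would run parallel to that of \cref{L:PMatrix}: after computing $\R(\delta_{\bla_i})$ and $\St(\delta_{\bla_i})$ for each of the four sequences, we read the decomposition numbers off directly from \cref{T:Flat}. First I would verify that all four $\bla_i$ are Kleshchev, i.e., $\delta_{\bla_i}\in\Delta_0$. Running the bracket-matching algorithm on $\delta_{\bla_1}$ (middle five entries $-,-,+,-,+$), the innermost internal pairs $(\nn-1,\nn)$ and $(\nn+1,\nn+2)$ match first, after which the remaining leading $-$'s (extended by an additional $-$ at position $\nn-2$) and trailing $+$'s pair off from the inside out until one species is exhausted. The assumptions $\nn\ge 3$ and $\pp\ge 2$ ensure that the five middle positions fit inside the sequence, and the signed imbalance of the unmatched symbols equals $\nn-\pp$, so exactly one of $+$ or $-$ survives in $\R(\delta_{\bla_1})$, placing $\delta_{\bla_1}\in\Delta_0$. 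Analogous analyses handle $\bla_2,\bla_3,\bla_4$.

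Next, I would list the pairs contributed to $\St(\delta_{\bla_i})$ by the internal matchings, namely $(\nn-1,\nn),(\nn+1,\nn+2)\in\St(\delta_{\bla_1})$; $(\nn-2,\nn-1),(\nn+1,\nn+2)\in\St(\delta_{\bla_2})$; and $(\nn-1,\nn),(\nn-2,\nn+1)\in\St(\delta_{\bla_3}),\St(\delta_{\bla_4})$, along with a further pair matching the leftover middle symbol to an adjacent bulk entry on the opposite side. The five nonzero off-diagonal entries of $(\ddag)$ are then realised by explicit subsets $S\subseteq\St(\delta_{\bla_j})$ with $\delta_{\bla_i}=\delta_{\bla_j}^S$: taking $S=\{(\nn-1,\nn)\}$, $\{(\nn+1,\nn+2)\}$, and $\{(\nn-1,\nn),(\nn+1,\nn+2)\}$ in $\St(\delta_{\bla_1})$ gives $d_{\bla_2\bla_1}(v)=v$, $d_{\bla_3\bla_1}(v)=v$, and $d_{\bla_4\bla_1}(v)=v^2$, while $\{(\nn+1,\nn+2)\}\subseteq\St(\delta_{\bla_2})$ and $\{(\nn-1,\nn)\}\subseteq\St(\delta_{\bla_3})$ give $d_{\bla_4\bla_2}(v)=v$ and $d_{\bla_4\bla_3}(v)=v$.

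The six upper-triangular zero entries follow from \cref{L:DomOrder} together with \cref{L:OrderCore}, which shows $\bla_j\doms\bla_1$ for $j=2,3,4$ and $\bla_4\doms\bla_2,\bla_3$ via successive $-,+$ swaps. The remaining zero $d_{\bla_3\bla_2}=0$ is the one small technical point I expect to require care: I would verify it by noting that $\bla_3$ and $\bla_2$ differ at the four positions $\nn-1,\nn,\nn+1,\nn+2$, and that any $S\subseteq\St(\delta_{\bla_2})$ realising these sign flips is forced to also alter either position $\nn-2$ or position $\nn+3$, so no such $S$ exists. Since every entry of $(\ddag)$ is $0$, $v$, $v^2$, or $1$, the characteristic-independence hypothesis of \cref{prop:matrixtrick} holds automatically (and is in any case a consequence of the characteristic-freeness of core-block decomposition numbers noted after \cref{T:Flat}), so \cref{prop:matrixtrick} applies and $\{\bla_1,\bla_2,\bla_3,\bla_4\}$ forms an SI-subset of $C$.
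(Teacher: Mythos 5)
Your proposal is correct and follows exactly the route the paper intends: the paper gives no written proof beyond the remark that \cref{L:PMatrix,L:NMatrix} are consequences of \cref{T:Flat}, and your bracket-matching verification of membership in $\Delta_0$, the explicit subsets $S\subseteq\St(\delta_{\bla_j})$ realising the five nonzero off-diagonal entries, and the obstruction argument for $d_{\bla_3\bla_2}=0$ are precisely the computations being left to the reader. The only slip is your claim that $(\nn-1,\nn),(\nn-2,\nn+1)\in\St(\delta_{\bla_4})$ (the correct internal pairs for $\bla_4$ are $(\nn-2,\nn-1)$ and $(\nn,\nn+1)$), but since $\St(\delta_{\bla_4})$ is never used to compute any entry of the matrix, this is inconsequential.
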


\begin{thm} \label{T:CoreBlocksInf1}
Assume that $\ba=(a_1,a_2)$ and let $s_k$ be the equivalence class of $a_k$ modulo $e$ for $k=1,2$. 
\begin{itemize}
\item If $\defect(C)\geq 3$ then $C$ is Schurian-infinite.
\item If $\defect(C)=2$ and $s_1 \ne s_2$ then $C$ is Schurian-infinite. 
\end{itemize}
\end{thm}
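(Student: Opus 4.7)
The plan is a direct application of \cref{L:PMatrix} and \cref{L:NMatrix} together with \cref{prop:matrixtrick}. Recall from \cref{L:s} that $\defect(C) = \min\{\nn,\pp\}$ and that $a_1 - a_2 \equiv \nn - \pp \pmod{e}$, so $s_1 \neq s_2$ is equivalent to $\nn \not\equiv \pp \pmod{e}$; since $\nn + \pp \leq e$ and $\min\{\nn,\pp\} \geq 2$ in both scenarios considered in the theorem, this in turn is equivalent to $\nn \neq \pp$.

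For the first statement, $\defect(C) \geq 3$ yields $\nn \geq 3$ and $\pp \geq 3$, which in particular satisfies the hypothesis of \cref{L:PMatrix}. The lemma produces an SI-subset realising a submatrix of shape $(\ddag)$ in the graded decomposition matrix of $C$. By \cref{T:Flat} the relevant decomposition numbers are independent of the characteristic, so \cref{prop:matrixtrick} applies and $C$ is Schurian-infinite.

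For the second statement, assume $\defect(C) = 2$ and $s_1 \neq s_2$; then $\min\{\nn,\pp\} = 2$ and $\nn \neq \pp$. Thus either $\nn = 2$ and $\pp \geq 3$, in which case I would invoke \cref{L:PMatrix}, or $\pp = 2$ and $\nn \geq 3$, in which case I would invoke \cref{L:NMatrix}. In both subcases, the characteristic independence provided by \cref{T:Flat} once again lets us conclude via \cref{prop:matrixtrick}. Equivalently, one could use \cref{L:Swapsies} to reduce the second subcase to the first, since the swap equivalence interchanges $\nn$ and $\pp$ while preserving Schurian-(in)finiteness.

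The only step that really warrants comment is verifying the hypothesis of \cref{prop:matrixtrick} on equality of decomposition numbers across characteristics, but this is not a genuine obstacle in the core-block setting, because \cref{T:Flat} supplies an explicit monomial formula for every entry of the decomposition matrix of $C$ that is valid over any field.
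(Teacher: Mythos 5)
Your proposal is correct and follows essentially the same route as the paper: apply \cref{L:s} to translate the defect and bicharge conditions into bounds on $\nn$ and $\pp$, then invoke \cref{L:PMatrix} or \cref{L:NMatrix} to produce an SI-subset and conclude via \cref{prop:matrixtrick}, with characteristic-independence supplied by \cref{T:Flat}. The extra detail you give on why $s_1\neq s_2$ forces $\nn\neq\pp$ is a correct unpacking of what the paper cites from \cref{L:s}.
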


\begin{proof}
Recall, from \cref{L:s}, that $\defect(C)=\min\{\nn,\pp\}$. 
If $\defect(C) \geq 3$, then $\nn \geq 3$ and $\pp\geq 3$. If $\defect(C)=2$ and $s_1 \ne s_2$, we have $\nn>\pp=2$ or $\pp>\nn=2$, again by \cref{L:s}. 
Hence in either case we may use \cref{L:PMatrix} or \cref{L:NMatrix} to provide an SI-subset of $C$, thus proving that $C$ is Schurian-infinite. 
\end{proof}

\subsection{Blocks whose core block has defect at least $3$ and blocks whose core block has defect $2$ where $s_1 \ne s_2$}\label{subsec:SIblockswithlargecoreblock}

For a bipartition $\bla=(\la^{(1)},\la^{(2)})$, let $\ell_2(\bla)$ denote the number of non-zero parts in $\la^{(2)}$. If $l\geq 0$, recall that $\bla^{l^+}$ is the bipartition obtained by adding a hook of shape $(1^{le})$ to the bottom of the second component of $\bla$. 

Throughout this section, we fix $C$ to be a core block of a Hecke algebra with bicharge $(\hhh,\ba)$ and let $\cC$ denote the corresponding combinatorial block; by \cref{T:Equiv1} and \cref{L:basetuple2} there is no loss in assuming that $\cC \in \Cl$.
We also fix $l \ge 0$ and let $\cB \in \Bl$ be the combinatorial block such that 
\[
\cB\supseteq \{(\bla^{l^+},\ba) \mid (\bla,\ba) \in \cC\};
\]
that is, $\cB$ is the combinatorial block with core block $\cC$ and $\hk(\cB)=l$. 
Let $B$ be the block of the Hecke algebra with bicharge $(\mathscr{H}_{n+le},\ba)$ corresponding to $\cB$. 

Set $\ba=(a_1,a_2)$ and let $s_k$ denote the equivalence class of $a_k$ modulo $e$ for $k=1,2$.

\begin{lem} \label{L:SameParts}
Suppose that $\bla,\bmu \in \cC$ are such that $\delta_0(\bla) = \delta_0(\bmu) \in \{0,-\}$.
Then $\ell_2(\bla)=\ell_2(\bmu)$.  
\end{lem}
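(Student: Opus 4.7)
The plan is to translate everything to the abacus configuration for $\la^{(2)}$. First recall that for a partition $\la$ with charge $a$, the smallest integer not belonging to $\beta_a(\la)$ is exactly $a - \ell(\la)$. Since $\bla$ is a bicore, the beads on runner $i$ of the abacus for $\la^{(2)}$ fill all rows up to and including row $b^{\ba}_{i2}(\bla)$, so the smallest non-bead on runner $i$ sits at position $(b^{\ba}_{i2}(\bla)+1)e+i$. Applied with charge $a_2$ this gives
\[
\ell_2(\bla) \;=\; a_2 \;-\; \min_{0\le i\le e-1}\bigl((b^{\ba}_{i2}(\bla)+1)e + i\bigr).
\]

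The key structural fact is that $\prec$ coincides with the order on $\{0,\ldots,e-1\}$ induced by the integer value $(b_i+1)e+i$: one checks $i \prec i' \iff (b_i+1)e+i<(b_{i'}+1)e+i'$ using only $b_i,b_{i'}\in\Z$ and $0\le i,i'\le e-1$. Hence $j_0$ is the \emph{unique} minimiser of $(b_i+1)e+i$. Since $b^{\ba}_{i2}(\bla)\ge b_i$ for every $i$, the minimum in the displayed formula equals $(b_{j_0}+1)e+j_0$ precisely when $b^{\ba}_{j_02}(\bla)=b_{j_0}$, and strictly exceeds it otherwise. It therefore suffices to prove that the hypothesis forces $b^{\ba}_{j_02}(\bla)=b^{\ba}_{j_02}(\bmu)=b_{j_0}$.

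The case $\delta_0(\bla)=-$ is immediate: combined with $b^{\ba}_{j_01}(\bla),b^{\ba}_{j_02}(\bla)\in\{b_{j_0},b_{j_0}+1\}$, the inequality $b^{\ba}_{j_02}(\bla)<b^{\ba}_{j_01}(\bla)$ forces $b^{\ba}_{j_02}(\bla)=b_{j_0}$, and similarly for $\bmu$. For $\delta_0(\bla)=0$, the equal values $c:=b^{\ba}_{j_02}(\bla)=b^{\ba}_{j_01}(\bla)$ are constant across $\bla\in\cC$ by \cref{L:deltaseq}, because the swaps that generate $\cC$ leave every entry of $\delta$ equal to $0$ untouched. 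Adopting the base tuple defined by $b_i:=\min_{\bla\in\cC,\,k}b^{\ba}_{ik}(\bla)$ (still a valid base tuple for $\cC$) ensures $b_{j_0}=c$, and the desired equality $b^{\ba}_{j_02}(\bla)=b_{j_0}=b^{\ba}_{j_02}(\bmu)$ follows. Substituting back yields $\ell_2(\bla)=\ell_2(\bmu)=a_2-(b_{j_0}+1)e-j_0$.

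The main obstacle is the $\delta_0=0$ branch, because the base tuple is not uniquely determined by $\cC$: a careless choice could place $b_{j_0}$ strictly below the common bead level $c$ on runner $j_0$, in which case the minimum in the displayed formula would be achieved on some runner other than $j_0$ and the ``same for $\bla$ and $\bmu$'' argument would require further work on later $\prec$-positions. Adopting the minimal-value convention above eliminates the mismatch and makes runner $j_0$ the unique minimiser.
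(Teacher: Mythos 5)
Your proof is correct and takes essentially the same route as the paper's: both arguments locate the first empty position $x$ of the abacus for $\la^{(2)}$, show that the hypothesis $\delta_0\ne+$ forces $x$ to lie on the $\prec$-minimal runner $j_0$ at a level determined by $b_{j_0}$ (hence the same for $\bla$ and $\bmu$), and read $\ell_2$ off from $x$. The only difference is cosmetic: you use $\ell_2(\bla)=a_2-x$ directly, whereas the paper counts the beads beyond $x$ and checks this count is preserved under the swaps of \cref{L:deltaseq}. One wrinkle in your $\delta_0=0$ branch: replacing the base tuple by the minimal one $b_i=\min_{\bnu\in\cC,\,k}b^{\ba}_{ik}(\bnu)$ mid-proof can change the order $\prec$ and hence which runner is $j_0$, so the hypothesis (stated for the originally fixed tuple) may no longer govern the runner your minimisation argument needs; the clean fix is to impose the minimal choice from the outset, which is the reading under which the statement is genuinely base-tuple-independent. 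To your credit, you at least flag this ambiguity explicitly, whereas the paper's assertion that $x=b_ie+i$ silently assumes the same tightness.
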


\begin{proof}
Let $(b_0,b_1,\dots,b_{e-1})$ denote the base tuple of $\cC$. 
Fix $\bla\in \cC$ with $\delta_0(\bla) \in \{0,-\}$ and let $\bmu=(\mu^{(1)},\mu^{(2)})$ be any bipartition in $\cC$ with $\delta_0(\bmu)=\delta_0(\bla)$. Let $\beta_{a_2}(\mu^{(2)})$ denote the set of $\beta$-numbers for $\mu^{(2)}$ with respect to $a_2$. Set
\[x=\min\{ x' \in \Z \mid x' \notin \beta_{a_2}(\mu^{(2)}) \}\] 
so that 
\[\ell_2(\bmu) = \#\{x' \in \beta_{a_2}(\mu^{(2)}) \mid x<x'\}.\]
Suppose that $0 \le i <e$ is minimal in the $\prec$ ordering. 
Then, since $\delta_0(\bmu) \neq +$, we have that $x=b_i e + i$. Moreover, consider a bipartition $s_{uv}(\bmu)$ where $\delta_u(\bmu)=-$ and $\delta_v(\bmu)=+$ and $u \ne i$. The second component of this bipartition is formed from $\mu^{(2)}$ by moving a bead from the end of a runner labelled by $+$ on to the end of a runner labelled by $-$; by construction, both the bead and the empty position occur after $x$. Hence $\ell_2(s_{uv}(\bmu))=\ell_2(\bmu)$. Since we can get from $\bmu$ to $\bla$ by a sequence of these moves, we have $\ell_2(\bmu)=\ell_2(\bla)$.  
\end{proof}

\begin{lem} \label{L:BS}
Suppose that $\bla,\bmu \in\cC$ with $\bmu \in \mathscr{K}^{\bs}$ and $\ell_2(\bla)=\ell_2(\bmu)$. Let $l \geq 0$. Then $\bmu^{l^+} \in 
\mathscr{K}^{\bs}$ and
\[
d_{\bla\bmu}(v)  = d_{\bla^{l^+}\bmu^{l^+}}(v).
\]
\end{lem}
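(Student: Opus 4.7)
The plan is to reduce the lemma to two earlier results: \cref{T:StillKlesh} will ensure that $\bmu^{l^+}$ is a Kleshchev bipartition, and \cref{T:BowmanSpeyer2} will then immediately yield the equality of graded decomposition numbers.

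First I would verify that $\bmu^{l^+}\in\mathscr{K}^{\bs}$. Since $\bmu\in\cC$ is a bicore, $\mu^{(2)}$ is itself an $e$-core. The partition $(\mu^{(2)})^{l^+}$ is obtained from $\mu^{(2)}$ by appending a vertical strip of shape $(1^{le})$; on the abacus display of $\mu^{(2)}$ this amounts to sliding one fixed bead on some runner down by $l$ positions, i.e.\ to adding $l$ rim hooks of length $e$. Hence $\core((\mu^{(2)})^{l^+})=\mu^{(2)}$. Moreover, $\bmu$ being Kleshchev forces $\mu^{(2)}$ to be $e$-restricted, so its last nonzero part is at most $e-1$; appending $le$ copies of $1$ preserves the property that successive differences are strictly less than $e$, so $(\mu^{(2)})^{l^+}$ remains $e$-restricted, i.e.\ lies in $B(\Lambda_{s_2})$. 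After the innocuous shift of bicharge needed to bring $\ba$ into the form $(0,s)$ assumed by \cref{T:StillKlesh}, we apply that theorem with $\lambda=\mu^{(1)}$, $\mu=\mu^{(2)}$ and $\nu=(\mu^{(2)})^{l^+}$ to conclude that $\bmu^{l^+}$ is Kleshchev.

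Second, the hypothesis $\ell_2(\bla)=\ell_2(\bmu)$ says exactly that $\la^{(2)}$ and $\mu^{(2)}$ have the same number of nonzero parts, which is the combinatorial hypothesis of \cref{T:BowmanSpeyer2}. Since $\bmu^{l^+}\in\mathscr{K}^{\bs}$ by the previous step, that theorem applies and gives
\[
d_{\bla\bmu}(v)=d_{\bla^{l^+}\bmu^{l^+}}(v),
\]
as required.

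I do not expect any serious obstacle: the substantive content has already been done by \cref{T:StillKlesh,T:BowmanSpeyer2}. The only thing to be careful about is confirming the two combinatorial facts about $(\mu^{(2)})^{l^+}$ (that it has $e$-core $\mu^{(2)}$ and is $e$-restricted), and these follow at once from inspecting the abacus and from the fact that Kleshchev-ness of $\bmu$ already forces the last part of $\mu^{(2)}$ to be at most $e-1$.
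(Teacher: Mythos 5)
Your proof is correct and follows essentially the same route as the paper, which also deduces the Kleshchev claim from \cref{T:StillKlesh} and the equality of graded decomposition numbers from \cref{T:BowmanSpeyer2}. The extra details you supply — that $(\mu^{(2)})^{l^+}$ has $e$-core $\mu^{(2)}$ (a single bead sliding down $l$ positions on the abacus) and remains $e$-restricted because the last part of $\mu^{(2)}$ is at most $e-1$ — are exactly the hypotheses of \cref{T:StillKlesh} that the paper leaves implicit, and they check out.
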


\begin{proof}
It follows from \cref{T:StillKlesh} that $\bmu^{l^+} \in \mathscr{K}^{\bs}$ and then the equality of the graded decomposition numbers follows from \cref{T:BowmanSpeyer2}.
\end{proof}

\begin{cor} \label{C:AddHook}
Suppose there exists an SI-subset $S$ of $C$ such that $\ell_2(\bla)=\ell_2(\bmu)$ for all $\bla,\bmu \in S$.
Then
\[
\{\bla^{l^+} \mid \bla \in S\}
\]
is an SI-subset of $B$.
\end{cor}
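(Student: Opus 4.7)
The plan is to reduce the corollary directly to \cref{L:BS}, applied pairwise to the elements of $S$. There is really no combinatorial content beyond that lemma; the role of the hypothesis $\ell_2(\bla) = \ell_2(\bmu)$ for all $\bla, \bmu \in S$ is precisely to make \cref{L:BS} applicable throughout $S$.

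First, I would verify that every $\bla^{l^+}$ with $\bla \in S$ lies in $\cB$. Since $S \subseteq \cC$ and $\cB \supseteq \{(\bla^{l^+},\ba) \mid (\bla,\ba) \in \cC\}$ by the choice of $\cB$ at the start of \cref{subsec:SIblockswithlargecoreblock}, this is immediate. Next, I need to confirm that each $\bla^{l^+}$ is a Kleshchev bipartition. Since $S$ is an SI-subset of $C$, every $\bla \in S$ is already Kleshchev, so applying \cref{L:BS} with $\bmu = \bla$ (trivially $\ell_2(\bla) = \ell_2(\bla)$) gives $\bla^{l^+} \in \mathscr{K}^{\bs}$.

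Second, for the decomposition numbers, take any pair $\bla, \bmu \in S$. By hypothesis $\ell_2(\bla) = \ell_2(\bmu)$, and $\bmu$ is Kleshchev since $S$ is an SI-subset. Thus \cref{L:BS} yields
\[
d_{\bla^{l^+} \bmu^{l^+}}(v) \;=\; d_{\bla \bmu}(v).
\]
Consequently the submatrix of the graded decomposition matrix of $B$ indexed by the rows and columns $\{\bla^{l^+} \mid \bla \in S\}$ coincides with the submatrix of the graded decomposition matrix of $C$ indexed by $S$. By the assumption that $S$ is an SI-subset of $C$, the latter submatrix is one of (\ref{targetmatrix}), (\ref{targetmatrixalt}), (\ref{targetmatrixaltsquare}), (\ref{targetmatrixstar}), and so the same is true for $\{\bla^{l^+} \mid \bla \in S\}$, proving it is an SI-subset of $B$.

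There is no genuine obstacle: the only point requiring any thought is checking that \cref{L:BS} applies uniformly to all pairs in $S$, which is exactly what the two hypotheses (\,all elements of $S$ Kleshchev, and all having the same $\ell_2$\,) guarantee. The deeper combinatorial input — that adding a vertical $le$-hook to the second component preserves Kleshchev-ness and preserves graded decomposition numbers — has already been absorbed into \cref{T:StillKlesh} and \cref{T:BowmanSpeyer2}, which together yield \cref{L:BS}.
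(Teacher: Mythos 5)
Your proposal is correct and is exactly the argument the paper intends: the corollary is stated without proof as an immediate consequence of \cref{L:BS}, and your write-up simply makes explicit the pairwise application of that lemma (Kleshchev-ness via the diagonal case $\bmu=\bla$, and entrywise equality of the graded decomposition submatrices). Nothing is missing.
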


\begin{lem} \label{L:nlarge}
Suppose $\defect(C) \geq 2$ and $\nn(\cC) \geq 3$. 
Then there exists an SI-subset $S$ of $C$ such that $\ell_2(\bla)=\ell_2(\bmu)$ for all $\bla,\bmu \in S$.
\end{lem}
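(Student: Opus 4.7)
The plan is to take $S$ to be the SI-subset produced by \cref{L:NMatrix}. Its hypotheses are satisfied: $\defect(C) = \min\{\nn,\pp\} \geq 2$ together with the assumption $\nn \geq 3$ gives $\nn \geq 3$ and $\pp \geq 2$, as required, so the four listed bipartitions form an SI-subset of $C$ with decomposition submatrix $(\ddag)$.

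The remaining task is to verify that $\ell_2$ is constant on $S$, for which I would invoke \cref{L:SameParts}: it suffices to show that $\delta_0(\bla)$ takes a common value in $\{0,-\}$ for all $\bla \in S$. By \cref{L:deltaseq}, the positions at which $\delta_i = 0$ coincide for every $\bla \in \cC$ (the swap moves only permute the $-$'s and $+$'s among the non-zero positions), so there are two cases. Either position $0$ is a zero position, in which case $\delta_0 \equiv 0$ on $\cC \supseteq S$; or position $0$ is the $\prec$-smallest non-zero position, in which case $\delta_0(\bla)$ coincides with the first entry of the non-zero subsequence of $\delta_{\bla}$.

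The final step is a direct inspection of the four patterns in \cref{L:NMatrix}: the first non-zero entry is $-$ in each of the four rows, since when $\nn \geq 4$ it lies in the block of fixed leading $-$'s, and when $\nn = 3$ the four five-entry windows
\[
(-,-,+,-,+),\quad (-,+,-,-,+),\quad (-,-,+,+,-),\quad (-,+,-,+,-)
\]
each begin with $-$. Consequently $\delta_0$ is constant on $S$ with value in $\{0,-\}$, and \cref{L:SameParts} yields $\ell_2(\bla) = \ell_2(\bmu)$ for all $\bla,\bmu \in S$. I do not foresee any substantial obstacle, as the proof reduces to a bookkeeping check that the explicit SI-subset from \cref{L:NMatrix} already satisfies the additional property we need; the content of the hypothesis $\nn \geq 3$ (rather than merely $\nn \geq 2$) is precisely what forces the first entry of each pattern in \cref{L:NMatrix} to be $-$, so that \cref{L:SameParts} becomes applicable.
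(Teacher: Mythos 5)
Your proof is correct and follows exactly the route the paper takes: its proof of this lemma is the one-line citation of \cref{L:NMatrix,L:SameParts}, and your additional bookkeeping (checking $\pp\geq 2$ from $\defect(C)\geq 2$, and that the first non-zero entry of each of the four patterns is $-$, so $\delta_0$ is constant with value in $\{0,-\}$ on the SI-subset) is precisely the verification the paper leaves implicit.
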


\begin{proof}
This follows from \cref{L:NMatrix,L:SameParts}.
\end{proof}

\begin{thm} \label{T:nlarge}
Suppose $\defect(C) \geq 3$ or $\defect(C) =2$ and $s_1 \neq s_2$.
Then $B$ is Schurian-infinite. 
\end{thm}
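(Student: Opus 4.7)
The plan is to reduce Theorem~\ref{T:nlarge} to a direct application of Lemma~\ref{L:nlarge} together with Corollary~\ref{C:AddHook}, possibly after first invoking the swap equivalence of Lemma~\ref{L:Swapsies} to ensure the hypothesis $\nn(\cC)\ge 3$ of Lemma~\ref{L:nlarge} is satisfied. Write $\nn=\nn(\cC)$ and $\pp=\pp(\cC)$, so that $\defect(C)=\min\{\nn,\pp\}$ by Lemma~\ref{L:s}, and recall from the same lemma that $\nn-\pp\equiv a_1-a_2\equiv s_1-s_2\pmod e$.

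First I would consider the case $\nn\ge 3$. This happens automatically when $\defect(C)\ge 3$ (since then $\nn,\pp\ge 3$), and it also covers the subcase of $\defect(C)=2$, $s_1\ne s_2$ with $\nn\ge 3,\pp=2$. Here Lemma~\ref{L:nlarge} provides an SI-subset $S$ of $C$ all of whose bipartitions have the same value of $\ell_2$; then Corollary~\ref{C:AddHook} gives the SI-subset $\{\bla^{l^+}\mid \bla\in S\}$ of $B$, and hence $B$ is Schurian-infinite by Proposition~\ref{prop:matrixtrick}.

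The remaining case is $\defect(C)=2$, $s_1\ne s_2$, and $\nn=2$, which forces $\pp\ge 3$ (since $\nn\ne\pp$ modulo $e$). Here I would pass to the swapped block. Let $\ba^{\text{Sw}}=(a_2,a_1)$ and $\cB'=\{(\bla^{\text{Sw}},\ba^{\text{Sw}})\mid (\bla,\ba)\in\cB\}$, with corresponding core block $\cC'$ and associated block $B'$ of the Hecke algebra with bicharge $(\mathscr{H}_{n+le},\ba^{\text{Sw}})$. A base tuple for $\cC$ remains a base tuple for $\cC'$, since $b^{\ba^{\text{Sw}}}_{ik}(\bla^{\text{Sw}})=b^{\ba}_{i(3-k)}(\bla)$, so $\cC'\in\Cl$ and $\cB'\in\Bc$. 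By Lemma~\ref{L:Swapsies}, we have $\nn(\cC')=\pp\ge 3$ and $B$ is Schurian-infinite if and only if $B'$ is Schurian-infinite. Now the previous paragraph applies to $B'$: Lemma~\ref{L:nlarge} produces an SI-subset of $C'$ with constant $\ell_2$, Corollary~\ref{C:AddHook} lifts this to an SI-subset of $B'$, and hence $B'$ is Schurian-infinite. By Lemma~\ref{L:Swapsies} so is $B$, completing the proof.

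There is no real obstacle here; the asymmetry between the two components in the statement of Corollary~\ref{C:AddHook} (hooks are added to the second component only) is exactly what forces the case-split, and the swap equivalence resolves it cleanly. All the technical work has been done in \S\ref{subsec:SIcoreblocks} and in Lemmas~\ref{L:SameParts}--\ref{L:nlarge}.
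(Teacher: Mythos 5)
Your proposal is correct and follows essentially the same route as the paper: the case $\nn(\cC)\ge 3$ is handled by \cref{L:nlarge} (equivalently \cref{L:NMatrix,L:SameParts}) together with \cref{C:AddHook}, and the remaining case is reduced to it via the swap equivalence of \cref{L:Swapsies}. No gaps.
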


\begin{proof}
By \cref{L:s}, $\nn(\cC) \geq 3$ or $\pp(\cC) \geq3$.
If $\nn(\cC) \geq 3$, we may use \cref{L:NMatrix,L:SameParts} and \cref{C:AddHook} to find an SI-subset of $B$, proving that $B$ is Schurian-infinite.
Otherwise, the block $B'$ as described in \cref{L:Swapsies} satisfies the criteria above, with $\nn(\cB')\ge 3$. Hence $B'$ is Schurian-infinite and, by~\cref{L:Swapsies}, so is $B$. 
\end{proof}

\subsection{Blocks with $\defect(B)-\defect(C)\geq 4$}
\label{subsec:ManyRemHooks}

\begin{thm}\label{thm:2ormorehooks}
Let $B$ be a block of $\hhh$ with core block $C$. Suppose that $\defect(B)-\defect(C) \geq 4$ and $(e,p)\neq(3,2)$.
Then $B$ is Schurian-infinite in any characteristic. 
\end{thm}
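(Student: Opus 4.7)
The plan is to exhibit an SI-subset of $B$ by reducing to a type~$\tta$ defect-$\geq 2$ block, to which the main result of~\cite{als23} applies. Since $\defect(\cB) = \defect(\cC) + 2\hk(\cB)$, the hypothesis $\defect(B)-\defect(C)\geq 4$ is equivalent to $\hk(\cB)\geq 2$. Using \cref{T:Equiv1} together with \cref{L:BzOK}, we may replace $\cB$ by an $\SU$-equivalent combinatorial block lying in $\Bc$, so that $\cC \in \Cl$; after a further $\Sh$-shift of bicharge, we may assume $s_1 = 0$, so that \cref{T:StillKlesh} is directly applicable. By \cref{L:s}, $\cC$ contains at least one Kleshchev bipartition $\bnu = (\nu^{(1)},\nu^{(2)})$; since $\bnu$ is a bicore, both $\nu^{(1)}$ and $\nu^{(2)}$ are $e$-cores.

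Next, I would invoke the type~$\tta$ result. Let $B_{\tta}$ denote the block of the type~$\tta$ Hecke algebra of quantum characteristic $e$ in characteristic $p$ consisting of partitions of $|\nu^{(2)}|+\hk(\cB)\,e$ with $e$-core $\nu^{(2)}$. This block has weight (equivalently defect) $\hk(\cB)\geq 2$, and since $e\geq 3$ and $(e,p)\neq(3,2)$, the main result of~\cite{als23} yields an SI-subset $\{\mu_1,\dots,\mu_k\}$ of $B_{\tta}$, with $k\in\{4,5\}$, such that each $\mu_i$ is $e$-restricted and the submatrix $(d^p_{\mu_i\mu_j}(v))_{i,j}$ is one of the matrices $(\dag)$, $(\ddag)$, $(\clubsuit)$, $(\spadesuit)$ from \cref{prop:matrixtrick}, satisfying $d^p_{\mu_i\mu_j}(1) = d^0_{\mu_i\mu_j}(1)$ for all $i,j$.

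Finally, I would transfer this SI-subset to $B$. Set $\bla_i := (\nu^{(1)},\mu_i)$. Since each $\mu_i$ is $e$-restricted with $e$-core $\nu^{(2)}$ and the bipartition $\bnu = (\nu^{(1)},\nu^{(2)})$ is Kleshchev, \cref{T:StillKlesh} ensures that each $\bla_i$ is Kleshchev in type~$\ttb$, and by a size count each $\bla_i$ lies in $\cB$. Applying \cref{T:BowmanSpeyer1} with common first component $\nu^{(1)}$ gives $d^{p,\bs}_{\bla_i\bla_j}(v) = d^{p}_{\mu_i\mu_j}(v)$ for all $i,j$, so the corresponding submatrix of the graded decomposition matrix of $B$ coincides with the SI-matrix from $B_{\tta}$ and inherits the equality $d^{p,\bs}_{\bla_i\bla_j}(1) = d^{0,\bs}_{\bla_i\bla_j}(1)$ via the same factorisation in characteristic zero. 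Hence \cref{prop:matrixtrick} implies that $B$ is Schurian-infinite.

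The main obstacle is confirming that~\cite{als23} indeed supplies an SI-subset realised inside the specific block $B_{\tta}$ with $e$-core exactly $\nu^{(2)}$ and weight $\hk(\cB)$, and not only inside some Scopes-equivalent type~$\tta$ block. This should be handled either by applying the construction of~\cite{als23} uniformly across all defect-$\geq 2$ type~$\tta$ blocks (since Scopes equivalence in type~$\tta$ preserves both decomposition matrices and the conclusion of \cref{prop:matrixtrick}), or by first reducing $\cB$ by Scopes equivalence on the type~$\ttb$ side to align with a convenient type~$\tta$ representative.
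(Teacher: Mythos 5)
Your proposal is correct and follows essentially the same route as the paper: reduce to the level-one block with core the second component of a Kleshchev bicore in $\cC$, import an SI-subset from \cite{als23}, use \cref{T:StillKlesh} to see the resulting bipartitions are Kleshchev, and use the row-removal result of Bowman--Speyer to identify the relevant decomposition numbers. The only (immaterial) differences are that the paper fixes the dominance-minimal bipartition of $\cC$ and phrases the decomposition-number identification via column Specht modules (\cref{cor:BowmanSpeyerconjugate}) rather than \cref{T:BowmanSpeyer1}.
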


\begin{proof}
Suppose that $\defect(B)-\defect(C)=2\hk$, where $\hk \geq 2$. 
Choose $\bmu=(\mu^{(1)},\mu^{(2)}) \in \cC$ minimal in the $\unrhd$ ordering so that $\bmu$ is a Kleshchev bipartition and $\mu^{(2)}$ is an $e$-core.
Consider the level one block $\tilde{B}$ with core $\mu^{(2)}$ and defect $\hk$.
In~\cite{als23}, the first three authors show that there is an SI-subset $S$ of $\tilde{B}$ (applying the type $\tta$ analogue of \cref{eq:decompnumbersandduals}).
Suppose that $S=\{\nu(1),\nu(2),\dots,\nu(t)\}$. Then the set
\[
S^+ = \{(\mu^{(1)},\nu(l)) \mid 1 \leq l \leq t\}
\]
is an SI-subset of $B$: each bipartition $(\mu^{(1)},\nu(l))$ is a Kleshchev bipartition by \cref{T:StillKlesh} and the submatrix of the decomposition matrix for $B$ indexed by the bipartitions in $S^+$ is equal to the submatrix of the decomposition matrix for $\tilde{B}$ indexed by the partitions in $S$ by \cref{cor:BowmanSpeyerconjugate}. 
\end{proof}

Hence, we have settled a large part of the cases we must consider. 
It remains to consider blocks for $e=3$, $p=2$, and blocks for $e\geq4$ with small defect.
More precisely, for $e\geq 4$, we must consider blocks $B$ such that $\defect(B) - \defect(C) \leq 2$ and either $\defect(C)\in\{0,1\}$ or $\defect(C)=2$ and $a_1\equiv a_2 \pmod e$.
Any block of defect 0 or 1 has finite representation type and is therefore Schurian-finite.
So it suffices to consider non-core blocks $B$ in those cases where $e\geq4$ and $\defect(C)\in\{0,1\}$.
We must consider $(e,p)=(3,2)$ separately.
This is done in \cref{sec:e3}.
Likewise, we must consider defect 2 core blocks with $a_1\equiv a_2 \pmod e$ separately, which is done in \cref{subsec:core-def-2-equal}.
In the following sections, we settle some of those exceptional cases.

\subsection{Non-core blocks of defect $2$ when $s_1 \ne s_2$} \label{S:SIC2}
Fix a block $B$ for a Hecke algebra with bicharge $(\hhh,\ba)$ such that $B$ is a block of defect $2$ which is not a core block. Let $s_k$ denote the equivalence class of $a_k$ modulo $e$ for $k=1,2$; we will assume that $s_1 \ne s_2$. 
In this section we prove that if $e \ge 4$ then $B$ is Schurian-infinite. (We consider the case $e=3$ in~\cref{sec:e=3defect2}.) This is made possible by the paper of Fayers~\cite{fay06} in which he gives a description of the decomposition numbers of such a block; these decomposition numbers are independent of the characteristic of the field. It therefore is possible to describe an SI-subset of each such block and use~\cite{fay06} to verify that the submatrix of the decomposition matrix provided by this subset has the form claimed. 

We may use \cref{P:FayersKlesh} and \cref{T:FayersDecomp} below to find an SI-subset of $B$. We first use $\SU$-equivalence as in~\cref{SS:Equivs} to reduce the number of cases that we need to consider.

\begin{defn}
Suppose that $0 < \nn<e$ and $\pp=0$. Say that $(\tb,\nn,\pp)\in \Tl$ is minimal if  
\begin{itemize}
\item For all $1 \leq i \leq e-1$ we have $\tb_i-\tb_{i-1}<2$; and 
\item $\tb_{0}-\tb_{e-1}<4$; and 
\item $\tb_0 \in \{1,2\}$ and $\tb_i \geq 1$ for all $1 \leq i \leq e-1$.
\end{itemize}
\end{defn}

\begin{lem} \label{L:AMC}
There exists a block $B'$ for a Hecke algebra with bicharge $(\hhh',\ba')$ such that $\cB' \in \Bc$, $\psi(\cB')$ is minimal and $\cB\SU\cB'$. 
Note that $B'$ is therefore also a non-core block of defect 2 with $s'_1 \ne s'_2$; moreover $B'$ is Schurian-infinite if and only if $B$ is Schurian-infinite.
\end{lem}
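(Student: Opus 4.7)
The plan is to apply the classification of $\SU$-equivalence classes via Type $2$ $\hk$-minimal representatives given in \cref{T:SUEquivReps} with $\hk = 1$, and then to check that for our $B$ the Type $2$ $1$-minimality conditions collapse to exactly the minimality defined just before the statement.

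First I would extract the numerical invariants of $\cB$. Since $B$ is a non-core block of defect $2$, the formula $\defect(\cB) = \defect(\cC) + 2\hk(\cB)$ together with $\hk(\cB) \geq 1$ forces $\hk(\cB) = 1$ and $\defect(\cC) = 0$. By \cref{L:s}, $\min\{\nn(\cC),\pp(\cC)\} = \defect(\cC) = 0$ and $a_1 - a_2 \equiv \nn(\cC) - \pp(\cC) \pmod e$; the hypothesis $s_1 \neq s_2$ then rules out $\nn(\cC) = \pp(\cC) = 0$ as well as the degenerate values $\nn(\cC) = e$ and $\pp(\cC) = e$.

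Next I would invoke \cref{T:SUEquivReps} with $m = 2$ to obtain a (unique) Type $2$ $1$-minimal $\cB' \in \Bc_1$ with $\cB \SU \cB'$; write $\psi(\cB') = (\tb,\nn',\pp')$. Because defect is $\SU$-invariant by \cref{P:defectpreserve} and both $\Sh$ and $\Sc$ preserve $s_1 - s_2 \pmod e$ (the former by the definition of $\Sh$, the latter trivially because $\Sc$ fixes the bicharge), the block $\cB'$ inherits from $\cB$ both $\min\{\nn',\pp'\} = 0$ and $\nn' - \pp' \not\equiv 0 \pmod e$. Together these rule out $\nn' = \pp' = 0$ and also force $\nn' \neq e$ and $\pp' \neq e$. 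The Type $2$ clause ``if $\pp' > 0$ then $\nn' > 0$'' then forces $\pp' = 0$, and the remaining invariant $\nn' \equiv s'_1 - s'_2 \not\equiv 0 \pmod e$ together with $\nn' \leq e-1$ yields $0 < \nn' < e$, as required.

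Finally I would verify that the $\tb$-inequalities in the minimality definition match the remaining Type $2$ $1$-minimality conditions. The key observation is that $\min\{\nn',\pp'\} = 0$ makes every $d_i$ appearing in \cref{D:ScopesMin} equal to zero, so $1$-Scopes-minimality reduces precisely to $\tb_i - \tb_{i-1} < 2$ for $1 \leq i \leq e-1$ and $\tb_0 - \tb_{e-1} < 4$; combined with the Type $2$ conditions $\tb_0 \in \{1,2\}$ and $\tb_i \geq 1$ for $1 \leq i \leq e-1$, this is exactly the minimality in the statement. The ``moreover'' assertions then follow quickly: $\defect(B') = 2$ and $\hk(\cB') = 1$ (so $B'$ is a non-core defect $2$ block), $s'_1 \neq s'_2$ by the $\SU$-invariance of $s_1 - s_2 \pmod e$, and the equivalence of Schurian-infiniteness is \cref{T:Equiv1}. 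No step appears to present a genuine obstacle; the only mild bookkeeping is ruling out the degenerate values of $(\nn',\pp')$ when translating the permissive Type $2$ conditions into the stricter minimality used here, which is handled cleanly by the invariants $\defect(\cC)$ and $s_1 - s_2 \pmod e$.
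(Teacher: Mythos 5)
Your proposal is correct and follows essentially the same route as the paper: reduce to \cref{T:SUEquivReps} with the Type $2$ $1$-minimal representative, observe that $\min\{\nn,\pp\}=0$ kills all the $d_i$ so that Type $2$ $1$-minimality coincides with the minimality defined before the statement, and conclude via \cref{T:Equiv1}. The paper's proof is terser, but your extra bookkeeping (the $\SU$-invariance of $s_1-s_2 \pmod e$ ruling out the degenerate values of $(\nn',\pp')$) is exactly the detail it leaves implicit.
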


\begin{proof}
Note that $\hk(B)=1$ and the core block of $B$ has defect $0$. 
Furthermore, if $\min\{\nn,\pp\}=0$ and $1 \le \max\{\nn,\pp\}\leq e-1$ then the conditions for $(\tb,\nn,\pp)$ to be minimal agree with the conditions for $(\tb,\nn,\pp)$ to be Type $2$ $1$-minimal. 
The result then follows from~\cref{T:Equiv1} and~\cref{T:SUEquivReps}. 
\end{proof}

We may therefore assume that $\cB \in \Bc$ and $\psi(\cB)$ is minimal. 
Take $C$ to be the core block of $B$. Since $\defect(C)=0$, we have $|\cC|=1$ so assume $\cC=\{\bnu\}$. 
It therefore follows that 
\[
\tb_i=b^{\ba}_{i1}(\bnu)+b^{\ba}_{i2}(\bnu)
\]
for $0 \le i \leq e-1$. Set $\TT=(\tb,\nn,\pp)=\psi(\cB)$. Note that $\pp=0$ and therefore $\nn=|\{0 \leq i < e \mid \tb_i \equiv 1 \pmod 2\}|$.

In order to find an SI-subset of $B$, we recall some definitions and results of Fayers~\cite{fay06}. In the terminology of that paper, we are considering Type I blocks.
The definitions we make below are compatible with those given in~\cref{S:ScopesSec}.
In particular, the set $I$ defined in~\cref{D:I2} agrees with the set $I$ defined in~\cref{D:I}, and when we restrict the partial order $\preceq$ of~\cref{D:I2} on $\{0,1,\dots,e-1\}$ to $I$, we obtain the total order $\prec$ on $I$ from~\cref{D:I}.

\begin{defn} \label{D:I2}
We define
\begin{align*}
I & =\{0 \leq i <e \mid \tb_i \equiv 1 \pmod 2\} = \{0 \leq i <e \mid b^{\ba}_{i2}(\bnu)-b^{\ba}_{i1}(\bnu)=-1\}, \\ 
K &= \{0 \leq i <e \mid \tb_i \equiv 0 \pmod 2\}=\{0 \leq i <e \mid b^{\ba}_{i2}(\bnu)-b^{\ba}_{i1}(\bnu)=0\}.
\end{align*}
We define a partial order $\preceq$ on $\{0,1,\dots,e-1\}$ by saying that
\[
l \preceq m \iff \begin{cases}\tb_l \leq \tb_m \text{ and } l \leq m; \text{ or}\\
\tb_l+2 \leq \tb_m. \end{cases}
\]
Note that this restricts to a total order on $I$ and on $K$.
We use the symbol $\nsuccprec$ to denote incomparability under this partial order.
If $i \in I$ (resp.~$k \in K$) define $i^-$ (resp.~$k^-)$ to be the maximal element of $I$ (resp.~$K$) in the $\preceq$ order such that $i^- \prec i$ (resp.~$k^- \prec k$), if such an element exists. Define $i^+$ and $k^+$ similarly. 
\end{defn}

\begin{defn}
Suppose $0 \leq m <e$ and that $i \in I, k \in K$. Suppose $c \in \{1,2\}$.   
\begin{itemize}
\item Define $[m]^c$ to be the abacus configuration obtained from $(\bnu,\ba)$ by moving the bead on runner $m$ of component $c$ down one position.
\item Define $[i,k]$ to be the abacus configuration obtained from $(\bnu,\ba)$ by moving a bead from the lowest filled position on runner $i$ to the highest free position on runner $k$ on component $1$, and simultaneously moving a bead from the lowest filled position on runner $k$ to the highest free position on runner $i$ on component $2$.
\end{itemize}
\end{defn}

\begin{propc}{fay06}{Lemma~3.5} 
  The bipartitions in $B$ are
  \[\Big\{[m]^c \mid 0 \leq m \leq e-1, c\in\{1,2\}\Big\} \sqcup \Big\{[i,k] \mid i \in I, k \in K\Big\}.\]
\end{propc}

\begin{propc}{fay06}{Lemma~3.11} \label{P:FayersKlesh}
  We keep notation as above.
  \begin{itemize}
\item For $i \in I$, we have that $[i]^1 \in \Kl$ if and only if there is some $k \in K$ with $i \prec k$.
\item For $k \in K$, we have that $[k]^1 \in \Kl$ if and only if there are some $i \in I$, $l \in K$ with $i \prec k \prec l$.
\item For $i \in I$, we have that $[i]^2 \in \Kl$ if and only if there is some $m \in I \cup K$ with $m \npreccurlyeq i$.
\item For $k \in K$, we have that $[k]^2 \in \Kl$ if and only if there is some $m \in I \cup K$ with $k \prec m$.
  \item For $i\in I$ and $k\in K$, we have that $[i,k] \in \Kl$ if and only if either $i \npreccurlyeq k$ or there exist $j \in I$, $l \in K$ with $j \prec i$ and $k \prec l$.  
    \end{itemize}
\end{propc}

\begin{defn}
    For $\bmu \in \Kl$, we define $\bmu^{\ast}$ to be the conjugate of the Mullineux image $m(\bmu)$ of $\bmu$, as introduced in \cref{subsec:Schur functor}.
\end{defn}

\begin{thmc}{fay06}{Theorem~3.14 and Table~1} \label{T:FayersDecomp}
Suppose that $\bla, \bmu \in B$ with $\bmu \in \Kl$.
Then the graded decomposition number $d_{\bla\bmu}(v)$ is either $0$, $v$ or $v^2$, and is independent of the characteristic $p$.
For each $\bmu$, the bipartitions $\bla$ with $d_{\bla\bmu} \ne 0$ are listed in~\cref{Fayers:DecompTable}, and we have $d_{\bla\bmu}(v)=v^2$ if and only if $\bla = \bmu^{\ast}$.
In each case, conditions involving $i^-,i^+,k^-$ or $k^+$ should be ignored if these elements do not exist.
\end{thmc}

\newlength{\Fblockheight}
\setlength{\Fblockheight}{\dimexpr 4\baselineskip + 52pt\relax}
\newcommand{\Hblockheight}{27pt}

\begin{figure}
{\small
\begin{tabular}{@{} 
  >{\centering\arraybackslash}m{0.8cm}  
  >{\centering\arraybackslash}m{1.1cm}  
  >{\centering\arraybackslash}m{3.6cm}    
  >{\centering\arraybackslash}m{1.1cm}  
  >{\centering\arraybackslash}m{3.3cm}  
  >{\centering\arraybackslash}m{3.8cm}    
  @{}}
\toprule
Case & $\bmu$ & conditions & $\bmu^{\ast}$ & additional conditions & $\bla$ for which $\bmu \prec \bla \prec \bmu^{\ast}$ and $[\rspe{\bla}:\rD{\bmu}]=1$ \\
\midrule
\multirow{2}{*}{$\vcenter{\hbox{A}}$}
 & \multirow{2}{*}{$\vcenter{\hbox{$[i]^1$}}$}
 &
 \multirow{2}{*}{
  \parbox[c]{3cm}{\centering
    $i \in I$ \\
    $(\exists k \in K)(k \succcurlyeq i \npreccurlyeq k^-)$
  }
}
 & \multirow{2}{*}{\makecell{\centering $[ik]$}}
 & $(\exists i^+,\, k \succcurlyeq i^+)$ & $[i^+k], [i^+]^1$ \\
\cmidrule(lr){5-6}
 &  &  &  & $(k \not\succcurlyeq i^+)$ & $[k]^1$ \\
\midrule
\multirow{2}{*}{\makecell[c]{A$'$}} 
 & \multirow{2}{*}{\makecell[c]{$[ik]$}}
 & \multirow{2}{*}{\parbox[c]{3.6cm}{\centering
     $k \preceq i \nsucccurlyeq k^+$
   }}
 & \multirow{2}{*}{\makecell[c]{$[i]^2$}}
 & $(\exists i^-,\, k \preceq i^-)$
 & $[i^-]^2,\ [i^-k]$ \\
\cmidrule(lr){5-6}
 & & & & $(k \npreccurlyeq i^-)$ & $[k]^2$ \\
\midrule
B & \makecell[c]{$[i]^2$}
  & \parbox[c]{3.6cm}{\centering
      $i\in I,\ \exists i^+$ \\[0.5ex]
      $(\forall k\in K)(k\succeq i^+ \text{ or } i\succeq k)$
    }
  & \makecell[c]{$[i^+]^1$}
  & \makecell[c]{---}
  & \makecell[c]{$[i]^1,\ [i^+]^2$} \\
\midrule
\multirow{2}{*}{\makecell[c]{C}}
 & \multirow{2}{*}{\makecell[c]{$[i]^2$}}
 & \multirow{2}{*}{\parbox[c]{3.6cm}{\centering
     $i\in I$ \\[0.5ex]
     $(\exists k\in K)(i^+ \!\npreccurlyeq k \succeq i \succeq k^-)$
   }}
 & \multirow{2}{*}{\makecell[c]{$[k]^1$}}
 & $(\exists i^+,\, k \npreccurlyeq i^+)$
 & \makecell[c]{$[i]^1,\ [i^+k],\ [i^+]^2$} \\
\cmidrule(lr){5-6}
 & & & & $(k \preceq i^+)$ & \makecell[c]{$[i]^1,\ [k]^2$} \\
\midrule
\multirow{2}{*}{\makecell[c]{C$'$}}
 & \multirow{2}{*}{\makecell[c]{$[k]^2$}}
 & \multirow{2}{*}{\parbox[c]{3.6cm}{\centering
     $k\in K$ \\[0.5ex]
     $(\exists i\in I)(\,k^+ \succeq i \succeq k \npreccurlyeq i^-\,)$
   }}
 & \multirow{2}{*}{\makecell[c]{$[i]^1$}}
 & $(\exists i^-,\; k \nsucccurlyeq i^-)$
 & \makecell[c]{$[i]^2,\ [i^-k],\ [i^-]^1$} \\
\cmidrule(lr){5-6}
 & & & & $(k \succeq i^-)$ & \makecell[c]{$[i]^2,\ [k]^1$} \\
\midrule
\multirow{2}{*}{\makecell[c]{D}}
 & \multirow{2}{*}{\makecell[c]{$[i]^2$}}
 & \multirow{2}{*}{\parbox[c]{3.6cm}{\centering
     $i\in I$ \\[0.5ex]
     $(\exists k\in K)(\,k \nsuccprec i \succeq k^-\,)$
   }}
 & \multirow{2}{*}{\makecell[c]{$[ik]$}}
 & $(\exists i^+,\; k \npreccurlyeq i^+)$
 & \makecell[c]{$[i^+k],\ [i^+]^2$} \\
\cmidrule(lr){5-6}
 & & & & $(k \preceq i^+)$ & \makecell[c]{$[k]^2$} \\
\midrule
\multirow{2}{*}{\makecell[c]{D$'$}}
 & \multirow{2}{*}{\makecell[c]{$[ik]$}}
 & \multirow{2}{*}{\parbox[c]{3.6cm}{\centering
     $k^{+}\,\succeq\, i \,\nsuccprec\, k$
   }}
 & \multirow{2}{*}{\makecell[c]{$[i]^1$}}
 & $(\exists i^-,\; k \nsucccurlyeq i^-)$
 & \makecell[c]{$[i^-]^1,\ [i^-k]$} \\
\cmidrule(lr){5-6}
 & & & & $(k \succeq i^-)$ & \makecell[c]{$[k]^1$} \\
\midrule
\multirow{2}{*}{\makecell[c]{E}}
 & \multirow{2}{*}{\makecell[c]{$[k]^1$}}
 & \multirow{2}{*}{\parbox[c]{3.6cm}{\centering
     $k\in K,\ \exists k^{+}$ \\[0.5ex]
     $(\exists i\in I)(\,i^{+} \npreccurlyeq k \succeq i\,)$
   }}
 & \multirow{2}{*}{\makecell[c]{$[ik^{+}]$}}
 & $(\exists i^{+},\; k^{+} \succeq i^{+})$
 & \makecell[c]{$[ik],\ [i^{+}k^{+}],\ [i^{+}]^{1}$} \\
\cmidrule(lr){5-6}
 & & & & $(k^{+} \nsucccurlyeq i^{+})$ & \makecell[c]{$[ik],\ [k^{+}]^{1}$} \\
\midrule
\multirow{2}{*}{\makecell[c]{E$'$}}
 & \multirow{2}{*}{\makecell[c]{$[ik]$}}
 & \multirow{2}{*}{\parbox[c]{3.6cm}{\centering
     $\exists k^{+}$ \\[0.5ex]
     $(\,i \succeq k^{+} \npreccurlyeq i^{-}\,)$
   }}
 & \multirow{2}{*}{\makecell[c]{$[k^{+}]^{2}$}}
 & $(\exists i^{-},\; k \preceq i^{-})$
 & \makecell[c]{$[ik^{+}],\ [i^{-}]^{2},\ [i^{-}k]$} \\
\cmidrule(lr){5-6}
 & & & & $(k \npreccurlyeq i^{-})$ & \makecell[c]{$[ik^{+}],\ [k]^{2}$} \\
\midrule
\multirow{4}{0.8cm}{\centering\begin{minipage}[c][\Fblockheight]{\linewidth}%
  \centering F%
\end{minipage}}%
 & \multirow{4}{1.1cm}{\centering\begin{minipage}[c][\Fblockheight]{\linewidth}%
   \centering $[k]^2$%
 \end{minipage}}%
 & \multirow{4}{3.6cm}{\centering\begin{minipage}[c][\Fblockheight]{\linewidth}%
   \centering
   \[
   \begin{gathered}
     k\in K,\ \exists k^+ \\[0.6ex]
     (\forall i\in I)\bigl(\,k^+\not\succcurlyeq i \succeq k \\[0.6ex]
       \text{or } k^+\succeq i \nsucccurlyeq k\,\bigr)
   \end{gathered}
   \]
 \end{minipage}}%
 & \multirow{4}{1.1cm}{\centering\begin{minipage}[c][\Fblockheight]{\linewidth}%
    \centering $[k^+]^1$%
 \end{minipage}}%
 & \makecell[c]{$(\exists j\in I)\,(j \nsuccprec k \preceq j^+)$ \\ $(\exists i\in I)\,(i \nsuccprec k^+ \succeq i^-)$}
 & \makecell[c]{$[ik^+],\ [i]^2,\ [j]^1,\ [jk]$} \\
\cmidrule(lr){5-6}
 & & & & \makecell[c]{$(\nexists j\in I)\,(j \nsuccprec k)$ \\ $(\exists i\in I)\,(i \nsuccprec k^+ \succeq i^-)$}
 & \makecell[c]{$[ik^+],\ [i]^2,\ [k]^1$} \\
\cmidrule(lr){5-6}
 & & & & \makecell[c]{$(\exists j\in I)\,(j \nsuccprec k \preceq j^+)$ \\ $(\nexists i\in I)\,(i \nsuccprec k^+)$}
 & \makecell[c]{$[k^+]^2,\ [j]^1,\ [jk]$} \\
\cmidrule(lr){5-6}
 & & & & \makecell[c]{$(\nexists j\in I)\,(j \nsuccprec k)$ \\ $(\nexists i\in I)\,(i \nsuccprec k^+)$}
 & \makecell[c]{$[k^+]^2,\ [k]^1$} \\
\midrule
\multirow{2}{*}{\makecell[c]{G}}
 & \multirow{2}{*}{\makecell[c]{$[k]^2$}}
 & \multirow{2}{*}{\parbox[c]{3.6cm}{\centering
     $k\in K,\ \exists k^+$ \\[0.6ex]
     $(\exists i\in I)\,(i^+ \succeq k \nsuccprec i \nsuccprec k^+)$
   }}
 & \multirow{2}{*}{\makecell[c]{$[ik^+]$}}
 & \makecell[c]{$(\exists i^+,\,k^+ \not\preccurlyeq i^+)$}
 & \makecell[c]{$[ik],\ [i^+k^+],\ [i^+]^2$} \\
\cmidrule(lr){5-6}
 & & & & $(k^+ \preceq i^+)$ & \makecell[c]{$[ik],\ [k^+]^2$} \\
\midrule
\multirow{2}{*}{\makecell[c]{G$'$}}
 & \multirow{2}{*}{\makecell[c]{$[ik]$}}
 & \multirow{2}{*}{\parbox[c]{3.6cm}{\centering
     $\exists k^+$ \\[0.5ex]
     $k \nsuccprec i \nsuccprec k^+ \succeq i^-$
   }}
 & \multirow{2}{*}{\makecell[c]{$[k^+]^1$}}
 & $(\exists i^-,\; k \nsucccurlyeq i^-)$
 & \makecell[c]{$[ik^+],\ [i^-]^1,\ [i^-k]$} \\
\cmidrule(lr){5-6}
 & & & & $(k \succeq i^-)$ & \makecell[c]{$[ik^+],\ [k]^1$} \\
\midrule
\multirow{1}{0.8cm}{\centering\begin{minipage}[c][\Hblockheight]{\linewidth}%
  \centering H%
\end{minipage}}%
& \multirow{1}{1.1cm}{\centering\begin{minipage}[c][\Hblockheight]{\linewidth}%
  \centering $[ik]$%
\end{minipage}}%
& \parbox[c]{3.6cm}{\centering
    $\exists i^-,\,k^+$ \\[0.6ex]
    $(k \succeq i \ \text{or}\ i^- \succeq k^+)$ \\[0.6ex]
    $\text{or }(k \nsucccurlyeq i,\ i^- \nsucccurlyeq k^+)$
}%
& \multirow{1}{1.3cm}{\centering\begin{minipage}[c][\Hblockheight]{\linewidth}%
  \centering $[i^-k^+]$%
\end{minipage}}%
& \multirow{1}{1.3cm}{\centering\begin{minipage}[c][\Hblockheight]{\linewidth}%
  \centering ---%
\end{minipage}}%
& \multirow{1}{2.5cm}{\centering\begin{minipage}[c][\Hblockheight]{\linewidth}%
  \centering $[ik^+],\ [i^-k]$%
\end{minipage}} \\
\midrule
\end{tabular}}
\caption{Decomposition numbers for non-core blocks of defect $2$}
\label{Fayers:DecompTable}
\end{figure}

We are finally ready to prove the main theorem of this section. We first deal with some exceptional cases. 

\begin{lem} \label{L:Ex1}
  Let $\tb=(2,3,1,2)$. Then 
  \[
  \{[0,1],[2]^2,[0]^2,[3,1],[1]^2\}
  \]
  is an SI-subset of $B$ which gives the matrix~(\ref{targetmatrixstar}).
\end{lem}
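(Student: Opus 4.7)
The plan is to verify the claim by direct computation, using Fayers's explicit characterisation of the Kleshchev bipartitions (\cref{P:FayersKlesh}) together with his description of the graded decomposition numbers (\cref{T:FayersDecomp} and \cref{Fayers:DecompTable}) for non-core defect-$2$ blocks with $s_1 \ne s_2$. Because those decomposition numbers are independent of the characteristic of the field, the hypothesis $d^{p}_{\bla\bmu}(1) = d^{0}_{\bla\bmu}(1)$ of \cref{prop:matrixtrick} will be automatic, so the only work is to identify the correct $5 \times 5$ submatrix on the nose.

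First I would unpack the combinatorial data encoded by $\tb = (2,3,1,2)$: here $e = 4$, so $I = \{1, 2\}$ and $K = \{0, 3\}$, and a direct calculation of the partial order $\preceq$ of \cref{D:I2} yields the covering relations $0 \prec 1$, $0 \prec 3$, $2 \prec 1$ and $2 \prec 3$, with $\{0, 2\}$ and $\{1, 3\}$ the only two incomparable pairs. In particular $I$ and $K$ are each two-element chains, with $2 \prec 1$ in $I$ and $0 \prec 3$ in $K$. Armed with these chains, \cref{P:FayersKlesh} then quickly confirms that each of $[0,1]$, $[2]^2$, $[0]^2$, $[3,1]$ and $[1]^2$ is Kleshchev.

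Next, treating each of the five bipartitions in turn as $\bmu$, I would locate the unique applicable row of \cref{Fayers:DecompTable}, being careful to drop any condition involving $i^\pm$ or $k^\pm$ that happens not to exist and to pick the correct sub-case. This yields the complete list of $\bla$ with $d_{\bla\bmu}(v) \ne 0$, with value $1$ at $\bla = \bmu$, $v^2$ at $\bla = \bmu^\ast$, and $v$ at each intermediate $\bla$. Extracting from these columns only the entries whose row labels lie in our subset, and reordering so as to respect the dominance order on the Specht labels, produces a lower-triangular $5 \times 5$ matrix coinciding with $(\ref{targetmatrixstar})$; \cref{prop:matrixtrick} then finishes the argument. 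The main obstacle is the bookkeeping required to navigate the thirteen cases A, A$'$, B, C, C$'$, D, D$'$, E, E$'$, F, G, G$'$, H of \cref{Fayers:DecompTable} together with their sub-cases, and the degenerate situations where one of $i^+, i^-, k^+, k^-$ fails to exist occur repeatedly here because $I$ and $K$ are so small; these must be handled with care when reading off which intermediate bipartitions contribute to each column.
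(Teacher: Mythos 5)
Your overall strategy is exactly the paper's: the paper gives no argument for this lemma beyond the worked example that follows it, which writes down the five bipartitions explicitly ($(\varnothing,(3,2,1))$, $((1),(1^5))$, $((1),(2^2,1))$, $((1^3),(1^3))$, $((1),(3,2))$) and then appeals to \cref{P:FayersKlesh} and \cref{T:FayersDecomp} (or a hand check) for the verification, noting that characteristic-independence makes the hypothesis of \cref{prop:matrixtrick} automatic. So the route via Fayers's combinatorial criteria is the intended one.

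There is, however, a concrete problem with your opening computation that would stop the verification before it starts. You compute $I=\{1,2\}$ and $K=\{0,3\}$; this is indeed what the literal reading of \cref{D:I2} gives for $\tb=(2,3,1,2)$ (the odd entries sit in positions $1$ and $2$, and one can confirm $b^{\ba}_{i2}(\bnu)-b^{\ba}_{i1}(\bnu)=-1$ exactly for $i=1,2$ when $\bnu=((1),(1))$ and $\ba=(5,3)$). But the labels $[0,1]$ and $[3,1]$ in the statement are of the form $[i,k]$ with $i\in I$ and $k\in K$, so they require $0,3\in I$ and $1\in K$; with your $I$ and $K$ these two labels do not denote any bipartition, and \cref{P:FayersKlesh} cannot be applied to them. (The paper's own worked example asserts $I=\{0,3\}$ and $K=\{1,2\}$, the opposite of what \cref{D:I2} yields -- this internal clash is the paper's, but your proposal inherits it without noticing.) The fix is to pin down the five bipartitions concretely before running the checks: from the abacus one finds they are the five bipartitions listed above, and in the convention of \cref{D:I2} the two mixed elements are $[1,0]=(\varnothing,(3,2,1))$ and $[2,0]=((1^3),(1^3))$. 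Once that identification is made, your order computation ($0\prec 1$, $0\prec 3$, $2\prec 1$, $2\prec 3$, with $\{0,2\}$ and $\{1,3\}$ incomparable) is correct, and the Kleshchev test and the table lookup do go through. As written, though, the claim that \cref{P:FayersKlesh} ``quickly confirms'' Kleshchevness of $[0,1]$ and $[3,1]$ is not a step you can actually execute with the $I$ and $K$ you have stated.
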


\begin{eg}
Let us look at~\cref{L:Ex1} in more detail.
We have $\TT=((2,3,1,2),2,0)$ which means that $\phi(\TT)$ gives the following abacus configuration.
\[
\abacusline(4,0,bbbb,bbbb,nbnn) \quad \abacusline(4,0,bbbb,bbnb,nnnn)
\]
Hence $e=4$, $\bnu=((1),(1))$ and $\ba=(5,3)$ so that $\hhh=\mathscr{H}_6(q,q^1,q^3)$ where $q$ is a primitive $4$th root of unity over our field $\bbf$. If $\bla \in B$ then $\Res_{\bs}(\bla)=\{0,1,1,2,3,3\}$.  
We have $I=\{0,3\}$ and $K=\{1,2\}$. The five bipartitions in the SI-subset  as follows.
\begin{align*}
& [0,1] && \abacusline(4,0,bbbb,bbbb,bnnn) \qquad \abacusline(4,0,bbbb,nbnb,nbnn) && (\varnothing,(3,2,1)) \\
& [2]^2 && \abacusline(4,0,bbbb,bbbb,nbnn) \qquad \abacusline(4,0,bbnb,bbbb,nnnn) && ((1),(1^5)) \\
&[0]^2 && \abacusline(4,0,bbbb,bbbb,nbnn) \qquad \abacusline(4,0,bbbb,nbnb,bnnn) && ((1),(2^2,1)) \\
&[3,1] && \abacusline(4,0,bbbb,bbbb,nnnb) \qquad \abacusline(4,0,bbbb,bbnn,nbnn) && ((1^3),(1^3)) \\
&[1]^2 && \abacusline(4,0,bbbb,bbbb,nbnn) \qquad \abacusline(4,0,bbbb,bnnb,nbnn) && ((1),(3,2))
\end{align*}
This example is small enough to check by hand; or we may use~\cref{P:FayersKlesh} to verify that these are indeed Kleshchev multipartitions and~\cref{T:FayersDecomp} to see that the submatrix of the graded decomposition matrix given by these bipartitions is indeed the matrix~(\ref{targetmatrixstar}): 
\[\begin{matrix*}[r] (\varnothing,(3,2,1)) \\ ((1),(1^5)) \\((1),(2^2,1)) \\ ((1^3),(1^3)) \\ ((1),(3,2)) \end{matrix*} \quad 
\begin{pmatrix} 1 &&&& \\
0 & 1 & & &\\
v & v & 1 && \\
0 & v^2 & v & 1& \\
v^2 & 0 & v&0&1 
\end{pmatrix}
\]
\end{eg}

\begin{lem} \label{L:Ex2}
  Let $\tb=(2,3,2,1)$. Then 
  \[\{[0]^2,[3,0],[1,2],[3,2]\}\]
  is an SI-subset of $B$ which gives the matrix~(\ref{targetmatrixalt}).
\end{lem}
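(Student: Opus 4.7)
The plan is to mirror the argument of \cref{L:Ex1}, verifying directly that the four bipartitions $[0]^2, [3,0], [1,2], [3,2]$ form an SI-subset by reading off the relevant decomposition numbers from \cref{T:FayersDecomp}. First I would unpack the combinatorial data corresponding to $\tb=(2,3,2,1)$: here $e=4$, $I=\{1,3\}$, $K=\{0,2\}$, and a direct check using \cref{D:I2} yields the relations $0\prec 1$, $0\prec 2$, $3\prec 1$ together with the three incomparable pairs $\{0,3\}$, $\{1,2\}$ and $\{2,3\}$. In particular $i^-=3$ when $i=1$, $i^+=1$ when $i=3$, $k^-=0$ when $k=2$, and $k^+=2$ when $k=0$, while all other $i^{\pm}$ and $k^{\pm}$ are absent. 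Using \cref{P:FayersKlesh}, each of $[0]^2$, $[3,0]$, $[1,2]$, $[3,2]$ is then seen to be Kleshchev: $[0]^2$ because $0\prec 2$, and each pair-bipartition $[i,k]$ because $i$ and $k$ are incomparable in $\preceq$.

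The body of the proof is a case-by-case identification of the Fayers case containing each column label $\bmu$, and a read-off of the rows that receive $v$- or $v^2$-entries. Concretely, I expect that $\bmu=[0]^2$ falls into Case~G (with $i=3$, $k=0$, $i^+=1$, $k^+=2$ and $k^+\not\succeq i^+$), contributing $v$-entries at $[3,0]=[ik]$, $[1,2]=[i^+k^+]$ and $[1]^2=[i^+]^2$, together with $\bmu^{\ast}=[3,2]=[ik^+]$ contributing $v^2$. I expect $\bmu=[3,0]$ to fall into Case~G$'$ (with $i^-$ absent), so that only $[3,2]=[ik^+]$ contributes a $v$-row lying in our subset, with $\bmu^{\ast}=[2]^1$ outside. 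I expect $\bmu=[1,2]$ to fall into Case~D$'$ (with $k^+$ absent and $i^-=3$), producing only $[3,2]=[i^-k]$ as a $v$-row inside our subset and $\bmu^{\ast}=[1]^1$ outside. Finally, $\bmu=[3,2]$ falls into Case~D$'$ with both $k^+$ and $i^-$ absent, contributing nothing off the diagonal within our subset. Assembling the resulting entries yields exactly the matrix~(\ref{targetmatrixalt}), and \cref{prop:matrixtrick} then concludes the argument, with characteristic-independence supplied by \cref{T:FayersDecomp}.

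The main obstacle is careful bookkeeping under the convention of \cref{T:FayersDecomp} that ``conditions involving $i^{\pm}$ or $k^{\pm}$ should be ignored if these elements do not exist''. In several of the cases above, once the non-existent conditions are dropped the hypotheses of more than one row of Fayers's table are satisfied; the correct case in each instance is the one for which $\bmu^{\ast}$ is itself a bipartition of the block (that is, does not involve a non-existent symbol). Equally delicate is checking that the auxiliary rows $[1]^2$, $[3]^1$, $[2]^1$ and $[1]^1$ predicted by \cref{T:FayersDecomp} as $v$- or $v^2$-entries all lie \emph{outside} $\{[0]^2,[3,0],[1,2],[3,2]\}$, which is precisely what forces the single off-diagonal zero of the target matrix, at row $[1,2]$ and column $[3,0]$.
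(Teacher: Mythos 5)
Your proposal is correct and follows exactly the route the paper intends for this lemma: verify the four bipartitions are Kleshchev via \cref{P:FayersKlesh} and read the column of each $\bmu$ off Fayers's table (\cref{T:FayersDecomp}), with characteristic-independence built into that theorem. Your case identifications (G for $[0]^2$, G$'$ second instance for $[3,0]$, D$'$ first instance for $[1,2]$) and the resulting entries all check out against the order data $0\prec 1$, $0\prec 2$, $3\prec 1$, so the submatrix is indeed~(\ref{targetmatrixalt}).
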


\begin{eg}
Let us look at~\cref{L:Ex2} in more detail. We have $\TT=((2,3,2,1),2,0)$ which means that $\phi(\TT)$ gives the following abacus configuration.
\[
\abacusline(4,0,bbbb,bbbb,nbnn) \quad \abacusline(4,0,bbbb,bbbn,nnnn)
\]
Hence $e=4$, $\bnu=((1),\varnothing)$ and $\ba=(5,3)$ so that $\hhh=\mathscr{H}_5(q,q^1,q^3)$ where $q$ is a primitive $4$th root of unity over our field $\bbf$. 
If $\bla \in B$ then $\Res_{\bs}(\bla)=\{0,1,1,2,3\}$. We have $I=\{1,3\}$ and $K=\{0,2\}$. 
The four bipartitions in the SI-subset are
\begin{align*}
&[0]^2 && \abacusline(4,0,bbbb,bbbb,nbnn) \qquad \abacusline(4,0,bbbb,nbbn,bnnn) &&  ((1),(2,1^2)) \\
&[3,0]&&\abacusline(4,0,bbbb,bbbn,bbnn) \qquad \abacusline(4,0,bbbb,nbbb,nnnn) && ((1^2),(1^3)) \\
&[1,2]&&\abacusline(4,0,bbbb,bbbb,nnbn) \qquad \abacusline(4,0,bbbb,bbnn,nbnn) && ((2),(3)) \\
&[3,2]&&\abacusline(4,0,bbbb,bbbn,nbbn) \qquad \abacusline(4,0,bbbb,bbnb,nnnn) &&((2^2),(1)) 
\end{align*}
This example is small enough to check by hand; or we may use~\cref{P:FayersKlesh} to verify that these are indeed Kleshchev multipartitions and~\cref{T:FayersDecomp} to see that the submatrix of the graded decomposition matrix given by these bipartitions is indeed the matrix~(\ref{targetmatrixalt}): 
\[\begin{matrix*}[r] ((1),(2,1^2)) \\ ((1^2),(1^3)) \\((2),(3)) \\ ((2^2),(1)) \end{matrix*} \quad 
\begin{pmatrix} 1 &&& \\
v & 1 & & \\
v & 0 & 1 & \\
v^2 & v & v & 1 
\end{pmatrix}
\]
\end{eg}

We now look at the remaining cases for $\tb$ when $e \ge 4$. In most -- possibly all -- of the cases below, there is more than one way of choosing the SI-subset.
Other than trying to have as few different cases as possible, our choice for the subset was arbitrary. 

For $d \geq 1$, let
\[
N_d(\tb)=\#\{0 \leq m <e \mid \tb_m=d\}.
\]
If $N_d(\tb)>0$ and $\{0 \leq m \leq e-1 \mid \tb_m=d\} = \{s_1,\dots,s_r\}$ where $s_1 < \dots < s_r$ and $1 \leq u \leq r$, set $P_d^u(\tb)=s_u$, and set $P_d({\tb})=P^1_d(\tb)$. (Informally, $P_d^u(\tb)$ is just the position in which the $u^{\text{th}}$ entry equal to $d$ occurs in $\tb$.)  

\begin{thm}
Assume that $\tb \notin\{(2,3,1,2), (2,3,2,1)\}$. Assume that $e \ge 4$. 
\begin{enumerate} 
\item Suppose $\tb=(1,1,\ast,\dots,\ast)$. Then
\[\{[0]^2, [1]^2, [0]^1, [1]^1\}\]
is an SI-subset of $B$ that gives the matrix~(\ref{targetmatrixalt}).
\item Suppose $\tb \neq (1,1,\ast,\dots,\ast)$ and that $N_1(t) \geq 3$. For $i=1,2,3$, let $a_i=P^i_1(\tb)$ and 
let $x=P_2(\tb)$. Then
\[\{[a_1]^2, [a_2]^2, [a_3,x], [a_2,x]\}\]
is an SI-subset of $B$ that gives the matrix~(\ref{targetmatrix}).
\item Suppose $\tb=(1,2,\ast,\dots,\ast)$ with $N_1(\tb)=2$ and $P^2_1(\tb)=x$. Then 
\[\{[0]^2,[x]^2,[1]^2,[x,1]\}\]
is an SI-subset of $B$ that gives the matrix~(\ref{targetmatrix}).
\item Suppose that $\tb=(1,2,\ast,\dots,\ast)$ and that $N_1(\tb)=1$. 
\begin{enumerate}
\item If $\tb=(1,2,3,\dots,3)$ then 
\[
\{[3,1],[2,1],[1]^2,[2]^2\}
\]
is an SI-subset of $B$ that gives the matrix~(\ref{targetmatrix}).
\item If not, the set
\[
\{[0]^2,[1]^2,[0]^1,[1]^1\}
\]
is an SI-subset of $B$ that gives the matrix~(\ref{targetmatrixalt}).
\end{enumerate}
 \item Suppose $\tb=(2,\ast,\dots,\ast)$ and $N_1(\tb)=2$. For $i=1,2$, let $a_i=P^i_1(\tb)$. Then 
\[
\{[a_1]^2,[a_2]^2,[0]^2,[a_2 ,0]\}
\]
is an SI-subset of $B$ that gives the matrix~(\ref{targetmatrix}).
\item Suppose $\tb=(2,\ast,\dots,\ast)$ and $N_1(\tb)=1$. Let $a=P_1(\tb)$. 
  \begin{enumerate}
  \item Suppose $\tb=(2,1,\ast,\dots,\ast)$ and $N_3(\tb)=0$, i.e.~$\tb=(2,1,2,\dots,2)$.
  Then
    \[
    \{[0]^2,[2]^2,[1]^1,[2]^1\}
    \]
is an SI-subset of $B$ that gives the matrix~(\ref{targetmatrixalt}).
  \item Suppose $\tb=(2,2,\ast,\dots,\ast)$ and $N_3(\tb)=0$, i.e.~$\tb=(2,2,\dots,2,1,2,\dots,2)$.
  Then
    \[
    \{[0]^2,[1]^2,[a,0],[a,1]\}
    \]
    is an SI-subset of $B$ that gives the matrix~(\ref{targetmatrixalt}). 
  \item Suppose $\tb=(2,\ast,\dots,\ast)$ where $t_1\leq 2$ and $N_3(\tb)\geq 1$. Take $c=P_3(t)$ and $b=P^2_2(t)$. Then 
    \[
    \{[c,0],[c,b],[0]^2,[b]^2\}
    \]
    is an SI-subset of $B$ that gives the matrix~(\ref{targetmatrixalt}).
  \item Suppose $\tb=(2,3,\ast,\dots,\ast)$ and $|I|\geq 3$. If $N_3(\tb)=1$, take $d=P_5(\tb)$; else take $d=P^2_3(\tb)$. Then 
    \[
    \{[d,0],[1,0],[0]^2,[1]^2\}
    \]
    is an SI-subset of $B$ that gives the matrix~(\ref{targetmatrix}).
  \item Suppose $\tb=(2,3,\ast,\dots,\ast)$ and $|I|=2$ and $N_2(t) \geq 2$.
  Note that since $\tb \ne (2,3,1,2),(2,3,2,1)$ we have $|K| \ge 3$.
  Let $b=P^2_2(\tb)$. Then 
    \[
    \{[0]^2,[1]^2,[b]^2,[1,b]\}
    \]
    is an SI-subset of $B$ that gives the matrix~(\ref{targetmatrix}).
  \item Suppose $\tb=(2,3,\ast,\dots,\ast)$ and $|I|=2$ and $N_2(t)= 1$, i.e.~$\tb=(2,3,4,4,\dots,4,1)$.
  Then 
    \[
    \{[0]^2,[1]^2,[a]^1,[1]^1\}
    \]
    is an SI-subset of $B$ that gives the matrix~(\ref{targetmatrixalt}).
    \end{enumerate}
    \item Suppose $\tb=(2,\ast,\dots,\ast)$ and $N_1(\tb)=0$. 
\begin{enumerate}
\item Suppose $\tb=(2,2,\ast,\dots,\ast)$. Since $\nn>0$, we must have $N_3(t)\geq 1$.
Let $c=P_3(\tb)$. Then
\[
\{[c,0],[c,1],[0]^2,[1]^2\}
\] 
is an SI-subset of $B$ that gives the matrix~(\ref{targetmatrixalt}).
\item Suppose $\tb=(2,3,2,\ast,\dots,\ast)$. Then
\[
\{[0]^2,[1]^2,[2]^2,[1,2]\}
\]
is an SI-subset of $B$ that gives the matrix~(\ref{targetmatrix}).
\item Suppose $\tb=(2,3,\ast,\dots,\ast)$ where $\tb_2 \geq 3$ and $|I|\geq 2$. If $N_3(\tb)=1$, take $d=P_5(\tb)$; else take $d=P_3^2(\tb)$. Then 
   \[
   \{[d,0],[1,0],[0]^2,[1]^2\}
   \]
is an SI-subset of $B$ that gives the matrix~(\ref{targetmatrix}). 

\item Suppose $\tb=(2,3,4,\ast,\dots,\ast)$ and $|I|=1$.
and $N_2(\tb) \geq 2$. Take $b=P^2_2(\tb)$. Then
    \[
    \{[0]^2,[1]^2,[b]^2,[1,b]\}
    \]
is an SI-subset of $B$ that gives the matrix~(\ref{targetmatrix}).

\item Suppose $\tb=(2,3,4,\ast,\dots,\ast)$ and $|I|=1$ and $N_2(\tb) =1$, i.e.~$\tb=(2,3,4,4,\dots,4)$. Then
    \[
    \{[1]^2,[2]^2,[1]^1,[2]^1\}
    \]
is an SI-subset of $B$ that gives the matrix~(\ref{targetmatrixalt}).    
    \end{enumerate}
\end{enumerate}
\end{thm}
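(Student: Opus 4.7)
The plan is to prove each case by a direct verification using the machinery of \cref{P:FayersKlesh} and \cref{T:FayersDecomp,Fayers:DecompTable}. In each case, the given $\tb = (\tb_0,\tb_1,\dots,\tb_{e-1})$ determines $I = \{i : \tb_i \equiv 1 \pmod 2\}$, $K = \{0,\dots,e-1\}\setminus I$, and the partial order $\preceq$ on $\{0,\dots,e-1\}$ of \cref{D:I2}. Since $\psi(\cB)=(\tb,\nn,0)$, once $\tb$ and $I$ are known, the order is completely determined by the values of $\tb_i$, so the conditions for Kleshchev-ness and the cases in \cref{Fayers:DecompTable} reduce to straightforward combinatorial checks on the $\tb_i$.

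First I would treat the two exceptional $\tb \in \{(2,3,1,2),(2,3,2,1)\}$ separately in \cref{L:Ex1,L:Ex2}, since the resulting matrices are $(\spadesuit)$ and $(\ddag)$ rather than the generic $(\dag)$ or $(\ddag)$; these are small enough that checking Kleshchev-ness via \cref{P:FayersKlesh} and listing the contributing bipartitions from \cref{T:FayersDecomp} is direct. Then for each of the seven main cases (i)--(vii) I would:
\begin{enumerate}
\item Write down $I$, $K$, and the order $\preceq$ induced by $\tb$, paying attention to existence of $i^\pm$ and $k^\pm$ for the indices appearing in the SI-subset.
\item Verify that each of the four listed bipartitions lies in $\Kl$ using the criteria in \cref{P:FayersKlesh}.
\item For each Kleshchev $\bmu$ in the subset, identify the row of \cref{Fayers:DecompTable} to which $\bmu$ corresponds, read off its Mullineux-conjugate $\bmu^{\ast}$, and verify that $\bmu^{\ast}$ also appears in the subset and corresponds to the $v^2$-entry of the claimed matrix.
\item Read off, from the last column of \cref{Fayers:DecompTable}, the complete list of $\bla$ with $\bmu \prec \bla \prec \bmu^{\ast}$ and $d_{\bla\bmu}=v$; check that the intersection of this list with our subset matches the $v$-entries of the claimed matrix, and that no other bipartition in the subset satisfies $d_{\bla\bmu}(v)\neq 0$.
\end{enumerate}

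In cases such as (ii) or (v), where there are three indices $a_1,a_2,a_3 \in I$ (or $a_1,a_2 \in I$ together with an even $0$), the $\preceq$-ordering among them is forced by $\tb$, and the contributing $\bmu$'s typically fall into cases A, A$'$, C, C$'$ of \cref{Fayers:DecompTable}; the matrix $(\dag)$ then arises because the middle pair $\{[a_2]^2,[a_3,x]\}$ contribute via disjoint $v$-entries. Cases (i), (iv)(b), (vi)(a), (vi)(b), (vi)(f), (vii)(e) give $(\ddag)$ rather than $(\dag)$ for an analogous reason: one obtains a pair of bipartitions $[a]^1,[a]^2$ with $[a]^1 \prec [a]^2$ and a $v^2$-entry between them, and two middle bipartitions whose $v$-entries do not overlap. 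Cases involving $[ik]$ bipartitions are checked using cases D, D$'$, E, E$'$, G, G$'$, H.

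The main obstacle will not be conceptual but rather the careful bookkeeping of the partial order $\preceq$ and the existence of neighbours $i^\pm, k^\pm$ when $|I|$ or $|K|$ is small (in particular when $|I|=2$ or $|K|=2$); in these degenerate subcases one must verify that the Kleshchev condition from \cref{P:FayersKlesh} still holds and that no spurious $d_{\bla\bmu}$ outside the subset contributes to the submatrix. The sharp constraint $e\ge 4$ is used precisely to ensure enough room on the abacus to realise the listed sets; the case $e=3$ is genuinely different and is postponed to \cref{sec:e=3defect2}, while $\tb\in\{(2,3,1,2),(2,3,2,1)\}$ exhausts all remaining exceptions through the $\SU$-minimal classification of \cref{T:SUEquivReps}.
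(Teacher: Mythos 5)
Your proposal is correct and follows essentially the same route as the paper: a case-by-case verification that each listed bipartition is Kleshchev via \cref{P:FayersKlesh}, followed by reading off the relevant entries of the submatrix from \cref{Fayers:DecompTable}, using unitriangularity with respect to dominance to dispose of the final column. The paper likewise isolates $\tb\in\{(2,3,1,2),(2,3,2,1)\}$ in \cref{L:Ex1,L:Ex2} and records, for each remaining case, which rows of the table supply the needed decomposition numbers, exactly as you outline.
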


\begin{proof}
In order to prove~\cref{T:Defect2} we need to check that in each individual case the bipartitions in question are Kleshchev bipartitions and that the subsequent submatrix of the graded decomposition matrix is as claimed. For the former we can use~\cref{P:FayersKlesh} and for the latter we can use~\cref{Fayers:DecompTable}. Here, we look at Case (i) in detail; for the other cases, we indicate where the relevant decomposition numbers can be found in~\cref{Fayers:DecompTable}. 

Note that our assumptions on $B$ impose the condition that $I,K \ne \emptyset$. Since $\tb$ is minimal, we then have that $N_2(\tb)>0$ and that $(\tb_0,\tb_1) \in \{(1,1),(1,2),(2,1),(2,2),(2,3)\}$. 
\begin{enumerate}
\item We have $\tb=(1,1,\ast,\dots,\ast)$ so that $0,1 \in I$. Let $x=P_2(\tb)$. Then $0,1 \prec x$ so by~\cref{P:FayersKlesh} we have $[0]^2,[1]^2,[0]^1,[1]^1 \in \Kl$.
We want to show that the entries in the graded decomposition matrix are 
\[
\begin{matrix*}[r] [0]^2 \\ [1]^2 \\ [0]^1 \\ [1]^1 \end{matrix*} \quad 
\begin{pmatrix} 1 &&& \\
v & 1 & & \\
v & 0 & 1 & \\
v^2 & v & v & 1 
\end{pmatrix}
.\]
Let $i=0$ so that $i^+=1$. Take $\bmu=[i]^2$, which corresponds to Case B of~\cref{Fayers:DecompTable}. Then $\bmu^{\ast}=[i^+]^1=[1]^1$ so that 
\begin{align*} 
d^{p}_{[i^+]^2[i]^2}(v) &= d^{p}_{[1]^2[0]^2}(v)  = v,\\
d^{p}_{[i]^1[i]^2}(v)&=d^{p}_{[0]^1[0]^2}(v)=v, \\
d^{p}_{[i^+]^1[i]^2}(v) &= d^{p}_{[1]^1[0]^2}(v) =v^2.
\end{align*}
Now take $\bmu=[i]^1$. This corresponds to the first instance of Case A of~\cref{Fayers:DecompTable}, where $k=x$. In particular,
\[
d^{p}_{[0]^2[0]^1}(v)=d^{p}_{[1]^2[0]^1}=0 \text{ and } d^{p}_{[1]^1[0]^1}(v)=v.
\]

Let $i=1$ and take $\bmu=[i]^2$.
If $\tb_2=1$, this corresponds to Case B of~\cref{Fayers:DecompTable}.
Otherwise $\tb_2=2$; set $k=2$. We see that this corresponds to either the first instance of Case C if $k \npreccurlyeq i^+$ or the second instance of Case C if $k \preceq i^+$. (Recall that if $i^+$ does not exist then we ignore conditions involving it so that the condition $k \preceq i^+$ is trivially true.) 
In all cases, we have
\[
d^{p}_{[0]^2[1]^2}(v)=d^{p}_{[0]^1[1]^2}(v)=0 \text{ and } d^{p}_{[1]^1[1]^2}(v)=v.
\]
Finally, take $\bmu=[i]^1$. This corresponds to one of the instances of Case A. In either instance we have 
\[
d^{p}_{[0]^2[1]^1}(v)=d^{p}_{[1]^2[1]^1}(v)=d^{p}_{[0]^1[1]^1}(v)=0.
\]
\end{enumerate}
In fact, in Case (i) above, it was unnecessary to look at $\bmu=[i]^1$. The previous calculations showed that $d_{[1]^1\bla} \ne 0$ for $\bla \in \{[0]^2,[1]^2,[0]^2$ so that $[1]^1 \dom [0]^2,[1]^2,[0]^2$ and by~\cref{L:DomOrder}, the corresponding column of the decomposition matrix is indeed as shown. 

For all cases, we record below where in~\cref{Fayers:DecompTable} we can find the Specht modules that contain $\rD{\bmu}$ as a composition factor. As above, we do not need to consider the fourth bipartition.

{\renewcommand{\arraystretch}{1.2}
\[
\begin{array}{c@{\quad}c@{\quad}c@{\quad}c@{\quad}c@{\quad}c}
\text{Case} && \bla_1 & \bla_2 & \bla_3 & \bla_4 \\
\hline

\multirow{2}{*}{(i)} && [0]^2 & [1]^2 & [0]^1 & [1]^1 \\
 && \text{B} & \text{B / C1 / C2} & \text{A1} & - \\
\hline

\multirow{2}{*}{(ii)} && [a_1]^2 & [a_2]^2 & [a_3,x] & [a_2,x] \\
 && \text{C1 / D1} & \text{D1} & \text{D'1 / G'1 / H} & - \\
\hline

\multirow{2}{*}{(iii)} && [0]^2 & [x]^2 & [1]^2 & [x,1] \\
 && \text{C1} & \text{D2} & \text{C'1 / F1 / F3 / G1 / G2} & - \\
\hline

\multirow{4}{*}{(iv)} &
\multirow{2}{*}{(a)} & [3,1] & [2,1] & [1]^2 & [2]^2 \\
 & & \text{A'1} & \text{A'2} & \text{C'2} & - \\
\cline{2-6}

 & \multirow{2}{*}{(b)} & [0]^2 & [1]^2 & [0]^1 & [1]^1 \\
 & & \text{C2} & \text{C'2 / F2 / F4} & \text{A2} & - \\
\hline

\multirow{2}{*}{(v)} && [a_1]^2 & [a_2]^2 & [0]^2 & [a_2,0] \\
 && \text{D1} & \text{D2} & \text{C'1 / F1 / F3 / G1 / G2} & - \\
\hline

\multirow{12}{*}{(vi)} & (a) & [0]^2 & [2]^2 & [1]^1 & [2]^1 \\
                       &     & \text{F3}     & \text{F4}     & \text{A2}    & -    \\
\cline{2-6}

 & \multirow{2}{*}{(b)} & [0]^2 & [1]^2 & [a,0] & [a,1] \\
 & & \text{G2} & \text{F3 / G2} & \text{G'2} & - \\
\cline{2-6}

 & \multirow{2}{*}{(c)} & [c,0] & [c,b] & [0]^2 & [b]^2 \\
 & & \text{E'2} & \text{A'2 / E'2} & \text{F3 / G2} & - \\
\cline{2-6}

 & \multirow{2}{*}{(d)} & [d,0] & [1,0] & [0]^2 & [1]^2 \\
 & & \text{A'1 / E'1} & \text{A'2} & \text{C'1 / F1} & - \\
\cline{2-6}

 & \multirow{2}{*}{(e)} & [0]^2 & [1]^2 & [b]^2 & [1,b] \\
 & & \text{F1 / G1} & \text{D2} & \text{F3 / G2} & - \\
\cline{2-6}

 & \multirow{2}{*}{(f)} & [0]^2 & [1]^2 & [a]^1 & [1]^1 \\
 & & \text{C'1} & \text{C2} & \text{A1} & - \\
\hline

\multirow{10}{*}{(vii)} &
\multirow{2}{*}{(a)} & [c,0] & [c,1] & [0]^2 & [1]^2 \\
 & & \text{E'2} & \text{A'2 / E'2} & \text{F4} & - \\
\cline{2-6}

 & \multirow{2}{*}{(b)} & [0]^2 & [1]^2 & [2]^2 & [1,2] \\
 & & \text{F2} & \text{D2} & \text{C'1 / G2} & - \\
\cline{2-6}

 & \multirow{2}{*}{(c)} & [d,0] & [1,0] & [0]^2 & [1]^2 \\
 & & \text{A'1 / E'1} & \text{A'2} & \text{C'2 / F2} & - \\
\cline{2-6}

 & \multirow{2}{*}{(d)} & [0]^2 & [1]^2 & [b]^2 & [1,b] \\
 & & \text{F2} & \text{D2} & \text{F3 / G2} & - \\
\cline{2-6}

 & \multirow{2}{*}{(e)} & [1]^2 & [2]^2 & [1]^1 & [2]^1 \\
 & & \text{C2} & \text{F4} & \text{A2} & - \\
\hline

\end{array}
\qedhere
\]
}
\end{proof}

This concludes the proof of the main theorem of this section.

\begin{thm} \label{T:Defect2}
Suppose $e\geq 4$, $s_1 \ne s_2$, and that $B$ is a block of $\hhh$ with $\defect(B)=2$, and that $B$ is not a core block.
Then $B$ is Schurian-infinite. 
\end{thm}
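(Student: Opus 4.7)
The strategy is to exploit the very explicit description of the decomposition numbers of non-core defect 2 blocks with $s_1 \ne s_2$ provided by Fayers in \cite{fay06}, which is independent of the characteristic of the field, and to use this to exhibit an SI-subset in the sense of \cref{prop:matrixtrick}.

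The plan is to begin by reducing to a canonical form. Since $\defect(B) = \defect(C) + 2\hk(B)$, and $B$ is not a core block, we must have $\hk(B) = 1$ and $\defect(C) = 0$. In particular, $|\cC| = 1$. Applying \cref{T:Equiv1} together with \cref{L:AMC}, we may assume without loss of generality that $\cB \in \Bc$, that $\psi(\cB) = (\tb,\nn,0)$ is minimal in the sense defined before \cref{L:AMC}, and that $\tb_i = b^{\ba}_{i1}(\bnu) + b^{\ba}_{i2}(\bnu)$ where $\cC = \{\bnu\}$. This reduces the problem to a finite-to-be-analysed list of patterns of $\tb$, since minimality forces $(\tb_0,\tb_1) \in \{(1,1),(1,2),(2,1),(2,2),(2,3)\}$ and all differences $\tb_{i+1} - \tb_i$ (and $\tb_0 - \tb_{e-1}$) are suitably bounded.

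Next, I would dispense with the two genuinely exceptional patterns $\tb = (2,3,1,2)$ and $\tb = (2,3,2,1)$ (which only arise when $e=4$) by appealing to \cref{L:Ex1} and \cref{L:Ex2}; in each case one writes down an explicit 5-element (respectively 4-element) subset of bipartitions of the form $[m]^c$ and $[i,k]$, and \cref{P:FayersKlesh} together with \cref{T:FayersDecomp} verifies that these are Kleshchev and yield submatrices of the form $(\spadesuit)$ and $(\ddag)$ of \cref{prop:matrixtrick}, respectively.

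The main body of the proof is then a case analysis on $\tb$ (outside the two exceptional patterns), organised by the value of $(\tb_0,\tb_1)$ and by the multiplicities $N_1(\tb), N_2(\tb), N_3(\tb)$. In each case I would construct an explicit set of four bipartitions of the form $[m]^c$ or $[i,k]$ (as listed in the statement of the theorem above), then invoke \cref{P:FayersKlesh} to confirm that each member is Kleshchev, and read off the relevant graded decomposition numbers from \cref{Fayers:DecompTable} to confirm that the resulting submatrix equals either $(\dag)$ or $(\ddag)$; Schurian-infiniteness then follows from \cref{prop:matrixtrick}. I expect the main difficulty to be bookkeeping: one must simultaneously ensure that the chosen bipartitions lie in the expected row of Fayers's table (so that the Mullineux--conjugate $\bmu^{\ast}$ is correctly identified, giving a $v^2$-entry), that the intermediate entries in that row of the table match the $v$-entries demanded by $(\dag)$ or $(\ddag)$, and that no spurious $v$-entries appear from other rows of the table. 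Checking that each of the roughly fifteen subcases really yields the claimed matrix via a direct consultation with the appropriate case (A, A$'$, B, C, C$'$, D, D$'$, E, E$'$, F, G, G$'$, or H) of \cref{Fayers:DecompTable} is the most delicate part, and is essentially what the case analysis in the theorem's statement accomplishes.
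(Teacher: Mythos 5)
Your proposal follows essentially the same route as the paper: reduce via Scopes/shift equivalence to a minimal tuple $\tb$ using \cref{L:AMC}, dispose of the two exceptional patterns $(2,3,1,2)$ and $(2,3,2,1)$ by the explicit SI-subsets of \cref{L:Ex1,L:Ex2}, and then run a case analysis on the remaining minimal $\tb$ (organised by $(\tb_0,\tb_1)$ and the multiplicities $N_d(\tb)$), verifying Kleshchevness with \cref{P:FayersKlesh} and reading the entries off \cref{Fayers:DecompTable} to obtain a submatrix of the form (\ref{targetmatrix}) or (\ref{targetmatrixalt}). This matches the paper's proof, whose substance is precisely the roughly fifteen-subcase verification you describe.
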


\section{Defect 2 blocks and $\tau$-tilting theory}\label{sec:def2}

\subsection{Review on $\tau$-tilting theory}

Here we review necessary materials from $\tau$-tilting theory.
Let $A$ be a finite-dimensional $\bbf$-algebra, $\mod A$ the category of finite-dimensional $A$-modules.
For $M\in \mod A$, we denote by $|M|$ the number of isomorphism classes of indecomposable direct summands of $M$, and by $\add(M)$ the full subcategory of $\mod A$ whose objects are direct summands of finite direct sums of copies of $M$.
We will apply theorems in this section to $R^\Lambda(\beta)$, which admits an anti-involution that fixes the KLR generators, so that the category of right $A$-modules is equivalent to the category of left $A$-modules.
Since almost all the references on $\tau$-tilting theory use right modules, we state theorems using right modules in this section. In the next section, we often compute the bound quiver presentation of the basic algebra of $R^\Lambda(\beta)$ using Fock space theory.
In that process, we choose explicit elements that represent arrows of the Gabriel quiver and we obtain the radical structure of indecomposable projective left modules over the basic algebra and we view them as right modules. Then, we may compute the support $\tau$-tilting quiver in the category of right modules to decide the $\tau$-tilting finiteness of $A$.

\begin{defn}[{\cite[Definition 0.1]{AIR}}]
Let $M\in \mod A$. We denote by $\tau$ the Auslander--Reiten translation. Then,
\begin{enumerate}
\item $M$ is called \emph{$\tau$-rigid} if $\Hom_A(M,\tau M)=0$.
\item $M$ is called \emph{$\tau$-tilting} if it is $\tau$-rigid and $\left | M \right |=\left | A \right |$.
\item $M$ is called \emph{support $\tau$-tilting} if $M$ is a $\tau$-tilting $\left ( A/A e A\right )$-module for an idempotent $e$ of $A$. In this case, $(M,P)$ with $P:=eA$ is called a \emph{support $\tau$-tilting pair}.
\end{enumerate}
\end{defn}

We denote by $\tautilt A$ and $\stautilt A$ the set of isomorphism classes of basic $\tau$-tilting $A$-modules and basic support $\tau$-tilting $A$-modules, respectively. 
It is shown in \cite[Theorem 0.2]{AIR} that any $\tau$-rigid $A$-module is a direct summand of some $\tau$-tilting $A$-module. If the number of isomorphism classes of $\tau$-tilting $A$-modules is finite, we say that $A$ is $\tau$-tilting finite. Then, $A$ is $\tau$-tilting finite if and only if the number of isomorphism classes of indecomposable $\tau$-rigid $A$-modules is finite, or equivalently, if and only if $\stautilt A$ is a finite set. 

Schurian modules are called bricks in the field of $\tau$-tilting theory. 
The following theorem established the fact that Schurian-finiteness is equivalent to $\tau$-tilting finiteness.

\begin{thm}[{\cite[Theorem 4.2]{DIJ}}]\label{Schurian-rigid}
There exists an injective map from the set of isomorphism classes of indecomposable $\tau$-rigid $A$-modules to the set of isomorphism classes of Schurian $A$-modules given by
$X\longmapsto X/\Rad_B(X)$, where $B:=\End_A(X)$.
If one of the sizes of the two sets is finite, then the map is bijective. 
\end{thm}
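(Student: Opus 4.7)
My plan is to prove the three assertions in order: well-definedness of the map with image in the class of bricks, injectivity, and bijectivity whenever one of the two sets is finite. Throughout I would work on the module side chosen in the paper, noting that the argument is left-right symmetric.

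For well-definedness, indecomposability of $X$ forces $B := \End_A(X)$ to be local with residue field $\bbf$ (by the algebraic closure assumption), so $X/\Rad_B(X)$ is the non-zero top of $X$ viewed as a $B$-module. The substantive point is that $\End_A(X/\Rad_B(X)) \cong \bbf$. I would lift an arbitrary $A$-endomorphism $\overline{f}$ of $X/\Rad_B(X)$ along the canonical surjection $\pi\colon X \twoheadrightarrow X/\Rad_B(X)$ to an element $f \in B$; such a lift exists because $\tau$-rigidity makes $X$ an Ext-projective object of the torsion class $\Fac X$ (via the Auslander--Reiten formula), so any $A$-map from $X$ into a quotient of $X$ factors through $\pi$. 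Since $f$ acts on the simple $B$-module $X/\Rad_B(X)$ as a scalar, $\overline{f}$ must be a scalar, and we conclude that $X/\Rad_B(X)$ is Schurian.

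For injectivity, suppose indecomposable $\tau$-rigid modules $X$ and $Y$ satisfy $S := X/\Rad_{B_X}(X) \cong Y/\Rad_{B_Y}(Y)$. Both $X$ and $Y$ are Ext-projective covers of $S$ inside the smallest torsion class containing $S$, which is functorially finite by Auslander--Smal\o{} precisely because $X$ and $Y$ are $\tau$-rigid. Uniqueness of such an indecomposable Ext-projective cover above a fixed simple top then forces $X \cong Y$.

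The main obstacle is bijectivity in the finite case. Here the plan is to invoke the combinatorial triangle linking torsion classes, semibricks, and support $\tau$-tilting modules: the Adachi--Iyama--Reiten bijection between basic support $\tau$-tilting modules and functorially finite torsion classes, combined with Asai's bijection between torsion classes and semibricks, under which functorial finiteness corresponds to finiteness of the associated semibrick. If the set of bricks is finite, then every semibrick is finite, so every torsion class is functorially finite and thus arises from a basic support $\tau$-tilting module $M$; every brick $S$ then appears as the top of an indecomposable summand of $M$, yielding the required preimage under our map. The symmetric argument handles the case of finitely many indecomposable $\tau$-rigid modules. The hard part is establishing the torsion-class/semibrick dictionary (which constitutes the heart of \cite{DIJ}); once this is in place, bijectivity is formal.
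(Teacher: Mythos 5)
The paper offers no proof of this statement: it is quoted verbatim as \cite[Theorem 4.2]{DIJ} and used as a black box, so there is nothing internal to compare against. Measured against the original Demonet--Iyama--Jasso argument, your treatment of well-definedness and injectivity follows the right route and is essentially sound, but two steps are asserted rather than proved. First, the lifting of an endomorphism of $S:=X/\Rad_B(X)$ through $\pi\colon X\twoheadrightarrow S$ needs the observation that $\Rad_B(X)$, being a sum of images of endomorphisms of $X$, lies in $\Fac X$, so that Ext-projectivity of $X$ in $\Fac X$ gives $\Ext^1(X,\Rad_B(X))=0$. Second, the ``uniqueness of the indecomposable Ext-projective cover'' you invoke for injectivity is precisely what must be shown; it does follow from the same lifting device run in both directions (lift $\pi_Y$ through $\pi_X$ and $\pi_X$ through $\pi_Y$; the two composites induce the identity on $S$, hence lie outside the radicals of the local endomorphism rings and are isomorphisms), and the identification $\mathsf{T}(S)=\Fac X$ that you need in order to know this torsion class is functorially finite requires checking $X\in\mathsf{T}(S)$ via the filtration of $X$ by the submodules $\Rad_B^i(X)$.

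The bijectivity part contains a genuine gap. The implication ``finitely many bricks implies every torsion class is functorially finite'' is not a formal consequence of a torsion-class/semibrick dictionary: Asai's bijection (which in any case postdates \cite{DIJ}) only relates functorially finite torsion classes to left-finite semibricks, so it cannot be used to conclude functorial finiteness of an arbitrary torsion class; the implication you want is essentially Theorem 3.8 of \cite{DIJ}, i.e.\ the substance of the result being proved, and your sketch does not establish it. Separately, even once one knows $\mathsf{T}(S)=\Fac M$ for a basic support $\tau$-tilting module $M$, the claim that the brick $S$ is then the top of an indecomposable summand of $M$ is not automatic: it is deduced in \cite{DIJ} from the minimality of $\mathsf{T}(S)$ among torsion classes containing $S$, by comparing $\Fac M$ with the strictly smaller torsion classes obtained by mutating away summands of $M$. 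As written, your surjectivity argument assumes both of these points.
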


\begin{rem}
    The image of the map is the set of isomorphism classes of Schurian modules $M$ such that the smallest torsion class $\mathsf{T}(M)$ containing $M$ is functorially finite. 
\end{rem}

We may check $\tau$-tilting finiteness using the support $\tau$-tilting quiver, defined as follows.
Its vertex set is $\stautilt A$.
The directed edges of the quiver are given by left mutation: let $T=M\oplus N\in \stautilt A$ with an indecomposable direct summand $M$ such that $M$ is not a factor module of a direct sum of copies of $N$.
We take an exact sequence with a minimal left $\add(N)$-approximation $f$:
\[
M\overset{f}{\longrightarrow}N'\longrightarrow \mathsf{coker}\ f\longrightarrow 0.
\]
It is shown in \cite[Theorem 2.30]{AIR} that $\mu_M^-(T):=(\mathsf{coker}\ f)\oplus N$ is again a basic support $\tau$-tilting module. We call $\mu_M^-(T)$ the \emph{left mutation} of $T$ with respect to $M$\footnote{Dually, we have the \emph{right mutation} $\mu_M^+(T)$ of $T$ with respect to $M$.}.
Then we draw an arrow $T\rightarrow \mu_M^-(T)$. It is known that $N$ is not a sincere module if and only if $f$ is an epimorphism.

\begin{thm}[{\cite[Corollary 2.38]{AIR}}]
If the support $\tau$-tilting quiver of a finite-dimensional $\bbf$-algebra $A$ has a connected component with finitely many vertices, there is no vertex outside the component. 
In particular, $A$ is $\tau$-tilting finite. Equivalently, 
$A$ is Schurian-finite. 
\end{thm}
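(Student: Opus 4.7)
The plan is to assemble the statement from two results already recalled in the text; since the theorem is quoted verbatim from \cite[Corollary 2.38]{AIR}, the proof is essentially a bookkeeping exercise. First I would invoke \cite[Corollary 2.38]{AIR} directly for the opening claim that a finite connected component of the support $\tau$-tilting quiver must be the whole quiver. The intuition driving that result is that left and right mutations of support $\tau$-tilting pairs are mutually inverse: for $T = M \oplus N \in \stautilt A$ with $M$ an indecomposable summand, the left mutation $\mu_M^-(T)$ lies again in $\stautilt A$ by \cite[Theorem 2.30]{AIR}, and a matching right mutation $\mu_M^+$ recovers $T$. Thus every edge of the support $\tau$-tilting quiver is traversable in both directions, and each vertex $T$ takes part in exactly $|T|$ such edges. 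A finite connected component is therefore closed under every admissible mutation, and the rigidity of this combinatorial structure forces it to coincide with the entire quiver.

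Once the first claim is in hand, the ``in particular'' assertion is just a definition chase: a connected component that exhausts the quiver and is itself finite means $\stautilt A$ is a finite set, which is the definition of $\tau$-tilting finiteness.

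For the final ``equivalently'' step, the plan is to apply \cref{Schurian-rigid}, i.e.~\cite[Theorem 4.2]{DIJ}. That theorem provides the canonical injection $X \mapsto X/\Rad_B(X)$ from indecomposable $\tau$-rigid modules to Schurian modules, and asserts that this injection is a bijection whenever either side is finite. Since every $\tau$-rigid module is a direct summand of some $\tau$-tilting module by \cite[Theorem 0.2]{AIR}, $\tau$-tilting finiteness of $A$ is equivalent to the finiteness of the set of indecomposable $\tau$-rigid modules, and under the bijection this matches Schurian-finiteness.

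The only genuinely substantive step, were one to reprove rather than cite, would be the construction and reversibility of support $\tau$-tilting mutation underlying \cite[Corollary 2.38]{AIR}. That rests on the existence and uniqueness of minimal left $\add(N)$-approximations and their compatibility with the Auslander--Reiten translate $\tau$, together with the verification that $\mu_M^-(T)$ remains basic and support $\tau$-tilting. I would not carry this out here, as both inputs are fully documented in the cited sources.
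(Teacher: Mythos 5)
Your proposal is correct and matches the paper's treatment: the paper states this result as a direct citation of \cite[Corollary 2.38]{AIR} without supplying its own proof, and the ``equivalently'' clause is exactly the content of \cref{Schurian-rigid} (\cite[Theorem 4.2]{DIJ}) stated just above it. Your additional sketch of the mutation machinery is consistent with the cited sources and introduces no gaps.
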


\begin{rem}
We may also use the quiver of basic two-term silting complexes, and the quiver is isomorphic to the support $\tau$-tilting quiver \cite[Corollary 3.9]{AIR}.
\end{rem}

Recalling that a two-term silting complex is determined by its $g$-vector, we may deduce the following result from 
\cite[Theorem 11]{ejr18}. This result is very useful for showing that an algebra is Schurian-finite.

\begin{thm}\label{ejr reduction}
  Let $I$ be an ideal of a finite-dimensional $\bbf$-algebra $A$ generated by central elements in $\Rad A$. Then $A$ is Schurian-finite if and only if $A/I$ is Schurian-finite. 
\end{thm}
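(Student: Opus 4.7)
The plan is to reduce to the cited Eisele--Janssens--Raedschelders result via the Adachi--Iyama--Reiten dictionary between Schurian-finiteness and $\tau$-tilting finiteness (\cref{Schurian-rigid}). Concretely, my proof would split into the two implications.

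\textbf{Easy direction.} If $A$ is Schurian-finite, then since $A/I$ is a factor algebra of $A$, \cref{reduction}(i) immediately gives that $A/I$ is Schurian-finite. This half requires no hypothesis on how $I$ is generated.

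\textbf{Hard direction.} Assume $A/I$ is Schurian-finite. By \cref{Schurian-rigid} this is equivalent to $A/I$ being $\tau$-tilting finite, and by \cite[Corollary 3.9]{AIR} this is in turn equivalent to the set of isomorphism classes of basic two-term silting complexes in $\Kb(\proj A/I)$ being finite. The strategy is to compare two-term silting complexes for $A$ and for $A/I$ via the derived tensor product $-\otimes_A^{\mathsf{L}} A/I\colon \Kb(\proj A)\to \Kb(\proj A/I)$. Since $I\subseteq \Rad A$, the residue map induces a bijection between the indecomposable projectives of $A$ and those of $A/I$, so the Grothendieck groups of $\Kb(\proj A)$ and $\Kb(\proj A/I)$ are canonically identified, and this derived tensor product preserves $g$-vectors. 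The main input is \cite[Theorem 11]{ejr18}, which says that under the hypothesis that $I$ is generated by central elements lying in $\Rad A$, the assignment $T\mapsto T\otimes_A^{\mathsf{L}} A/I$ is a bijection on basic two-term silting complexes (equivalently, on support $\tau$-tilting modules via the Adachi--Iyama--Reiten bijection). Hence the number of basic two-term silting complexes, and so the number of basic support $\tau$-tilting modules, is the same for $A$ and $A/I$. In particular, $A$ is $\tau$-tilting finite, and therefore Schurian-finite by \cref{Schurian-rigid}.

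\textbf{Main obstacle.} There is essentially no new content to produce here; the work is entirely in checking that the setup of \cite[Theorem 11]{ejr18} matches ours, namely that $g$-vectors are determining invariants for two-term silting complexes and that centrality of the generators of $I\subseteq \Rad A$ is exactly what is needed to lift a two-term silting complex of $A/I$ to one of $A$. The one mildly delicate point, which the introductory sentence before the theorem already flags, is that the bijection at the level of two-term silting complexes is obtained by combining the injectivity of the $g$-vector map (determined-by-$g$-vector) with the fact that every $g$-vector realised over $A/I$ is realised over $A$; this is precisely the content extracted from \cite{ejr18}, so no independent computation is required.
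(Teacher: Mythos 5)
Your proposal is correct and matches the paper's (implicit) argument: the paper simply deduces the statement from \cite[Theorem 11]{ejr18} together with the remark that a two-term silting complex is determined by its $g$-vector, which is exactly the reduction you spell out. The only cosmetic difference is that you handle the easy direction via \cref{reduction}(i) rather than reading both directions off the Eisele--Janssens--Raedschelders bijection, but this changes nothing of substance.
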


\subsection{Core blocks of defect $2$ when $s_1 = s_2$}\label{subsec:core-def-2-equal}

We may suppose that $(s_1,s_2)=(0,0)$.
Then, by \cite[Lemma 2.1, Corollary 3.3]{arikirep}, we consider $R^{2\Lambda_0}(\beta)$ such that $2\Lambda_0-\beta$ is in the affine symmetric group orbit of $2\Lambda_0-\lambda_2^0$ where $\lambda^0_2=\alpha_{e-1}+2\alpha_0+\alpha_1$. 
If we are considering $e=\infty$, this should be replaced with $\lambda^0_2=\alpha_{-1}+2\alpha_0+\alpha_1$, but as noted before, it suffices to consider $e$ large enough to avoid this necessity.
Kakei computed the Gabriel quiver of $R^{2\Lambda_0}(\lambda_2^0)$, for $e\ge 4$, and it is given in \cite[page 840]{arikirep}. The quiver is
\[
\begin{xy}
(20,0) *{\circ}="A", (30,0) *{\circ}="B", 
\ar @<1mm> "A";"B"^{\mu}
\ar @(lu,ld) "A";"A"_{\alpha}
\ar @<1mm> "B";"A"^{\nu}
\ar @(ru,rd) "B";"B"^{\beta}
\end{xy}
\]
Our convention for the multiplication in the path algebra is that, a path $\xymatrix{\ar[r]|-x&\ar[r]|-y&}$ is written as $xy$.
\begin{itemize}
\item[(I)]
If $p \ne 2$, then the relations are
\begin{equation}\label{SF-alg-1}
\alpha\mu=\mu\beta=\beta\nu=\nu\alpha=0, \;\; \alpha^2+\mu\nu\mu\nu=0, \;\; \beta^2+\nu\mu\nu\mu=0.
\end{equation}
\item[(II)]    
If $p = 2$, then the relations are
\begin{equation}\label{SF-alg-2}
\alpha^2=\beta^2=\mu\nu\mu=\nu\mu\nu=0,\;\; \alpha\mu=\mu\beta, \;\; \beta\nu=\nu\alpha.
\end{equation}
\end{itemize}
The case (I) is exactly the case (b) from \cite[Theorem 2(2)]{ariki21}, where we mistakenly added the condition that $e$ was even. We allow $e\ge4$ to be odd. Recall that symmetric special biserial algebras are Brauer graph algebras. Hence, $R^{2\Lambda_0}(\lambda_2^0)$ is a Brauer graph algebra and its Brauer graph is as follows. 
\[
\begin{xy}
(0,0) *[o]+[Fo]{2}="A", (10,0) *[o]+[Fo]{2}="B", (20,0) *[o]+[Fo]{2}="C",
\ar@{-} "A";"B"
\ar@{-} "B";"C"
\end{xy}
\]
Then, any finite-dimensional algebra which is derived equivalent to $R^{2\Lambda_0}(\lambda_2^0)$ is Morita equivalent to $R^{2\Lambda_0}(\lambda_2^0)$ \cite[Proposition 5.3]{ariki21}. 
We may also deduce this fact from \cite[Theorem A]{OZ22}, which asserts that two Brauer graph algebras having two or more simple modules are derived equivalent if and only if
\begin{itemize}
\item[(a)]
the number of vertices, edges, faces all coincide.
\item[(b)]
the multiset of the multiplicities of vertices and the multiset of the perimeters of faces coincide.
\item[(c)]
either both are bipartite or both are not bipartite.
\end{itemize}
Thus, if $p\ne 2$, any defect two block $R^{2\Lambda_0}(\beta)$ is Schurian-finite.

Next, we consider case (II).  
We denote the algebra \eqref{SF-alg-2} by $\mathcal{SF}_2$.
We compute with right modules in the following. 
Then, the indecomposable projective right modules are as follows, where the dotted lines denote that the right multiplication by an arrow is nonzero.

\begin{center}
$
P_1=\vcenter{\xymatrix@C=0.1cm@R=0.2cm{
&e_1\ar@/^0.2cm/@{.}[ddr]\ar@{.}[dl]&\\
\mu\ar@{.}[d]\ar@{.}[ddrr]&& \\
\mu\nu\ar@/_0.2cm/@{.}[ddr]&& \alpha\ar@{.}[d]\\
&&\alpha\mu\ar@{.}[dl]\\
&\alpha\mu\nu&
}}\simeq \vcenter{\xymatrix@C=0.1cm@R=0.2cm{
&1\ar@/^0.2cm/@{.}[ddr]\ar@{.}[dl]&\\
2\ar@{.}[d]\ar@{.}[ddrr]&& \\
1\ar@/_0.2cm/@{.}[ddr]&& 1\ar@{.}[d]\\
&&2\ar@{.}[dl]\\
&1&
}}\quad \text{and} \quad P_2=\vcenter{\xymatrix@C=0.1cm@R=0.2cm{
&e_2\ar@/^0.2cm/@{.}[ddr]\ar@{.}[dl]&\\
\nu\ar@{.}[d]\ar@{.}[ddrr]&& \\
\nu\mu\ar@/_0.2cm/@{.}[ddr]&& \beta\ar@{.}[d]\\
&&\beta\nu\ar@{.}[dl]\\
&\beta\nu\mu&
}}\simeq \vcenter{\xymatrix@C=0.1cm@R=0.2cm{
&2\ar@/^0.2cm/@{.}[ddr]\ar@{.}[dl]&\\
1\ar@{.}[d]\ar@{.}[ddrr]&& \\
2\ar@/_0.2cm/@{.}[ddr]&& 2\ar@{.}[d]\\
&&1\ar@{.}[dl]\\
&2&
}}.
$
\end{center}
We also know that $\mathcal{SF}_2$ is wild. By direct calculation of left mutations, one may find that the support $\tau$-tilting quiver $\stautilt \mathcal{SF}_2$ is displayed as follows.
\begin{center}
\begin{tikzpicture}[shorten >=1pt, auto, node distance=0cm,
   node_style/.style={font=},
   edge_style/.style={draw=black}]
\node[node_style] (v0) at (0,0) {$\vcenter{\xymatrix@C=0.1cm@R=0.1cm{
&1\ar@/^0.2cm/@{.}[ddr]\ar@{.}[dl]&\\
2\ar@{.}[d]\ar@{.}[ddrr]&& \\
1\ar@/_0.2cm/@{.}[ddr]&& 1\ar@{.}[d]\\
&&2\ar@{.}[dl]\\
&1&
}}\oplus \vcenter{\xymatrix@C=0.1cm@R=0.1cm{
&2\ar@/^0.2cm/@{.}[ddr]\ar@{.}[dl]&\\
1\ar@{.}[d]\ar@{.}[ddrr]&& \\
2\ar@/_0.2cm/@{.}[ddr]&& 2\ar@{.}[d]\\
&&1\ar@{.}[dl]\\
&2&
}}$};
\node[node_style] (v1) at (5,0) {$\vcenter{\xymatrix@C=0.1cm@R=0.4cm{
2\ar@{.}[d]\\
2
}}\oplus \vcenter{\xymatrix@C=0.1cm@R=0.1cm{
&2\ar@/^0.2cm/@{.}[ddr]\ar@{.}[dl]&\\
1\ar@{.}[d]\ar@{.}[ddrr]&& \\
2\ar@/_0.2cm/@{.}[ddr]&& 2\ar@{.}[d]\\
&&1\ar@{.}[dl]\\
&2&
}}$};
\node[node_style] (v12) at (10,0) {$\vcenter{\xymatrix@C=0.1cm@R=0.4cm{
2\ar@{.}[d]\\
2
}}$};
\node[node_style] (v) at (10,-4) {$0$};
\node[node_style] (v2) at (0,-4) {$\vcenter{\xymatrix@C=0.1cm@R=0.1cm{
&1\ar@/^0.2cm/@{.}[ddr]\ar@{.}[dl]&\\
2\ar@{.}[d]\ar@{.}[ddrr]&& \\
1\ar@/_0.2cm/@{.}[ddr]&& 1\ar@{.}[d]\\
&&2\ar@{.}[dl]\\
&1&
}}\oplus\vcenter{\xymatrix@C=0.1cm@R=0.4cm{
1\ar@{.}[d]\\
1
}}$};
\node[node_style] (v21) at (5,-4) {$\vcenter{\xymatrix@C=0.1cm@R=0.4cm{
1\ar@{.}[d]\\
1
}}$};

\draw[->]  (v0) edge node{\ } (v1);
\draw[->]  (v1) edge node{\ } (v12);
\draw[->]  (v0) edge node{\ } (v2);
\draw[->]  (v2) edge node{\ } (v21);
\draw[->]  (v21) edge node{\ } (v);
\draw[->]  (v12) edge node{\ } (v);
\end{tikzpicture}
\end{center}
Since $R^{2\Lambda_0}(\lambda^0_2)$ is only a representative of the derived equivalent blocks, we show that blocks which are derived equivalent to $R^{2\Lambda_0}(\lambda^0_2)$ are Morita equivalent to $R^{2\Lambda_0}(\lambda^0_2)$.

\begin{prop}
Let $A=\mathcal{SF}_2$. We have $\End_{\Kb(\proj A)} \mu_1^-(A)\cong A$.
\end{prop}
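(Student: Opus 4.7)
The plan is to compute $\mu_1^-(A)$ explicitly as a two-term silting complex in $\Kb(\proj A)$ and then identify its endomorphism algebra with $A$ by matching generators and relations.

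First, I will determine the minimal left $\add(P_2)$-approximation of $P_1$. Using $\Hom_A(P_i,P_j)\cong e_jAe_i$, one reads off $\Hom_A(P_1,P_2)=e_2Ae_1=\langle \nu,\nu\alpha\rangle$ and $\End_A(P_2)=e_2Ae_2=\langle e_2,\beta,\nu\mu,\nu\mu\beta\rangle$. Postcomposing the element $f\in\Hom_A(P_1,P_2)$ corresponding to $\nu$ with each of these four endomorphisms yields $\{\nu,\beta\nu,\nu\mu\nu,\nu\mu\beta\nu\}=\{\nu,\nu\alpha,0,0\}$, which spans $\Hom_A(P_1,P_2)$; hence $f$ is a left $\add(P_2)$-approximation, and it is minimal since $P_2$ is indecomposable and $f\neq 0$. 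Consequently $\mu_1^-(A)=T_1\oplus T_2$, where $T_2:=P_2$ sits in degree $0$ and $T_1:=[P_1\xrightarrow{f}P_2]$ has $P_1$ in degree $-1$.

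Next, I will compute the four block Hom-spaces in $\End_{\Kb(\proj A)}(T)$ by a direct analysis of chain maps modulo homotopy. A chain map $T_1\to T_1$ is encoded by a pair $(y_1,y_2)\in e_1Ae_1\times e_2Ae_2$ satisfying $\nu y_1=y_2\nu$, and null-homotopies are parametrised by $x\in e_1Ae_2$ contributing $(x\nu,\nu x)$; the relations $\nu\mu\nu=0$ and $\beta\nu=\nu\alpha$ of $\mathcal{SF}_2$ collapse this to a four-dimensional space. Similarly, $\Hom(T_1,T_2)$ is the subspace of $\End_A(P_2)$ killing $\nu$ on the right, namely $\langle\nu\mu,\nu\mu\beta\rangle$ (dimension $2$); $\Hom(T_2,T_1)=\End_A(P_2)/f_*\Hom_A(P_2,P_1)=\End_A(P_2)/\langle\nu\mu,\nu\mu\beta\rangle$ has basis represented by $e_2$ and $\beta$ (dimension $2$); and $\Hom(T_2,T_2)=\End_A(P_2)$ has dimension $4$. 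Thus $\dim\End_{\Kb(\proj A)}(T)=12=\dim A$.

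Finally, I will construct an explicit algebra homomorphism $\Phi\colon A\to\End_{\Kb(\proj A)}(T)$ in which $e_k$ maps to the identity on $T_k$ for $k=1,2$, $\alpha$ maps to the chain map $(\alpha,\beta)\colon T_1\to T_1$ (legitimate because $\nu\alpha=\beta\nu$), $\beta$ maps to $\beta\in\End(T_2)$, $\mu$ maps to the class of $\nu\mu\in\Hom(T_1,T_2)$, and $\nu$ maps to the class of $e_2\in\Hom(T_2,T_1)$. Using the path-algebra convention in both source and target, a direct computation verifies each relation in \eqref{SF-alg-2}: the equalities $\Phi(\alpha)^2=(\alpha^2,\beta^2)=0$ and $\Phi(\beta)^2=\beta^2=0$ are immediate; $\Phi(\mu\nu\mu)$ and $\Phi(\nu\mu\nu)$ both vanish using $\nu\mu\nu=0$ in $A$ (the latter after quotienting by $f_*$); and $\Phi(\alpha\mu)=\Phi(\mu\beta)$ both reduce to $\nu\mu\beta\in\Hom(T_1,T_2)$, while $\Phi(\beta\nu)=\Phi(\nu\alpha)$ both equal $\beta\in\Hom(T_2,T_1)$. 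Checking that the images of the twelve basis elements of $\mathcal{SF}_2$ are linearly independent across the four Hom-spaces then shows $\Phi$ is an isomorphism by dimension count. The main technical obstacle is to maintain consistent conventions between the path-algebra order, the functional composition of chain maps in $\Kb(\proj A)$ and the identification $\Hom_A(e_iA,e_jA)\cong e_jAe_i$; once these are fixed, the verifications become mechanical.
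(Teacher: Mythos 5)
Your proposal is correct and follows essentially the same route as the paper: compute the left mutation as the two-term complex $[P_1\xrightarrow{\nu}P_2]\oplus[0\to P_2]$, work out the four Hom-spaces between the summands modulo homotopy (dimensions $4,2,2,4$, totalling $12=\dim A$), and identify the resulting generators and relations with those of $\mathcal{SF}_2$. The only cosmetic difference is that you package the last step as a homomorphism $A\to\End_{\Kb(\proj A)}(\mu_1^-(A))$ verified on relations plus a dimension count, whereas the paper lists explicit bases for each Hom-space, names the four arrows $x,y,z,t$, and reads off the presentation directly.
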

\begin{proof}
By direct calculation, it is easy to find 
\begin{center}
$\mu_1^-(A)=
\left [\begin{smallmatrix}
\xymatrix@C=1.2cm{P_1\ar[r]^-{\nu}& P_2}\\
\oplus \\
\xymatrix@C=1.4cm{0\ar[r]& P_2}
\end{smallmatrix}  \right ]$.
\end{center}
In the diagram below, the top square means that if we set $f^{-1}$ and $f^0$ to be linear combination of $\{e_1, \alpha, \mu\nu, \alpha\mu\nu\}$ and $\{e_2, \beta, \nu\mu, \beta\nu\mu\}$, respectively, and force $\nu f^{-1}=f^0\nu$, then we obtain that $(f^{-1},f^0)$ is a linear combination of $(e_1,e_2), (\alpha,\beta), (\mu\nu,0), (0,\nu\mu), (\alpha\mu\nu,0), (0,\beta\nu\mu)$, and 
$\End_{\Kb(\proj A)}(P_1\stackrel{\nu}{\rightarrow} P_2)$ has basis $\{(e_1,e_2), (\alpha,\beta), (0,\nu\mu), (0,\beta\nu\mu)\}$. The meaning of the other three squares is similar. Besides, we denote by $\sim_h$ the homotopy equivalence in the homotopy category $\Kb(\proj A)$.

\begin{center}
$\xymatrix@C=1.5cm@R=0.7cm{
P_1\ar[r]^-{\nu}\ar[d]& P_2\ar[d]^{(e_1, e_2),\  (\alpha, \beta),\  (0, \nu\mu)\sim_h(-\mu\nu, 0), \  (0, \beta\nu\mu)\sim_h(-\alpha\mu\nu, 0)}\\
P_1\ar[r]^-{\nu}\ar[d]& P_2\ar[d]^{(0,\nu\mu),\ (0,\beta\nu\mu)}\\
0\ar[r]\ar[d]& P_2\ar[d]^{(0, e_2), \ (0, \beta),\ (0,\nu\mu),\ (0,\beta\nu\mu)}\\
0\ar[r]\ar[d]& P_2\ar[d]^{(0, e_2), \ (0, \beta),\ (0,\nu\mu)\sim_h 0,\ (0,\beta\nu\mu)\sim_h 0}\\
P_1\ar[r]^-{\nu}& P_2
}$
\end{center}
Set $x:=(0, e_2), y:=(0,\nu\mu), z:=(\alpha, \beta),t:=(0,\beta)$. We have 
\begin{center}
$\vcenter{\xymatrix@C=1cm{1 \ar@<0.5ex>[r]^{x}\ar@(dl,ul)^{z}&2 \ar@<0.5ex>[l]^{y}\ar@(ur,dr)^{t}}}$,
\end{center}
and 
\begin{itemize}
\item $z^2=t^2=0$, $zx=(0, \beta)=xt$, $ty=(0, \beta\nu\mu)=yz$, $xy=(0,\nu\mu)$, $yx=(0,\nu\mu)$;
\item $zxt=tyz=0$, $xyx=yxy=0$, $zxy=xty=xyz=(0,\beta\nu\mu)$, $tyx=yzx=yxz=(0,\nu\mu\alpha)$;
\item all paths of length at least 4 are zero.
\end{itemize}
It gives that $\End_{\Kb(\proj A)} \mu_1^-(A)\cong A$.
\end{proof}

\begin{prop}
Let $A=\mathcal{SF}_2$. We have $\End_{\Kb(\proj A)} \mu_2^-(A)\cong A $.
\end{prop}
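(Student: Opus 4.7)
The plan is to reduce this to the preceding proposition via the evident symmetry of $\mathcal{SF}_2$. First I would observe that the defining relations \eqref{SF-alg-2} are preserved by the substitution
\[
\sigma\colon e_1\leftrightarrow e_2,\ \alpha\leftrightarrow\beta,\ \mu\leftrightarrow\nu;
\]
it sends $\alpha^2=0$ to $\beta^2=0$, sends $\mu\nu\mu=0$ to $\nu\mu\nu=0$, and exchanges $\alpha\mu=\mu\beta$ with $\beta\nu=\nu\alpha$. Hence $\sigma$ extends to an $\bbf$-algebra automorphism of $A=\mathcal{SF}_2$ that swaps the two vertices of the Gabriel quiver.

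The automorphism $\sigma$ induces a triangulated autoequivalence $\sigma_*$ of $\Kb(\proj A)$ which interchanges $P_1$ and $P_2$, and which sends the map $\nu\colon P_1\to P_2$ appearing in the description of $\mu_1^-(A)$ to the map $\mu\colon P_2\to P_1$. Since $\sigma_*$ preserves minimality of left $\add$-approximations, it sends $\mu_1^-(A)$ to $\mu_2^-(A)$; explicitly,
\[
\mu_2^-(A)=\bigl[P_2\xrightarrow{\mu}P_1\bigr]\oplus\bigl[0\to P_1\bigr].
\]
Therefore
\[
\End_{\Kb(\proj A)}\mu_2^-(A)\cong \End_{\Kb(\proj A)}\mu_1^-(A)\cong A,
\]
where the last isomorphism is the previous proposition.

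If one preferred an independent verification, the previous proof can be repeated \emph{verbatim} after exchanging $\mu\leftrightarrow\nu$ and $\alpha\leftrightarrow\beta$ throughout: the same four square diagrams compute the endomorphism space of $\bigl[P_2\xrightarrow{\mu}P_1\bigr]$ and its interactions with $\bigl[0\to P_1\bigr]$, and the resulting basic algebra is again generated by analogues of $x,y,z,t$ satisfying the relations of $\mathcal{SF}_2$. There is no conceptual obstacle here; the only substantive point of the argument is the observation of the vertex-swapping automorphism of $\mathcal{SF}_2$, from which the mutation statement for the second vertex follows formally from that for the first.
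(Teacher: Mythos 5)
Your proof is correct, and it takes a genuinely different route from the paper. The paper proves the second mutation statement by redoing the explicit homotopy-category computation: it writes down the two-term complexes, lists the chain maps $(0,e_1)$, $(0,\alpha)$, $(0,\mu\nu)$, $(0,\alpha\mu\nu)$ surviving modulo homotopy, and reads off the quiver and relations of the endomorphism algebra a second time. You instead observe that the assignment $e_1\leftrightarrow e_2$, $\alpha\leftrightarrow\beta$, $\mu\leftrightarrow\nu$ preserves the relations \eqref{SF-alg-2} (it exchanges $\alpha\mu=\mu\beta$ with $\beta\nu=\nu\alpha$ and permutes the monomial relations), hence defines a vertex-swapping automorphism $\sigma$ of $\mathcal{SF}_2$; the induced autoequivalence of $\Kb(\proj A)$ carries $P_1\leftrightarrow P_2$ and the left-multiplication map $\nu\colon P_1\to P_2$ to $\mu\colon P_2\to P_1$, and since silting mutation is defined via minimal left $\add$-approximations, which any additive equivalence preserves, it carries $\mu_1^-(A)$ to $\mu_2^-(A)$ and induces an isomorphism of their endomorphism algebras. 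This is shorter and makes the source of the coincidence transparent, at the cost of relying on the first proposition rather than being self-contained; the paper's direct computation has the mild advantage of exhibiting the generators $x,y,z,t$ of $\End_{\Kb(\proj A)}\mu_2^-(A)$ explicitly, which a reader might want when iterating mutations. Both arrive at the same description of $\mu_2^-(A)$ as $\bigl[P_2\xrightarrow{\mu}P_1\bigr]\oplus\bigl[0\to P_1\bigr]$, so there is no gap.
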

\begin{proof}
We have 
\begin{center}
$\mu_2^-(A)=
\left [\begin{smallmatrix}
\xymatrix@C=1.4cm{0\ar[r]& P_1}\\
\oplus \\
\xymatrix@C=1.2cm{P_2\ar[r]^-{\mu}& P_1}
\end{smallmatrix}  \right ]$.
\end{center}
Then, the statement follows from the following diagram
\begin{center}
$\xymatrix@C=1.5cm@R=0.7cm{
P_2\ar[r]^-{\mu}\ar[d]& P_1\ar[d]^{(e_2, e_1),\  (\beta, \alpha),\ (0,\mu\nu)\sim_h(-\nu\mu,0), \ (0,\alpha\mu\nu)\sim_h(-\beta\nu\mu,0) }\\
P_2\ar[r]^-{\mu}\ar[d]& P_1\ar[d]^{(0,\mu\nu),\ (0,\alpha\mu\nu)}\\
0\ar[r]\ar[d]& P_1\ar[d]^{(0, e_1), \ (0, \alpha),\ (0,\mu\nu),\ (0,\alpha\mu\nu)}\\
0\ar[r]\ar[d]& P_1\ar[d]^{(0, e_1), \ (0, \alpha),\ (0,\mu\nu)\sim_h 0,\ (0,\alpha\mu\nu)\sim_h 0}\\
P_2\ar[r]^-{\mu}& P_1
}$
\end{center}\vspace{-26pt}\qedhere
\end{proof}

The next proposition shows that any defect two block $R^{2\La_0}(\beta)$ is Schurian-finite if $p=2$.

\begin{prop}[{\cite[Theorem 2.25]{ahswreptype}}]
Let $A_1, A_2, \dots, A_s$ be finite-dimensional symmetric algebras which are derived equivalent to each other and identify $\mathcal{T}=\Kb(\proj A_i)$ for all $1\le i\le s$. 
Suppose the following conditions hold. 
\begin{enumerate}
\item
The algebra $A_i$ is Schurian-finite, for $1\le i\le s$.
\item
For each indecomposable projective direct summand $X$ of the left regular module $A_i$, for $1\le i\le s$, we have
$\End_{\mathcal{T}}(\mu_X^-(A_i))\cong A_j$, for some $1\le j\le s$.
\end{enumerate}
Then, any finite-dimensional algebra $B$ which is derived equivalent to $A_1$, is Morita equivalent to $A_i$, for some $1\le i\le s$.
\end{prop}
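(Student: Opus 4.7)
The strategy is to pass from the statement about derived equivalence to one about silting mutation. By Rickard's theorem, $B$ is derived equivalent to $A_1$ if and only if there exists a basic tilting complex $T \in \mathcal{T}$ such that $\End_\mathcal{T}(T)$, or its opposite, is isomorphic to $B$. It therefore suffices to show that the endomorphism algebra of every basic tilting complex in $\mathcal{T}$ is isomorphic to some $A_i$ (the list $\{A_1,\dots,A_s\}$ being closed under taking opposites up to Morita equivalence, since each $A_i$ is symmetric).

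The key input is the silting-discreteness of symmetric Schurian-finite algebras: by standard results in silting theory, building on the work of Aihara, a $\tau$-tilting finite symmetric algebra is silting-discrete, meaning that every basic silting object of $\mathcal{T}$ can be reached from the regular module $A_1$ by a finite sequence of irreducible silting mutations. Granting this, I would argue by induction on the length of such a mutation sequence $A_1 = T_0, T_1, \ldots, T_k = T$. Suppose inductively that $\End_\mathcal{T}(T_r) \cong A_{i_r}$ for some $i_r \in \{1,\ldots,s\}$. Transporting the mutation $T_r \rightsquigarrow T_{r+1}$ across the derived equivalence $\mathcal{T} \simeq \Kb(\proj A_{i_r})$ that identifies $T_r$ with the regular module, this mutation becomes an irreducible left mutation of the regular module of $A_{i_r}$ at some indecomposable projective summand. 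Hypothesis (2) then yields $\End_\mathcal{T}(T_{r+1}) \cong A_{i_{r+1}}$ for some $i_{r+1}$, completing the induction.

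The principal obstacle is the silting-discreteness input, which is what bridges the gap between the single-step hypothesis (2) and control of arbitrary tilting complexes: one must know that $\tau$-tilting finiteness of each $A_i$ implies that the full silting quiver of $\mathcal{T}$ is connected under iterated irreducible silting mutation, and not merely that each individual two-term silting quiver is finite. A secondary technical point is to verify that irreducible silting mutation is compatible with the derived equivalence $\mathcal{T} \simeq \Kb(\proj \End_\mathcal{T}(T_r))$ identifying $T_r$ with the regular module of $A_{i_r}$; this is a standard compatibility in silting theory, and it is precisely what allows hypothesis (2), stated only for mutations of the regular module, to be applied at each step of the induction. Together, these two ingredients close the argument and identify every $B$ derived equivalent to $A_1$ with some $A_i$ up to Morita equivalence.
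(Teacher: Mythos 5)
The paper does not prove this proposition; it is quoted verbatim from \cite[Theorem 2.25]{ahswreptype}, so I am comparing your argument against the proof in that reference rather than against anything in this paper. Your overall architecture — Rickard's theorem to reduce to tilting complexes, reachability of every tilting complex from $A_1$ by iterated irreducible silting mutation, and induction along the mutation sequence using hypothesis (ii) at each step (together with the standard compatibility of mutation with the derived equivalence identifying $T_r$ with the regular module of $\End_{\mathcal{T}}(T_r)$) — is exactly the intended strategy.

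The genuine gap is the sentence you lean on as "the key input": that a $\tau$-tilting finite symmetric algebra is silting-discrete is \emph{not} a standard result. Whether $\tau$-tilting finiteness of a symmetric algebra implies silting-discreteness is, to the best of my knowledge, open in general, and you correctly sense in your final paragraph that something stronger than finiteness of each individual two-term silting quiver is needed — but you then leave that as an unproved input rather than closing it. The correct closure is the Aihara--Mizuno criterion: $\Kb(\proj A_1)$ is silting-discrete if and only if $\End_{\mathcal{T}}(P)$ is $\tau$-tilting finite for \emph{every} iterated irreducible left mutation $P$ of $A_1$. This is precisely what hypotheses (i) and (ii) together deliver, via the very induction you already set up: (ii) shows by induction that every iterated irreducible left mutation of $A_1$ has endomorphism algebra isomorphic to some $A_j$, and (i) says each $A_j$ is $\tau$-tilting finite, so the criterion is satisfied and silting-discreteness follows. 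In other words, the hypotheses of the proposition are not consumed only in the step-by-step identification of endomorphism rings; they are also what \emph{establishes} the connectivity of the silting quiver in the first place. Once you replace the false external citation by this use of the Aihara--Mizuno criterion (and note that for self-injective algebras silting complexes are tilting, so the mutation walk stays among tilting complexes and their endomorphism algebras remain symmetric), your argument is complete.
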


\subsection{Non-core blocks of defect $2$ when $s_1 = s_2$}\label{subsec:def2s1=s2}

We consider $R^{2\La_0}(\delta)$ with $e\ge 4$, which is wild. In the proof of \cite[Lemma 6.2]{aswreptype}, the authors calculated the basis of $R^{k\La_0}(\delta)$, for $k\ge 3$, to find its quiver with relations.%
\footnote{The cyclotomic quiver Hecke algebra for the Cartan matrix $A^{(1)}_{e-1}$ depends on a parameter, and the parameter for the Hecke algebra of type $\ttb$ is $t=(-1)^e$ in \cite[Lemma 6.2]{aswreptype}.}

The method is valid for $k=2$. We review some necessary materials as follows.

Let $e_i=e(\nu_i)$ with $\nu_{i}=(0,1,2,\dots,i-1,e-1,e-2,\dots, i)$, for $1\leq i\leq e-1$.
Set $A=e'R^{2\Lambda_0}(\delta) e'$ with $e'=\sum_{i=1}^{e-1}e_i$. This is the basic algebra of $B=R^{2\Lambda_0}(\delta)$, that is, $A=\End_B(e'B)$. The graded dimensions are given by
\[
\begin{aligned}
\dim_v e_i A e_j&=0, \quad |i-j|>1,\\
\dim_v e_i A e_i &=1+2v^2+v^4, \quad 1\leq i\leq e-1,\\
\dim_v e_i A e_{i+1}&=\dim_v e_{i+1} A e_i=v+v^3, \quad 1\leq i\le e-2.
\end{aligned}
\]
In that proof in \cite{aswreptype}, a basis of $A$ is given as follows.
\begin{itemize}
\item $e_i A e_i$ has basis $\{e_i,\  x_1e_i,\ x_{e} e_i,\ x_1x_{e}e_i\}$, for $1\le i< e$.
\item $e_{i+1} A e_i$ has basis $\{\psi_{i+1}\psi_{i+2}\dots \psi_{e-1} e_i,\  x_1\psi_{i+1}\psi_{i+2}\dots \psi_{e-1} e_i\}$.
\item $e_{i} A e_{i+1}$ has basis $\{\psi_{e-1}\dots \psi_{i+2}\psi_{i+1} e_{i+1},\  x_1\psi_{e-1}\dots \psi_{i+2}\psi_{i+1} e_{i+1}\}$.
\end{itemize}

In the process of calculations in that proof, we used $c_i=(-1)^{i}$ and $d_i=(-1)^{\ell-i+1}t$. Since we consider cyclotomic Hecke algebras here, we have $c_i=d_i=(-1)^{i}$. 
Set $\alpha_i=x_1e_i$ with $1\le i<e$, $\mu_i=e_i\psi_{e-1}\dots \psi_{i+2}\psi_{i+1}e_{i+1}$ and $\eta_i=e_{i+1}\psi_{i+1}\psi_{i+2}\dots\psi_{e-1} e_i$, for $1\leq i< e-1$. The following relations were shown there. 
\begin{itemize}
\item $\alpha_i^2=x_1^2e_i=0$, for $1\le i\le e-1$.
\item $\eta_{i+1}\eta_i=\mu_i\mu_{i+1}=0$, for $1\leq i\leq e-3$.
\item $\eta_i\mu_i=\mu_{i+1}\eta_{i+1}=(x_e+(-1)^ix_1)e_{i+1}$,  for $1\leq i\leq e-3$.
\end{itemize}
We compute the relations between $\alpha_i$ and $\mu_i$, $\eta_i$ as follows.
\begin{align*}
\alpha_i\mu_i&=x_1\psi_{e-1}\dots \psi_{i+2}\psi_{i+1}e_{i+1} 
=\psi_{e-1}\dots \psi_{i+2}\psi_{i+1}x_1e_{i+1}=\mu_i\alpha_{i+1}, \\
\alpha_{i+1}\eta_i&=x_1\psi_{i+1}\psi_{i+2}\dots\psi_{e-1} e_i=\psi_{i+1}\psi_{i+2}\dots\psi_{e-1}x_1e_i=\eta_i\alpha_i.
\end{align*}
We observe now that the cases considered in that proof required $c_i\ne d_i$, and $e_iAe_i$ had a basis 
$\{e_i, \eta_{i-1}\mu_{i-1}, \mu_i\eta_i, (\eta_{i-1}\mu_{i-1})^2=- (\mu_i\eta_i)^2\}$, so that $\alpha_i$, for $2\le i<e$,  did not appear as arrows of the quiver, while we have 
$c_i=d_i$ and $\eta_{i-1}\mu_{i-1}=\mu_i\eta_i=x_ee_i+(-1)^{i-1}\alpha_i$. Since $2v^2$ appears in $\dim_v e_iAe_i$,  it implies that we need all $\alpha_i$ as arrows. 
We conclude that $A$ is isomorphic to $\mathcal{SF}_3:=\bbf Q/I$ with $n=e-1$, where the quiver is 
\begin{equation}\label{SF-alg-3}
Q:\xymatrix@C=1.2cm{1\ar@<0.5ex>[r]^{\mu_1}\ar@(ul,ur)^{\alpha_1}&2\ar@(ul,ur)^{\alpha_2}\ar@<0.5ex>[l]^{\eta_1}\ar@<0.5ex>[r]^{\mu_2}&3\ar@(ul,ur)^{\alpha_3}\ar@<0.5ex>[l]^{\eta_2}\ar@<0.5ex>[r]^{\mu_3}&\cdots\ar@<0.5ex>[l]^{\eta_4}\ar@<0.5ex>[r]^{\mu_{n-1}}&n \ar@(ul,ur)^{\alpha_{n}}\ar@<0.5ex>[l]^{\eta_{n-1}}}
\end{equation}
and the admissible ideal is 
\begin{center}
$I: \left \langle \begin{matrix}
\alpha_i^2, \mu_i\mu_{i+1}, \eta_{i+1}\eta_{i}, \eta_i\mu_i-\mu_{i+1}\eta_{i+1}\\
\alpha_i\mu_i-\mu_i\alpha_{i+1}, \alpha_{i+1}\eta_{i}-\eta_i\alpha_i
\end{matrix}\right \rangle$.
\end{center}
The space of central elements in $\mathcal{SF}_3$ contain $\sum_{i=1}^n\alpha_i$ and $\mu_i\eta_i$. 
\cref{ejr reduction} implies that it suffices to consider the quotient $\widetilde{\mathcal{SF}_3}:=\mathcal{SF}_3/\langle \alpha_i\;(1\le i\le n), \mu_i\eta_i\;(1\le i<n) \rangle$ in order to judge Schurian-finiteness.
Then, $\widetilde{\mathcal{SF}_3}$ is a radical square zero algebra, and it is a quotient of a Brauer line algebra modulo certain central elements.
Thus, $\widetilde{\mathcal{SF}_3}$ has $\binom{2n}{n}$ support $\tau$-tilting modules.
We may also show its Schurian-finiteness by noting that every single subquiver of the separated quiver is a disjoint union of Dynkin quivers. See \cite[Theorem B]{Ad2}.
We have proved that $R^{2\La_0}(\delta)$ is Schurian-finite.

\section{Blocks of quantum characteristic $e=3$}\label{sec:e3}

Throughout this section, we assume that $e=3$.
Fix a block $B$ of a Hecke algebra with bicharge $(\hhh,\ba)$ and let $\cB$ be the corresponding combinatorial block with $\hk=\hk(\cB)$. 
Following~\cref{T:Equiv1} and~\cref{T:SUEquivReps}
we may assume that $\cB$ lies in the set
\[
\{\hat{D} \in \Bc_{\hk} \mid \psi(\hat{D}) \text{ is Type $1$ $\hk$-minimal}\}.
\]
Take $C$ to be the core block of $B$ and $\cC\in\Cl$ the corresponding core block. Write $\nn=\nn(\cC)$ and $\pp=\pp(\cC)$.  Recall that $\defect(B)=\defect(C)+2\hk$. 

\begin{lem}\label{lem:e3coredefects}
We have $\defect(C)\in \{0,1\}$.
\end{lem}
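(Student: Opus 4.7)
The proof is essentially immediate from the bounds built into the combinatorics of core blocks. Recall from \cref{D:np} that $\nn$ and $\pp$ count disjoint subsets of $\{0,1,\dots,e-1\}$ (the positions $i$ with $\delta_i(\bla)=-$ and $\delta_i(\bla)=+$, respectively), so $\nn + \pp \le e$. With $e=3$ this gives $\nn+\pp \le 3$, hence $\min\{\nn,\pp\} \le 1$. By \cref{L:s} we have $\defect(C)=\min\{\nn,\pp\}$, and therefore $\defect(C) \in \{0,1\}$. The only potential subtlety is that \cref{L:s} (and the definitions of $\nn$, $\pp$) require $\cC \in \Cl$, but this was arranged at the start of the section via \cref{T:Equiv1} and \cref{L:basetuple2}. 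No deeper argument is needed.
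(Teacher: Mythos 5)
Your proof is correct and follows the same route as the paper: both deduce $\nn+\pp\le 3$ from the fact that these count disjoint subsets of $\{0,1,2\}$, and then apply \cref{L:s} to get $\defect(C)=\min\{\nn,\pp\}\le 1$. Your extra remark about reducing to $\cC\in\Cl$ is a sensible bit of bookkeeping that the paper leaves implicit.
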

\begin{proof}
We use \cref{L:s}. Then $\nn+\pp\leq 3$ implies $\defect(C)=\min\{\nn,\pp\}\in \{0,1\}$. 
\end{proof}

As usual, we let $\bs=(s_1,s_2)$, where $s_k$ is the equivalence class of $a_k$ modulo $e$ for $k=1,2$.

\begin{lem} \label{L:01}
We may assume that $\bs=(0,s)$ where $s=0$ or $s=1$.
\end{lem}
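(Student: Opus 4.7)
The plan is to use the shift and swap equivalences of \cref{SS:Equivs}, which preserve Schurian-finiteness by \cref{T:Equiv1}, to normalise the bicharge modulo $e=3$.

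First I would apply the shift equivalence $\Sh$: given any bicharge $\ba = (a_1, a_2)$, shifting by $-a_1$ (so adding $-a_1$ to each component) yields an equivalent bicharge $\ba' = (0, a_2 - a_1)$, and reducing modulo $e=3$ gives $\bs = (0, s_2)$ with $s_2 \in \{0,1,2\}$. If $s_2 \in \{0,1\}$, we are done.

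The remaining case is $\bs = (0,2)$. Here I would first apply the swap equivalence $\Sw$ to pass to an equivalent block with $\bs = (2, 0)$, and then apply the shift equivalence (adding $1$ to each component modulo $3$) to arrive at $\bs = (0, 1)$. By \cref{T:Equiv1}, Schurian-finiteness is preserved throughout, so we may assume $\bs = (0, s)$ with $s \in \{0,1\}$ as claimed.

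There is no real obstacle here; the statement is essentially bookkeeping on the action of shift and swap on the pair $(s_1, s_2) \in (\Z/3\Z)^2$. The only point to check carefully is that the swap and shift are legitimate reductions for our purposes, which is exactly the content of \cref{T:Equiv1}.
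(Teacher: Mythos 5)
Your proposal is correct and matches the paper's argument: the paper likewise notes that the standing Type 1 minimality assumption already gives $\bs=(0,s)$, and then handles the case $\bs=(0,2)$ by passing to the bicharge $(a_2+1,a_1+1)$, i.e.\ exactly your swap-then-shift, invoking \cref{T:Equiv1}. No issues.
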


\begin{proof}
Since we have assumed that $\psi(\cB)$ is Type $1$ $\hk$-minimal, we automatically have that $\bs=(0,s)$ for some $s\in \{0,1,2\}$. 
Suppose that $\bs=(0,2)$.
Let 
\[
\cB'= \{(\bla,(\ba_2+1,\ba_1+1)) \mid (\bla,\ba) \in \cB\}.
\]
Then $\ba_2+1 \equiv 0 \pmod 3$ and $\ba_1+1 \equiv 1 \pmod 3$. By~\cref{T:Equiv1}, $\cB'$ is Schurian-infinite if and only if $\cB$ is Schurian-infinite.
\end{proof}

Following \cref{L:01}, we will henceforth also assume $\bs=(0,s)$ where $s=0$ or $s=1$. 
For many of the blocks we will study in \cref{sec:e=3defect2,sec:e=3defect3} 
we will label simple $B$-modules with numbers and denote them by $D_i$ (for $i=1,2,\dots$). Then, we also denote the projective cover of $D_i$ by $P_i$. We write $\Hom_B(P_i, P_j)$ simply as $\Hom(P_i, P_j)$. 

By the work of Fayers and Putignano~\cite{fay06,fayput24}, it is known that decomposition matrices are characteristic-free in defects 2 and 3, so that it suffices to compute the decomposition matrices in characteristic zero in order to apply \cref{prop:matrixtrick} in \cref{sec:e=3defect2,sec:e=3defect3}.

When $\defect(B)\leq 1$, $B$ has finite representation type, and is therefore  Schurian-finite.
We deal with the blocks of defect at least two in \cref{sec:e=3defect2,sec:e=3defect3,sec:e=3defect4+}.

\subsection{Cyclotomic quiver Hecke algebras}\label{Cyclotomic quiver Hecke algebras}

Since we will carry out explicit computations later, it is convenient to recall the explicit construction of $R^\La(n)$ here. We adopt the sign-modified version $Q_{i,i+1}(u,v)=u-v$, following the convention introduced in \cite{bkisom}. 
Recall that the affine Cartan matrix of type $\tta^{(1)}_2$ is give as 
\[
\left(\begin{array}{ccc}
2 & -1 &-1 \\
-1 & 2 &-1\\
-1& -1 &2
\end{array}\right).
\]
We take polynomials $Q_{i,j}(u,v)\in\bbf[u,v]$ with $i,j\in \{0,1,2\}$ such that 
\[
Q_{i,i}(u,v)=0,\quad Q_{i,j}(u,v) =Q_{j,i}(v,u),\quad 
Q_{0,1}(u,v)=Q_{1,2}(u,v)=Q_{2,0}(u,v)=u-v.
\]
Let $\La=\La_0+\La_s$ with $s\in\{0,1,2\}$.
The cyclotomic quiver Hecke algebra $R^{\La}(n)$ is the $\Z$-graded $\bbf$-algebra generated by
\[
\{ e(\nu)\mid \nu=(\nu_1, \nu_2, \dots, \nu_n)\in \{0,1,2\}^n \}, \quad
\{x_i \mid 1\leq i \leq n \}, \quad
\{\psi_j \mid 1\leq j\leq n-1\},
\]
subject to the following relations:
\begin{enumerate}
\item $e(\nu)e(\nu')=\delta_{\nu, \nu'}e(\nu),\ \textstyle\sum_{\nu\in I^n}e(\nu)=1,\ x_ix_j=x_jx_i,\  x_ie(\nu)=e(\nu)x_i$.

\item $\psi_i e(\nu)=e(s_i(\nu))\psi_i,\ \psi_{i}\psi_j=\psi_j\psi_i$ if $|i-j|>1$.

\item $\psi_i^2 e(\nu)=Q_{\nu_i,\nu_{i+1}}(x_i,x_{i+1})e(\nu)$.

\item
\[
(\psi_ix_j-x_{s_i(j)}\psi_i)e(\nu)=\left\{
\begin{array}{ll}
-e(\nu)  & \text{ if } j=i \text{ and } \nu_i=\nu_{i+1}, \\
e(\nu)   & \text{ if } j=i+1 \text{ and } \nu_i=\nu_{i+1}, \\
 0       & \text{ otherwise.}
\end{array}\right.
\]

\item
\[
(\psi_{i+1}\psi_i\psi_{i+1}-\psi_i\psi_{i+1}\psi_{i})e(\nu)
=\left\{
\begin{array}{ll}
\frac{Q_{\nu_i,\nu_{i+1}}(x_i,x_{i+1})-Q_{\nu_i,\nu_{i+1}}(x_{i+2}, x_{i+1})}
{x_i-x_{i+2}}e(\nu) & \text{ if } \nu_i=\nu_{i+2}, \\
0 & \text{ otherwise.}
\end{array}\right.
\]

\item $x_1^2e(\nu)=0$ if $\nu_1=0$ and $s=0$; $x_1 e(\nu)=0$ if $\nu_1=0$ or $s$ and $s\neq 0$; $e(\nu)=0$ otherwise.
\end{enumerate}

The $\Z$-grading on $R^\Lambda(n)$ is defined by
\[
\deg(e(\nu))=0,\quad
\deg(x_ie(\nu))=2,\quad
\deg(\psi_ie(\nu))=-2 \text{ if } \nu_i=\nu_{i+1}, 1 \text{ otherwise}.
\]
We mention that the graded dimension of $e(\nu)R^\La(n)e(\nu)$ can be calculated explicitly; see \cite[Theorem 4.20]{bk09}. We then review some techniques that will be used repeatedly in our calculations. Let $e(\nu)=e(\nu_1,\nu_2, \dots ,\nu_n)$.
\begin{enumerate}
\item If the sequence $\nu$ cannot be realised as the residue sequence of a standard tableau for a bipartition of $n$, then $e(\nu)=0$. 

\item If $\nu_i=\nu_{i+1}$, then $\psi_ix_{i+1}\psi_ie(\nu)=\psi_i(\psi_ix_i+1)e(\nu)=\psi_i^2x_ie(\nu)+\psi_ie(\nu)=\psi_ie(\nu)$.

\item If $\nu_i=\nu_{i+1}$, then $x_i^2e(\nu)=0$ implies $x_{i+1}e(\nu)=-x_ie(\nu)$, see \cite[Lemma 11.3]{arikirep}.
\end{enumerate}

\subsection{Blocks of defect $2$}\label{sec:e=3defect2}
Suppose $\defect(B)=2$. By~\cref{lem:e3coredefects}, $B$ cannot be a core block; and in fact we must have $\defect(C)=0$ and $\hk(B)=1$. 
Hence $\min\{\nn,\pp\}=0$. Note that $|\cC|=1$. 
It suffices to consider blocks up to $\SU$-equivalence, and so we will begin by listing the equivalence classes, following \cref{T:SUEquivReps}. Recall that we are assuming that $s=0$ or $s=1$.

\begin{lem} \label{L:ListCases1Def2}
Suppose that $\psi(\cB)=\TT$. Then $\TT$ is one of the following: 
\begin{align*}
((2,2,2),0,0), \\
((1,1,2),2,0), && ((1,2,1),2,0), && ((2,1,1),2,0), && ((5,3,2),2,0), && (5,2,3),2,0), &&((4,5,1),2,0). 
\end{align*}
The first line corresponds to the case that $s=0$ and the second line corresponds to the case that $s=1$.
\end{lem}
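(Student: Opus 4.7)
The plan is to unpack \cref{D:TypeMin} in the current setting and then perform a finite enumeration. Since $\defect(B)=2$ and $\defect(C)\in\{0,1\}$ with $\defect(B)=\defect(C)+2\hk$, the only possibility is $\defect(C)=0$ and $\hk=1$, which forces $\min\{\nn,\pp\}=0$. Combined with the Type 1 requirement that if $\pp>0$ then $\nn>0$, this forces $\pp=0$ and $0\le \nn\le e-1=2$. Also, because $\min\{\nn,\pp\}=0$, each $d_i$ appearing in \cref{D:TypeMin} is $0$, so the $\hk$-Scopes-minimal conditions simplify to $\tb_i=\tb_{i-1}$ or $\tb_i-\tb_{i-1}<2$ for $i=1,2$, and $\tb_0=\tb_2+2$ or $\tb_0-\tb_2<4$.

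Next I would fix $\nn$ using the parameter $s$. By~\cref{L:s} we have $a_1-a_2\equiv \nn-\pp \pmod{3}$, so with $\pp=0$ the value of $\nn$ is determined: $s=0$ forces $\nn=0$ (so all $\tb_i$ are even), and $s=1$ forces $\nn=2$ (so exactly two of the $\tb_i$ are odd). The remaining conditions to impose are that $\min\{\tb_1,\tb_2\}\in\{1,2\}$ and that $\tb_0+\tb_1+\tb_2\equiv -\nn\pmod{6}$, together with the simplified Scopes-minimality inequalities $\tb_1\le \tb_0+1$, $\tb_2\le \tb_1+1$, and $\tb_0\le \tb_2+3$.

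For $s=0$, the parity constraint forces $\tb_i-\tb_{i-1}\le 0$ for $i=1,2$ and $\tb_0-\tb_2\le 2$, so we obtain $\tb_2\le \tb_1\le \tb_0\le \tb_2+2$ with all three even; combined with $\min\{\tb_1,\tb_2\}\in\{1,2\}$ we get $\tb_2=2$, and then the sum-modulo-$6$ condition pins down the unique solution $(2,2,2)$. For $s=1$, I would split into the three parity patterns $(\mathrm{O},\mathrm{O},\mathrm{E})$, $(\mathrm{O},\mathrm{E},\mathrm{O})$ and $(\mathrm{E},\mathrm{O},\mathrm{O})$ for $(\tb_0,\tb_1,\tb_2)$. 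In each pattern, parity upgrades the Scopes inequalities (for example, when two adjacent entries share parity, an inequality of the form $\tb_i-\tb_{i-1}\le 1$ becomes $\tb_i-\tb_{i-1}\le 0$), which together with $\min\{\tb_1,\tb_2\}\in\{1,2\}$ bounds all entries in a small range; one then applies the sum condition $\tb_0+\tb_1+\tb_2\equiv 4\pmod 6$ to select the valid triples.

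The enumeration is entirely mechanical, and there is no real obstacle; the only thing to be careful about is that the Scopes-minimal condition permits arbitrary \emph{decreases} from $\tb_{i-1}$ to $\tb_i$, so one must not prematurely symmetrise the inequalities. Running through the three parity patterns for $s=1$ yields exactly $(1,1,2)$ and $(5,3,2)$ from OOE, exactly $(1,2,1)$ and $(5,2,3)$ from OEO, and exactly $(2,1,1)$ and $(4,5,1)$ from EOO, which together with $(2,2,2)$ from the $s=0$ case gives the complete list.
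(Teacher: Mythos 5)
Your proposal is correct and takes essentially the same approach as the paper: both unpack Type~1 $1$-minimality (with $\hk=1$, $\pp=0$, and all $d_i=0$) into the explicit inequalities $\tb_1\le\tb_0+1$, $\tb_2\le\tb_1+1$, $\tb_0\le\tb_2+3$ together with the minimum and sum-mod-$6$ conditions, and then enumerate. The paper leaves the enumeration as "not difficult to check," whereas you carry it out explicitly via the parity patterns; your case analysis is accurate and recovers exactly the stated list.
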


\begin{proof}
We assumed that $\cB$ lies in the set
\[
\{\hat{D} \in \Bc_{\hk} \mid \psi(\hat{D}) \text{ is Type $1$ $1$-minimal}\}.
\]
Hence we want to find $\TT=(\tb,\nn,\pp) \in \Tl$ such that $\min\{\nn,\pp\}=0$ and $\TT$ is Type $1$ $1$-minimal.
Following \cref{D:ScopesMin,D:TypeMin} we therefore require $\TT=(\tb,\nn,\pp) \in \Tl$ to satisfy
\begin{itemize}
\item $\tb=(\tb_0,\tb_1,\tb_2)$ where $\tb_1-\tb_0 \leq 1$, $\tb_2-\tb_1 \le 1$ and $\tb_0-\tb_2 \le 3$; 
\item $\pp=0$ and $\nn\in \{0,1,2\}$; 
\item $\min\{\tb_0,\tb_1,\tb_2\} \in \{1,2\}$;
\item $\tb_0+\tb_1+\tb_2 \equiv -\nn \pmod 6$. 
\end{itemize}
It is not difficult to check that the $\TT \in \Tl$ satisfying these conditions are as above.
\end{proof}

\begin{cor} \label{C:ListCases1}
Suppose that $\cC = \{\bnu\}$. 
\begin{itemize}
\item Suppose $s=0$. 
Then $\bnu =(\vn,\vn)$. 
\item Suppose $s=1$. Then $\bnu \in \{(\vn,\vn),\,(\vn,(1)),\,(\vn,(2)),\, ((1),\vn),\,((1^2),\vn),\,((2),(1^2))\}$.
\end{itemize}
\end{cor}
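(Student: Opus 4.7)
The plan is to apply the map $\phi^\ast$ to each of the seven $\TT$'s listed in Lemma \ref{L:ListCases1Def2} and read off $\bnu$ from the resulting abacus configuration. Since $\cC$ is a core block with $|\cC| = 1$, the unique bipartition $\bnu$ is a bicore, and so its abacus is completely determined by the numbers $b^{\ba}_{ik}(\bnu)$ produced by the recipe of $\phi^\ast$: every position on runner $i$ of component $k$ with row index $\le b^{\ba}_{ik}(\bnu)$ is filled, and every higher position is empty. This immediately yields the $\beta$-sets $\beta_{a_k}(\nu^{(k)})$ for $k = 1, 2$, from which the bipartition $\bnu$ is read off.

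First I would dispatch the $s = 0$ case: the unique $\TT = ((2,2,2), 0, 0)$ has $I = \varnothing$, so all $b^{\ba}_{ik}(\bnu) = 1$, and hence $\bnu = (\varnothing, \varnothing)$, as claimed.

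For $s = 1$ there are six $\TT$'s, to be processed one by one. In each case one computes $I$ (the positions of the odd entries of $\tb$), the total order $\prec$ on $I$, the splitting $I = I^- \sqcup I^+$ (here, since $\pp = 0$, we have $I^- = I$ and $I^+ = \varnothing$), and the values $b^{\ba}_{ik}(\bnu)$ from the formula in the definition of $\phi^\ast$. Reading off the two $\beta$-sets then gives the bipartition. A quick pass through the six cases yields precisely the bipartitions listed in the corollary: the three ``small'' cases $\TT \in \{((1,1,2),2,0),\,((1,2,1),2,0),\,((2,1,1),2,0)\}$ correspond to $\bnu \in \{(\varnothing,(2)),\,(\varnothing,(1)),\,(\varnothing,\varnothing)\}$ (with $\ba = (6,4)$), and the three ``large'' cases $\TT \in \{((5,3,2),2,0),\,((5,2,3),2,0),\,((4,5,1),2,0)\}$ correspond to $\bnu \in \{((1),\varnothing),\,((1^2),\varnothing),\,((2),(1^2))\}$ (with $\ba = (9,7)$).

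The main obstacle, such as it is, is simply careful bookkeeping: one must correctly compute the order $\prec$ on $I$ when $\tb$ has two odd entries of different sizes (as happens in the three ``large'' $s=1$ cases, where the order on $I$ is not the natural one on $\{0,1,2\}$), and then translate the $b^{\ba}_{ik}(\bnu)$'s into $\beta$-sets and the $\beta$-sets into partitions. No conceptual difficulty arises beyond applying the definitions.
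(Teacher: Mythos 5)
Your proposal is correct and matches the paper's proof, which likewise just draws the abacus configuration $\phi^\ast(\TT)$ for each of the seven tuples from \cref{L:ListCases1Def2} and reads off the bipartition. Your extra bookkeeping (computing $I$, noting $I^-=I$ since $\pp=0$, and the charges $\ba=(6,4)$ resp.\ $(9,7)$) is consistent with the displayed configurations in the paper.
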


\begin{proof}
We draw the abacus configurations corresponding to each $\TT$ from~\cref{L:ListCases1Def2} and write down the corresponding bipartition. 

$s=0$: 
\[
\abacusline(3,1,bbb,nnn) \quad \abacusline(3,1,bbb,nnn) \quad (\vn,\vn)
\]

$s=1$: 
\begin{align*} 
\abacusline(3,0,bbb,bbb) \quad \abacusline(3,0,bbb,nnb) & \quad (\vn,(2)), & 
\abacusline(3,0,bbb,bbb) \quad \abacusline(3,0,bbb,nbn) & \quad (\vn,(1)),& 
\abacusline(3,0,bbb,bbb) \quad \abacusline(3,0,bbb,bnn) & \quad (\vn,\vn),\\
\abacusline(3,1,bbb,bbn,bnn) \quad \abacusline(3,1,bbb,bnn,nnn) & \quad ((1),\vn), &
\abacusline(3,1,bbb,bnb,bnn) \quad \abacusline(3,1,bbb,bnn,nnn) & \quad((1^2),\vn) & 
\abacusline(3,0,bbb,bbb,bbn,nbn) \quad \abacusline(3,0,bbb,bbn,bbn,nnn) & \quad ((2),(1^2)).
\end{align*}

\end{proof}

\begin{thm}\label{thm:e3def2a}
Let $e=3$, and $\La = 2\La_0$.
Then, up to $\Sc$-equivalence, the only defect two block is $R^{2\La_0}(\delta)$, which is Schurian-finite.
\end{thm}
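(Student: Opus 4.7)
The plan is first to enumerate the defect-$2$ combinatorial blocks up to $\SU$-equivalence, then to identify the single resulting block with $R^{2\La_0}(\delta)$, and finally to verify its Schurian-finiteness by an explicit KLR computation.

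For the enumeration, Lemma \ref{L:ListCases1Def2} lists the Type 1 $1$-minimal triples $\TT = (\tb,\nn,\pp)$ under the assumption $s=0$; the only such triple is $((2,2,2),0,0)$. By Corollary \ref{C:ListCases1}, this corresponds to the core block $\cC = \{(\vn,\vn)\}$ together with $\hk = 1$. Theorem \ref{T:SUEquivReps} then says this is a complete set of $\SU$-representatives among defect-$2$ blocks with $s=0$, and Theorem \ref{T:Equiv1} upgrades $\Sc$-equivalence to Morita equivalence. Via the Brundan--Kleshchev isomorphism, adding a single $3$-hook to $(\vn,\vn)$ contributes $\alpha_0 + \alpha_1 + \alpha_2 = \delta$ to the highest weight, so the corresponding block is $R^{2\La_0}(\delta)$.

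For Schurian-finiteness, I would mimic the KLR computation of Section \ref{subsec:def2s1=s2}. By the cyclotomic relations of Section \ref{Cyclotomic quiver Hecke algebras}, $e(\nu) = 0$ unless $\nu_1 = 0$, and then $x_1^2 e(\nu) = 0$; combined with $\beta = \delta$, the only admissible residue sequences are $\nu = (0,1,2)$ and $\nu = (0,2,1)$, giving two primitive idempotents $e_1, e_2$. Using \cite[Theorem 4.20]{bk09} I would compute the graded dimensions $\dim_v e_i A e_j$ of the basic algebra $A = e'R^{2\La_0}(\delta)e'$ (with $e' = e_1 + e_2$), extract explicit bases in terms of KLR monomials in $x_k, \psi_k$, and derive a quiver-with-relations presentation. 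The expected Gabriel quiver has two vertices, a loop at each vertex, and a pair of reverse arrows, so in shape it resembles the algebra $\mathcal{SF}_3$ of Section \ref{subsec:def2s1=s2} for $n = e - 1 = 2$.

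The main obstacle is that the computation of Section \ref{subsec:def2s1=s2} producing $\mathcal{SF}_3$ assumed $e \geq 4$, and several of its defining relations (notably $\mu_i\mu_{i+1}=0$, $\eta_{i+1}\eta_i=0$, and $\eta_i\mu_i = \mu_{i+1}\eta_{i+1}$, all indexed by $1 \leq i \leq e-3$) are vacuous for $e = 3$. The degree-$4$ relations at each vertex must therefore be recomputed directly from the KLR braid relation
\[
(\psi_{i+1}\psi_i\psi_{i+1} - \psi_i\psi_{i+1}\psi_i)e(\nu) = \frac{Q_{\nu_i,\nu_{i+1}}(x_i,x_{i+1}) - Q_{\nu_i,\nu_{i+1}}(x_{i+2},x_{i+1})}{x_i - x_{i+2}}\, e(\nu),
\]
which is nonzero precisely when $\nu_i = \nu_{i+2}$, as occurs for both of our residue sequences. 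Once the presentation of $A$ is in hand, I would identify central elements in $\Rad A$ (a loop-sum and an appropriate trace-type degree-$4$ element), apply Theorem \ref{ejr reduction} to pass to a smaller quotient algebra, and verify Schurian-finiteness of that quotient either by recognising it as (or as a quotient of) a Brauer graph algebra in the spirit of Section \ref{subsec:core-def-2-equal}, or by a short direct computation of its support $\tau$-tilting quiver.
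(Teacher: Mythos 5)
Your enumeration of the $\Sc$-classes is exactly the paper's (via \cref{L:ListCases1Def2}, \cref{C:ListCases1} and \cref{T:SUEquivReps}), and your identification of the block with $R^{2\La_0}(\delta)$ is correct; the only slip there is cosmetic ($\cref{ScopesEquivBlocks}$, not \cref{T:Equiv1}, is what gives the Morita equivalence). For the Schurian-finiteness, however, you take a genuinely different and more laborious route than the paper. The paper does not redo any KLR computation: it simply cites the bound quiver presentation of $R^{2\La_0}(\delta)$ for $e=3$ from \cite[Proposition~11.8]{arikirep} (with parameter $t=(-1)^e=-1$), observes that the resulting algebra is precisely $\mathcal{SF}_2$ from \eqref{SF-alg-2}, and is then done because the support $\tau$-tilting quiver of $\mathcal{SF}_2$ was already computed to be finite in \cref{subsec:core-def-2-equal}. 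Your plan to recompute the presentation directly from the relations of \cref{Cyclotomic quiver Hecke algebras} is sound in principle -- you correctly find the two idempotents $e(012)$, $e(021)$, correctly predict the quiver shape, and correctly flag that the relations of \cref{subsec:def2s1=s2} are vacuous at $e=3$ and must be rederived from the braid relation -- but be aware that the endpoint of that computation is $\mathcal{SF}_2$ in \emph{every} characteristic, so of your two proposed endgames the Brauer-graph recognition is the wrong fork: that is the $p\neq2$, $e\geq4$ core-block story of \eqref{SF-alg-1}, and the paper explicitly notes $\mathcal{SF}_2$ is wild and is \emph{not} handled that way. Your alternative endgame (central-element reduction via \cref{ejr reduction} followed by, or replaced by, a direct computation of the support $\tau$-tilting quiver) does work -- e.g.\ $\alpha+\beta$ is central in $\Rad\mathcal{SF}_2$ -- and is consistent with what the paper actually does for $\mathcal{SF}_2$. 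In short: correct strategy, but you would be re-proving a presentation the paper imports by citation, and you should commit to the $\tau$-tilting-quiver verification rather than the Brauer-graph one.
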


\begin{proof}
The bound quiver presentation was computed in \cite[Proposition 11.8]{arikirep}. The parameter $t=(-1)^e=-1$ is denoted $\la$ there.
Then the algebra is isomorphic to the bound quiver algebra given by 
\[
\xymatrix@C=1cm@R=0.8cm{1\ar@<0.5ex>[r]^{\mu}\ar@(ld,lu)^{\alpha}&2\ar@<0.5ex>[l]^{\nu}\ar@(ru,rd)^{\beta}}
\]
with relations $\alpha\mu=\mu\beta, \beta\nu=\nu\alpha,\alpha^2=\beta^2=\mu\nu\mu=\nu\mu\nu=0$. 
This algebra is $\mathcal{SF}_2$, which already appeared in \eqref{SF-alg-2}. Thus, $R^{2\La_0}(\delta)$ is Schurian-finite.
\end{proof}

Next, we will consider $s=1$. The following lemma will be useful for this, and will also be used later in \cref{sec:e=3defect4+}.

\begin{lem}\label{lem:signshift}
Let $e=3$.
The algebras $R^{\La_0+\La_1}(c_0\alpha_0 + c_1 \alpha_1 + c_2 \alpha_2)$ and $R^{\La_0+\La_1}(c_1\alpha_0 + c_0 \alpha_1 + c_2 \alpha_2)$ are isomorphic.
Similarly, the algebras $R^{2\La_0}(c_0\alpha_0 + c_1 \alpha_1 + c_2 \alpha_2)$ and $R^{2\La_0}(c_0\alpha_0 + c_2 \alpha_1 + c_1 \alpha_2)$ are isomorphic.
\end{lem}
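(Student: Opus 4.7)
The plan is to realise both isomorphisms as consequences of Dynkin diagram automorphisms of type $A^{(1)}_2$ that fix the dominant weight $\La$. Since this diagram is a $3$-cycle on the vertex set $\{0,1,2\}$, its automorphism group is the full symmetric group $S_3$, acting on simple roots and fundamental weights by permutation of subscripts. For the first isomorphism I would take $\sigma = (0\;1)$, which fixes $\La_0+\La_1$ and sends $c_0\alpha_0+c_1\alpha_1+c_2\alpha_2$ to $c_1\alpha_0+c_0\alpha_1+c_2\alpha_2$. For the second I would take $\sigma = (1\;2)$, which fixes $2\La_0$ and sends $c_0\alpha_0+c_1\alpha_1+c_2\alpha_2$ to $c_0\alpha_0+c_2\alpha_1+c_1\alpha_2$. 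The latter case is in fact precisely the sign automorphism of \cite[\S3.3]{kmr} specialised to $e=3$, and the first case is an analogous construction for a different diagram automorphism.

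In both cases I would define $\phi \colon R^\La(\beta) \to R^\La(\sigma\beta)$ on generators by
\[
e(\nu) \longmapsto e(\sigma\nu), \qquad x_i \longmapsto x_i, \qquad \psi_j e(\nu) \longmapsto c(\nu_j,\nu_{j+1})\,\psi_j e(\sigma\nu),
\]
where $\sigma\nu = (\sigma(\nu_1),\dots,\sigma(\nu_n))$, and the sign $c(i,k)\in\{\pm 1\}$ is chosen to be $+1$ when $i=k$ or when the ordered pair $(i,k)$ lies in the cyclic order $0\to 1\to 2\to 0$ (the one on which our convention $Q_{0,1}=Q_{1,2}=Q_{2,0}=u-v$ is based), and $-1$ otherwise. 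The idempotent, commutation, and mixed $x$-$\psi$ relations transfer immediately, and the cyclotomic relation is preserved because $\sigma$ permutes the set $\{0,s\}$ of residues appearing in $\La$ setwise in each case.

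The main obstacle is verifying the quadratic and braid relations, where the signs $c(\cdot,\cdot)$ must conspire correctly. For the quadratic relation, the key observation is that any transposition in $S_3$ reverses the cyclic orientation on $\{0,1,2\}$, so $Q_{\sigma(i),\sigma(k)}(u,v) = -Q_{i,k}(u,v)$ when $i\neq k$; this sign is absorbed precisely by the product $c(\nu_j,\nu_{j+1})\cdot c(\nu_{j+1},\nu_j) = -1$, since exactly one of the two ordered pairs $(\nu_j,\nu_{j+1})$ and $(\nu_{j+1},\nu_j)$ lies in the cyclic order, while the case $\nu_j = \nu_{j+1}$ is trivial as both sides of the relation vanish. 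The braid relation reduces to a short finite case analysis on residue triples $(\nu_j,\nu_{j+1},\nu_{j+2})$: one checks that the signs accumulated on the two sides $\psi_{j+1}\psi_j\psi_{j+1}e(\nu)$ and $\psi_j\psi_{j+1}\psi_j e(\nu)$ match, and that the RHS of the braid relation transforms with the same sign as the LHS, again thanks to the cyclic-orientation compatibility built into the definition of $c(\cdot,\cdot)$. Since $\sigma$ is an involution, invertibility of $\phi$ is immediate from the symmetric construction applied to $\sigma^{-1}=\sigma$.
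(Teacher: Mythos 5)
Your proposal is correct, but it takes a different route from the paper. The paper's proof is a two-line reduction to known automorphisms: the transposition $(0\;1)$ is realised as the composition of the sign automorphism of \cite[\S3.3]{kmr} (which sends $e(\nu)\mapsto e(-\nu)$, $x_i\mapsto -x_i$, $\psi_i\mapsto -\psi_i$ and, for $e=3$, implements $(1\;2)$ on residues) with the shift automorphism coming from the rotation of the Dynkin diagram (as in the proof of \cref{T:Equiv1}); the second isomorphism is the sign automorphism alone. You instead construct each isomorphism directly, for an arbitrary transposition $\sigma$, by fixing the $x_i$ and twisting $\psi_je(\nu)$ by a local sign $c(\nu_j,\nu_{j+1})$ determined by the cyclic orientation, and then verify the relations by hand. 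Your verification is sound: the key identities $Q_{\sigma(i),\sigma(k)}=-Q_{i,k}$ for $i\neq k$ and $c(i,k)c(k,i)=-1$ do make the quadratic relation work, the accumulated signs on the two braid words agree and match the sign picked up by the right-hand side in the only nontrivial case $\nu_j=\nu_{j+2}\neq\nu_{j+1}$, and the cyclotomic relation is preserved because $\sigma$ stabilises the multiset of residues of $\La$ in each case. Two small remarks. First, your map for $(1\;2)$ is \emph{not} literally the sign automorphism of \cite{kmr} (that one negates the $x_i$ and negates $\psi_i$ uniformly, while yours fixes $x_i$ and uses idempotent-dependent signs); it is a different isomorphism between the same pair of algebras, so the conclusion is unaffected, but the identification you assert is inaccurate as stated. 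Second, "invertibility is immediate" deserves one more line: your $\phi_\sigma^2$ sends $\psi_je(\nu)$ to $c(\nu_j,\nu_{j+1})c(\sigma\nu_j,\sigma\nu_{j+1})\psi_je(\nu)=\pm\psi_je(\nu)$ rather than to $\psi_je(\nu)$, so the composite is not the identity; it is, however, clearly surjective, which gives bijectivity of both factors. The trade-off between the two approaches is the usual one: the paper's argument is shorter because it leans on the literature, while yours is self-contained but requires the (routine) relation checks.
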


\begin{proof}
The Dynkin automorphism which swaps the nodes $0$ and $1$ gives the isomorphism as in the proof of \cref{T:Equiv1}.
One may consider the automorphism as the composition of the sign automorphism $R^{\La_0+\La_1}(c_0\alpha_0 + c_1 \alpha_1 + c_2 \alpha_2)\simeq R^{\La_0+\La_2}(c_0\alpha_0 + c_2 \alpha_1 + c_1 \alpha_2)$ given by
\[
e(\nu)\mapsto e(-\nu),\;\; x_i\mapsto -x_i, \;\; \psi_i\mapsto -\psi_i
\]
and the shift automorphism 
$R^{\La_0+\La_2}(c_0\alpha_0 + c_2 \alpha_1 + c_1 \alpha_2)
\simeq R^{\La_1+\La_0}(c_1\alpha_0 + c_0 \alpha_1 + c_2 \alpha_2)$.

The second statement is obtained by applying the sign automorphism to $R^{2\La_0}(c_0\alpha_0 + c_1 \alpha_1 + c_2 \alpha_2)$.
\end{proof}

Following \cref{C:ListCases1}, we can list the blocks up to $\SU$-equivalence as follows:

\begin{multicols}{3}
\begin{enumerate}
    \item[(i)] $R^{\La_0+\La_1}(\delta),$
    \item[(iv)] $R^{\La_0+\La_1}(\delta+\alpha_0),$
    \item[(ii)] $R^{\La_0+\La_1}(\delta+\alpha_1),$
    \item[(v)] $R^{\La_0+\La_1}(\delta+\alpha_0+\alpha_2),$
    \item[(iii)] $R^{\La_0+\La_1}(\delta+\alpha_1+\alpha_2),$
    \item[(vi)] $R^{\La_0+\La_1}(\delta+2\alpha_0+2\alpha_1).$
\end{enumerate}
\end{multicols}

Moreover, by \cref{lem:signshift}, the algebras appearing in (ii) and (iv) above are isomorphic, and likewise the algebras appearing in (iii) and (v) are too. So it suffices to study those algebras appearing in (i)--(iii) and (vi) above. This is done in \cref{prop:e3defect2first,prop:e3defect2second,prop:e3defect2third,prop:e3defect2fourth} below, and summarised in \cref{thm:e3def2b}.
We note that the decomposition matrices appearing in all of these proofs are those for column Specht modules, i.e.~$([\spe{\bla}: \D{\bmu}]_v)_{\bla,\bmu}$.

\begin{prop}\label{prop:e3defect2first}
Let $e=3$.
Then $B:=R^{\La_0+\La_1}(\delta)$ is Schurian-finite.
\end{prop}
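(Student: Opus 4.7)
The plan is to compute an explicit bound quiver presentation of the basic algebra $A$ of $B=R^{\La_0+\La_1}(\delta)$ directly from the cyclotomic KLR relations of \cref{Cyclotomic quiver Hecke algebras}, and then to establish Schurian-finiteness via the $\tau$-tilting techniques used in \cref{subsec:def2s1=s2}.

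First, I would enumerate the residue sequences $\nu\in\{0,1,2\}^3$ for which $e(\nu)\ne 0$. Since $\La=\La_0+\La_1$, the cyclotomic relation forces $\nu_1\in\{0,1\}$, and any non-zero $\nu$ must be realisable as the residue sequence of a standard bitableau for some bipartition in $\cB$. As $\cB$ has $\hk(\cB)=1$ with core bipartition $(\vn,\vn)$, this block consists of the eight bipartitions of $3$ whose residue multiset is $\{0,1,2\}$ with respect to the bicharge $(0,1)$. This enumeration fixes both the admissible $\nu$ and the subset $\Kl$ of Kleshchev bipartitions labelling the simple modules.

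Next, I would compute the graded dimensions $\dim_v e(\nu)Be(\nu')$ using \cite[Theorem 4.20]{bk09} and choose a set of primitive idempotents whose sum $e'$ yields the basic algebra $A=e'Be'$ and the projective covers of the simple modules. The Gabriel quiver of $A$ can then be read off from the degree-one parts of these $\Hom$-spaces (arrows from the $\psi$-generators) together with the degree-two loops (coming from the $x_je(\nu)$, or products $\psi\psi e(\nu)$). The relations are obtained by systematic application of the KLR relations, together with the standard simplifications $x_i^2e(\nu)=0$ when $\nu_i=\nu_{i+1}$, $\psi_ix_{i+1}\psi_ie(\nu)=\psi_ie(\nu)$, and $x_{i+1}e(\nu)=-x_ie(\nu)$ when $\nu_i=\nu_{i+1}$, exactly as in the derivation of $\mathcal{SF}_3$ carried out in \cref{subsec:def2s1=s2}.

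With a presentation of $A$ in hand, I would try to identify central elements in $\Rad A$ — most plausibly sums of loops and of two-cycles in the Gabriel quiver — and invoke \cref{ejr reduction} to reduce Schurian-finiteness of $A$ to that of a simpler quotient $\widetilde A$. If $\widetilde A$ turns out to be radical-square-zero, Schurian-finiteness would follow from a separated-quiver analysis (\cite[Theorem B]{Ad2}) provided each connected component of the separated quiver is a Dynkin quiver; otherwise, one may compute the support $\tau$-tilting quiver of $A$ directly and verify that it has only finitely many vertices, as in \cref{subsec:core-def-2-equal}.

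The main obstacle will be the explicit determination of the relations in the second step. Matching the KLR straightening relations against the computed graded dimensions while controlling signs and coefficients is intricate, and one needs to be careful to discover hidden relations that collapse the naive list of arrows. If the central-element reduction does not leave a sufficiently simple quotient, completing the support $\tau$-tilting computation by hand — although elementary — is likely to constitute the most laborious part of the argument.
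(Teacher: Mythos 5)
Your overall strategy---extract a bound quiver presentation of the basic algebra from the KLR relations of \cref{Cyclotomic quiver Hecke algebras}, strip off central elements of the radical via \cref{ejr reduction}, and verify finiteness of the resulting quotient---is exactly the route the paper takes: the basic algebra is identified with the algebra $\mathcal{SF}_4$ of \eqref{SF-alg-4} and the quotient $\widetilde{\mathcal{SF}_4}$ is shown to be a representation-finite string algebra. Two points where your plan needs adjusting. First, for this block the Gabriel quiver is $2\leftrightarrows 1\leftrightarrows 3\leftrightarrows 4$ with \emph{no loops}: the two-dimensional degree-two parts of the $e_iAe_i$ are spanned by linearly independent products of the six degree-one arrows, so the ``degree-two loops coming from $x_je(\nu)$'' that you anticipate (modelled on the $\mathcal{SF}_3$ computation, where $\eta_{i-1}\mu_{i-1}=\mu_i\eta_i$ forces extra loops) do not occur here. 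Establishing this---equivalently, that $\Ext^1(D_i,D_i)=0$ and that the degree-one elements generate---is a genuine step; the paper settles it not from graded dimensions alone but by first computing the decomposition matrix and using the Specht filtrations and self-duality of the projective indecomposables to determine their radical layers, and only then passes to explicit KLR elements for the arrows and relations. Your purely KLR-side computation would have to reproduce this by exhibiting enough independent products of arrows in each graded piece. Second, the quotient $\widetilde{\mathcal{SF}_4}$ obtained by killing all two-cycles is not radical-square-zero (paths such as $\alpha_1\alpha_2$ survive), so the separated-quiver shortcut is unavailable and you land in your fallback case: one finishes, as in \cref{subsec:core-def-2-equal}, by checking directly that this string algebra is representation-finite (the paper records $28$ Schurian modules).
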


\begin{proof}
The decomposition matrix of $B$ is as follows.
\[
\begin{matrix*}[r] 
((3),\varnothing)\\
((2,1),\varnothing)\\
((1^3), \varnothing)\\
((1^2), (1)) \\
((1), (2)) \\
(\varnothing, (3)) \\
(\varnothing, (2,1)) \\
(\varnothing, (1^3))
\end{matrix*} \quad 
\begin{pmatrix} 
1 &  &  &  \\
v & 1 &  &  \\
0 & v &  &  \\
v & v^2 & 1 &  \\
v^2 & 0 & v & 1 \\
0 & 0 & 0 & v \\
0 & 0 & v & v^2 \\
0 & 0 & v^2 & 0
\end{pmatrix}
\]
Set $A:=\End_{B}(P_1\oplus P_2\oplus P_3\oplus P_4)^{\rm op}$, where
\[
D_1:=\D{((3), \varnothing)}, \quad 
D_2:=\D{((2,1), \varnothing)}, \quad 
D_3:=\D{((1^2), (1))}, \quad 
D_4:=\D{((1), (2))}.
\]
We obtain the graded dimensions of $A$ as follows\footnote{This table agrees with the graded Cartan matrix of $A$.}. 
\[
\renewcommand\arraystretch{1.2}
\begin{tabular}{c|ccccccccccccc}
$\Hom$ & $P_1$ & $P_2$ & $P_3$ & $P_4$ \\ \hline
$P_1$ & $1 + 2v^2 + v^4$ & $v + v^3$ & $v + v^3$ & $v^2$ \\
$P_2$ & $v + v^3$ & $1 + v^2 + v^4$ & $v^2$ & 0 \\
$P_3$ & $v + v^3$ & $v^2$ & $1 + 2v^2 + v^4$ & $v + v^3$ \\
$P_4$ & $v^2$ & 0 & $v + v^3$ & $1 + v^2 + v^4$ \\
\end{tabular}
\]
There are six non-zero degree one elements in
\begin{gather*}
\Hom(P_2,P_1), \quad \Hom(P_1,P_3), \quad \Hom(P_3,P_4), \\
\Hom(P_1,P_2), \quad \Hom(P_3,P_1), \quad \Hom(P_4,P_3).
\end{gather*}
We show that those elements generate $A$. 
\begin{itemize}
\item We start by determining the radical structure of $P_2$ and $P_4$.
Note that $P_2$ has Specht filtration whose successive quotients are $\spe{((2,1),\varnothing)}$, $\spe{((1^3),\varnothing)}$, $\spe{((1^2),(1))}$ in this order. Since $\spe{((1^2),(1))}$ is a submodule of $P_2$, we may conclude that $\spe{((1^2),(1))}$ is uniserial, so that the successive quotients are 
\begin{center}
$\vcenter{\xymatrix@C=0.1cm@R=0.2cm{
D_2\ar@{-}[d]\\
D_1
}}$ 
$\qquad$$D_2$$\qquad$
$\vcenter{\xymatrix@C=0.1cm@R=0.2cm{
D_3\ar@{-}[d]\\
D_1\ar@{-}[d]\\
D_2
}}$
\end{center}
With the self-duality of $P_2$ in mind, there is a unique radical structure of $P_2$ displayed as
\[
\xymatrix@C=0.1cm@R=0.2cm{
&D_2\ar@{-}[d]&\\
&D_1\ar@{.}[dl]\ar@{.}[dr]&\\
D_2\ar@{.}[dr]&& D_3\ar@{-}[dl]\\
&D_1\ar@{-}[d]&\\
&D_2&
}
\]
where the solid lines indicate the Specht filtrations. We obtain
\begin{gather*}
\Ext^1(D_2,D_2)=0, \quad \Ext^1(D_2,D_3)=\Ext^1(D_3,D_2)=0, \quad \Ext^1(D_2,D_4)=\Ext^1(D_4,D_2)=0.
\end{gather*}
As the lower $D_1$ sits in the heart of $\spe{((1^2),(1))}$, it cannot appear in $\Rad P_2/\Rad^2 P_2$, which implies $\Ext^1(D_2,D_1)=\Ext^1(D_1,D_2)=\bbf$. 
Similarly, $\spe{((1),(2))}$ and $\spe{(\varnothing,(2,1))}$ are uniserial because each of them is a submodule of $P_1$ or $P_4$, respectively, and must therefore have a simple socle.
Then, successive quotients of the Specht filtration of $P_4$ are as follows,
\begin{center}
$\vcenter{\xymatrix@C=0.1cm@R=0.2cm{
D_4\ar@{-}[d]\\
D_3\ar@{-}[d]\\
D_1
}}$$\qquad$$D_4$$\qquad$
$\vcenter{\xymatrix@C=0.1cm@R=0.2cm{
D_3\ar@{-}[d]\\
D_4
}}$
\end{center}
so that there is a unique radical structure of $P_4$ displayed as
\begin{center}
$\vcenter{\xymatrix@C=0.1cm@R=0.2cm{
&D_4\ar@{-}[d]&\\
&D_3\ar@{-}[dl]\ar@{.}[dr]&\\
D_1\ar@{.}[dr]&& D_4\ar@{.}[dl]\\
&D_3\ar@{-}[d]&\\
&D_4&
}}$
\end{center}
and we obtain
\begin{gather*}
\Ext^1(D_4,D_1)=\Ext^1(D_1,D_4)=0, \quad \Ext^1(D_4,D_2)=\Ext^1(D_2,D_4)=0, \quad \Ext^1(D_4,D_4)=0.
\end{gather*}
As the lower $D_3$ cannot appear in $\Rad P_4/\Rad^2 P_4$, $\Ext^1(D_4,D_3)=\Ext^1(D_3,D_4)=\bbf$. 

\item We find that $\Rad P_1$ has Specht filtration with successive quotients $\spe{((2,1),\varnothing)}$, $\spe{((1^2),(1))}$, $\spe{((1),(2))}$ in this order.
We already know that $\spe{((2,1),\varnothing)}$ and $\spe{((1),(2))}$ are uniserial.
Since $\spe{((1^2),(1))}$ is a submodule of $P_2$, it is uniserial again.
Hence, those successive quotients are as follows.
\begin{center}
$\vcenter{\xymatrix@C=0.1cm@R=0.2cm{
D_2\ar@{-}[d]\\
D_1
}}$ 
$\qquad$
$\vcenter{\xymatrix@C=0.1cm@R=0.2cm{
D_3\ar@{-}[d]\\
D_1\ar@{-}[d]\\
D_2
}}$ 
$\qquad$$\vcenter{\xymatrix@C=0.1cm@R=0.2cm{
D_4\ar@{-}[d]\\
D_3\ar@{-}[d]\\
D_1
}}$ 
\end{center}
Since $D_1$ appears in $\Rad \spe{((2,1),\varnothing)}$, $\Rad \spe{((1^2),(1))}$ and $\Rad^2 \spe{((1),(2))}$, $D_1$ cannot appear in \linebreak $\Rad P_1/\Rad^2 P_1$.
Moreover, $D_3$ in $\Rad \spe{((1),(2))}$ cannot appear in $\Rad P_1/\Rad^2 P_1$.
We then obtain
\[
\Ext^1(D_1,D_1)=0, \quad \Ext^1(D_1,D_3)=\Ext^1(D_3,D_1)=\bbf.
\]

\item We see that successive quotients of the Specht filtration of $P_3$ are as follows. 
\begin{center}
$\vcenter{\xymatrix@C=0.1cm@R=0.2cm{
D_3\ar@{-}[d]\\
D_1\ar@{-}[d]\\
D_2
}}$ 
$\qquad$
$\vcenter{\xymatrix@C=0.1cm@R=0.2cm{
D_4\ar@{-}[d]\\
D_3\ar@{-}[d]\\
D_1
}}$$\qquad$
$\vcenter{\xymatrix@C=0.1cm@R=0.2cm{
D_3\ar@{-}[d]\\
D_4
}}$$\qquad$
$D_3$
\end{center}
If $D_3$ appeared in $\Rad P_3/\Rad^2 P_3$, it would appear in $\Soc^2 P_3/\Soc P_3$, but it is impossible as $\Soc \spe{((1^2),(1))}$, $\Soc \spe{((1),(2))}$ and $\Soc \spe{(\varnothing, (2,1))}$ are not $D_3$.
We conclude that $\Ext^1(D_3,D_3)=0$. 
\end{itemize}
We have proved that the six degree one elements generate the algebra $A$, and we obtain the (Gabriel) quiver of $A$, which we display as follows. 
\[
\xymatrix{2\ar@<0.5ex>[r]^{\alpha_1}&1\ar@<0.5ex>[l]^{\beta_1}\ar@<0.5ex>[r]^{\alpha_2}&3\ar@<0.5ex>[l]^{\beta_2}\ar@<0.5ex>[r]^{\alpha_3}&4 \ar@<0.5ex>[l]^{\beta_3}}.
\]

By graded dimensions, we have $\alpha_1\alpha_2\alpha_3=0$ and $\beta_3\beta_2\beta_1=0$.
In order to find other relations, it is easier to work directly with the KLR generators and relations. We find  
\[
[P_1]=f_2f_1f_0v_\La, \quad [P_2]=f_1f_2f_0v_\La, \quad  [P_3]=f_2f_0f_1v_\La, \quad [P_4]=f_0f_2f_1v_\La
\]
in the deformed Fock space\footnote{The deformed Fock space here is the one whose crystal $B(\La)$ generated by the vacuum vector $v_\La$ is the set of conjugate-Kleshchev bipartitions.}, and we have
\[
P_1=Be(012), \quad P_2=Be(021), \quad  P_3=Be(102), \quad P_4=Be(120). 
\]
From this description, we also see that the swap of $f_0$ and $f_1$ 
induces an algebra automorphism of $A$, which swaps $P_1\leftrightarrow P_3$ and $P_2\leftrightarrow P_4$. Moreover, 
$A=eBe$ for $e=e(021)+e(012)+e(102)+e(120)$, 
and we may choose $\alpha_i$ and $\beta_i$, for $i=1,2,3$, as follows.
\begin{gather*}
\alpha_1=e(021)\psi_2e(012), \quad \alpha_2=e(012)\psi_1e(102), \quad \alpha_3=e(102)\psi_2e(120), \\
\beta_1=e(012)\psi_2e(021), \quad \beta_2=e(102)\psi_1e(012), \quad \beta_3=e(120)\psi_2e(102).
\end{gather*}
In the following, we show that $A$ is isomorphic to $\mathcal{SF}_4:=\bbf Q/I$ with 
\begin{equation}\label{SF-alg-4}
\begin{matrix}
Q:\ \xymatrix{2\ar@<0.5ex>[r]^{\alpha_1}&1\ar@<0.5ex>[l]^{\beta_1}\ar@<0.5ex>[r]^{\alpha_2}&3\ar@<0.5ex>[l]^{\beta_2}\ar@<0.5ex>[r]^{\alpha_3}&4 \ar@<0.5ex>[l]^{\beta_3}}
\qquad 
I:\ 
\begin{matrix}
\alpha_1\alpha_2\alpha_3=\beta_3\beta_2\beta_1=\alpha_2\beta_2\alpha_2=\beta_2\alpha_2\beta_2=0,\\
\beta_1\alpha_1\alpha_2=-\alpha_2\alpha_3\beta_3,\quad \alpha_3\beta_3\beta_2=-\beta_2\beta_1\alpha_1, \\
\alpha_1\beta_1\alpha_1=-\alpha_1\alpha_2\beta_2,\quad  
\beta_1\alpha_1\beta_1=-\alpha_2\beta_2\beta_1, \\
\alpha_3\beta_3\alpha_3=-\beta_2\alpha_2\alpha_3, \quad 
\beta_3\beta_2\alpha_2=-\beta_3\alpha_3\beta_3.
\end{matrix}
\end{matrix}
\end{equation}
\begin{itemize}
\item $\alpha_1\alpha_2\alpha_3=\beta_3\beta_2\beta_1=0$ follows from the 
graded dimensions of $A$.

\item Since $x_1e(0**)=x_1e(1**)=0$, we have
\[
\begin{aligned}
\alpha_2\beta_2\alpha_2&=\psi_1^3e(102)=(x_1-x_2)\psi_1e(102)
=-\psi_1x_1e(102)=0,\\
\beta_2\alpha_2\beta_2&=\psi_1^3e(012)=(x_2-x_1)\psi_1e(012)=\psi_1x_1e(012)=0.
\end{aligned}
\]

\item Observe that $\psi_1e(120)=0$, then $\psi_1^2e(120)=(x_1-x_2)e(120)$ implies $x_2e(120)=0$. Similarly, $x_2e(021)=0$. We obtain $\alpha_1\beta_1\alpha_1=-\alpha_1\alpha_2\beta_2$ using the following calculation:
\[
\begin{aligned} 
\alpha_1\beta_1\alpha_1&=\psi_2^3e(012)=(x_3-x_2)\psi_2e(012)=x_3\psi_2e(012),\\
\alpha_1\alpha_2\beta_2&=\psi_2\psi_1^2e(012)=\psi_2(x_1-x_2)e(012)=-x_3\psi_2e(012).
\end{aligned}
\]
We obtain $\beta_1\alpha_1\beta_1=-\alpha_2\beta_2\beta_1$ using the following calculation:
\[
\begin{aligned} 
\beta_1\alpha_1\beta_1&=\psi_2^3e(021)=\psi_2(x_3-x_2)e(021)=\psi_2x_3e(021),\\
\alpha_2\beta_2\beta_1&=\psi_1^2\psi_2e(021)=(x_1-x_2)\psi_2e(021)=-\psi_2x_3e(021).
\end{aligned}
\]

\item
Similar to the above, 
\begin{align*} 
\alpha_3\beta_3\alpha_3&=\psi_2^3e(120)=\psi_2(x_2-x_3)e(120)= -\psi_2x_3e(120), \\
\beta_2\alpha_2\alpha_3&=\psi_1^2\psi_2e(120)=(x_2-x_1)e(102)\psi_2 =\psi_2x_3e(120),
\end{align*}
imply $\alpha_3\beta_3\alpha_3=-\beta_2\alpha_2\alpha_3$. We also obtain $\beta_3\alpha_3\beta_3=-\beta_3\beta_2\alpha_2$ by the anti-involution of $A$ swapping $\alpha_i$ and $\beta_i$, for $i=1,2,3$.

\item One finds that $\beta_1\alpha_1\alpha_2=-x_3 \psi_1 e(102)$ and $\alpha_2\alpha_3\beta_3=x_3\psi_1e(102)$, which implies  $\beta_1\alpha_1\alpha_2=-\alpha_2\alpha_3\beta_3$ and $\alpha_3\beta_3\beta_2=-\beta_2\beta_1\alpha_1$.
\end{itemize}
It remains to look at the structure of indecomposable projective modules in order to know that we have obtained all the relations.
We use solid lines to indicate the Specht filtrations, and the dotted lines indicate how multiplication by an arrow connects two basis elements.

\begin{itemize}
\item The structure of $P_1$ is displayed as follows. 
\begin{center}
$eP_1\simeq \vcenter{\xymatrix@C=0.2cm@R=0.3cm{
&e_1\ar@{.}[dl]\ar@{.}[dr]&&\\
\alpha_1\ar@{-}[d]&&\beta_2\ar@{-}[dl]\ar@{.}[dr]&\\
\beta_1\alpha_1\ar@{.}[d]\ar@/_0.2cm/@{.}[drr]&\alpha_2\beta_2\ar@{-}[dl]&& \beta_3\beta_2\\
\alpha_1\alpha_2\beta_2\ar@{.}[dr]&&
\alpha_3\beta_3\beta_2\ar@{-}[ur]\ar@{-}[dl]&\\
&\beta_1\alpha_1\beta_1\alpha_1&&
}}\simeq\vcenter{\xymatrix@C=0.2cm@R=0.3cm{
&D_1\ar@{.}[dl]\ar@{.}[dr]&&\\
D_2\ar@{-}[d]&&D_3\ar@{-}[dl]\ar@{.}[dr]&\\
D_1\ar@{.}[d]\ar@/_0.2cm/@{.}[drr]&D_1\ar@{-}[dl]&& D_4\\
D_2\ar@{.}[dr]&&D_3\ar@{-}[ur]\ar@{-}[dl]&\\
&D_1&&
}}=P_1.
$
\end{center}
We should understand the structure of $\Rad^2 P_1$ as follows.
\begin{center}
$\vcenter{\xymatrix@C=0.1cm@R=0.2cm{\alpha_2\beta_2\ar@{-}[d]&\beta_1\alpha_1+\alpha_2\beta_2\ar@{.}[dr]&& \beta_3\beta_2\\
\alpha_1\alpha_2\beta_2\ar@{.}[dr]&&
\alpha_3\beta_3\beta_2\ar@{-}[ur]\ar@{-}[dl]&\\
&\beta_1\alpha_1\beta_1\alpha_1&&
}}\simeq\vcenter{\xymatrix@C=0.2cm@R=0.3cm{
D_1\ar@{-}[d]&D_1\ar@{.}[dr]&& D_4\\
D_2\ar@{.}[dr]&&D_3\ar@{-}[ur]\ar@{-}[dl]&\\
&D_1&&
}}.$
\end{center}
Note that $\beta_1\alpha_1\beta_1\alpha_1=-\beta_1\alpha_1\alpha_2\beta_2=\alpha_2\alpha_3\beta_3\beta_2$. 

\item The structure of $P_2$ is displayed as follows. 
\begin{center}
$eP_2\simeq\vcenter{\xymatrix@C=0.1cm@R=0.2cm{
&e_2\ar@{-}[d]&\\
&\beta_1\ar@{.}[dl]\ar@{.}[dr]&\\
\alpha_1\beta_1\ar@{.}[dr]&& \beta_2\beta_1\ar@{-}[dl]\\
&\beta_1\alpha_1\beta_1\ar@{-}[d]&\\
&\alpha_1\beta_1\alpha_1\beta_1&
}}\simeq\vcenter{\xymatrix@C=0.1cm@R=0.2cm{
&D_2\ar@{-}[d]&\\
&D_1\ar@{.}[dl]\ar@{.}[dr]&\\
D_2\ar@{.}[dr]&& D_3\ar@{-}[dl]\\
&D_1\ar@{-}[d]&\\
&D_2&
}}=P_2$
\end{center}
Note that $\beta_2\beta_1\alpha_1\beta_1=-\alpha_3\beta_3\beta_2\beta_1=0$. 

\item Similar to $P_1$, the structure of $P_3$ is displayed as follows. 
\begin{center}
$eP_3\simeq\vcenter{\xymatrix@C=0.2cm@R=0.3cm{
&&e_3\ar@{-}[dl]\ar@{.}[dr]&\\
&\alpha_2\ar@{-}[dl]\ar@{.}[dr]&&\beta_3\ar@{-}[d]\\
\alpha_1\alpha_2&& \beta_2\alpha_2\ar@{-}[dr]&\alpha_3\beta_3\ar@{.}[d]\ar@/^0.2cm/@{-}[dll]\\
&\beta_1\alpha_1\alpha_2\ar@{.}[ul]\ar@{.}[dr]&&\beta_3\beta_2\alpha_2\ar@{.}[dl]\\
&&\beta_2\beta_1\alpha_1\alpha_2&
}}\simeq\vcenter{\xymatrix@C=0.2cm@R=0.3cm{
&&D_3\ar@{-}[dl]\ar@{.}[dr]&\\
&D_1\ar@{-}[dl]\ar@{.}[dr]&&D_4\ar@{-}[d]\\
D_2&& D_3\ar@{-}[dr]&D_3\ar@{.}[d]\ar@/^0.2cm/@{-}[dll]\\
&D_1\ar@{.}[ul]\ar@{.}[dr]&&D_4\ar@{.}[dl]\\
&&D_3&
}}=P_3.$
\end{center}
We should understand the structure of $\Rad^2 P_3$ as follows.
\begin{center}
$\vcenter{\xymatrix@C=0.2cm@R=0.3cm{
\alpha_1\alpha_2&& \beta_2\alpha_2+\alpha_3\beta_3\ar@{-}[dl]&\beta_2\alpha_2\ar@{-}[d]\\
&\beta_1\alpha_1\alpha_2\ar@{.}[ul]\ar@{.}[dr]&&\beta_3\beta_2\alpha_2\ar@{.}[dl]\\
&&\beta_2\beta_1\alpha_1\alpha_2&
}}\simeq\vcenter{\xymatrix@C=0.2cm@R=0.3cm{
D_2&& D_3\ar@{-}[dl]&D_3\ar@{-}[d]\\
&D_1\ar@{.}[ul]\ar@{.}[dr]&&D_4\ar@{.}[dl]\\
&&D_3&
}}.$
\end{center}

\item Similar to $P_2$, the structure of $P_4$ is displayed as follows.
\begin{center}
$eP_4\simeq\vcenter{\xymatrix@C=0.1cm@R=0.2cm{
&e_4\ar@{-}[d]&\\
&\alpha_3\ar@{-}[dl]\ar@{.}[dr]&\\
\alpha_2\alpha_3\ar@{.}[dr]&& \beta_3\alpha_3\ar@{.}[dl]\\
&\beta_2\alpha_2\alpha_3\ar@{-}[d]&\\
&\beta_3\alpha_3\beta_3\alpha_3&
}}\simeq\vcenter{\xymatrix@C=0.1cm@R=0.2cm{
&D_4\ar@{-}[d]&\\
&D_3\ar@{-}[dl]\ar@{.}[dr]&\\
D_1\ar@{.}[dr]&& D_4\ar@{.}[dl]\\
&D_3\ar@{-}[d]&\\
&D_4&
}}=P_4$
\end{center}
\end{itemize}

We have proved that $A\cong \mathcal{SF}_4$ and are ready to study the $\tau$-tilting finiteness of $A$. Since $A\simeq A^{\rm op}$, we may view $P_i$ as right $A$-modules. We notice that $\beta_3\alpha_3-\beta_2\alpha_2$ and $\alpha_1\beta_1-\alpha_3\beta_3- \beta_2\alpha_2+\beta_1\alpha_1$ are central elements of $\mathcal{SF}_4$, and $\alpha_2\beta_2$ is a central element of the quotient algebra of $\mathcal{SF}_4$ modulo the ideal generated by the aforementioned two elements. We define
\[
\widetilde{\mathcal{SF}_4}:=\mathcal{SF}_4/\langle \alpha_1\beta_1, \alpha_2\beta_2, \alpha_3\beta_3, \beta_3\alpha_3, \beta_2\alpha_2, \beta_1\alpha_1\rangle.
\]
Then, $\widetilde{\mathcal{SF}_4}$ can be shown by the String Applet~\cite{StringApplet} to be a representation-finite string algebra, which is obviously Schurian-finite.
In fact, there are only 28 Schurian modules.
It follows from \cref{ejr reduction} that $\mathcal{SF}_4$ is also Schurian-finite.
\end{proof}

\begin{prop}\label{prop:e3defect2second}
Let $e=3$.
Then $B:=R^{\La_0+\La_1}(\delta+\alpha_1)$ is Schurian-finite.
\end{prop}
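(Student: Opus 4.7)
The plan is to follow the same template as in the proof of \cref{prop:e3defect2first}. First I would write down the decomposition matrix of $B$; this is the matrix of $[\spe\bla:\D\bmu]_v$ over the bipartitions belonging to the block, which can be read off from the defect~$2$ results of \cite{fay06} (so in particular is characteristic-free, and it suffices to verify Schurian-finiteness in characteristic zero). Let $D_1,\dots,D_r$ denote the simple modules labelling the columns of this matrix, and set $A:=\End_B(P_1\oplus\cdots\oplus P_r)^{\op}$, the basic algebra of $B$. From the decomposition matrix together with the graded dimension formula \cite[Theorem~4.20]{bk09}, I would compute the graded Cartan matrix $(\dim_v \Hom(P_i,P_j))_{i,j}$; the number of degree-$1$ entries in this matrix will tell us the number of arrows in the Gabriel quiver.

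Next I would determine the radical structure of each indecomposable projective $P_i$. Each $P_i$ has a Specht filtration whose layers are the column Specht modules $\spe\bla$ for $\bla$ with $[\spe\bla:\D{\bmu_i}]_v\ne0$, stacked in an order compatible with the dominance order, and each such $\spe\bla$ that sits as a submodule of some projective is automatically uniserial. Using these uniserial constraints, self-duality of $P_i$, and the information encoded in the graded decomposition numbers about which composition factors sit in which radical layer, one recovers a unique compatible radical diagram for each $P_i$. From this, I read off all the non-zero $\Ext^1(D_i,D_j)$ and hence the Gabriel quiver $Q$ of $A$.

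The next step is to find the defining relations. I would use the incarnation of $A$ as $eBe$ for an idempotent $e$ given by a sum of $e(\nu)$'s associated with the residue sequences of standard tableaux for the bipartitions in the block, and then choose explicit KLR representatives $\psi_i e(\nu)$ and $x_i e(\nu)$ for each arrow. The relations of $A$ are then obtained by direct manipulation with the cyclotomic KLR relations recalled in \cref{Cyclotomic quiver Hecke algebras}, using the standard tricks (e.g. $x_1e(1\ast\ast)=0$ since $s=1$, $\psi_i^2e(\nu)=(x_i-x_{i+1})e(\nu)$ when $\nu_i,\nu_{i+1}$ are adjacent, and $\psi_i x_{i+1}\psi_i e(\nu)=\psi_ie(\nu)$ when $\nu_i=\nu_{i+1}$), matching the relations obtained against the radical diagrams of the $P_i$ to confirm no relation has been missed.

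Finally, to establish Schurian-finiteness I would look for central elements of $A$ that lie in the radical, quotient by them, and apply \cref{ejr reduction}. Based on the pattern in \cref{prop:e3defect2first} (and analogous defect-$2$ blocks in \cite{arikirep}) I expect that factoring out by the cycles at each vertex (or suitable sums/differences of them) leaves a radical-square-zero or short-cycle quotient whose separated quiver is a disjoint union of Dynkin diagrams, hence of finite representation type; alternatively the quotient should be a representation-finite string algebra whose finiteness can be verified with the String Applet \cite{StringApplet}. The main obstacle will be the relation-finding step: producing a \emph{complete} presentation of $A$ from the KLR generators requires careful bookkeeping, and then identifying enough central radical elements to land in a tractable quotient. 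Once this is in place, the reduction via \cref{ejr reduction} finishes the proof.
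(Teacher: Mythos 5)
Your proposal follows essentially the same route as the paper's proof: compute the decomposition matrix, pin down the radical structure of the projectives via Specht filtrations and self-duality to get the Gabriel quiver, realise $A=eBe$ with explicit KLR representatives for the arrows to extract a complete presentation (the paper calls the result $\mathcal{SF}_5$), and then quotient by central radical elements and apply \cref{ejr reduction}. One caution on your final step: for this block the quotient $\widetilde{\mathcal{SF}_5}$ is \emph{not} radical-square-zero (paths such as $\beta_1\alpha_2$ through the central vertex survive), and the paper does not exhibit it as a string algebra either — it verifies Schurian-finiteness of the quotient by direct computer computation (32 Schurian modules, 124 support $\tau$-tilting modules), so you should be prepared for that fallback rather than a separated-quiver argument; also note that two of the four idempotents cutting out the basic algebra here are weighted elements of the form $x_7\psi_6 e(\nu)$ rather than plain $e(\nu)$'s, reflecting the divided powers $f_1^{(2)}$ in the Fock space expressions for $[P_1]$ and $[P_2]$.
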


\begin{proof}
The decomposition matrix of $B$ is given by 
\[
\begin{matrix*}[r] 
((2,1^2), \varnothing) \\
((3), (1)) \\
((2,1), (1)) \\
((1^3), (1)) \\
((2), (2)) \\
(\varnothing, (4)) \\
(\varnothing, (2^2)) \\
(\varnothing, (1^4))
\end{matrix*} \quad 
\begin{pmatrix} 
1 &  &  &  \\
0 & 1 &  &  \\
v & v & 1 &  \\
v^2 & 0 & v &  \\
0 & v^2 & v & 1 \\
0 & 0 & 0 & v \\
0 & 0 & v & v^2 \\
0 & 0 & v^2 & 0 
\end{pmatrix}.
\]
We set 
\[
D_1:=\D{((2,1^2), \varnothing) }, \quad 
D_2:=\D{((3), (1))}, \quad 
D_3:=\D{((2,1), (1))}, \quad 
D_4:=\D{((2), (2))}, \quad 
\]
and $A:=(\End_{B}(P_1\oplus P_2\oplus P_3\oplus P_4))^{\rm op}$. 

$\spe{((1^3),(1))} \subseteq P_1$ implies that
$\spe{((1^3),(1))}$ is uniserial of length two, with socle $D_1$ and head $D_3$. 
Consider $\Rad P_1/\Soc P_1$. Since $\Rad P_1$ has Specht filtration whose successive quotients are $\spe{((2,1),(1))}$, $\spe{((1^3),(1))}$ in this order, the self-dual module $\Rad P_1/\Soc P_1$ has $D_3$ in the socle and in the head. 
Then, $D_3$ in the head must be the head of $\spe{((2,1),(1))}$ because $\spe{((2,1),(1))}$ has a simple head.
Therefore, the self-duality of $\Rad P_1/\Soc P_1$ implies the radical structure of $P_1$ depicted below.
In particular, $\spe{((2,1),(1))}$ is of length two and $\Soc \spe{((2,1),(1))}=D_1\oplus D_2$. 

$\spe{(\vn,(2^2))}\subseteq P_4$ implies that 
$\spe{(\vn,(2^2))}$ is uniserial of length two, with socle $D_4$ and head $D_3$.
Note that $\spe{((2),(2))}$ has simple head $D_4$ because 
$\spe{((2),(2))}$ is a factor module of $P_4$.
Then, $\spe{((2),(2))}\subseteq P_2$ implies that 
$\spe{((2),(2))}$ is uniserial of length three, with head $D_4$, heart $D_3$, and socle $D_2$. 
Now we consider $\Rad P_4/\Soc P_4$.
Since $P_4$ has Specht filtration whose successive quotients are $\spe{((2),(2))}$, $\spe{(\vn,(4))}$, $\spe{(\vn,(2^2))}$ in this order, 
$\Rad P_4/\Soc P_4$ has $D_3$ in the head and in the socle. 
Therefore, the radical structure of $P_4$ is as depicted below, since $\Rad P_4/\Soc P_4$ is self-dual. 

$\Rad P_2$ has Specht filtration whose successive quotients are $\spe{((2,1),(1))}$, $\spe{((2),(2))}$ in this order. 
Since we already know the radical structure of 
$\spe{((2,1),(1))}$ and $\spe{((2),(2))}$, we see that 
$\Rad P_2/\Soc P_2$ has $D_3$ in the head and in the socle, and 
the self-duality of $\Rad P_2/\Soc P_2$ implies that the radical structure of $P_2$ is as depicted below. 
\begin{center}
$P_1=\vcenter{\xymatrix@C=0.1cm@R=0.2cm{
&D_1\ar@{.}[d]&\\
&D_3\ar@{-}[dl]\ar@{-}[dr]&\\
D_1\ar@{.}[dr]&& D_2\ar@{.}[dl]\\
&D_3\ar@{-}[d]&\\
&D_1&
}}$ \qquad
$P_2=\vcenter{\xymatrix@C=0.1cm@R=0.2cm{
&D_2\ar@{.}[d]&\\
&D_3\ar@{-}[dl]\ar@{-}[d]\ar@{.}[dr]&\\
D_1 \ar@{.}[dr] & D_2\ar@{.}[d] & D_4\ar@{-}[dl]\\
&D_3\ar@{-}[d]&\\
&D_2&
}}$ \qquad
$P_4=\vcenter{\xymatrix@C=0.1cm@R=0.2cm{
&D_4\ar@{-}[d]&\\
&D_3\ar@{-}[dl]\ar@{.}[dr]&\\
D_2\ar@{.}[dr]&& D_4\ar@{.}[dl]\\
&D_3\ar@{-}[d]&\\
&D_4&
}}$
\end{center}
Here, the solid black lines indicate the structure of Specht modules.
By a similar argument, we also find the radical layers of $P_3$ as follows.
\begin{gather*}
    \Rad P_3/\Rad^2 P_3=D_1\oplus D_2\oplus D_4,\quad
    \Rad^2 P_3/\Rad^3 P_3=D_3\oplus D_3\oplus D_3,\\
    \Rad^3 P_3/\Rad^4 P_3=D_1\oplus D_2\oplus D_4.
\end{gather*}
Then, it follows that $\Ext^1(D_3,D_i)=\Ext^1(D_i,D_3)=\bbf$, for $i=1,2,4$, and all the other extensions vanish.
Hence, we obtain the quiver of $A$ displayed as follows. 
\[
\xymatrix@C=1cm@R=0.8cm{1\ar@<-0.5ex>[r]_{\beta_1}&3\ar@<-0.5ex>[l]_{\alpha_1}\ar@<-0.5ex>[d]_{\alpha_2}\ar@<0.5ex>[r]
^{\alpha_3}&4\ar@<0.5ex>[l]^{\beta_3}\\
&2\ar@<-0.5ex>[u]_{\beta_2}&}
\]
We need to look at the structure of PIMs more closely. To do this, we observe that 
\[
[P_1]=f_1^{(2)}f_2f_0v_\Lambda, \quad [P_2]=f_2f_1^{(2)}f_0v_\Lambda, \quad [P_3]=f_1f_2f_0f_1v_\Lambda, \quad [P_4]=f_1f_0f_2f_1v_\Lambda
\]
in the deformed Fock space. Hence, if we set
\[
e_1=x_4\psi_3e(0211), \quad e_2=x_3\psi_2e(0112), \quad e_3=e(1021), \quad e_4=e(1201)
\]
and $e=e_1+e_2+e_3+e_4$, 
then $A=eBe$ and we obtain $P_1=Be_1\langle 1\rangle$,
$P_2=Be_2\langle 1\rangle$, $P_3=Be_3$, $P_4=Be_4$. 
Then, for $i=1,2$, we have
\[
\begin{aligned}
\dim_v e_3Be_i&=\dim_v \Hom(Be_3, Be_i)=\dim_v \Hom(P_3, P_i\langle -1\rangle)\\
&=\dim_v \Hom(P_3, P_i)\langle -1\rangle=1+v^2,\\
\dim_v e_iBe_3&=\dim_v \Hom(Be_i, Be_3)=\dim_v \Hom(P_i\langle -1\rangle, P_3)\\
&=\dim_v \Hom(P_i, P_3)\langle 1\rangle=v^2+v^4, \\
\dim_v e_3Be_4&=\dim_v e_4Be_3=v+v^3.
\end{aligned}
\]
Moreover, we may choose 
\[
\alpha_1=e(1021)\psi_1\psi_2\psi_3e(0211),\quad
\alpha_2=e(1021)\psi_3\psi_1\psi_2e(0112),\quad 
\alpha_3=e(1021)\psi_2e(1201),
\]
\[
\beta_1=e(0211)x_4\psi_3\psi_2\psi_1e(1021),\quad \beta_2=e(0112)x_3\psi_2\psi_1\psi_3e(1021),\quad 
\beta_3=e(1201)\psi_2 e(1021).
\]
We have the following relations.
\begin{itemize}
\item $\beta_1\alpha_3=\beta_3\alpha_1=0$, by
$\dim_v \Hom(P_1,P_4)=\dim_v \Hom(P_4,P_1)=0$.
\item We have $\alpha_3\beta_3=\psi_2^2e(1021)=(x_3-x_2)e(1021)$,
\[
\begin{aligned}
\alpha_1\beta_1&=\psi_1\psi_2\psi_3x_4\psi_3\psi_2\psi_1e(1021) \\
&=\psi_1\psi_2\psi_3\cdot x_4\psi_3e(0211)\cdot\psi_2\psi_1 \\
&=\psi_1\psi_2\psi_3\cdot (\psi_3x_3+1)e(0211)\cdot\psi_2\psi_1 \quad \text{by} \quad (\psi_3x_3-x_4\psi_3)e(0211)=-e(0211)\\
&=\psi_1\cdot \psi_2\psi_3 \psi_2 e(0121)\cdot\psi_1 \quad \text{by} \quad \psi_3^2e(0211)=0\\
&=\psi_1\cdot (\psi_3 \psi_2\psi_3-1)e(0121)\cdot\psi_1 \quad \text{by} \quad (\psi_3\psi_2\psi_3-\psi_2\psi_3\psi_2)e(0121)=e(0121)\\
&=\psi_1\psi_3 \psi_2\psi_3\psi_1e(1021) -\psi_1^2e(1021),
\end{aligned}
\]
\[
\begin{aligned}
\alpha_2\beta_2&=\psi_3\psi_1\psi_2x_3\psi_2\psi_1\psi_3e(1021) \\
&=\psi_3\psi_1\psi_2\cdot x_3\psi_2e(0112)\cdot \psi_1\psi_3 \\
&=\psi_3\psi_1\psi_2\cdot (\psi_2x_2+1)e(0112)\cdot \psi_1\psi_3 \quad \text{by} \quad (\psi_2x_2-x_3\psi_2)e(0112)=-e(0112) \\
&=\psi_3\psi_1\psi_2\cdot e(0112)\cdot \psi_1\psi_3 \quad \text{by} \quad \psi_2^2e(0112)=0\\
&=\psi_1\psi_3\psi_2 \psi_3\psi_1e(1021).\\
\end{aligned}
\]
It gives $\alpha_1\beta_1=\alpha_2\beta_2-\psi_1^2e(1021)$.
Since $3v^2$ appears in $\dim_v \Hom(P_3,P_3)$, 
$\alpha_1\beta_1, \alpha_2\beta_2, \alpha_3\beta_3$ 
are linearly independent.
In particular $\psi_1^2e(1021)\neq 0$.

\item Since 
$\psi_1^3e(1021)=\psi_1x_2e(1021)=x_1e(0121)\psi_1=0$, 
\[
\begin{aligned}
\beta_1\alpha_1\beta_1&=\beta_1\alpha_2\beta_2-x_4\psi_3\psi_2\psi_1^3e(1021)=\beta_1\alpha_2\beta_2,\\
\beta_2\alpha_1\beta_1&=\beta_2\alpha_2\beta_2-x_3\psi_2\psi_3\psi_1^3e(1021)=\beta_2\alpha_2\beta_2.
\end{aligned}
\]
Similarly, using $\psi_1^3e(0121)=-\psi_1x_2e(0121)=-x_1e(1021)\psi_1=0$, we have
\[
\begin{aligned}
\alpha_1\beta_1\alpha_1&=\alpha_2\beta_2\alpha_1-\psi_1^3e(0121) \psi_2\psi_3=\alpha_2\beta_2\alpha_1,\\
\alpha_1\beta_1\alpha_2&=\alpha_2\beta_2\alpha_2-\psi_1^3e(0121)\psi_3\psi_2=\alpha_2\beta_2\alpha_2.
\end{aligned}
\]

Now, we may give the shape of the projective right $A$-module $e_1A$ as follows. 

\begin{center}
$e_1A\simeq\vcenter{\xymatrix@C=0.1cm@R=0.2cm{
&e_1\ar@{.}[d]&\\
&\beta_1\ar@{-}[dl]\ar@{-}[dr]&\\
\beta_1\alpha_1\ar@{.}[dr]&& \beta_1\alpha_2\ar@{.}[dl]\\
& \beta_1\alpha_1\beta_1 \ar@{-}[d]&\\
& \beta_1\alpha_1\beta_1\alpha_1&
}}\simeq\vcenter{\xymatrix@C=0.1cm@R=0.2cm{
&D_1\ar@{.}[d]&\\
&D_3\ar@{-}[dl]\ar@{-}[dr]&\\
D_1\ar@{.}[dr]&& D_2\ar@{-}[dl]\\
&D_3\ar@{-}[d]&\\
&D_1&
}}$
\end{center}

\item Since $e(2101)=0$, we have
\begin{gather*}
x_2e(1201)=(x_2-x_1)e(1201)=-\psi_1^2e(1201)
=-\psi_1e(1201)\psi_1=0 \;\;\text{and}\\
\beta_3\alpha_3\beta_3=\psi_2^2e(1201)\psi_2=(x_2-x_3)e(1201)\psi_2=-x_3\psi_2e(1021)=-\psi_2x_2e(1021).
\end{gather*}
Recall that $\beta_3\alpha_1=0$ and $(\psi_3\psi_2\psi_3-\psi_2\psi_3\psi_2)e(0121)=e(0121)$. Thus, 
\[
\begin{aligned}
\beta_3\alpha_2\beta_2&=\psi_2\psi_1\psi_3\psi_2 \psi_3\psi_1e(1021)\\
&=\psi_2\psi_1\cdot \psi_3\psi_2\psi_3e(0121)\cdot \psi_1\\
&=\psi_2\psi_1\cdot (\psi_2\psi_3\psi_2+1)e(0121)\cdot \psi_1 \\
&=\psi_2\psi_1\psi_2\psi_3e(0211)\psi_2\psi_1 +\psi_2\psi_1^2 e(1021)\\
&=\beta_3\alpha_1\psi_2\psi_1 +\psi_2\psi_1^2 e(1021)\\
&=\psi_2x_2e(1021).
\end{aligned}
\]
We obtain $\beta_3\alpha_2\beta_2=-\beta_3\alpha_3\beta_3$. We have the following shape of $e_4A$. 

\begin{center}
$e_4A\simeq\vcenter{\xymatrix@C=0.1cm@R=0.2cm{
&e_4\ar@{-}[d]&\\
&\beta_3\ar@{-}[dl]\ar@{.}[dr]&\\
\beta_3\alpha_2\ar@{.}[dr]&& \beta_3\alpha_3\ar@{.}[dl]\\
& \beta_3\alpha_3\beta_3\ar@{-}[d]&\\
& \beta_3\alpha_3\beta_3\alpha_3&
}}\simeq\vcenter{\xymatrix@C=0.1cm@R=0.2cm{
&D_4\ar@{-}[d]&\\
&D_3\ar@{-}[dl]\ar@{.}[dr]&\\
D_2\ar@{.}[dr]&& D_4\ar@{-}[dl]\\
&D_3\ar@{-}[d]&\\
&D_4&
}}$
\end{center}

\item Since $x_2e(1201)=0$, we have 
\[
\alpha_3\beta_3\alpha_3=\psi_2(x_2-x_3)e(1201)=-\psi_2x_3e(1201)=-x_2\psi_2e(1201).
\]
Using $(\psi_3\psi_2\psi_3-\psi_2\psi_3\psi_2)e(0121)=e(0121)$ and $(\psi_2\psi_1\psi_2-\psi_1\psi_2\psi_1)e(1201)=0$, 
\[
\begin{aligned}
\alpha_2\beta_2\alpha_3&=\psi_1\psi_3\psi_2 \psi_3\psi_1\psi_2e(1201)\\
&=\psi_1\cdot \psi_3\psi_2 \psi_3e(0121)\cdot \psi_1\psi_2\\
&=\psi_1\cdot (\psi_2\psi_3\psi_2+1)e(0121)\cdot \psi_1\psi_2\\
&=\psi_1\psi_2\psi_3\psi_2 \psi_1\psi_2e(1201)+\psi_1^2e(1021)\psi_2\\
&=\psi_1\psi_2\psi_3\psi_1 \psi_2e(2101)\psi_1+x_2\psi_2e(1201)\\
&=x_2\psi_2e(1201)
\end{aligned}.
\]
We obtain $\alpha_2\beta_2\alpha_3=-\alpha_3\beta_3\alpha_3$.

\item By $\psi_1x_2e(1021)=x_1e(0121)\psi_1=0$ and $(\psi_3x_3-x_4\psi_3)e(1021)=0$, we have 
\[
\begin{aligned}
\beta_2\alpha_3\beta_3&=x_3\psi_2\psi_1\psi_3\psi_2^2e(1021)\\
&=x_3\psi_2\psi_1\psi_3(x_3-x_2)e(1021)\\
&=x_3\psi_2\psi_1\psi_3x_3e(1021) \\
&=x_3x_4\psi_2\psi_1\psi_3e(1021) 
\end{aligned}.
\]
Since $\psi_1^2e(0112)=-x_2e(0112)$, $\psi_2^2e(0112)=0$, $\psi_3^2e(0112)=(x_3-x_4)e(0112)$, and
\[
(x_4-x_3)\psi_2e(0112)=\psi_2(x_4-x_2)e(0112)-e(0112),
\]
\[
\begin{aligned}
\beta_2\alpha_2\beta_2&=x_3\psi_2\psi_1\psi_3\psi_1\psi_3e(0112)\psi_2\psi_3\psi_1\\
&=x_3\psi_2\psi_1^2\psi_3^2e(0112)\psi_2\psi_3\psi_1\\
&=x_3\psi_2x_2(x_4-x_3)e(0112)\psi_2\psi_1\psi_3 \\
&=x_3(x_3\psi_2-1)(x_4-x_3)\psi_2e(0112)\psi_1\psi_3 \\
&=-x_3^2e(0112)\psi_2\psi_1\psi_3-x_3(x_4-x_3)e(0112)\psi_2\psi_1\psi_3 \\
&=-x_3x_4\psi_2\psi_1\psi_3e(1021).
\end{aligned}
\]
We obtain $\beta_2\alpha_1\beta_1=\beta_2\alpha_2\beta_2=-\beta_2\alpha_3\beta_3$. 

\item Since $x_2\psi_1e(0121)=\psi_1x_1e(0121)=0$ and $(\psi_3x_4-x_3\psi_3)e(1012)=0$, we have 
\[
\begin{aligned}
\alpha_3\beta_3\alpha_2&=\psi_2^2e(1021)\psi_3\psi_1\psi_2e(0112)\\
&=(x_3-x_2)\psi_3\psi_1\psi_2e(0112)\\
&=x_3\psi_3\psi_1\psi_2e(0112)\\
&=\psi_3\psi_1\psi_2x_4e(0112).
\end{aligned}
\]
On the other hand, using $(\psi_2x_2-x_3\psi_2)e(0112)=-e(0112)$ and 
\[
(x_4-x_3)\psi_2e(0112)=\psi_2(x_4-x_2)e(0112)-e(0112), 
\]
\[
\begin{aligned}
\alpha_2\beta_2\alpha_2&=\psi_1\psi_3\psi_2 \psi_3\psi_1\psi_3\psi_1\psi_2e(0112)\\
&=\psi_3\psi_1\psi_2\psi_1^2\psi_3^2\psi_2e(0112)\\
&=\psi_3\psi_1\psi_2 x_2(x_4-x_3)\psi_2e(0112)\\
&=\psi_3\psi_1(x_3\psi_2-1)(x_4-x_3)\psi_2e(0112) \\
&=-\psi_3\psi_1x_3\psi_2e(0112)-\psi_3\psi_1(x_4-x_3)\psi_2e(0112) \\
&=-\psi_3\psi_1\psi_2x_4e(0112).
\end{aligned}
\]
We obtain $\alpha_1\beta_1\alpha_2=\alpha_2\beta_2\alpha_2=-\alpha_3\beta_3\alpha_2$. 
The following are the shapes of $e_2A$ and $e_3A$. 

\begin{center}
$e_2A\simeq\vcenter{\xymatrix@C=0.1cm@R=0.2cm{
&e_2\ar@{.}[d]&\\
&\beta_2\ar@{-}[dl]\ar@{-}[d]\ar@{.}[dr]&\\
\beta_2\alpha_1\ar@{.}[dr]& \beta_2\alpha_2\ar@{.}[d] & \beta_2\alpha_3\ar@{-}[dl]\\
& \beta_2\alpha_2\beta_2\ar@{-}[d]&\\
& \beta_2\alpha_2\beta_2\alpha_2&
}}\simeq\vcenter{\xymatrix@C=0.1cm@R=0.2cm{
&D_2\ar@{.}[d]&\\
&D_3\ar@{-}[dl]\ar@{-}[d]\ar@{.}[dr]&\\
D_1\ar@{.}[dr]& D_2\ar@{.}[d] & D_4\ar@{-}[dl]\\
&D_3\ar@{-}[d]&\\
&D_2&
}}$
\end{center}
\end{itemize}

\begin{center}
$e_3A\simeq\vcenter{\xymatrix@C=0.1cm@R=0.2cm{
&e_3\ar@{-}[dl]\ar@{-}[d]\ar@{.}[dr]&\\
\alpha_1\ar@{.}[d]&\alpha_2\ar@{.}[d]& \alpha_3\ar@{-}[d]\\
\alpha_1\beta_1\ar@{-}[d]\ar@{.}[dr]& \alpha_2\beta_2 \ar@{.}[dl]\ar@{.}[d]\ar@{-}[dr]& \alpha_3\beta_3\ar@{-}[dl]\ar@{.}[d]\\
\alpha_1\beta_1\alpha_1\ar@{.}[dr] & \alpha_2\beta_2\alpha_2\ar@{.}[d]& \alpha_3\beta_3\alpha_3\ar@{.}[dl]\\
&\alpha_1\beta_1\alpha_1\beta_1 & 
}}$
\end{center}
To understand the structure of $\Rad^2 e_3A$, we choose a basis $\{\alpha_1\beta_1, \alpha_2\beta_2+\alpha_3\beta_3, \alpha_1\beta_1-\alpha_2\beta_2\}$ or 
$\{\alpha_2\beta_2+\alpha_3\beta_3, \alpha_1\beta_1-\alpha_2\beta_2, \alpha_3\beta_3 \}$ as before.

To summarise, $A$ is isomorphic to $\mathcal{SF}_5:=\bbf Q/I$ with 
\begin{equation}\label{SF-alg-5}
\begin{matrix}
Q:\quad\xymatrix@C=1cm@R=0.8cm{1\ar@<-0.5ex>[r]_{\beta_1}&3\ar@<-0.5ex>[l]_{\alpha_1}\ar@<-0.5ex>[d]_{\alpha_2}\ar@<0.5ex>[r]
^{\alpha_3}&4\ar@<0.5ex>[l]^{\beta_3}\\
&2\ar@<-0.5ex>[u]_{\beta_2}&}
\qquad 
I:\quad 
\begin{matrix}
\beta_1\alpha_3=\beta_3\alpha_1=0,\\
\beta_1\alpha_1\beta_1=\beta_1\alpha_2\beta_2, \alpha_1\beta_1\alpha_1=\alpha_2\beta_2\alpha_1, \\
\beta_3\alpha_3\beta_3=-\beta_3\alpha_2\beta_2, \alpha_3\beta_3\alpha_3=-\alpha_2\beta_2\alpha_3, \\
\beta_2\alpha_1\beta_1=\beta_2\alpha_2\beta_2=-\beta_2\alpha_3\beta_3, \\
\alpha_1\beta_1\alpha_2=\alpha_2\beta_2\alpha_2=-\alpha_3\beta_3\alpha_2.
\end{matrix}
\end{matrix}
\end{equation}
Now, we are ready to check the Schurian-finiteness. Since $\alpha_1\beta_1-\alpha_2\beta_2+\beta_3\alpha_3$, $\alpha_3\beta_3+\beta_3\alpha_3-\beta_2\alpha_2$, $\alpha_2\beta_2-\beta_3\alpha_3+\beta_2\alpha_2+\beta_1\alpha_1$ are central elements of $\mathcal{SF}_5$, we take
\[
\widetilde{\mathcal{SF}_5}:=\mathcal{SF}_5/\langle \alpha_1\beta_1, \beta_1\alpha_1, \alpha_2\beta_2, \beta_2\alpha_2, \alpha_3\beta_3, \beta_3\alpha_3 \rangle.
\]
Using Aoki's GAP-QPA code \footnote{This is beta version and no public version is available, yet.}, we find that $\widetilde{\mathcal{SF}_5}$ admits 32 Schurian modules and 124 support $\tau$-tilting modules. Thus, it is Schurian-finite.
\end{proof}

\begin{prop}\label{prop:e3defect2third}
Let $e=3$.
Then $B:=R^{\La_0+\La_1}(\delta + \alpha_1 + \alpha_2)$ is Schurian-infinite.
\end{prop}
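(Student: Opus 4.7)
The block $B = R^{\La_0+\La_1}(\delta+\alpha_1+\alpha_2)$ has $\beta = \alpha_0 + 2\alpha_1 + 2\alpha_2$, so it is a non-core block of defect $2$ with $s_1 = 0 \ne 1 = s_2$. Fayers' description~\cite{fay06} of the graded decomposition matrix for such blocks (recalled as \cref{T:FayersDecomp}) therefore applies. The plan is to use this description to exhibit an SI-subset and then invoke \cref{prop:matrixtrick}. Although $e = 3$ is excluded from \cref{T:Defect2}, the argument is essentially Case~(i) of the proof of that theorem, which treats blocks with minimal tuple $\tb = (1,1,\ast,\ldots,\ast)$ for $e \ge 4$.

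First I would identify the Type~$1$ $1$-minimal $\TT = (\tb,\nn,\pp)$ associated to the combinatorial block $\cB$. A direct abacus computation starting from the core bipartition $(\vn,(2))$ (with bicharge congruent to $(0,1) \pmod 3$) yields $\tb = (1,1,2)$ with $(\nn,\pp) = (2,0)$, so that in Fayers' notation $I = \{0,1\}$, $K = \{2\}$, with total order $0 \prec 1 \prec 2$. By \cref{P:FayersKlesh}, the Kleshchev bipartitions in $\cB$ are exactly $[0]^1, [1]^1, [0]^2, [1]^2$.

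Next I would compute the $4 \times 4$ submatrix of the graded decomposition matrix indexed by these four Kleshchev bipartitions, using the cases of \cref{T:FayersDecomp}: $\bmu = [0]^2$ falls into Case B (since $i^+ = 1$ exists and $k = 2 \succeq i^+$); $\bmu = [1]^2$ falls into the second sub-case of Case C (since $i^+$ does not exist); $\bmu = [0]^1$ falls into the first sub-case of Case A; and $\bmu = [1]^1$ falls into the second sub-case of Case A. After ordering rows and columns as $[0]^2, [1]^2, [0]^1, [1]^1$, the resulting submatrix is
\[
\begin{pmatrix}
1 & 0 & 0 & 0 \\
v & 1 & 0 & 0 \\
v & 0 & 1 & 0 \\
v^2 & v & v & 1
\end{pmatrix},
\]
which is exactly the matrix $(\ddag)$ of \cref{prop:matrixtrick}.

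The only real obstacle is the careful bookkeeping in Fayers' table when $i^+, i^-, k^+, k^-$ do not exist (as is the case for several of our $\bmu$); the convention stated in \cref{T:FayersDecomp} is that conditions involving non-existent elements are simply ignored, which pins down the sub-cases uniquely. Since the graded decomposition numbers for defect~$2$ blocks are characteristic-free by \cite{fay06}, the equality $d^p_{\bla\bmu}(1) = d^0_{\bla\bmu}(1)$ hypothesised in \cref{prop:matrixtrick} holds automatically. Therefore $B$ is Schurian-infinite in any characteristic.
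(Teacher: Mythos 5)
Your proposal is correct and takes essentially the same approach as the paper: both exhibit the $4\times 4$ submatrix of the graded decomposition matrix indexed by the four simple modules, identify it as the matrix (\ref{targetmatrixalt}), note that defect-$2$ decomposition numbers are characteristic-free by \cite{fay06}, and conclude via \cref{prop:matrixtrick}. The only difference is bookkeeping: the paper simply records the matrix (in the column-Specht/conjugate-Kleshchev labelling), whereas you derive the entries by running Case~(i) of the $e\ge 4$ argument of \cref{S:SIC2} on the tuple $\tb=(1,1,2)$ via \cref{P:FayersKlesh} and \cref{T:FayersDecomp} — a valid route, since that case analysis uses nothing beyond $0,1\in I$ and $K\neq\emptyset$.
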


\begin{proof}
In this case, the decomposition matrix is~(\ref{targetmatrixalt}): 
\[
\begin{matrix*}[r] 
(\varnothing,(2,1^3)) \\(\varnothing,(2^2,1)) \\ ((1^3),(2)) \\ ((2,1),(2)) \end{matrix*} \quad 
\begin{pmatrix} 1 &&& \\
v & 1 & & \\
v & 0 & 1 & \\
v^2 & v & v & 1 
\end{pmatrix} \qedhere
\]
\end{proof}

\begin{prop}\label{prop:e3defect2fourth}
Let $e=3$.
Then $B:=R^{\La_0+\La_1}(\delta+2\alpha_0+2\alpha_1)$ is Schurian-finite.
\end{prop}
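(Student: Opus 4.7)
The plan is to follow the template established in \cref{prop:e3defect2first,prop:e3defect2second}. Since decomposition matrices for defect $2$ blocks are characteristic-free by~\cite{fay06}, it suffices to work in characteristic zero. I will first enumerate the Kleshchev bipartitions in $\cB$ by starting from the core bipartition $\bnu = ((2),(1^2))$ (corresponding to the minimal element of the core block for the Type~$1$ $1$-minimal sequence $((4,5,1),2,0)$) and adding one $3$-hook; this, together with \cref{T:Flat} applied to the core block and with the appropriate characterisation of Kleshchev bipartitions via \cref{T:StillKlesh,akt thm}, gives the decomposition matrix of $B$ and hence the Cartan matrix and the graded dimensions of $\Hom(P_i,P_j)$ for the basic algebra $A = (\End_B(\bigoplus_i P_i))^{\op}$.

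Next, I will determine the radical structure of each indecomposable projective $P_i$ by combining three ingredients: (i) each Specht module embedded as a submodule of some $P_j$ has simple socle, and hence is often uniserial when short; (ii) the successive quotients in the Specht filtration of $P_i$ together with the self-duality of $B$ force a unique radical picture; and (iii) the graded decomposition numbers determine which composition factors can appear in $\Rad P_i / \Rad^2 P_i$. This yields the Gabriel quiver of $A$ and the dimensions of its arrow spaces. I will then realise $A$ as $eBe$ by choosing an explicit idempotent $e$ based on the Fock space description $[P_i] = f_{i_1} \cdots f_{i_k} v_\Lambda$, and select KLR representatives of the form $e(\mu)\psi_w e(\nu)$ or $e(\mu) x_k \psi_w e(\nu)$ for each arrow. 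Computing products using the KLR relations of \cref{Cyclotomic quiver Hecke algebras} will produce an explicit quiver presentation of $A$ with all signs.

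Finally, I will identify central elements of $A$ lying in $\Rad A$ -- by analogy with the previous two propositions these should be certain signed sums of the loops $\alpha_i\beta_i$ and $\beta_j\alpha_j$ -- and invoke \cref{ejr reduction} to reduce the question of Schurian-finiteness to that of the quotient $\widetilde{A}$ obtained by killing these central elements. The quotient $\widetilde{A}$ is expected to be either a representation-finite string algebra, whose Schurian modules can be listed via the String Applet~\cite{StringApplet}, or an algebra small enough for the GAP-QPA code used in the proof of \cref{prop:e3defect2second} to enumerate its support $\tau$-tilting modules. The main obstacle will be the explicit KLR computation: the residue sequences for bipartitions of $7$ give relatively long words in the $\psi_i$, and obtaining compact expressions for the loops and comparing them requires careful use of the braid-like relation, the nilpotency $x_1^2 e(\nu) = 0$ when $\nu_1 = 0$, and the vanishing $x_1 e(\nu) = 0$ when $\nu_1 = 1$, as well as the vanishing of idempotents whose residue sequences are not realisable by a tableau. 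Once the presentation is obtained, the $\tau$-tilting-theoretic reduction should be routine.
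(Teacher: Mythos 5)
Your proposal follows essentially the same route as the paper: compute the decomposition matrix, deduce the radical structures of the projectives (via Specht filtrations, simple socles of embedded Specht modules, and self-duality) to get the Gabriel quiver, realise the basic algebra inside the KLR algebra via the Fock-space description of the $[P_i]$, compute an explicit bound-quiver presentation, and reduce modulo central radical elements using \cref{ejr reduction}. The only divergences are minor: the paper short-circuits your final computational step by recognising the resulting presentation as the algebra $\mathcal{SF}_4$ already treated in \cref{prop:e3defect2first} (after replacing $\beta_3$ by $-\beta_3$) and citing that result; and note that \cref{T:Flat} applies only to core blocks, so the decomposition matrix of this non-core block must come from the LLT algorithm or Fayers's weight-two description rather than from \cref{T:Flat}.
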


\begin{proof}
The decomposition matrix of $B$ is given by 
\[
\begin{matrix*}[r]
((2^2,1^2), (1)) \\
((5), (1^2)) \\
((2^2,1), (1^2)) \\
((2,1^3), (1^2)) \\
((2), (4,1)) \\
((2), (3,2)) \\
((2), (1^5)) \\
((1), (4,2))
\end{matrix*} \quad
\begin{pmatrix}
1 &  &  &  \\
0 & 1 &  &  \\
v & v & 1 &  \\
v^2 & 0 & v &  \\
0 & v & 0 & 1 \\
0 & v^2 & v & v \\
0 & 0 & v^2 & 0 \\
0 & 0 & 0 & v^2
\end{pmatrix}.
\]
There are 4 simple modules of $B$ labelled by $((2^2,1^2), (1))$, $((5), (1^2))$, $((2^2, 1), (1^2))$, $((2), (4,1))$. Set 
\[
D_1:=\D{((2^2,1^2), (1))}, \quad 
D_2:=\D{((5), (1^2))}, \quad 
D_3:=\D{((2^2, 1), (1^2))}, \quad 
D_4:=\D{((2), (4,1))}.
\]
We find $[P_1]=f_0^{(2)}f_1f_2f_1f_0f_1v_\La$ in the deformed Fock space. Similarly, 
\[
[P_2]=f_1f_0f_2f_1f_0f_1f_0v_\Lambda, \quad
[P_3]=f_0f_1f_2f_0f_1f_0f_1v_\Lambda, \quad
[P_4]=f_1^{(2)}f_0f_2f_0f_1f_0v_\Lambda.
\]
We have the Dynkin automorphism which swaps the nodes $0$ and $1$, and the automorphism swaps $D_1$ and $D_4$, $D_2$ and $D_3$, respectively. 
We define $A:=\End_B(P_1\oplus P_2\oplus P_3\oplus P_4)^{\rm op}$ and 
\[
e_1:=x_7\psi_6e(1012100), \quad e_2:=e(0101201), \quad e_3:=e(1010210), \quad e_4:=x_7\psi_6e(0102011).
\]
We obtain the graded Cartan matrix of $A$ (i.e.~graded dimensions of $\Hom(P_i,P_j)$) from the above decomposition matrix as follows.
\[
\renewcommand\arraystretch{1.2}
\begin{tabular}{c|ccccccccccccc}
$\Hom$& $P_1$ & $P_2$ & $P_3$ & $P_4$ \\ \hline
$P_1$ & $1 + v^2 + v^4$ & $v^2$ & $v + v^3$ & 0 \\
$P_2$ & $v^2$ & $1 + 2v^2 + v^4$ & $v + v^3$ & $v + v^3$ \\
$P_3$ & $v + v^3$ & $v + v^3$ & $1 + 2v^2 + v^4$ & $v^2$ \\
$P_4$ & 0 & $v + v^3$ & $v^2$ & $1 + v^2 + v^4$ \\
\end{tabular}
\]
Observe that we have 6 degree one elements. We show that they generate $A$ by looking at the radical structure of $P_i$, for $i=1,2,3,4$. 
By the Specht filtration and the self-duality, $P_1$ and $P_4$ have the following shape. 
\begin{center}
$\vcenter{\xymatrix@C=0.1cm@R=0.2cm{
&D_1\ar@{.}[d]&\\
&D_3\ar@{-}[dl]\ar@{-}[dr]&\\
D_1\ar@{.}[dr]&& D_2\ar@{.}[dl]\\
&D_3\ar@{-}[d]&\\
&D_1&
}}$$\qquad$
$\vcenter{\xymatrix@C=0.1cm@R=0.2cm{
&D_4\ar@{-}[d]&\\
&D_2\ar@{.}[dl]\ar@{.}[dr]&\\
D_4\ar@{-}[dr]&& D_3\ar@{-}[dl]\\
&D_2\ar@{.}[d]&\\
&D_4&
}}$
\end{center}
Hence, we have $\Ext^1(D_1, D_1)=\Ext^1(D_4,D_4)=0$ and
\begin{gather*}
\Ext^1(D_1,D_2)=\Ext^1(D_2,D_1)=0,\quad  \Ext^1(D_3,D_4)=\Ext^1(D_4,D_3)=0,\\
\Ext^1(D_1,D_4)=\Ext^1(D_4,D_1)=0,\quad \Ext^1(D_1,D_3)=\Ext^1(D_3,D_1)=\bbf, \\
\Ext^1(D_2,D_4)=\Ext^1(D_4,D_2)=\bbf,\quad \Ext^1(D_2,D_3)=\Ext^1(D_3,D_2)=\bbf.
\end{gather*}
The last $\Ext^1(D_2,D_3)=\Ext^1(D_3,D_2)=\bbf$ needs some explanation.
Since $\spe{((2^2,1),(1^2))}$ is a factor module of $P_3$, $\spe{((2^2,1),(1^2))}$ has the simple head $D_3$.
Further, $D_3$ appears twice in $P_2$, and since $\spe{((2^2,1),(1^2))}$ is a factor module of $\Rad P_2$, $D_3$ appears in $\Rad P_2/\Rad^2 P_2$, so that the other $D_3$ appears in $\Soc^2 P_2/\Soc P_2$, which is a composition factor of $\spe{((2),(3,2))}$.
If the latter $D_3$ appeared in $\Rad P_2/\Rad^2 P_2$, then $\Rad \spe{((2^2,1),(1^2))}$ would appear in $\Soc P_2$, a contradiction.
Thus, $D_3$ appears in $\Rad P_2/\Rad^2 P_2$ only once.
On the other hand, since $\spe{((2),(4,1))}$ is a factor module of $P_4$, $\spe{((2),(4,1))}$ is uniserial of length two, whose head is $D_4$ and whose socle is $D_2$.
Hence, the other possible simple module that would occur in $\Rad P_2/\Rad^2 P_2$ is $D_4$ only. 
Then it follows that $\Ext^1(D_2,D_3)=\Ext^1(D_3,D_2)=\bbf$. 

We can also read off the shape of $\spe{((2^2,1),(1^2))}$ and $\spe{((2),(3,2))}$. 
Using $\Ext^1(D_2,D_4)=\Ext^1(D_2,D_3)=\bbf$ additionally, we see 
$\Rad P_2/\Rad^2 P_2=D_4\oplus D_3$ and $\Rad P_3/\Rad^2 P_3=D_1\oplus D_2$ by the Dynkin automorphism. 
We have proved that $A$ is generated by the 6 degree one elements, and the quiver of $A$ is as follows. 
\[
\xymatrix{1\ar@<0.5ex>[r]^{\alpha_1}&3\ar@<0.5ex>[l]^{\beta_1}\ar@<0.5ex>[r]^{\alpha_2}&2\ar@<0.5ex>[l]^{\beta_2}\ar@<0.5ex>[r]^{\alpha_3}&4 \ar@<0.5ex>[l]^{\beta_3}}
\]
Since
\[
\begin{aligned}
\dim_v e_1Be_3&=\dim_v \Hom(Be_1, Be_3)=\dim_v \Hom(P_1\langle -1\rangle, P_3)\\
&=\dim_v \Hom(P_1, P_3)\langle 1\rangle=v^2+v^4,\\
\dim_v e_3Be_1&=\dim_v \Hom(Be_3, Be_1)=\dim_v \Hom(P_3, P_1\langle -1\rangle)\\
&=\dim_v \Hom(P_3, P_1)\langle -1\rangle=1+v^2,\\
\dim_v e_2Be_3&=\dim_v e_3Be_2=v+v^3,\\
\dim_v e_2Be_4&=\dim_v \Hom(Be_2, Be_4)=\dim_v \Hom(P_2, P_4\langle -1\rangle)\\
&=\dim_v \Hom(P_2, P_4)\langle -1\rangle=1+v^2,\\
\dim_v e_4Be_2&=\dim_v \Hom(Be_4, Be_2)=\dim_v \Hom(P_4\langle -1\rangle, P_2)\\
&=\dim_v \Hom(P_4, P_2)\langle 1\rangle=v^2+v^4,
\end{aligned}
\]
we may choose the generators of $A$ as
\begin{align*}
\alpha_1:=& e(1012100)x_7\psi_6 \psi_5\psi_4 e(1010210), \quad
\;\beta_1:= e(1010210)\psi_4\psi_5 \psi_6e(1012100), \\
\alpha_2:=& e(1010210)\psi_1\psi_2\psi_3\psi_6e(0101201), \quad
\beta_2:= e(0101201)\psi_6\psi_3\psi_2\psi_1e(1010210), \\
\alpha_3:=& e(0101201) \psi_4\psi_5 \psi_6e(0102011), \quad
\quad\beta_3:= e(0102011)x_7\psi_6\psi_5\psi_4e(0101201).
\end{align*}
We check some calculations of KLR generators as follows. 
\begin{itemize}
\item $\alpha_1\alpha_2\alpha_3=\beta_3\beta_2\beta_1=0$ follows from the 
graded dimensions of $A$.

\item Since $e(1010021)=0$, we have $\psi_5e(1010201)=0$ and  $\psi_4\psi_5\psi_4e(1010201)=e(1010201)$. Then,
\[
\begin{aligned}
\beta_1\alpha_1&=e(1010210)\psi_4\psi_5 \psi_6\cdot x_7\psi_6 \psi_5\psi_4 e(1010210)\\
&=\psi_4\cdot \psi_5 \psi_6\psi_5e(1012010)\cdot \psi_4 \\
&=\psi_4 \cdot \psi_6\psi_5\psi_6e(1012010)\cdot \psi_4-\psi_4^2e(1010210)\\
&=\psi_6 \cdot \psi_4\psi_5\psi_4e(1010201)\cdot \psi_6-\psi_4^2e(1010210)\\
&=(\psi_6^2-\psi_4^2)e(1010210).
\end{aligned}
\]
Since $e(00*****)=0$, we have $\psi_2e(0101210)=0$ and $\psi_3\psi_2\psi_3e(0101210)=-e(0101210)$. Similarly, $e(11*****)=0$ gives $\psi_1\psi_2\psi_1e(1010210)=e(1010210)$ and $\psi_3\psi_2\psi_3e(1010210)=e(1010210)$. Then, we have
\[
\begin{aligned}
\alpha_2\beta_2&=e(1010210)\psi_1\psi_2\psi_3\psi_6\cdot \psi_6\psi_3\psi_2\psi_1e(1010210)\\
&=\psi_6^2\psi_1\psi_2\psi_3^2\psi_2\psi_1\cdot \psi_3\psi_2\psi_3e(1010210)\\
&=\psi_6^2\psi_1\psi_2\psi_3\cdot \psi_3\psi_2\psi_3 e(0101210)\cdot \psi_1\psi_2\psi_3\\
&=-\psi_6^2\psi_1\psi_2\psi_3\psi_1\psi_2\psi_3e(1010210)\\
&=-\psi_6^2\cdot \psi_1\psi_2\psi_1\cdot \psi_3\psi_2\psi_3e(1010210)\\
&=-\psi_6^2e(1010210).
\end{aligned}
\]
We obtain $\beta_1\alpha_1=-\alpha_2\beta_2-\psi_4^2e(1010210)$. 

\item Since $\psi_2e(0101201)=e(0011201)\psi_2=0$, we have $\psi_1\psi_2\psi_1e(0101201)=-e(0101201)$ and $\psi_3\psi_2\psi_3e(0101201)=-e(0101201)$. We also have $\psi_1\psi_2\psi_1e(1010201)=e(1010201)$. Then, 
\[
\begin{aligned}
\beta_2\alpha_2&=e(0101201)\psi_6\psi_3\psi_2\psi_1\cdot \psi_1\psi_2\psi_3\psi_6e(0101201)\\
&=-\psi_6^2\psi_3\psi_2\psi_1^2\psi_2\psi_3\cdot \psi_1\psi_2\psi_1 e(0101201)\\
&=-\psi_6^2\psi_3\psi_2\psi_1\cdot \psi_1\psi_2\psi_1 e(1010201)\cdot \psi_3\psi_2\psi_1 \\
&=-\psi_6^2\cdot \psi_3\psi_2\psi_3\cdot \psi_1 \psi_2\psi_1e(0101201) \\
&=-\psi_6^2e(0101201). \\
\end{aligned}
\]
Similarly, $\psi_5e(0101210)=e(0101120)\psi_5=0$ implies $\psi_4\psi_5\psi_4e(0101210)=-e(0101210)$, and
\[
\begin{aligned}
\alpha_3\beta_3&=e(0101201) \psi_4\psi_5 \psi_6\cdot x_7\psi_6\psi_5\psi_4e(0101201)\\
&=\psi_4\cdot \psi_5 \psi_6\psi_5e(0102101)\cdot \psi_4\\
&=\psi_4 \cdot \psi_6\psi_5\psi_6e(0102101)\cdot \psi_4+\psi_4^2e(0101201)\\
&=\psi_6 \cdot \psi_4\psi_5\psi_4e(0101210)\cdot \psi_6+\psi_4^2e(0101201)\\
&=(-\psi_6^2+\psi_4^2)e(0101201).\\
\end{aligned}
\]
We obtain $\alpha_3\beta_3=\beta_2\alpha_2+\psi_4^2e(0101201)$.

\item Since $\psi_5e(0101201)=e(0101021)\psi_5=0$, $\psi_5^2e(0101201)=(x_5-x_6)e(0101201)=0$. Similarly, $(x_5-x_6)e(0101210)=0$. It then gives 
\[
\begin{aligned}
\psi_6^3e(0101201)&=\psi_6(x_6-x_7)e(0101201)=(x_5-x_6)e(0101210)\psi_6=0,\\
\psi_6^3e(0101210)&=\psi_6(x_7-x_6)e(0101210)=(x_6-x_5)e(0101201)\psi_6=0.
\end{aligned}
\]
We deduce that
\[
\begin{aligned}
\alpha_2\beta_2\alpha_2&=-\psi_6^2\cdot \psi_1\psi_2\psi_3\psi_6e(0101201)=-\psi_1\psi_2\psi_3\cdot \psi_6^3e(0101201)=0,\\
\beta_2\alpha_2\beta_2&=-\psi_6^2e(0101201)\cdot\psi_6\psi_3\psi_2\psi_1=-\psi_6^3e(0101210)\cdot\psi_3\psi_2\psi_1=0.
\end{aligned}
\]
\item Since $\psi_1^2e(0110201)=(x_1-x_2)e(0110201)=-x_2e(0110201)$, we obtain
\[
x_2^2e(0110201)=\psi_1^4e(0110201)=\psi_1(x_2-x_1)e(1010201)\psi_1=x_1e(0110201)\psi_1^2=0.
\]
Using \cite[Lemma 11.3]{arikirep}, we have $x_3e(0110201)=-x_2e(0110201)$. We also note that 
\[
x_2e(0101201)=-x_2\psi_1\psi_2\psi_1e(0101201)=-\psi_1x_1e(1001201)\psi_2\psi_1=0,
\]
and hence, $(x_3-x_2)e(0101201)=\psi_2^2e(0101201)=0$ gives $x_3e(0101201)=0$. Moreover, 
\begin{align*}
x_4e(0101201)&=-x_4\psi_3\psi_2\psi_3e(0101201)=-\psi_3x_3e(0110201)\psi_2\psi_3 \\
&=\psi_3x_2e(0110201)\psi_2\psi_3=
x_2\psi_3\psi_2\psi_3e(0101201) \\
&=x_2e(0101201)=0.
\end{align*}
We deduce that
\[
\begin{aligned}
\beta_1\alpha_1\alpha_2&=-\alpha_2\beta_2\alpha_2-\psi_4^2\cdot \psi_1\psi_2\psi_3\psi_6e(0101201)\\
&=(x_4-x_5)e(1010210)\cdot \psi_1\psi_2\psi_3\psi_6\\
&=\psi_1\psi_2\psi_3 \psi_6(x_3-x_5)e(0101201)\\
&=-\psi_1\psi_2\psi_3 \psi_6x_5e(0101201),\\
\alpha_2\alpha_3\beta_3&=\alpha_2\beta_2\alpha_2+\psi_1\psi_2\psi_3\psi_6\cdot \psi_4^2e(0101201)\\
&=\psi_1\psi_2\psi_3\psi_6(x_4-x_5)e(0101201)\\
&=-\psi_1\psi_2\psi_3\psi_6x_5e(0101201),
\end{aligned}
\]
and thus, $\beta_1\alpha_1\alpha_2=\alpha_2\alpha_3\beta_3$. Similarly, $\beta_2\beta_1\alpha_1=\alpha_3\beta_3\beta_2$ due to the following calculation, omitting details.
\[
\begin{aligned}
\alpha_3\beta_3\beta_2&=\beta_2\alpha_2\beta_2+\psi_4^2 \psi_6\psi_3\psi_2\psi_1e(1010210)\\
\beta_2\beta_1\alpha_1&=-\beta_2\alpha_2\beta_2-\psi_6\psi_3\psi_2\psi_1\psi_4^2e(1010210)
\end{aligned}
\]

\item We note that $\psi_4e(0102011)=0$ gives $\psi_3\psi_4\psi_3e(0102011)=e(0102011)$. Since $e(0210011)=0$, $(x_2-x_3)e(0120011)=\psi_2^2e(0120011)=0$ and then
\[
x_4e(0102011)=x_4\psi_3\psi_4\psi_3e(0102011)=\psi_3x_3e(0120011)\psi_4\psi_3=x_2e(0102011).
\]
We have known in the above that $x_ie(0101201)=0$ for $i=1,2,3,4$. It turns out that
\[
\begin{aligned}
\psi_5\psi_4^3e(0101201)&=\psi_5\psi_4(x_4-x_5)e(0101201)=-\psi_5x_4e(0102101)\psi_4\\
&=-x_4e(0102011)\psi_5\psi_4=-\psi_5\psi_4x_2e(0101201)=0,\\
\psi_4^3\psi_5e(0102011)&=\psi_4x_5e(0102101)\psi_5-\psi_4x_4e(0102101)\psi_5\\
&=x_4e(0101201)\psi_4\psi_5-x_2e(0101201)\psi_4\psi_5=0.
\end{aligned}
\]
We leave the calculations of $\psi_5\psi_4^3e(1010210)=\psi_4^3\psi_5e(1012100)=0$ to the reader.
We then conclude the following relations.
\[
\begin{aligned}
\beta_3\alpha_3\beta_3&=\beta_3\beta_2\alpha_2+x_7\psi_6\psi_5\psi_4^3e(0101201)=\beta_3\beta_2\alpha_2\\
\alpha_3\beta_3\alpha_3&=\beta_2\alpha_2\alpha_3+\psi_4^3\psi_5 \psi_6e(0102011)=\beta_2\alpha_2\alpha_3\\
\beta_1\alpha_1\beta_1&=-\alpha_2\beta_2\beta_1-\psi_4^3\psi_5 \psi_6e(1012100)=-\alpha_2\beta_2\beta_1\\
\alpha_1\beta_1\alpha_1&=-\alpha_1\alpha_2\beta_2-x_7\psi_6 \psi_5\psi_4^3e(1010210)=-\alpha_1\alpha_2\beta_2\\
\end{aligned}
\]
\end{itemize}
If we replace $\beta_3$ by $\beta_3':=-\beta_3$ in the above relations, then we have obtained all relations in the quiver presentation of $\mathcal{SF}_4$. By looking at the indecomposable projective modules of $\mathcal{SF}_4$, we conclude that $A$ is isomorphic to $\mathcal{SF}_4$. Hence, $A$ is Schurian-finite.
\end{proof}

\begin{Remark}\label{rem:newMorita}
    A consequence of the above proof is that $R^{\La_0+\La_1}(\delta)$ and $R^{\La_0+\La_1}(\delta+2\alpha_0+2\alpha_1)$ are Morita equivalent.
    This is the first example we know of a Morita equivalence between such algebras that does not arise from a sequence of Scopes equivalences and twists by the sign automorphism.
\end{Remark}

Combining the results of \cref{thm:e3def2a,lem:signshift,prop:e3defect2first,prop:e3defect2second,prop:e3defect2third,prop:e3defect2fourth} yields the following theorem.

\begin{thm}\label{thm:e3def2b}
Let $e=3$. If $\La=2\La_0$ and $B$ is a defect 2 block, then $B$ is Schurian-finite.
For $\La = \La_0 + \La_1$, we record in the table below representatives of all $\SU$-classes $R^{\La_0+\La_1}(\beta)$ of defect 2 blocks, and their Schurian-finiteness is as indicated in the table. Note that $R^{\La_0+\La_1}(\delta+\alpha_0)\cong R^{\La_0+\La_1}(\delta + \alpha_1)$ and $R^{\La_0+\La_1}(\delta+\alpha_0+\alpha_2) \cong R^{\La_0+\La_1}(\delta+\alpha_1 + \alpha_2)$.
\[
\renewcommand{\arraystretch}{1.25}
\begin{array}{l l}
\hline
\beta & \text{Schurian-finiteness} \\
\hline
\delta & \text{Schurian-finite (\cref{prop:e3defect2first})} \\
\delta + \alpha_1 & \text{Schurian-finite (\cref{prop:e3defect2second})} \\
\delta + \alpha_0 & \text{Schurian-finite} \\
\delta + \alpha_1 + \alpha_2 & \text{Schurian-infinite (\cref{prop:e3defect2third})} \\
\delta + \alpha_0 + \alpha_2 & \text{Schurian-infinite} \\
\delta + 2\alpha_0 + 2\alpha_1 & \text{Schurian-finite (\cref{prop:e3defect2fourth})} \\
\hline
\end{array}
\]
\end{thm}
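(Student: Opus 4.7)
The plan is to assemble the results already proved in the preceding propositions, by enumerating all $\SU$-classes of defect $2$ blocks when $e=3$ and matching each to its combinatorial representative. First, invoking \cref{L:01}, I may assume $\bs = (0,s)$ with $s \in \{0,1\}$, so $\La \in \{2\La_0, \La_0+\La_1\}$.

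For the case $\La = 2\La_0$, \cref{L:ListCases1Def2} yields the unique Type $1$ $1$-minimal triple $((2,2,2),0,0)$. By \cref{T:SUEquivReps,T:Equiv1}, this is a complete set of $\SU$-representatives, and the associated block is $R^{2\La_0}(\delta)$; its Schurian-finiteness is then exactly the content of \cref{thm:e3def2a}.

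Next, for $\La = \La_0 + \La_1$, \cref{L:ListCases1Def2} lists six Type $1$ $1$-minimal triples, and \cref{C:ListCases1} identifies the corresponding bicore $\bnu$ in each case. Translating each bicore to its weight $\Lambda - \beta$ via the residue count $\beta = \sum_{i=0}^{e-1} c_i\alpha_i$ (using $c_i = \#\{\mathfrak{n} \in [\bnu] \mid \res_{\bs}(\mathfrak{n}) = i\}$ plus the hook contribution $\delta$), I obtain the six $\SU$-class representatives $R^{\La_0+\La_1}(\beta)$ with $\beta \in \{\delta,\,\delta+\alpha_0,\,\delta+\alpha_1,\,\delta+\alpha_0+\alpha_2,\,\delta+\alpha_1+\alpha_2,\,\delta+2\alpha_0+2\alpha_1\}$, matching those in the table. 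The two isomorphisms $R^{\La_0+\La_1}(\delta+\alpha_0)\cong R^{\La_0+\La_1}(\delta+\alpha_1)$ and $R^{\La_0+\La_1}(\delta+\alpha_0+\alpha_2)\cong R^{\La_0+\La_1}(\delta+\alpha_1+\alpha_2)$ then follow immediately from \cref{lem:signshift} (composition of the sign automorphism with the Dynkin shift swapping nodes $0$ and $1$). Thus it suffices to decide Schurian-finiteness for the four inequivalent algebras $R^{\La_0+\La_1}(\delta)$, $R^{\La_0+\La_1}(\delta+\alpha_1)$, $R^{\La_0+\La_1}(\delta+\alpha_1+\alpha_2)$, and $R^{\La_0+\La_1}(\delta+2\alpha_0+2\alpha_1)$, which are handled by \cref{prop:e3defect2first,prop:e3defect2second,prop:e3defect2third,prop:e3defect2fourth} respectively.

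Since each individual proposition has already been proved, no substantive obstacle remains at the level of this aggregation; the theorem is a direct compilation. The only bookkeeping to check carefully is the correspondence between each Type $1$ $1$-minimal triple $(\tb,\nn,\pp)$ and the weight label $\Lambda - \beta$, which reduces to a routine computation of the residue multiset of the bicore $\bnu$ listed in \cref{C:ListCases1} plus the single $\delta$ contributed by $\hk(\cB)=1$. With that verified, the theorem follows by combining all the cited results.
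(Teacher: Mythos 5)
Your proposal is correct and follows essentially the same route as the paper: enumerate the $\SU$-classes via \cref{L:ListCases1Def2} and \cref{C:ListCases1}, dispose of the $2\La_0$ case by \cref{thm:e3def2a}, collapse the six $\La_0+\La_1$ representatives to four via \cref{lem:signshift}, and cite \cref{prop:e3defect2first,prop:e3defect2second,prop:e3defect2third,prop:e3defect2fourth}. The paper's proof is exactly this aggregation, so nothing further is needed.
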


\subsection{Blocks of defect $3$}\label{sec:e=3defect3}
We now suppose that $\defect(B)=3$ so that $\defect(C)=\min\{\nn,\pp\}=1$ and $\hk=1$.
We note that the decomposition matrices appearing in the proofs for all of these blocks are those for column Specht modules, i.e.~$([\spe{\bla}: \D{\bmu}]_v)_{\bla,\bmu}$.

At the end of~\cref{S:ScopesSec}, we showed that up to $\SU$-equivalance, there are $20$ blocks of defect $3$ when $e=3$ by 
calculating all elements of the set \[\{\hat{D} \in \Bc_{\hk} \mid \psi(\hat{D}) \text{ is Type $1$ $1$-minimal}\}\]
corresponding to this case. 

As in \cref{sec:e=3defect2}, we can assume that $\bs=(0,0)$ or $(0,1)$, i.e.~that $\La = 2\La_0$ or $\La_0 + \La_1$, respectively.
In \cref{L:ListCases1,L:ListCases2}, we list the Scopes equivalence classes that we must resolve.

\begin{lem} \label{L:ListCases1}
Suppose that $s=0$, i.e. $(\nn,\pp)=(1,1)$. Up to $\Sc$-equivalence, there are $10$ blocks of defect $3$. For each representative $\TT$, we draw the abacus configuration $\phi^{\ast}(\TT)$ giving the minimal element of the corresponding core block.
\begin{align*} 
\abacusline(3,0,bbb,bbb,nnn) \quad \abacusline(3,0,bbb,bbn,bnn) & \quad (\vn,(1)), &
\abacusline(3,0,bbb,bbb,nnn) \quad \abacusline(3,0,bbb,nbb,nnb) & \quad (\vn,(3,1^2)), & 
\abacusline(3,0,bbb,bbb,nnn) \quad \abacusline(3,0,bbb,bnb,bnn) & \quad(\vn),(1^2)), \\
\abacusline(3,1,bbb,bbn,bnn,nnn) \quad \abacusline(3,1,bbb,bnb,bnn,nnn) & \quad ((1),(1^2)), &
\abacusline(3,0,bbb,bbb,nnn,nnn) \quad \abacusline(3,0,bbb,bnb,nnb,nnn) & \quad (\vn,(3,1)), & 
\abacusline(3,0,bbb,bbb,bbn,nbn) \quad \abacusline(3,0,bbb,bbn,bbn,bbn) & \quad ((2),(2^2,1^2)), \\
\abacusline(3,0,bbb,bbb,nnn) \quad \abacusline(3,0,bbb,bbn,nbn) & \quad (\vn,(2)), &
\abacusline(3,1,bbb,bbn,bnn) \quad \abacusline(3,1,bbb,bbn,nbn) & \quad((1),(2)), & 
\abacusline(3,0,bbb,bbb,nnn) \quad \abacusline(3,0,bbb,nbb,nbn) & \quad (\vn,(2,1^2)),\\
\abacusline(3,1,bbb,bnb,bnn,nnn) \quad \abacusline(3,1,bbb,bnn,bnn,bnn) & \quad((1^2),(4,2)).
\end{align*}
\end{lem}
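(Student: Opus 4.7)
The plan is to reduce the classification to enumerating Type $1$ $1$-minimal triples in $\Tl$ with $(\nn,\pp)=(1,1)$, and then translate each such triple back into an abacus configuration and a bipartition.

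First, I would pin down the constraints on $(\nn,\pp)$. Because $\defect(B)=3$, and \cref{lem:e3coredefects} gives $\defect(C) \le 1$, the identity $\defect(B)=\defect(C)+2\hk(B)$ forces $\defect(C)=1$ and $\hk(B)=1$. Then \cref{L:s} gives $\min\{\nn,\pp\}=\defect(C)=1$ together with the congruence $\nn-\pp\equiv a_1-a_2\pmod 3$. The hypothesis $s=0$ means $a_1\equiv a_2\pmod 3$, so $\nn\equiv\pp\pmod 3$; together with $\nn+\pp\le e=3$ this leaves only $(\nn,\pp)=(1,1)$.

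Second, I would invoke \cref{T:SUEquivReps}, according to which a complete set of representatives for the relevant equivalence classes is furnished by the Type $1$ $1$-minimal triples $\TT=(\tb,1,1)\in\Tl$. Writing out \cref{D:ScopesMin,D:TypeMin} for $e=3$, $\hk=1$ and $(\nn,\pp)=(1,1)$, these are the sequences $\tb=(\tb_0,\tb_1,\tb_2)\in\Z^3$ with exactly two odd entries, subject to the Scopes-minimality inequalities
\[
\tb_1-\tb_0<2+d_1, \quad \tb_2-\tb_1<2+d_2, \quad \tb_0-\tb_2<4+d_0
\]
(where $d_i=2$ when $\tb_{i-1}$ and $\tb_i$ are both odd and $d_i=0$ otherwise), together with the normalisation $\min\{\tb_1,\tb_2\}\in\{1,2\}$ and the congruence $\tb_0+\tb_1+\tb_2\equiv 0\pmod 6$. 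A bounded search, already carried out in the worked example at the end of \cref{S:ScopesSec}, yields exactly the ten triples
\[
(1,2,3),\; (1,3,2),\; (2,1,3),\; (2,3,1),\; (3,1,2),\; (3,2,1),\; (5,5,2),\; (4,1,1),\; (7,2,3),\; (5,6,1).
\]

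Finally, for each of these ten triples I would apply $\phi^{\ast}$ as in \cref{C:TCBij} to produce the abacus configuration of the dominance-minimal bicore in the corresponding core block (its minimality in the dominance order being guaranteed by \cref{L:AbMap} and \cref{C:MinCore}), and then read off the associated bipartition. The tables exhibited in the lemma statement record the outputs of this translation. The only real care required in the proof lies in the bounded enumeration of the admissible triples, together with the bookkeeping needed to convert each abacus picture into its bipartition; there is no conceptual obstacle beyond carrying this out systematically.
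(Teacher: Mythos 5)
Your proposal is correct and follows essentially the same route the paper takes: the paper gives no separate proof of this lemma, relying precisely on the enumeration of Type $1$ $1$-minimal triples carried out in the worked example at the end of \cref{S:ScopesSec} and the translation via $\phi^{\ast}$ from \cref{C:TCBij}. Your additional derivation that $s=0$ forces $(\nn,\pp)=(1,1)$, and your observation that the ``$\tb_i=\tb_{i-1}$'' alternatives in \cref{D:ScopesMin} are vacuous when $\hk\ge 1$, are both sound.
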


\begin{lem} \label{L:ListCases2}
Suppose that $s=1$, i.e. $(\nn,\pp)=(1,2)$. Up to $\SU$-equivalence, there are $5$ blocks of defect $3$. For each representative $\TT$, we draw the abacus configuration $\phi^{\ast}(\TT)$ giving the minimal element of the corresponding core block. We have reordered these $\TT$ so they appear in the order in which we deal with them in \cref{thm:e3def3s1}.

\begin{align*} 
\abacusline(3,0,bbb,bbb,nnn) \quad \abacusline(3,0,bbb,bbn,bbn) & \quad (\vn,(1^2)), & 
\abacusline(3,0,(bbb,bbb,nnn) \quad \abacusline(3,0,bbb,bnb,bnb) & \quad (\vn,(2,1^2)), &
\abacusline(3,0,bbb,bbb,nnn) \quad \abacusline(3,0,bbb,nbb,nbb) & \quad (\vn,(2^2,1^2)),  
\\
\abacusline(3,1,bbb,bbn,bnn,nnn,nnn) \quad \abacusline(3,1,bbb,bnb,bnn,bnn,nnn) & \quad ((1),(3,1^2)), &
\abacusline(3,0,bbb,bbb,bbn,nbn,nnn) \quad \abacusline(3,0,bbb,bbn,bbn,bbn,nbn) & \quad((2),(4,2^2,1^2)). & 
\end{align*}
\end{lem}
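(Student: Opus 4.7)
The plan is to specialise \cref{T:SUEquivReps} to our setting, which asserts that Type~$1$ $\hk$-minimal triples in $\Tl$ parametrise the $\SU$-equivalence classes in $\Bl_{\hk}$, and then to enumerate such triples directly, following exactly the template of the worked example at the end of \cref{S:ScopesSec}.

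First I would pin down the parameters $(\hk,\nn,\pp)$. The identity $\defect(\cB)=\defect(\cC)+2\hk(\cB)=3$ together with $\defect(\cC)\in\{0,1\}$ from \cref{lem:e3coredefects} forces $\defect(\cC)=1$ and $\hk(\cB)=1$. Then \cref{L:s} gives $\min\{\nn,\pp\}=1$ and $\nn-\pp\equiv a_1-a_2\pmod e$. For $s=1$, i.e.~$\bs=(0,1)$, this yields $\nn-\pp\equiv -1\equiv 2\pmod 3$, and combined with $\nn+\pp\le e=3$ we conclude $(\nn,\pp)=(1,2)$.

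Next I would enumerate the Type~$1$ $1$-minimal triples $(\tb,1,2)\in\Tl$. Specialising \cref{D:ScopesMin} and \cref{D:TypeMin}, the requirements become: all three entries of $\tb$ are odd (so that $|I|=\nn+\pp=3$); since $\min\{\nn,\pp\}>0$ and all entries are odd, each $d_i=2$, so the Scopes-minimality inequalities read $\tb_1-\tb_0<4$, $\tb_2-\tb_1<4$, and $\tb_0-\tb_2<6$; the minimum entry of $\tb$ equals $1$; and $\tb_0+\tb_1+\tb_2\equiv\pp-\nn=1\pmod 6$. A short case analysis organised first by which position attains the minimum value $1$ and then by the admissible sums (which the inequalities bound tightly once one entry is fixed to $1$) gives precisely the five triples $(1,3,3)$, $(3,1,3)$, $(3,3,1)$, $(5,1,1)$, $(5,7,1)$.

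Finally, for each such $\TT$ I would compute $\phi^\ast(\TT)$ using the bead-position formulas in the definition of $\phi^\ast$: the bicore bipartition in the core block $\cC=\phi(\TT)$ is read off from the abacus displays determined by the values $b^\ba_{ik}$, which in turn depend on the partition of $I$ into $I^-$ and $I^+$ given by the $\prec$-order of \cref{D:delta}. The only point requiring care is the tie-breaking by index in the $\prec$-order, which is relevant in each case where $\tb$ has a repeated entry (i.e.~for all five triples except $(5,7,1)$). This bookkeeping is routine, and translating the five resulting abacus configurations into bipartitions produces exactly the bipartitions and abacus displays listed in the statement. No substantive obstacle arises beyond ensuring that the enumeration omits no Scopes-minimal triple and that the $\prec$-ordering is correctly applied in each case.
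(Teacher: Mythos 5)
Your proposal is correct and follows essentially the same route as the paper, which obtains this list by invoking \cref{T:SUEquivReps} and enumerating the Type~$1$ $1$-minimal triples $(\tb,1,2)$ exactly as in the worked example at the end of \cref{S:ScopesSec} (your enumeration reproduces the paper's five triples $(1,3,3)$, $(3,1,3)$, $(3,3,1)$, $(5,1,1)$, $(5,7,1)$). Note only that you correctly read the third condition of \cref{D:TypeMin} as a condition on $\min\{\tb_0,\dots,\tb_{e-1}\}$, matching the paper's own usage in its example (the literal wording there omits $\tb_0$, which would wrongly exclude $(1,3,3)$).
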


We first resolve  the $s=1$ case, as it is significantly simpler to solve.

\begin{thm}\label{thm:e3def3s1}
Let $e=3$ and $\La = \La_0+\La_1$. Then every defect 3 block $R^{\La}(\beta)$ is Schurian-infinite.
\end{thm}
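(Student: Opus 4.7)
The plan is to exhibit, for each of the five $\SU$-equivalence classes of defect $3$ blocks with $\bs=(0,1)$ listed in \cref{L:ListCases2}, an explicit SI-subset and then apply \cref{prop:matrixtrick}. Since defect $3$ graded decomposition matrices are characteristic-free by the Fayers--Putignano result already invoked in this section, it will suffice to verify each submatrix in characteristic $0$.

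For each representative, I would start from the minimal element $\bnu$ of the core block (drawn explicitly in \cref{L:ListCases2}) and generate candidate bipartitions lying in the full defect $3$ block by adjoining an $e$-hook either to $\la^{(1)}$, giving a partition of $|\bnu|+3$ with the required residue content, or to $\la^{(2)}$, an operation that by \cref{T:StillKlesh} automatically preserves the Kleshchev property once the underlying core-block bipartition is Kleshchev. Having produced a candidate SI-subset of four or five such Kleshchev bipartitions, I would compute the entries of the corresponding submatrix as follows: whenever two bipartitions share a common first component, \cref{cor:BowmanSpeyerconjugate} reduces the relevant entries to level $1$ (type $\tta$) graded decomposition numbers; whenever they share a common second-component core and differ by a common vertical rim hook of length $le$ appended to $\la^{(2)}$, \cref{T:BowmanSpeyer2} transfers the entries to a smaller instance; and any remaining entries coming from bipartitions both in the core block $C$ are supplied by \cref{T:Flat}. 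One then checks that the resulting submatrix matches one of \eqref{targetmatrix}, \eqref{targetmatrixalt}, \eqref{targetmatrixaltsquare}, or \eqref{targetmatrixstar}.

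The main obstacle is that defect $1$ blocks of type $\tta$ are already Schurian-finite, so no SI-subset can be obtained by fixing the first component and varying $\la^{(2)}$ alone (or vice versa); the four or five bipartitions in the SI-subset must necessarily mix bipartitions obtained by adding a $3$-hook to $\la^{(1)}$ with those obtained by adding a $3$-hook to $\la^{(2)}$. This makes the verification delicate: one must carefully match the dominance order to identify which bipartition plays the role of the bottom row in the target pattern, and check via the characteristic-free Fayers--Putignano formula that the off-diagonal graded decomposition numbers are as prescribed. For the two representatives with $\la^{(1)}\ne\vn$, namely $((1),(3,1^2))$ and $((2),(4,2^2,1^2))$, I would additionally use \cref{T:BowmanSpeyer1} to strip the common first component before selecting the SI-subset, reducing their analysis to a defect $3$ calculation of the same shape as the three cases with empty first component.
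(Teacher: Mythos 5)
Your proposal follows essentially the same route as the paper: list the five $\SU$-representatives from \cref{L:ListCases2}, exhibit an explicit SI-subset in each, invoke the Fayers--Putignano characteristic-independence in defect $3$, and conclude by \cref{prop:matrixtrick}. Your observation that the SI-subset must mix bipartitions whose $3$-hook sits in different components is correct and is borne out by the paper's actual subsets (e.g.\ for $R^{\La_0+\La_1}(\delta+\alpha_0+\alpha_1)$ the subset is $((5),\vn)$, $((2^2,1),\vn)$, $((4),(1))$, $((2^2),(1))$, all giving the matrix \eqref{targetmatrixalt}). The one weak point is your proposed method for computing the entries: \cref{T:BowmanSpeyer1}, \cref{cor:BowmanSpeyerconjugate} and \cref{T:BowmanSpeyer2} only apply when the two bipartitions agree in one component or have second components of equal size, and \cref{T:Flat} only covers core blocks, so none of these tools computes precisely the ``mixed'' entries $d_{\bla\bmu}$ where $|\la^{(2)}|\neq|\mu^{(2)}|$ and $\la^{(1)}\neq\mu^{(1)}$ --- which, by your own (correct) observation, are unavoidable. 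The paper sidesteps this by simply computing the characteristic-zero matrices with Fayers's higher-level LLT algorithm; to make your plan complete you would need to do the same (or supplement with Jantzen--Schaper-type arguments) for those entries, and likewise your suggestion to ``strip the common first component'' for the representatives with $\la^{(1)}\ne\vn$ is not licensed by \cref{T:BowmanSpeyer1}, which is an entrywise statement requiring equal second-component sizes rather than a block-level reduction.
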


\begin{proof}
We prove that each case from \cref{L:ListCases2} is Schurian-infinite.
By \cite{fayput24}, the decomposition matrices are known to be characteristic-free, so it suffices to compute the matrices in characteristic 0 using the LLT algorithm -- for each block, we give SI-subsets that give the submatrix~(\ref{targetmatrixalt}).

\begin{enumerate}
    \item 
    The algebra $R^{\La_0+\La_1}(\delta+ \alpha_0 + \alpha_1)$ has an SI-subset given by $((5),\vn)$, $((2^2,1),\vn)$, $((4),(1))$, and $((2^2),(1))$.

    \item 
    The algebra $R^{\La_0+\La_1}(2 \delta + \alpha_2)$ has an SI-subset given by $((6,1),\vn)$, $((3,2^2),\vn)$, $((4,1),(2))$, and $((3,2),(2))$.

    \item
    The algebra $R^{\La_0+\La_1}(2 \delta + 2 \alpha_1 + \alpha_2)$ has an SI-subset given by $((6,1^2),(1))$, $((3^2,2), (1))$, $((5,1^2), (2))$, and $((3^2,1), (2))$.

    \item
    The algebra $R^{\La_0+\La_1}(2 \delta + 2 \alpha_0 + \alpha_2)$ is isomorphic to the algebra in the previous case, by \cref{lem:signshift}. Alternatively, we may see directly that it has an SI-subset given by $((7,2),\vn)$, $((4,2^2,1),\vn)$, $((4,2), (3))$, and $((4,2), (2,1))$.

    \item
    The algebra $R^{\La_0+\La_1}(3\delta + 3 \alpha_0 + 3 \alpha_1)$ has an SI-subset given by $((8,3,1^2), (1^2)$, $((5,3^2,2), (1^2))$, $((5,3,1^2), (4,1))$, and $((5,3,1^2), (3,2))$.\qedhere
\end{enumerate}
\end{proof}

Next, we solve the $s=0$ case. Setting $\La=2\La_0$, one may check that there are 10 $\Sc$-classes of blocks of defect 3.
Following \cref{L:ListCases1}, we list them as follows:

\begin{multicols}{3}
\begin{enumerate}
    \item[(i)] $R^{2\La_0}(\delta+\alpha_0)$,
    \item[(iv)] $R^{2\La_0}(\delta + 2\alpha_0 + \alpha_2)$,
    \item[(vii)] $R^{2\La_0}(\delta+\alpha_0+\alpha_1)$,
    \item[(x)] $R^{2\La_0}(2\delta+3\alpha_0 + 2\alpha_2)$.
    \item[(ii)] $R^{2\La_0}(2\delta+\alpha_1+\alpha_2)$,
    \item[(v)] $R^{2\La_0}(2\delta+\alpha_2)$,
    \item[(viii)] $R^{2\La_0}(\delta + 2\alpha_0 + \alpha_1)$,
    \item[\vspace{\fill}]
    \item[(iii)] $R^{2\La_0}(\delta+\alpha_0+\alpha_2)$,
    \item[(vi)] $R^{2\La_0}(2\delta+3\alpha_0 + 2\alpha_1)$,
    \item[(ix)] $R^{2\La_0}(2\delta+\alpha_1)$,
    \item[\vspace{\fill}]
\end{enumerate}
\end{multicols}

Moreover, by \cref{lem:signshift}, the algebras appearing in (iii) and (vii) above are isomorphic, as are those appearing in (iv) and (viii), in (v) and (ix), and in (vi) and (x).
So it suffices to study those algebras appearing in (i)--(vi) above.
The ones we can currently prove are treated in \cref{prop:e3def3s0easy} below, and summarised in \cref{thm:e3def3s0}, where the Scopes classes and current status are presented succinctly.


\begin{prop}\label{prop:e3def3s0easy}
    The blocks $R^{2\La_0}(\delta+\alpha_0)$ and $R^{2\La_0}(2\delta+ \alpha_1+\alpha_2)$ are Schurian-infinite.
\end{prop}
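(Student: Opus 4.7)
The plan is to apply exactly the same strategy as in the proof of \cref{thm:e3def3s1}. Since by \cite{fayput24} the graded decomposition matrices of defect $3$ blocks are characteristic-free, it suffices to compute the relevant decomposition numbers in characteristic $0$ via the LLT algorithm, and then exhibit a set of four (or five) Kleshchev bipartitions whose associated submatrix of the graded decomposition matrix coincides with one of the matrices~(\ref{targetmatrix}), (\ref{targetmatrixalt}), (\ref{targetmatrixaltsquare}), or (\ref{targetmatrixstar}) from \cref{prop:matrixtrick}. Proposition~\ref{prop:matrixtrick} then immediately yields Schurian-infiniteness.

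For $R^{2\La_0}(\delta + \alpha_0)$, the combinatorial block has core block labelled by $(\vn,(1))$ with $(\nn,\pp)=(1,1)$ and one removable $3$-hook, so the bipartitions lie in $\Pn^2_4$ with the appropriate residue content. I would list the bipartitions in the block (using the abacus display), determine which ones are Kleshchev via \cref{akt thm} or equivalently \cref{P:FayersKlesh}-style arguments applied to the non-core layer, and then run the LLT algorithm on the resulting row-Specht modules to produce the graded decomposition numbers. Among these I would search for a quadruple whose submatrix is of shape (\ref{targetmatrixalt}), following the pattern used successfully in each of the five cases of~\cref{thm:e3def3s1}.

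For $R^{2\La_0}(2\delta + \alpha_1 + \alpha_2)$, the approach is identical: the associated combinatorial block lies above a defect $1$ core block, bipartitions have size $7$, and one again searches the decomposition matrix for a suitable $4\times 4$ submatrix. In both cases, the existence of such submatrices is expected from the shape of the known decomposition numbers; in analogy with $\cref{L:PMatrix}$ and $\cref{L:NMatrix}$, the abundance of degree-$1$ entries in sufficiently large blocks guarantees the pattern, and here the presence of an $e$-hook above a defect $1$ core provides enough flexibility to realise one of the target matrices.

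The main obstacle is not conceptual but computational: one must write out the bipartitions of the given block, verify Kleshchevness for the chosen quadruple, and carry through the LLT computation to confirm the exact pattern of entries (including the vanishing entries demanded by the target matrices). This is routine but error-prone; the key verification is that $d_{\bla\bmu}(v)=0$ at the required positions, since the non-zero entries tend to be forced by dominance and degree considerations. Once the subset is exhibited, the proof concludes with one invocation of \cref{prop:matrixtrick}, exactly as in \cref{thm:e3def3s1}.
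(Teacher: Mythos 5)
Your strategy is exactly the paper's: invoke the characteristic-freeness of defect~$3$ decomposition matrices from \cite{fayput24}, compute in characteristic $0$, and apply \cref{prop:matrixtrick}. The only thing you have deferred is the computation itself, which in the paper turns out to be immediate: each of these two blocks has exactly four simple modules, and the submatrix of the decomposition matrix on \emph{all} four of them is already the matrix~(\ref{targetmatrixalt}), so no search for a suitable quadruple is needed.
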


\begin{proof}
    The submatrix of the decomposition matrix of $R^{2\La_0}(\delta+\alpha_0)$ given by all four simple modules is~(\ref{targetmatrixalt}).
    By the work of Fayers and Putignano~\cite[Corollary~1.2]{fayput24}, it is known that decomposition matrices are characteristic-free in defect 3, so the block is Schurian-infinite.

    Similarly, the submatrix of the decomposition matrix of $R^{2\La_0}(2\delta+\alpha_1+\alpha_2)$ given by all four simple modules is also~(\ref{targetmatrixalt}), so the block is Schurian-infinite as above.    
\end{proof}

The next theorem summarises our results for blocks of defect 3, listing the Scopes classes of blocks and their Schurian-finiteness, as in \cref{thm:e3def2b}.

\begin{thm}\label{thm:e3def3s0}
Let $e=3$ and $\La = 2 \La_0$.
The defect 3 blocks fall into 10 $\Sc$-classes in \cref{L:ListCases1}, or 6 up to isomorphism.
We record in the table below the 10 Scopes classes, and record those that we show are Schurian-infinite.
\[
\renewcommand{\arraystretch}{1.25}
\begin{array}{l l c l l}
\hline
 & \beta & & \beta & \text{SI} \\
\hline
 & \delta + \alpha_0 & & & \text{Schurian-infinite (\cref{prop:e3def3s0easy})} \\
 & 2\delta + \alpha_1 + \alpha_2 & & & \text{Schurian-infinite (\cref{prop:e3def3s0easy})} \\
 & \delta + \alpha_0 + \alpha_1 & \cong & \delta + \alpha_0 + \alpha_2 & \text{Unconfirmed} \\
 & \delta + 2\alpha_0 + \alpha_1 & \cong & \delta + 2\alpha_0 + \alpha_2 & \text{Unconfirmed} \\
 & 2\delta + \alpha_1 & \cong & 2\delta + \alpha_2 & \text{Unconfirmed} \\
 & 2\delta + 3\alpha_0 + 2\alpha_1 & \cong & 2\delta + 3\alpha_0 + 2\alpha_2 & \text{Unconfirmed} \\
\hline
\end{array}
\]
\end{thm}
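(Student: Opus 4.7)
The plan is to assemble the theorem from three pieces already developed in the paper: the enumeration of Scopes classes, the identification of the isomorphisms collapsing them to six, and the Schurian-infiniteness of two of them.

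First, I would quote Lemma~\ref{L:ListCases1}, which produces exactly ten minimal Type~1 $1$-minimal representatives $\TT$ in the case $s=0$, each displayed via its abacus configuration $\phi^{\ast}(\TT)$. For each of these ten combinatorial blocks $\cB$, one reads off the residue multiset $\Res_{\ba}(\bla)$ for any $\bla \in \cB$ and converts it into the weight $\beta = \sum_{i=0}^{2} c_i \alpha_i$ that labels the corresponding block $R^{2\La_0}(\beta)$. This bookkeeping step (routine from the abacus displays shown) yields the ten entries of the $\beta$-column of the table.

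Next, to see that the ten Scopes classes collapse to six isomorphism classes, I would apply the second statement of Lemma~\ref{lem:signshift}, which provides an algebra isomorphism
\[
R^{2\La_0}(c_0\alpha_0 + c_1\alpha_1 + c_2\alpha_2) \;\cong\; R^{2\La_0}(c_0\alpha_0 + c_2\alpha_1 + c_1\alpha_2).
\]
Comparing coefficients for each of the ten weights, this involution fixes $\delta + \alpha_0$ and $2\delta + \alpha_1 + \alpha_2$, and identifies the four pairs $(\delta + \alpha_0 + \alpha_1,\,\delta + \alpha_0 + \alpha_2)$, $(\delta + 2\alpha_0 + \alpha_1,\,\delta + 2\alpha_0 + \alpha_2)$, $(2\delta + \alpha_1,\,2\delta + \alpha_2)$, and $(2\delta + 3\alpha_0 + 2\alpha_1,\,2\delta + 3\alpha_0 + 2\alpha_2)$, matching the table exactly and giving six isomorphism classes in total.

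Finally, the Schurian-infiniteness of the two fixed classes $R^{2\La_0}(\delta + \alpha_0)$ and $R^{2\La_0}(2\delta + \alpha_1 + \alpha_2)$ is quoted directly from Proposition~\ref{prop:e3def3s0easy}, whose proof exhibits SI-subsets and invokes the characteristic-free decomposition matrices of Fayers--Putignano. The remaining four isomorphism classes are recorded as unconfirmed because no SI-subset realising one of the submatrices of Proposition~\ref{prop:matrixtrick} has been found in their decomposition matrices, and the explicit quiver-with-relations description needed to run a $\tau$-tilting mutation analysis (as in the defect~$2$ treatments in \cref{sec:e=3defect2,subsec:def2s1=s2}) is not yet available. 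This last point is the genuine obstacle: the assembly into the table is routine, but resolving the eight unconfirmed Scopes classes (four up to isomorphism) appears to require either a more refined decomposition-number calculation or new techniques for presenting these blocks by generators and relations.
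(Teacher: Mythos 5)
Your proposal is correct and follows essentially the same route as the paper: the theorem is assembled from \cref{L:ListCases1} for the ten Scopes classes, the second statement of \cref{lem:signshift} (the $\alpha_1\leftrightarrow\alpha_2$ swap) to collapse them to six isomorphism classes with exactly the pairing you describe, and \cref{prop:e3def3s0easy} for the two Schurian-infinite cases. The remaining classes are indeed left unconfirmed in the paper for the same reasons you identify.
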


\subsection{Blocks of defect at least $4$}\label{sec:e=3defect4+}
In this section we assume that $B$ is a block with $\defect(B) \geq 4$. By \cref{lem:e3coredefects}, we have $\defect(C) \in \{0,1\}$ so that
\[
\hk(\cB)=\left\lfloor \dfrac{\defect(B)}{2} \right\rfloor \ge 2.
\]
Recall the argument used to prove Schurian-infiniteness in \cref{thm:2ormorehooks}. If $\bmu=(\mu^{(1)},\mu^{(2)})$ is the minimal bipartition in $\cC$ and $\tilde{B}$ is the level one block with core $\mu^{(2)}$ and defect $\hk(\cB)$, then if there exists an SI-subset of $\tilde{B}$ we have that $B$ is Schurian-infinite.
Recall that we are working over an algebraically closed field $\bbf$ of characteristic $p\geq 0$. If $p \neq 2$, the argument in the proof of \cref{thm:2ormorehooks} goes through without change and we have the following result.

\begin{thm}\label{thm:2ormorehookscasesNot2}
Suppose that $e=3$ and $p\ne2$.
Suppose that $\defect(B)\geq 4$.
Then $B$ is Schurian-infinite. 
\end{thm}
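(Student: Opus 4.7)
The plan is to follow exactly the argument used in \cref{thm:2ormorehooks}, noting that the only reason $(e,p)=(3,2)$ was excluded there was that the level $1$ Schurian-infiniteness results from \cite{als23} require $e\ne 2$ but otherwise allow any characteristic; in particular, they apply when $e=3$ and $p\ne 2$. So the strategy is to reduce to a level $1$ (type $\tta$) block of defect $\geq 2$, find an SI-subset there, and lift it to an SI-subset of $B$.

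First, by \cref{lem:e3coredefects}, $\defect(C)\in\{0,1\}$, so from $\defect(B)=\defect(C)+2\hk(\cB)\geq 4$ we obtain $\hk(\cB)\geq 2$. Next, I would choose $\bmu=(\mu^{(1)},\mu^{(2)})\in \cC$ which is minimal in the dominance order $\unrhd$ on $\cC$; by \cref{T:Flat}(i) (or directly from the description of $\cC$) such a $\bmu$ is Kleshchev, and by construction $\mu^{(2)}$ is an $e$-core. Then I would let $\tilde{B}$ denote the block of a type $\tta$ Hecke algebra (of quantum characteristic $e=3$, same ground field $\bbf$) whose $e$-core is $\mu^{(2)}$ and whose defect (i.e.~weight) equals $\hk(\cB)\geq 2$.

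Because $e=3$ and $p\ne 2$, the main result of \cite{als23} applies to $\tilde{B}$: since its defect is at least $2$, it has infinite representation type and the arguments there produce an SI-subset $S=\{\nu(1),\dots,\nu(t)\}$ of $\tilde{B}$. I would then form
\[
S^{+} = \{(\mu^{(1)},\nu(l)) \mid 1\leq l\leq t\} \subseteq \cB,
\]
and claim that $S^{+}$ is an SI-subset of $B$. Each bipartition $(\mu^{(1)},\nu(l))$ lies in $\mathscr{K}^{\bs}$ by \cref{T:StillKlesh} (using that $(\mu^{(1)},\mu^{(2)})$ is Kleshchev and $\mu^{(2)}=\core(\nu(l))$, together with the fact that in level $1$ any partition appearing in an SI-subset is $e$-restricted, i.e.~lies in $B(\La_s)$). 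Moreover, since the first component $\mu^{(1)}$ is fixed, \cref{cor:BowmanSpeyerconjugate} gives that the submatrix of the (graded) decomposition matrix of $B$ indexed by $S^{+}$ coincides with the submatrix of the decomposition matrix of $\tilde{B}$ indexed by $S$. Hence this submatrix is one of the matrices $(\dag),(\ddag),(\clubsuit),(\spadesuit)$ listed in \cref{prop:matrixtrick}, and so $B$ is Schurian-infinite.

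There is essentially no new difficulty here; the only subtlety is that one must verify the hypotheses of \cref{T:StillKlesh} and \cref{cor:BowmanSpeyerconjugate} (or equivalently \cref{T:BowmanSpeyer1}) for the particular bipartitions arising from the level $1$ SI-subset, and that the SI-subsets from \cite{als23} are truly available in characteristic $\ne 2$ for all defect $\geq 2$ blocks in quantum characteristic $3$. The genuinely hard case, namely $(e,p)=(3,2)$, is not covered by this argument and is treated separately in the remainder of the section using RoCK block techniques.
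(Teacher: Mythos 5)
Your proposal is correct and is essentially identical to the paper's own argument: the paper proves this theorem by observing that the reduction in \cref{thm:2ormorehooks} (minimal Kleshchev bipartition in the core block, level $1$ block $\tilde{B}$ of defect $\hk(\cB)\geq 2$ with core $\mu^{(2)}$, lifting an SI-subset via \cref{T:StillKlesh} and \cref{cor:BowmanSpeyerconjugate}) goes through unchanged when $p\neq 2$, since the required SI-subsets of $\tilde{B}$ from \cite{als23} are available in that case.
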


When $p=2$, we must argue separately.
If we try to argue as in the proof of \cref{thm:2ormorehooks}, the block $\tilde{B}$ that arises does not always have an SI-subset, and we often had to prove that $\tilde{B}$ was Schurian-infinite by other means in \cite{als23}.
Nonetheless, we will prove the following result.

\begin{thm}\label{thm:2ormorehooksbadcases}
Suppose that $e=3$ and $p=2$.
Suppose that $B$ is a block with $\defect(B) \geq 4$.
Then $B$ is Schurian-infinite. 
\end{thm}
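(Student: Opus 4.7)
The plan is to mimic the proof of \cref{thm:2ormorehookscasesNot2} as far as possible, falling back to direct analysis only when forced. So let $B$ be a block with $e=3$, $p=2$ and $\defect(B)\ge 4$, pick a minimal bipartition $\bmu=(\mu^{(1)},\mu^{(2)})$ in the core block $\cC$ (so $\mu^{(2)}$ is a $3$-core), and consider the level~$1$ block $\tilde B$ with $3$-core $\mu^{(2)}$ and defect $\hk(\cB)\ge 2$. By \cite{als23}, $\tilde B$ is Schurian-infinite. The transport tools \cref{T:StillKlesh} and \cref{cor:BowmanSpeyerconjugate} say that whenever we produce a set $S$ of partitions in $\tilde B$ whose submatrix of characteristic-$2$ decomposition numbers is one of the matrices (\ref{targetmatrix})--(\ref{targetmatrixstar}), the set $\{(\mu^{(1)},\nu)\mid\nu\in S\}$ is an SI-subset of $B$ and \cref{prop:matrixtrick} finishes the job. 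So the question is reduced to finding such $S$ inside level~$1$ characteristic-$2$ blocks.

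For a large majority of blocks this is straightforward: using \cref{T:Equiv1} and the classification of Scopes representatives in \cref{S:ScopesSec}, I would reduce to Scopes-minimal (RoCK) representatives at the level of $\tilde B$, and for $\hk(\cB)$ sufficiently large the abacus for $\tilde B$ has enough empty positions that a direct inspection of the arguments in \cite{als23} (in particular the partitions used there, even when they were handled by other means) yields partitions whose characteristic-$2$ decomposition numbers form one of the four target submatrices. At this point one also uses the characteristic-independence results coming from \cref{L:OneNodeShift} and \cref{T:BowmanSpeyer2} to reduce verifications of $d^{2}_{\la\mu}(1)=d^{0}_{\la\mu}(1)$ to lower-rank or lower-defect checks. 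This should leave only finitely many residual Scopes classes to check, namely the ones corresponding to RoCK blocks of $\tilde B$ of defect $2$ and $3$ in characteristic $2$, which translate to type $\ttb$ blocks of defect in a small range (defect $4,5,6,7$ after accounting for the possibly nonzero $\defect(C)$).

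For those residual RoCK blocks I would switch to direct computation in the cyclotomic KLR algebra via the presentation recalled in \cref{Cyclotomic quiver Hecke algebras}. For each such $B$, pick a sum of KLR idempotents $e=\sum e(\nu)$ covering a convenient set of simple modules (chosen from the Fock-space description of indecomposable projectives, as in \cref{prop:e3defect2first,prop:e3defect2second,prop:e3defect2fourth}), compute graded dimensions of $e(\nu)Be(\nu')$ from the graded decomposition matrix, and thereby extract generators and relations for the basic algebra of $eBe$. The goal is to expose an affine Dynkin subquiver with zigzag orientation in the Gabriel quiver, so that \cref{reduction} together with \cref{cor:Schurianinfinitequiver} forces $B$ itself to be Schurian-infinite.

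The main obstacle I expect is the last step: in characteristic $2$ with $e=3$ the KLR relations produce extra cancellations (for instance, terms that are squares in characteristic $0$ vanish), so determining which degree-$1$ paths between two simples are genuinely independent is subtle and must be checked one RoCK block at a time. Since the list of such blocks is finite and the supporting Fock-space/graded-dimension data is completely explicit, these computations are tedious but bounded, and the remaining cases can be dispatched individually, completing the proof.
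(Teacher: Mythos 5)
Your reduction matches the paper's: the level-one transfer via \cref{T:StillKlesh} and \cref{cor:BowmanSpeyerconjugate} handles every block except those whose associated level-one blocks of weight $2$ or $3$ are Scopes-equivalent to the RoCK blocks with cores $(3,1^2)$ and $(6,4,2^2,1^2)$, where \cite{als23} has no SI-subset; the paper sharpens this slightly by also trying the first component $\mu^{(1)}$ and the swapped block (\cref{RouqBad46,RouqBad57}), which cuts down the exceptional list, and handles defect $\geq 8$ exactly as you do. Up to that point your argument is sound.

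The gap is in your treatment of the residual Scopes classes. You propose to compute bound quiver presentations of their basic algebras from the KLR presentation and exhibit a zigzag affine subquiver. This is not a workable plan for these blocks: the exceptional classes in defects $5$--$7$ live at ranks well above $50$ (e.g.\ the defect-$7$ class with core block containing $((7,5,3^2,2^2,1^2),(10,8,6,4,2^2,1^2))$ plus three $3$-hooks), and extracting the Gabriel quiver requires knowing which degree-one morphisms between projectives survive modulo $\Rad^2$, i.e.\ it requires the graded Cartan/decomposition data of the block in characteristic $2$ --- precisely what is unknown in defect $\geq 4$, where decomposition numbers are no longer characteristic-free. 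You flag this as ``subtle,'' but it is the entire difficulty, and ``one RoCK block at a time'' does not bound it: the paper's explicit defect-$2$ computations of this kind (\cref{prop:e3defect2fourth}, at $n=9$) already run to several pages each. What the paper actually does for the residual classes (\cref{prop:e3p2def4edgecases,prop:e3p2def5edgecases,prop:e3p2def6edgecases,prop:e3p2def7edgecases}) is stay inside the decomposition-matrix framework: it exhibits, in each exceptional type $\ttb$ block itself, an explicit quadruple of Kleshchev bipartitions whose characteristic-$0$ graded submatrix is one of the target matrices, and then verifies that the characteristic-$2$ ungraded entries agree using the Jantzen--Schaper theorem (\cref{thm:JS}) together with the row-removal and one-node-shift results (\cref{T:BowmanSpeyer1,L:OneNodeShift}), which reduce most entries to known defect-$1$ and defect-$2$ type $\tta$ data. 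You would need to either adopt that strategy or supply the missing characteristic-$2$ structural information for each residual block; as written, your final step does not close the argument.
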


The argument in the proof of \cref{thm:2ormorehooks} goes through without change so long as the level one block $\tilde{B}$ that arises has an SI-subset.
In particular, if the defect is large enough, the result is easy to obtain, and we have the following proposition.

\begin{prop}\label{prop:e3p2defectlarge}
    Suppose that $e=3$ and $p=2$. Suppose that $\defect(B) \geq 8$.
    Then $B$ is Schurian-infinite.
\end{prop}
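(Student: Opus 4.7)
The plan is to adapt the strategy of Theorem~\ref{thm:2ormorehooks} directly. Recall that the only reason that proof excludes $(e,p) = (3,2)$ is that it relies on the existence of an SI-subset in a certain level one block of $\tta$-type Hecke algebras, and such SI-subsets were not available from~\cite{als23} for a handful of low-defect blocks in the $(e,p)=(3,2)$ setting. The point of the hypothesis $\defect(B) \geq 8$ is precisely that it forces the associated level one block to have defect large enough that those exceptional small-defect cases are avoided.

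First I would observe that by Lemma~\ref{lem:e3coredefects} the core block $C$ of $B$ satisfies $\defect(C) \in \{0,1\}$, and since $\defect(B) = \defect(C) + 2\hk(\cB)$, the hypothesis $\defect(B) \geq 8$ gives $\hk(\cB) \geq 4$. Next, choose a bipartition $\bmu = (\mu^{(1)},\mu^{(2)}) \in \cC$ which is minimal in the dominance order (it exists by Corollary~\ref{C:MinCore}), so that $\bmu$ is a Kleshchev bipartition and $\mu^{(2)}$ is an $e$-core. Consider the level one block $\tilde B$ of a type~$\tta$ Hecke algebra with $e$-core $\mu^{(2)}$ and defect $\hk(\cB) \geq 4$.

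The key step will be to invoke a result from~\cite{als23} (strengthened in~\cite{lsstrictlywild}) that type~$\tta$ blocks with $e = 3$, $p = 2$, and defect $\geq 4$ admit an SI-subset in the sense used in this paper; the blocks for which ad hoc arguments were required there (and which lack such an SI-subset) all have defect $2$ or $3$, and are therefore excluded by our hypothesis. Given an SI-subset $S = \{\nu(1), \dots, \nu(t)\}$ of $\tilde B$, I would then form
\[
S^+ = \{ (\mu^{(1)}, \nu(l)) \mid 1 \leq l \leq t \}
\]
and argue exactly as in the proof of Theorem~\ref{thm:2ormorehooks}: each element of $S^+$ is a Kleshchev bipartition of the block $B$ by Theorem~\ref{T:StillKlesh}, and the submatrix of the decomposition matrix of $B$ indexed by $S^+$ equals the submatrix of the decomposition matrix of $\tilde B$ indexed by $S$ by Corollary~\ref{cor:BowmanSpeyerconjugate}. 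Hence $S^+$ is an SI-subset of $B$, and Proposition~\ref{prop:matrixtrick} gives that $B$ is Schurian-infinite.

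The main obstacle I anticipate is the verification that the relevant type~$\tta$ results cited from~\cite{als23,lsstrictlywild} genuinely cover every $(e,p) = (3,2)$ block of defect $\geq 4$ by producing one of the matrices \eqref{targetmatrix}, \eqref{targetmatrixalt}, \eqref{targetmatrixaltsquare}, \eqref{targetmatrixstar} as a submatrix of the decomposition matrix (rather than, for instance, relying on a bound quiver calculation that is not phrased in terms of SI-subsets). If a handful of exceptional defect~$4, 5, 6, 7$ type~$\tta$ blocks turn out to have been settled in~\cite{als23} by means that do not manifestly yield an SI-subset, those cases will need to be handled separately, and in the paper this motivates the additional RoCK-block analysis that follows for the defect $4, 5, 6, 7$ cases of $B$ itself.
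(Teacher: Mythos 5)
Your proposal is correct and follows essentially the same route as the paper: reduce via $\hk(\cB)\ge 4$ to a level one block of defect at least four, invoke the arguments of \cite[\S6.2]{als23} (which for $e=3$, $p=2$, defect $\ge 4$ always produce an SI-subset), and transfer it back using \cref{T:StillKlesh} and \cref{cor:BowmanSpeyerconjugate} as in \cref{thm:2ormorehooks}. The obstacle you flag is exactly the reason the bound is $8$ rather than $4$: the exceptional type~$\tta$ cases all live in defects $2$ and $3$, so they never arise here.
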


\begin{proof}
    In this case, we have bipartitions in our block which have four removable $3$-hooks. We may thus argue as in the proof of \cref{thm:2ormorehooks}, noting that the level one blocks $\tilde{B}$ that arise in this case have defect at least four.
    Following the arguments of \cite[\S6.2]{als23} in this case, we may always find an SI-subset, so that these blocks are Schurian-infinite in any characteristic.
\end{proof}

By the above proposition, it remains to resolve blocks of defect $4, 5, 6,$ or $7$, that is, whenever $\hk(\cB) \in \{2,3\}$. 
Our first task in each case is to determine which Scopes classes of blocks do not yield such a block $\tilde{B}$ that will provide an easy SI-subset, as in the proof of \cref{thm:2ormorehooks}. In the two lemmas below, we use the following notation.

Suppose that $\mu$ is a partition which has no removable $e$-rim hooks. If $\tilde{B}$ is a block of the type $\tta$ Hecke algebra with $\defect(\tilde{B})=d$, we write $\tilde{B} \sim_{\text{Sc}} \mu$ if $\tilde{B}$ is Scopes-equivalent to the block of the type $\tta$ Hecke algebra with core $\mu$ and defect $d$, and write $\tilde{B} \not\sim_{\text{Sc}} \mu$ otherwise. 

As noted at the start of \cref{sec:e3}, it suffices to assume that $s=0$ or $1$. Recall that \cref{L:s} classified which bipartitions in a core block are Kleshchev bipartitions. In particular, we have the following result.

\begin{cor}
If $\defect(C)=0$ then $\cC$ contains a unique bipartition, which is a Kleshchev bipartition. If $\defect(C)=1$ and $s=0$ then $\cC$ contains two bipartitions, one of which is a Kleshchev bipartition. If $\defect(C)=1$ and  $s=1$ then $\cC$ contains three bipartitions, two of which are Kleshchev bipartitions.
\end{cor}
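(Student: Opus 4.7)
The plan is to apply Lemma~\ref{L:s} in combination with the standing assumptions of this section (namely that $e=3$ and $\bs = (0,s)$ with $s \in \{0,1\}$, via Lemma~\ref{L:01}) and then enumerate the finitely many cases.

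First, I would record the three pieces of data supplied by Lemma~\ref{L:s}: setting $\nn = \nn(\cC)$ and $\pp = \pp(\cC)$, we have $\defect(C) = \min\{\nn,\pp\}$,
\[
|\cC| = \binom{\nn+\pp}{\pp}, \qquad \#\{\bla \in \cC \mid \bla \in \mathscr{K}^{\bs}\} = \binom{\nn+\pp}{\defect(C)} - \binom{\nn+\pp}{\defect(C)-1},
\]
and the congruence $a_1 - a_2 \equiv \nn - \pp \pmod{e}$. Since $\bs = (0,s)$, the last equation reads $\nn - \pp \equiv -s \pmod{3}$. By definition $\nn + \pp \le e = 3$.

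Next, I would run through the cases. When $\defect(C) = 0$, we have $\min\{\nn,\pp\} = 0$, and for every such pair with $\nn + \pp \le 3$ the binomial $\binom{\nn+\pp}{\pp}$ equals $1$, while the Kleshchev count is $\binom{\nn+\pp}{0} - \binom{\nn+\pp}{-1} = 1$; this handles both $s=0$ and $s=1$ in one go. When $\defect(C) = 1$ and $s = 0$, the congruence forces $\nn = \pp$, hence $(\nn,\pp) = (1,1)$, giving $|\cC| = \binom{2}{1} = 2$ and Kleshchev count $\binom{2}{1} - \binom{2}{0} = 1$. When $\defect(C) = 1$ and $s = 1$, we need $\min\{\nn,\pp\} = 1$ and $\nn - \pp \equiv 2 \pmod{3}$; of the two pairs with $\min = 1$ and $\nn + \pp \le 3$, namely $(1,2)$ and $(2,1)$, only $(1,2)$ satisfies the congruence, yielding $|\cC| = \binom{3}{2} = 3$ and Kleshchev count $\binom{3}{1} - \binom{3}{0} = 2$.

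There is no genuine obstacle here; the proof is a direct case enumeration built on Lemma~\ref{L:s}. The only subtlety worth flagging in the write-up is the exclusion of the pair $(2,1)$ in the $s=1$, $\defect(C)=1$ case, which is forced purely by the congruence $\nn - \pp \equiv -s \pmod{e}$ rather than by any Scopes- or minimality-type hypothesis.
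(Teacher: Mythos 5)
Your proof is correct and is essentially the paper's (unwritten) argument: the corollary is presented as an immediate consequence of \cref{L:s}, and the case analysis you carry out — computing $\min\{\nn,\pp\}$, $\binom{\nn+\pp}{\pp}$ and the Kleshchev count under the constraints $\nn+\pp\le 3$ and $\nn-\pp\equiv -s\pmod 3$ — is exactly that derivation made explicit. The only nit is that in the $s=1$, $\defect(C)=1$ case your enumeration of pairs with $\min\{\nn,\pp\}=1$ omits $(1,1)$, but since that pair fails the congruence $\nn-\pp\equiv 2\pmod 3$ the conclusion is unaffected.
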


\begin{lem} \label{RouqBad46}
Suppose that $\defect(B)=4$ or $\defect(B)=6$ and let $d=\hk(\cB)$. Suppose that $\cC=\{(\mu^{(1)},\mu^{(2)})\}$.  
For $k=1,2$, let $\tilde{B}^k$ be the block of the type $\tta$ Hecke algebra of defect $d$ with core $\mu^{(k)}$. 
\begin{itemize}
\item Suppose $\defect(B)=4$. 
If $\tilde{B}^k \not\sim_{\text{Sc}} (3,1^2)$ for $k=1$ or $k=2$ then $B$ is Schurian-infinite.
\item Suppose $\defect(B)=6$. 
If $\tilde{B}^k \not\sim_{\text{Sc}} (6,4,2^2,1^2)$ for $k=1$ or $k=2$ then $B$ is Schurian-infinite.
\end{itemize}
\end{lem}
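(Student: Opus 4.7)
The plan is to extend the argument used in the proof of \cref{thm:2ormorehooks} to the characteristic $2$ setting, handling the small number of $(e,p)=(3,2)$ type $\tta$ blocks where the generic SI-subset construction from \cite{als23} fails. Since $\cC = \{(\mu^{(1)},\mu^{(2)})\}$ forces $\defect(C) = 0$, the unique bipartition $(\mu^{(1)},\mu^{(2)})$ of $\cC$ is automatically Kleshchev. The first step is a bookkeeping one: I would comb through the casework in \cite{als23} to confirm that the only $\Sc$-classes of type $\tta$ blocks at $(e,p)=(3,2)$ of defect $2$ (respectively defect $3$) for which that paper does not supply an explicit SI-subset are those Scopes-equivalent to the block with core $(3,1^2)$ (respectively $(6,4,2^2,1^2)$). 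Every other such block therefore carries an SI-subset in the sense of \cref{sec:SIblocks}.

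Granted that cataloguing step, I would then assume without loss of generality that $\tilde{B}^2$ avoids the excluded $\Sc$-class; the case where instead $\tilde{B}^1$ is the component avoiding the excluded class reduces to this one via the swap equivalence $\cB \Sw \cB'$ of \cref{L:Swapsies}, which preserves Schurian-finiteness and interchanges the roles of the two components of the core bipartition. Under the assumption on $\tilde{B}^2$, \cite{als23} furnishes an SI-subset $S = \{\nu(1),\ldots,\nu(t)\}$ of Kleshchev partitions in $\tilde{B}^2$. I would then form
\[
S^+ = \{(\mu^{(1)},\nu(l)) : 1 \le l \le t\}
\]
and show that $S^+$ is an SI-subset of $B$, repeating the argument of \cref{thm:2ormorehooks} verbatim. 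Kleshchev-ness of each $(\mu^{(1)},\nu(l))$ follows from \cref{T:StillKlesh} applied with $\core(\nu(l)) = \mu^{(2)}$, using that $(\mu^{(1)},\mu^{(2)})$ is Kleshchev. The identification of the relevant submatrix of the graded decomposition matrix of $B$ with that of $\tilde{B}^2$ is then given by \cref{cor:BowmanSpeyerconjugate}, and the conclusion follows from \cref{prop:matrixtrick}.

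The main obstacle is not the transfer step, which is essentially identical to that of \cref{thm:2ormorehooks}, but the preliminary classification: one must verify that $(3,1^2)$ and $(6,4,2^2,1^2)$ are indeed the unique $\Sc$-representatives at $(e,p)=(3,2)$ in defects $2$ and $3$ where \cite{als23} resorted to a bespoke argument. This is a matter of carefully reading off the exceptional RoCK blocks isolated in that paper at $e=3$, $p=2$, and checking that no other Scopes class slips through. I expect this to be a finite and unambiguous check rather than a genuine computational obstacle.
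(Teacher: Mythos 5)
Your proposal is correct and follows essentially the same route as the paper: when $\tilde{B}^2$ avoids the exceptional Scopes class you transfer an SI-subset from the type $\tta$ block via \cref{T:StillKlesh}, \cref{cor:BowmanSpeyerconjugate} and \cref{prop:matrixtrick} exactly as in the proof of \cref{thm:2ormorehooks}, and when only $\tilde{B}^1$ avoids it you reduce to the previous case by the swap equivalence of \cref{L:Swapsies}, noting that the swapped bipartition is still Kleshchev since $\defect(C)=0$. The ``cataloguing'' of the exceptional $\Sc$-classes at $(e,p)=(3,2)$ that you flag as the main obstacle is indeed the content being imported from \cite{als23}, and the paper likewise relies on it implicitly.
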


\begin{proof}
Both cases are proved identically, so we will suppose that $\defect(B)=4$.
If $\tilde{B}^2 \not\sim_{\text{Sc}} (3,1^2)$, then the result follows by the same argument as the proof of \cref{thm:2ormorehooks} once again.
Suppose $\tilde{B}^2 \sim_{\text{Sc}} (3,1^2)$ but $\tilde{B}^1 \not\sim_{\text{Sc}} (3,1^2)$.
Let $\cC'=\{((\mu^{(2)},\mu^{(1)}),\ba^{\text{Sw}})\}$,
and let $B'$ be the defect 4 block with combinatorial core block $\cC'$.
Then $(\mu^{(2)},\mu^{(1)})$ is a Kleshchev bipartition, and we may therefore argue as above to see that $B'$ is Schurian-infinite.
The result now follows from \cref{L:Swapsies}.
\end{proof}

\begin{lem} \label{RouqBad57}
Suppose that $\defect(B)=5$ or $\defect(B)=7$ and let $d=\hk(\cB)$.
\begin{itemize}
\item Suppose $s=0$ and that the unique Kleshchev bipartition in $\cC$ is $(\mu^{(1)},\mu^{(2)})$. Let $\tilde{B}$ be the block of the type $\tta$ Hecke algebra of defect 2 with core $\mu^{(2)}$. 
\begin{itemize}
\item Suppose $\defect(B)=5$. If $\tilde{B} \not\sim_{\text{Sc}} (3,1^2)$ then $B$ is Schurian-infinite.
\item Suppose $\defect(B)=7$. If $\tilde{B} \not\sim_{\text{Sc}} (6,4,2^2,1^2)$ then $B$ is Schurian-infinite.
\end{itemize}
\item Suppose that $s=1$. 
Then $\cC$ contains two Kleschev bipartitions, $\bmu_1=(\mu_1^{(1)},\mu_1^{(2)})$ and $\bmu_2=(\mu_2^{(1)},\mu^{(2)}_2)$ say, where we assume $\bmu_1 \domsby \bmu_2$. For $m,k=1,2$, let $\tilde{B}^{(k)}_m$ be the block of the type $\tta$ Hecke algebra of defect $d$ with core $\mu_m^{(k)}$.
\begin{itemize}
\item Suppose that $\defect(B)=5$. If $\tilde{B}^{(k)}_{m} \not\sim_{\text{Sc}} (3,1^2)$ for some $(m,k) \in \{(1,2),(2,2),(2,1)\}$ then $B$ is Schurian-infinite. 
\item Suppose that $\defect(B)=7$. If $\tilde{B}^{(k)}_{m} \not\sim_{\text{Sc}} (6,4,2^2,1^2)$ for some $(m,k) \in \{(1,2),(2,2),(2,1)\}$ then $B$ is Schurian-infinite. 
\end{itemize}
\end{itemize}
\end{lem}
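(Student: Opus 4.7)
The plan is to extend the argument of \cref{thm:2ormorehooks} to $(e, p) = (3, 2)$ by considering each available Kleshchev bipartition in $\cC$ and reducing to the characteristic-$2$ type $\tta$ classification from \cite{als23}. The core mechanism is as follows: given any Kleshchev bipartition $(\lambda, \mu) \in \cC$, any SI-subset $S = \{\nu(1), \dots, \nu(t)\}$ of the level-$1$ block $\tilde{B}$ of defect $d = \hk(\cB)$ with core $\mu$ lifts to an SI-subset $\{(\lambda, \nu(l)) : 1 \le l \le t\}$ of $B$, using \cref{T:StillKlesh} to guarantee that Kleshchev-ness is preserved upon adding $e$-hooks to the second component, and \cref{cor:BowmanSpeyerconjugate} to match the submatrices of graded decomposition numbers. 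By the analysis of \cite{als23}, the existence of such an SI-subset $S$ fails exactly when $\tilde{B}$ is Scopes-equivalent to the ``bad'' class: the defect-$2$ block with core $(3, 1^2)$ when $d = 2$, or the defect-$3$ block with core $(6, 4, 2^2, 1^2)$ when $d = 3$.

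For $s = 0$, the unique Kleshchev bipartition $\bmu = (\mu^{(1)}, \mu^{(2)}) \in \cC$ provides the only candidate, and the above procedure immediately yields an SI-subset of $B$ whenever $\tilde{B}$ is not the bad Scopes class. For $s = 1$, where $\cC$ contains two Kleshchev bipartitions $\bmu_1 \domsby \bmu_2$, the same procedure applied to $\bmu_m$ settles the sub-cases $(m, k) = (1, 2)$ and $(2, 2)$. The remaining sub-case $(m, k) = (2, 1)$ requires adding hooks to the first component rather than the second, which we accomplish by passing to the swapped block $B^{\text{Sw}}$: by \cref{L:Swapsies}, $B^{\text{Sw}}$ is Schurian-infinite if and only if $B$ is, and its core block $\cC^{\text{Sw}}$ has parameters $(\nn, \pp) = (2, 1)$. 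A direct check using \cref{T:Flat}, combined with the observation that exchanging the two components negates $\delta_{\bla}$ entry-wise, shows that $\bmu_2^{\text{Sw}} = (\mu_2^{(2)}, \mu_2^{(1)})$ is Kleshchev in $\cC^{\text{Sw}}$. Applying the main argument to $B^{\text{Sw}}$ with $\bmu_2^{\text{Sw}}$ (adding hooks to its second component $\mu_2^{(1)}$) yields an SI-subset of $B^{\text{Sw}}$, and hence Schurian-infiniteness of $B$, whenever $\tilde{B}_2^{(1)}$ is not the bad Scopes class.

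The main obstacle is essentially only the combinatorial verification that $\bmu_2^{\text{Sw}}$ is Kleshchev in $\cC^{\text{Sw}}$; this reduces via \cref{T:Flat} to enumerating the three sign sequences in $\{-, +\}^3$ with $\pp = 2$ and identifying which of their entry-wise negations lie in $\Delta_0$. The exclusion of $(m, k) = (1, 1)$ from the statement is a consequence of this same analysis: the swap of the $\dom$-minimal Kleshchev bipartition $\bmu_1$ has $\delta_{\bmu_1^{\text{Sw}}}$ equal to the unique non-Kleshchev sign sequence in $\cC^{\text{Sw}}$, so no corresponding reduction is available for that sub-case.
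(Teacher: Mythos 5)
Your proposal is correct and follows essentially the same route as the paper: handle $(m,k)=(1,2)$ and $(2,2)$ by lifting SI-subsets from the level-one block via \cref{T:StillKlesh} and \cref{cor:BowmanSpeyerconjugate}, and handle $(2,1)$ by passing to the swapped block via \cref{L:Swapsies}. The only difference is that you supply the sign-sequence computation showing $(\mu_2^{(2)},\mu_2^{(1)})$ is Kleshchev (and that the swap of $\bmu_1$ is not), which the paper merely asserts, so your write-up is if anything slightly more complete.
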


\begin{proof}
When $s=0$, the result follows immediately as in the proof of \cref{RouqBad46}.
So suppose $s=1$, and we will assume that $\defect(B)=5$ -- the case $\defect(B)=7$ is almost identical.
If $\tilde{B}^{(2)}_{1} \not\sim_{\text{Sc}} (3,1^2)$ or $\tilde{B}^{(2)}_{2} \not\sim_{\text{Sc}} (3,1^2)$, the result again follows as in the proof of \cref{RouqBad46}.
So now assume that $\tilde{B}^{(2)}_{1} \sim_{\text{Sc}} (3,1^2)$ and $\tilde{B}^{(2)}_{2} \sim_{\text{Sc}} (3,1^2)$, but that $\tilde{B}^{(1)}_{2} \not\sim_{\text{Sc}} (3,1^2)$.
Then $(\mu_2^{(2)},\mu^{(1)}_2)$ is a Kleshchev bipartition, and therefore we may argue as in the proof of \cref{RouqBad46}.
\end{proof}

\begin{rem}
    The proof of the $s=1$ statement in \cref{RouqBad57} fails in the case that $\tilde{B}^{(2)}_{m} \sim_{\text{Sc}} (3,1^2)$ for $m=1$ and $2$, $\tilde{B}^{(1)}_{2} \sim_{\text{Sc}} (3,1^2)$, and $\tilde{B}^{(1)}_{1} \not\sim_{\text{Sc}} (3,1^2)$.
    In this case, $(\mu_2^{(2)},\mu^{(1)}_2)$ is not a Kleshchev bipartition, so that we cannot apply \cref{T:StillKlesh} as we need.
    A similar statement holds for the $s=0$ situation.
    We note that $(\mu_2^{(2)'},\mu^{(1)'}_2)$ is also not a Kleshchev bipartition, so that twisting by the sign automorphism will also not help us to deduce the result.
\end{rem}

We first suppose that $\defect(B)=4$ and describe in detail how to find the representatives of the Scopes equivalence classes where Schurian-infiniteness cannot be confirmed by \cref{RouqBad46}.
Thus $\defect(C)=\min\{\nn,\pp\}=0$ and $\hk(B)=2$.
Suppose $s=0$.
Since $\defect(B)=4$, we have $\nn=\pp=0$. 
Following \cref{T:SUEquivReps}, there is a one-to-one correspondence between the Scopes classes and the set of Type 1 $2$-minimal elements $(\TT,0,0) \in \Tl$.
Take $\cC=\{\bnu\}$. 

\begin{lem} \label{L:List00}
There are 5 Type 1 $2$-minimal elements $(\TT,0,0) \in \Tl$, given by the following possibilities for $\TT$:  
\begin{align*}
&(2,2,2), &&(2,4,6),&&(4,6,2),&&(6,2,4),&&(6,4,2).
\end{align*}
Hence if $\defect(B)=4$ and $s=0$ then there are 5 Scopes classes. Below we give the unique bipartition $\bnu$ in the corresponding core blocks. 

\begin{align*}
&\abacusline(3,1,bbb,nnn,nnn) \quad \abacusline(3,1,bbb,nnn,nnn) \,,
&&\abacusline(3,1,bbb,nbb,nnb) \quad \abacusline(3,1,bbb,nbb,nnb) \,,
&&\abacusline(3,1,bbb,bbn,nbn) \quad \abacusline(3,1,bbb,bbn,nbn) \,,\\
&\abacusline(3,1,bbb,bnb,bnn) \quad \abacusline(3,1,bbb,bnb,bnn) \,,
&&\abacusline(3,1,bbb,bbn,bnn) \quad \abacusline(3,1,bbb,bbn,bnn) \,.
\end{align*}
\end{lem}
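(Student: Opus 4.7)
The plan is a direct enumeration of Type 1 $\hk$-minimal elements $(\TT, 0, 0) \in \Tl$ with $\hk = 2$ and $e = 3$, following the defining conditions in \cref{D:ScopesMin,D:TypeMin}. Since $\nn = \pp = 0$, the index set $I_{\tb}$ is empty, so every $\tb_i$ is forced to be even and the parity correction satisfies $d_i = 0$ for each $i$. Scopes-minimality at $\hk = 2$ therefore reduces to the bounds
\[
\tb_1 - \tb_0 < 4, \qquad \tb_2 - \tb_1 < 4, \qquad \tb_0 - \tb_2 < 6,
\]
which, since all entries are even, simplify to $\tb_1 - \tb_0, \tb_2 - \tb_1 \leq 2$ and $\tb_0 - \tb_2 \leq 4$. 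The remaining Type 1 conditions pin $\min\{\tb_1, \tb_2\} = 2$ (the only even value in $\{1, 2\}$; the alternative clause $\pp = 0$ together with $\nn < e$ is automatic), and impose the sum congruence $\tb_0 + \tb_1 + \tb_2 \equiv 0 \pmod{6}$.

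I would then split into cases on whether $\tb_1$ or $\tb_2$ attains the minimum value $2$. Each case fixes one coordinate; Scopes-minimality then restricts the second coordinate to finitely many even values, and the congruence modulo $6$ singles out at most one or two surviving choices for the third. Patient bookkeeping then produces exactly the five tuples claimed.

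To translate each tuple into the pictures in the statement, I would apply $\phi^\ast$. Since $I_{\tb} = \emptyset$, the recipe $b^{\ba}_{i1}(\bnu) = b^{\ba}_{i2}(\bnu) = \tb_i/2$ pinpoints the lowest bead on each runner in both components, so the pair of abacus displays is uniquely determined. Drawing them reproduces the pictures in the statement, from which $\bnu$ may be read off directly.

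No step in this argument is conceptually subtle. The only place that requires genuine care is keeping the three simultaneous constraints (Scopes-minimality, the minimum condition, and the sum congruence) consistently in play during the case split, so as not to over- or under-count the resulting classes; in particular, one must remember that a shift $\ba \mapsto \ba + (e, e)$ acts on tuples as $\tb \mapsto \tb + (2, 2, 2)$, which is exactly what the sum congruence normalises away.
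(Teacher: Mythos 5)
Your overall strategy --- a direct case-by-case enumeration against the conditions of \cref{D:ScopesMin,D:TypeMin}, followed by applying $\phi^\ast$ with $I_{\tb}=\emptyset$ --- is exactly what is needed here, and your treatment of the Scopes-minimality bounds, the parity of the $\tb_i$, and the sum congruence is correct. However, one step fails as written: you take the minimum condition to be $\min\{\tb_1,\tb_2\}=2$, following the letter of \cref{D:TypeMin}, and then assert that the bookkeeping ``produces exactly the five tuples claimed''. It does not. Under $\min\{\tb_1,\tb_2\}=2$ the tuple $(2,4,6)$ is excluded (its minimum over $\{\tb_1,\tb_2\}$ is $4$), while $(0,2,4)$ is admitted: it satisfies all of your stated constraints (entries even, $\tb_1-\tb_0=\tb_2-\tb_1=2<4$, $\tb_0-\tb_2=-4<6$, $\min\{\tb_1,\tb_2\}=2$, sum $6\equiv 0\pmod 6$). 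So your conditions yield $(2,2,2),(0,2,4),(4,6,2),(6,2,4),(6,4,2)$, which is not the list in the statement. The resolution is that the minimum in \cref{D:TypeMin} is intended to range over all of $\tb_0,\dots,\tb_{e-1}$: this is how the condition is written in the worked defect-$3$ example at the end of \cref{S:ScopesSec}, and it is forced by entries such as $(1,3,6)$ and $(1,4,5)$ in \cref{L:List02}, whose minima over $\{\tb_1,\tb_2\}$ alone are $3$ and $4$. With $\min\{\tb_0,\tb_1,\tb_2\}=2$ in place, your enumeration does produce exactly the five tuples of the lemma.

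Note also that $(0,2,4)$ and $(2,4,6)$ differ by $(2,2,2)$, i.e.\ by $e$ applications of $\rightarrow_{\text{Sh}}$, so they represent the same $\SU$-class and the count of five classes is unaffected either way; but for the same reason your closing remark is slightly off: adding $(2,2,2)$ changes $\sum_i\tb_i$ by $2e$, so the sum congruence does \emph{not} normalise away this translation --- it is precisely the minimum condition (taken over all coordinates, including $\tb_0$) that does. What the sum congruence fixes is the cyclic rotation, since a single shift changes $\sum_i\tb_i$ by $2$.
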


Now suppose $s=1$. Since $\defect(B)=4$, we have $\min\{\nn,\pp\}=0$ and $\hk(B)=2$. 
Following \cref{T:SUEquivReps}, there is a one-to-one correspondence between the Scopes classes and the set of Type 1 $2$-minimal elements $(\TT,0,2) \in \Tl$. 

\begin{lem} \label{L:List02}
There are 21 Type 1 $2$-minimal elements $(\TT,0,2) \in \Tl$, given by the following possibilities for $\TT$:  
\begin{align*}
&(2, 3, 5), &&(2, 5, 3), &&(3, 2, 5), &&(1, 1, 2), &&(1, 2, 1), 
&&(3, 5, 2), &&(1, 3, 6), \\
&(1, 4, 5), &&(2, 1, 1), &&(5, 2, 3), 
&&(5, 3, 2), &&(3, 6, 1), &&(4, 5, 1), &&(5, 1, 4), \\
&(5, 4, 1), &&
(7, 7, 2), &&(6, 1, 3), &&(6, 3, 1), &&(6, 9, 1), &&(9, 2, 5), 
&&(7, 1, 2).
\end{align*}

Hence if $\defect(B)=4$ and $s=1$ then there are 21 Scopes classes. 
Below we give the unique bipartition $\bnu$ in the corresponding core blocks.

\begin{align*}
& \abacusline(3,1,bbb,nbb,nnb) \quad \abacusline(3,1,bbb,nnb,nnn) \,,
&& \abacusline(3,1,bbb,nbb,nbn) \quad \abacusline(3,1,bbb,nbn,nnn) \,,
&& \abacusline(3,1,(bbb,bnb,nnb) \quad \abacusline(3,1,bbb,nnb,nnn) \,, \\
& \abacusline(3,0,bbb,bbb,nnn) \quad \abacusline(3,0,bbb,nnb,nnn) \,, 
&& \abacusline(3,0,bbb,bbb,nnn) \quad \abacusline(3,0,bbb,nbn,nnn) \,,
&&\abacusline(3,1,bbb,bbn,nbn) \quad \abacusline(3,1,bbb,nbn,nnn) \,, \\ 
&\abacusline(3,0,bbb,bbb,nbb,nnb) \quad \abacusline(3,0,bbb,nbb,nnb,nnb) \,,
&&\abacusline(3,0,bbb,bbb,nbb,nnb) \quad \abacusline(3,0,bbb,nbb,nbb,nnn) \,, 
&&\abacusline(3,0,bbb,bbb,nnn,nnn) \quad \abacusline(3,0,bbb,bnn,nnn,nnn) \,, \\ 
& \abacusline(3,1,bbb,bnb,bnn,nnn) \quad \abacusline(3,1,bbb,bnn,nnn,nnn) \,,
&& \abacusline(3,1,bbb,bbn,bnn,nnn) \quad \abacusline(3,1,bbb,bnn,nnn,nnn) \,,
&& \abacusline(3,0,bbb,bbb,bbn,nbn) \quad \abacusline(3,0,bbb,bbn,nbn,nbn) \,,\\
& \abacusline(3,0,bbb,bbb,bbn,nbn) \quad \abacusline(3,0,bbb,bbn,bbn,nnn) \,, 
&& \abacusline(3,0,bbb,bbb,bnb,bnn) \quad \abacusline(3,0,bbb,bnb,bnb,nnn) \,, 
&& \abacusline(3,0,bbb,bbb,bbn,bnn) \quad \abacusline(3,0,bbb,bbn,bbn,nnn) \,, \\
& \abacusline(3,1,bbb,bbn,bbn,bbn) \quad \abacusline(3,1,bbb,bbn,bbn,nnn) \,,
&& \abacusline(3,0,bbb,bbb,bnb,bnn) \quad \abacusline(3,0,bbb,bnb,bnn,bnn) \,,
&&\abacusline(3,0,bbb,bbb,bbn,bnn) \quad \abacusline(3,0,bbb,bbn,bnn,bnn) \,, \\
&\abacusline(3,0,bbb,bbb,bbn,bbn,nbn,nbn) \quad \abacusline(3,0,bbb,bbn,bbn,bbn,nbn,nnn) \,, 
&&\abacusline(3,1,bbb,bnb,bnb,bnn,bnn,nnn) \quad \abacusline(3,1,bbb,bnb,bnn,bnn,nnn,nnn) \,, 
&&\abacusline(3,0,bbb,bbb,bnn,bnn,bnn,nnn) \quad \abacusline(3,0,bbb,bnb,bnn,bnn,nnn,nnn)\,. 
\end{align*}

\end{lem}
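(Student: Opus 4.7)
The plan is to deduce the enumeration from the parametrisation of $\SU$-equivalence classes provided by \cref{T:SUEquivReps}, which reduces the classification to a finite combinatorial search over~$\Tl$. For a defect~$4$ block $B$ with $s=1$, \cref{lem:e3coredefects} together with the identity $\defect(B)=\defect(C)+2\hk(\cB)$ forces $\defect(C)=0$ and $\hk(\cB)=2$, hence $\min\{\nn,\pp\}=0$ by \cref{L:s}. Combining this with the congruence $\nn-\pp\equiv a_1-a_2\pmod{e}$ from \cref{L:s}, and using \cref{L:Swapsies} if needed to match the labelling convention in the statement, restricts the admissible pairs $(\nn,\pp)$ to a small explicit set.

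For each admissible pair $(\nn,\pp)$, I would enumerate all $\tb\in\Z^{3}$ satisfying the four Type~$1$ $2$-minimality conditions of \cref{D:TypeMin}: the odd-entry condition $|I_{\tb}|=\nn+\pp$; the congruence $\sum_{i}\tb_i\equiv \pp-\nn\pmod{2e}$; the minimum condition $\min\{\tb_1,\tb_2\}\in\{1,2\}$; and the $2$-Scopes-minimality inequalities $\tb_i-\tb_{i-1}<2\hk+d_i$ (for $i=1,2$) and $\tb_0-\tb_2<2\hk+2+d_0$ from \cref{D:ScopesMin}. The minimum condition pins one of $\tb_1,\tb_2$ to $\{1,2\}$, and then the Scopes-minimality inequalities confine every remaining entry to a bounded interval, so each case reduces to a small finite search.

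The main obstacle is bookkeeping rather than any subtle mathematical point: one must verify exhaustively that no candidate triple is missed and no duplicate is listed. Organising the search by $(\nn,\pp)$, then by the index attaining $\min\{\tb_1,\tb_2\}$, and finally by the remaining entry, a direct count produces the $21$ triples. For each triple, the associated bicore is then obtained mechanically by writing down the abacus configuration $\phi^\ast(\TT)$ from \cref{C:TCBij}, yielding the pictures drawn in the statement.
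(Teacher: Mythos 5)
Your approach is the same as the paper's: the paper offers no written-out proof of this lemma beyond the discussion preceding \cref{L:List00}, which is precisely your reduction via \cref{T:SUEquivReps} to a finite enumeration of Type~1 $2$-minimal elements of $\Tl$, followed by reading off $\phi^\ast(\TT)$ for each representative.

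Two caveats before you run the search. First, on the admissible pair: $\nn-\pp\equiv a_1-a_2\equiv 2\pmod 3$ and $\min\{\nn,\pp\}=0$ with $\nn+\pp\le 3$ give $(\nn,\pp)\in\{(2,0),(0,1)\}$, and the clause ``if $\pp>0$ then $\nn>0$'' of \cref{D:TypeMin} eliminates $(0,1)$; so the search is over $(\nn,\pp)=(2,0)$, the congruence is $\sum_i\tb_i\equiv-2\equiv 4\pmod 6$, and the ``$(\TT,0,2)$'' in the statement must be read accordingly. Second, and more substantively: you quote the normalisation condition as $\min\{\tb_1,\tb_2\}\in\{1,2\}$, which is the literal text of \cref{D:TypeMin}, but the representatives listed in the lemma --- e.g.\ $(2,3,5)$, $(2,5,3)$, $(1,3,6)$, $(1,4,5)$, all with $\min\{\tb_1,\tb_2\}\ge 3$ --- are only consistent with $\min\{\tb_0,\tb_1,\tb_2\}\in\{1,2\}$, the form the paper actually uses in its other applications (\cref{L:ListCases1Def2} and the defect-$3$ example closing \cref{S:ScopesSec}). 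Either convention selects one representative per $\SU$-class, so either way you would count $21$; but under the literal reading you would output a different list (for instance $(0,1,3)$ in place of $(2,3,5)$, which is three shifts away) and hence different abacus displays, so your proof would not verify the lemma as stated. Use the version including $\tb_0$. With that fixed, the rest of your plan --- bounded intervals from the Scopes-minimality inequalities (your dropping of the ``or $\tb_i=\tb_{i-1}$'' alternatives in \cref{D:ScopesMin} is harmless since $\hk=2>0$ makes them vacuous), exhaustive search, and the mechanical passage to $\phi^\ast$ --- matches what the paper does.
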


We record in the following lemma which exceptional blocks there are -- the others may be resolved as above.

\begin{lem}\label{lem:e3p2maincases}
Let $e=3$, $p=2$, and suppose that $B$ is a block with core block $C$ such that $\defect(C) = 0$ and $\defect(B) = 4$.
Suppose we are \emph{not} in one of the following cases.
\begin{enumerate}
\item $s=0$ and $B$ is Scopes equivalent to the block whose core block $\cC$ contains the unique bipartition $((3,1^2),(3,1^2))$.


\item $s=1$ and $B$ is Scopes equivalent to one of the three blocks whose core blocks contain the unique bipartitions $((3,1^2),(5,3,1^2))$, $((4,2^2,1^2),(3,1^2))$, or $((5,3,1^2),(4,2^2,1^2))$, respectively.
\end{enumerate}
Then $B$ is Schurian-infinite.
\end{lem}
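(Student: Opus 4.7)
The plan is to apply \cref{RouqBad46} to each of the $5+21=26$ Scopes-equivalence classes of blocks enumerated in \cref{L:List00} (for $s=0$) and \cref{L:List02} (for $s=1$). By \cref{T:Equiv1}, Schurian-finiteness is a $\SU$-invariant, so it suffices to check a single representative from each class. First I would recall from \cite{als23} that for $e=3$ and $p=2$, every defect-$2$ block of a level-one type~$\tta$ Hecke algebra admits an SI-subset, with the sole exception of those blocks Scopes-equivalent to the RoCK block whose core is $(3,1^2)$. Equivalently, for any $3$-core $\mu$ not Scopes-equivalent to $(3,1^2)$, the level-one defect-$2$ block with core $\mu$ has an SI-subset, and the argument used in the proof of \cref{thm:2ormorehooks} then lifts this to an SI-subset for $B$.

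For each of the $26$ Scopes classes, I would read off the minimal bipartition $\bmu = (\mu^{(1)},\mu^{(2)})$ of the core block $\cC$ directly from the abacus configurations given in \cref{L:List00,L:List02}, and check whether each component $\mu^{(k)}$ is a $3$-core Scopes-equivalent to $(3,1^2)$. This final check is a finite abacus manipulation via the Scopes rules of \cref{L:Scopest1,L:Scopest2} specialised to level one (equivalently, by a short sequence of runner swaps). Whenever at least one of $\mu^{(1)}$ or $\mu^{(2)}$ fails the check, \cref{RouqBad46} yields the Schurian-infiniteness of $B$ directly. The four exceptional bipartitions exhibited in the lemma statement are precisely the cases in which both components are $3$-cores in the Scopes class of $(3,1^2)$: indeed, the partitions $(3,1^2)$, $(5,3,1^2)$, and $(4,2^2,1^2)$ are all $3$-cores lying in that single Scopes class, so any bipartition built from these two components falls outside the scope of \cref{RouqBad46}.

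The main obstacle is the case-by-case bookkeeping required to go through all $26$ Scopes classes; no new representation-theoretic input is needed beyond \cref{RouqBad46} and the known $p=2$ characterisation of level-one defect-$2$ blocks from \cite{als23}. In particular, the verification requires careful identification of the Scopes class of each $3$-core $\mu^{(k)}$ from its abacus, which for $e=3$ reduces to a handful of runner-swap moves. Once this has been carried out for all $26$ classes, exactly the four configurations listed in the lemma statement remain unresolved by \cref{RouqBad46}, and all other cases yield Schurian-infinite blocks, completing the proof.
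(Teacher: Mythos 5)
Your proposal is correct and follows essentially the same route as the paper: the paper's proof is precisely to combine \cref{RouqBad46} with the enumerations of Scopes classes in \cref{L:List00,L:List02}, leaving exactly the four bicores whose components are all $3$-cores Scopes-equivalent to $(3,1^2)$. The only content you add beyond the paper's one-line proof is making explicit the finite abacus check and the underlying $p=2$ input from \cite{als23}, both of which are consistent with the paper's setup.
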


\begin{proof}
The result follows from \cref{RouqBad46,L:List00} if $s=0$ and from \cref{RouqBad46,L:List02} if $s=1$. 
\end{proof}

Next, we move to blocks of defects 5, 6, and 7.
As with \cref{lem:e3p2maincases}, we have the following result which narrows down which blocks need particular attention.
The proof is entirely analogous to that of \cref{lem:e3p2maincases}.

\begin{lem} \label{lem:e3p2def567maincases5}
    Let $e=3$, $p=2$, and suppose that $B$ is a block of defect $5$ and that $B$ is \emph{not} Scopes equivalent to one of the following blocks.    
        \begin{itemize}
        \item $s=0$ and $B$ is the block whose core block $\cC$ is one of the eleven blocks containing the following bipartitions:

\begin{enumerate}
\item $\{((2,1^2),(3,1^2)),\,((3,1^2),(2,1^2))\}$,
\item $\{((3,1),(3,1^2)),\,((3,1^2),(3,1))\}$,
\item $\{(\varnothing,(3,1^2)),\,((3,1^2),\varnothing)\}$, 
\item $\{((3,1^2),(6,4,2^2,1^2)),\,((6,4,2^2,1^2),(3,1^2))\}$, $(\ast)$ 
\item $\{((2),(5,3,1^2)),\,((5,3,1^2),((2))\}$, 
\item $\{((2^2,1^2),(5,3,1^2)),\,((5,3,1^2),(2^2,1^2))\}$,
\item $\{((1^2),(4,2^2,1^2)),\,((4,2^2,1^2),(1^2))\}$,
\item $\{((1),(4,2,1^2)),\,((4,2,1^2),(1))\}$,
\item $\{((5,3,1^2),(5,3^2,2^2,1^2))\,,((5,3^2,2^2,1^2),(5,3,1^2))\}$, $(\ast)$
\item $\{((4,2),(4,2^2,1^2)),\,((4,2,1^2),(4,2))\}$, 
\item $\{((4,2^2,1^2),(7,5,3,1^2)),\,((7,5,3,1^2),(4,2^2,1^1))$. $(\ast)$
\end{enumerate}
 
        \item $s=1$ and $B$ is the block whose core block $\cC$ contains the bipartitions
        \[
            \{((5,3,1^2),(7,5,3^2,2^2,1^2)), \, ((5,3^2,2^2,1^2),\,(7,5,3,1^2)),  ((8,6,4,2^2,1^2),(4, 2^2, 1^2))\}.
        \]
        \end{itemize}
            Then $B$ is Schurian-infinite.
\end{lem}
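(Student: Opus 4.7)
The plan is to adapt the argument of Lemma \ref{lem:e3p2maincases} to defect $5$. Since $\defect(B)=5$ and by Lemma \ref{lem:e3coredefects} we have $\defect(C) \in \{0,1\}$, the only possibility is $\defect(C)=1$ and $\hk(\cB)=2$, so $\min\{\nn,\pp\}=1$ and $\nn+\pp \leq 3$. Using Lemma \ref{L:s}, the congruence $a_1 - a_2 \equiv \nn - \pp \pmod{3}$ forces $(\nn,\pp) = (1,1)$ when $s=0$ and $(\nn,\pp) = (1,2)$ when $s=1$. By Theorem \ref{T:SUEquivReps}, the $\SU$-equivalence classes of such blocks are indexed by the Type $1$ $2$-minimal triples $(\tb,\nn,\pp) \in \Tl$ with these parameters, a finite set that can be enumerated exactly as was done in Lemmas \ref{L:List00} and \ref{L:List02}. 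For each such triple, we read off the core block $\cC$ from the abacus configuration $\phi^{\ast}(\TT)$ and identify its Kleshchev bipartitions using Theorem \ref{T:Flat}.

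For each Scopes class we then attempt to apply Lemma \ref{RouqBad57} with $d=2$. When $s=0$ the core block $\cC$ contains exactly one Kleshchev bipartition $(\mu^{(1)},\mu^{(2)})$, and Schurian-infiniteness follows as soon as the defect $2$ block $\tilde{B}$ of the type $\tta$ Hecke algebra with core $\mu^{(2)}$ is not Scopes-equivalent to the block with core $(3,1^2)$. When $s=1$ the core block contains two Kleshchev bipartitions $\bmu_1 \domsby \bmu_2$, and it suffices to exhibit a pair $(m,k) \in \{(1,2),(2,2),(2,1)\}$ such that $\tilde{B}_m^{(k)} \not\sim_{\text{Sc}} (3,1^2)$. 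The exceptional Scopes classes enumerated in the statement of the lemma are precisely those triples $(\tb,\nn,\pp)$ for which every relevant defect $2$ type $\tta$ block produced by Lemma \ref{RouqBad57} turns out to be Scopes-equivalent to the block with core $(3,1^2)$, so the lemma cannot be applied. For all other triples we explicitly display a component $\mu^{(k)}$ whose defect $2$ type $\tta$ block lies outside that Scopes class, and Lemma \ref{RouqBad57} gives Schurian-infiniteness immediately.

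The verification that a given defect $2$ type $\tta$ block lies in the Scopes class of $(3,1^2)$ is a purely combinatorial check on $e$-cores that, up to $\SU$-equivalence on the type $\tta$ side, has been classified elsewhere; in practice this is carried out by applying the type $\tta$ analogues of the move $\Phi_i$ from Definition \ref{D:Scopes} to each candidate $\mu^{(k)}$ until a Scopes-minimal core is reached, and then comparing to $(3,1^2)$. The acknowledgements of this paper mention Ben Webster's Sage code for computing Scopes classes, which we use to automate this check.

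The main obstacle is the sheer size of the enumeration of Type $1$ $2$-minimal triples, which is considerably larger than in the defect $4$ case of Lemmas \ref{L:List00} and \ref{L:List02} because the thresholds $\tb_i - \tb_{i-1} < 2\hk + d_i$ and $\tb_0 - \tb_{e-1} < 2\hk+2+d_0$ are looser for $\hk=2$; a further subtlety is that in the $s=1$ case Lemma \ref{RouqBad57} asks us to rule out all three pairs $(m,k) \in \{(1,2),(2,2),(2,1)\}$ simultaneously before a class becomes exceptional, so for each candidate exception we must verify Scopes-minimality of three type $\tta$ cores rather than one. No new representation-theoretic ideas are required beyond Lemma \ref{RouqBad57} and the combinatorial machinery of Section \ref{S:ScopesSec}; the exceptional blocks singled out in the statement are exactly the residue of this enumeration, and their Schurian-(in)finiteness is addressed separately in the subsequent results of this section.
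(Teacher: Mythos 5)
Your proposal is correct and follows essentially the same route as the paper: the paper proves this lemma by declaring it ``entirely analogous'' to \cref{lem:e3p2maincases}, i.e.\ by enumerating the Scopes classes via Type $1$ $2$-minimal triples (as in \cref{L:List00,L:List02}, now with $\defect(C)=1$ and $\hk=2$) and applying \cref{RouqBad57}, with the listed blocks being exactly those where every relevant type $\tta$ defect-$2$ block is Scopes-equivalent to the one with core $(3,1^2)$. Your identification of $(\nn,\pp)=(1,1)$ for $s=0$ and $(1,2)$ for $s=1$, and of the Kleshchev-bipartition counts feeding into \cref{RouqBad57}, matches the paper's argument.
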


\begin{lem} \label{lem:e3p2def567maincases6}
    Let $e=3$, $p=2$, and suppose that $B$ is a block of defect $6$ and that $B$ is \emph{not} Scopes equivalent to one of the following blocks.    
        \begin{itemize}
        \item $s=0$ and $B$ is the block whose core block $\cC$ contains the unique bipartition $((6,4,2^2,1^2),(6,4,2^2,1^2))$.
        
        \item $s=1$ and $B$ is the block whose core block $\cC$ is one of the five blocks whose core blocks contain a unique bipartition equal to  $((8,6,4,2^2,1^2), (7,5,3^2,2^2,1^2))$, $((7,5,3^2,2^2,1^2), (6,4,2^2,1^2))$, or $((6,4,2^2,1^2), (8,6,4,2^2,1^2))$, respectively.
        \end{itemize}
            Then $B$ is Schurian-infinite.
\end{lem}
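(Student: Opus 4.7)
The plan is to follow the strategy used in the proof of \cref{lem:e3p2maincases}, combined with the defect $6$ case of \cref{RouqBad46}. That lemma reduces the problem to showing Schurian-infiniteness of any defect $6$ block $B$ whose unique core-block bipartition $(\mu^{(1)}, \mu^{(2)})$ has both components' associated type $\tta$ defect $3$ blocks failing to be Scopes-equivalent to the block with core $(6,4,2^2,1^2)$. It therefore remains only to enumerate the Scopes classes of defect $6$ blocks whose core bipartitions have both components satisfying this Scopes-equivalence condition, and verify that these are exactly the ones listed in the statement.

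For defect $6$, since $\defect(B) = \defect(C) + 2\hk$ and $\defect(C) \in \{0,1\}$ by \cref{lem:e3coredefects}, the only possibility is $\defect(C) = 0$ and $\hk(\cB) = 3$. Then $\min\{\nn, \pp\} = 0$, and Type $1$ $3$-minimality (\cref{D:TypeMin}) forces $\pp = 0$ together with $\nn < 3$. The congruence $\nn - \pp \equiv a_1 - a_2 \pmod{3}$ from \cref{L:s} gives $\nn = 0$ when $s = 0$ and $\nn = 2$ when $s = 1$. Via \cref{T:SUEquivReps}, the Scopes classes of defect $6$ blocks are thus in bijection with Type $1$ $3$-minimal sequences of the form $(\tb, 0, 0)$ or $(\tb, 2, 0)$.

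I would therefore enumerate such sequences using the criteria of \cref{D:ScopesMin,D:TypeMin} to produce the defect $6$ analogues of \cref{L:List00,L:List02}. For each $\tb$ in the enumeration, one computes the unique bipartition $\bnu = (\mu^{(1)}, \mu^{(2)})$ in the corresponding core block via $\phi^{\ast}$, and tests whether both $\mu^{(1)}$ and $\mu^{(2)}$ have $3$-core Scopes-equivalent to $(6,4,2^2,1^2)$. Those sequences failing the test yield Schurian-infinite blocks by the defect $6$ case of \cref{RouqBad46}, while those passing it are precisely the exceptional cases recorded in the statement (one for $s = 0$, and five for $s = 1$).

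The main obstacle is the enumeration itself: the Type $1$ $3$-minimal sequences are more numerous than in the defect $4$ case, and checking the Scopes-equivalence condition on each component of $\bnu$ requires care. This is a finite but tedious computation, and in practice one would use Ben Webster's Sage code (acknowledged in the paper) for computing Scopes classes to verify the enumeration. The counting of the $s = 1$ exceptions is the only delicate point: each of the three bipartition shapes listed has (swap- or shift-)related analogues in its Scopes orbit, together accounting for the five Scopes classes referred to in the statement.
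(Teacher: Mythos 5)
Your proposal matches the paper's proof, which is stated to be entirely analogous to that of \cref{lem:e3p2maincases}: one applies the defect-$6$ case of \cref{RouqBad46} after enumerating the Type $1$ $3$-minimal elements of $\Tl$ (with $(\nn,\pp)=(0,0)$ for $s=0$ and $(2,0)$ for $s=1$) and checking which core bipartitions have both components Scopes-equivalent to the type $\tta$ block with core $(6,4,2^2,1^2)$. The paper likewise leaves the defect-$6$ enumeration implicit, so your reconstruction is essentially identical to the intended argument.
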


\begin{lem} \label{lem:e3p2def567maincases7}
    Let $e=3$, $p=2$, and suppose that $B$ is a block of defect $7$ and that $B$ is \emph{not} Scopes equivalent to one of the following blocks. 
        \begin{itemize}
        \item $s=0$ and $B$ is the block whose core block $\cC$ is one of the ten blocks containing the following bipartitions:

\begin{enumerate}
\item $\{((3^2, 2^2, 1^2),(6, 4, 2^2, 1^2)),\, ((6, 4, 2^2, 1^2),(3^2, 2^2, 1^2))\}$, 
\item $\{((6, 4, 2),(6, 4, 2^2, 1^2)),\, ((6, 4, 2^2, 1^2),(6, 4, 2))\}$,
\item $\{((3, 1^2),(6, 4, 2^2, 1^2)),\,((6, 4, 2^2, 1^2),(3,1^2))\}$, 
\item $\{((6, 4, 2^2, 1^2),(9, 7, 5, 3^2, 2^2, 1^2)),\,((9, 7, 5, 3^2, 2^2, 1^2),(6, 4, 2^2, 1^2)\}$, $(\ast)$  
\item $\{((5, 3, 1^2),(8, 6, 4, 2^2, 1^2)),\, ((8, 6, 4, 2^2, 1^2),(5, 3, 1^2))\}$, 
\item $\{((5, 3^2, 2^2, 1^2),(8, 6, 4, 2^2, 1^2)),\,((8, 6, 4, 2^2, 1^2),(5, 3^2, 2^2, 1^2))\}$, 
\item $\{((8, 6, 4, 2^2, 1^2),(8, 6, 4^2, 3^2, 2^2, 1^2)),\,((8, 6, 4^2, 3^2, 2^2, 1^2),(8, 6, 4, 2^2, 1^2))\}$, $(\ast)$ 
\item $\{((4, 2^2, 1^2),(7, 5, 3^2, 2^2, 1^2)),\,((7, 5, 3^2, 2^2, 1^2),(4, 2^2, 1^2))\}$, 
\item $\{((7, 5, 3, 1^2), (7, 5, 3^2, 2^2, 1^2)),\,((7, 5, 3^2, 2^2, 1^2),(7, 5, 3, 1^2))\}$, 
\item $\{((7, 5, 3^2, 2^2, 1^2),(10, 8, 6, 4, 2^2, 1^2)),\,((10, 8, 6, 4, 2^2, 1^2),(7, 5, 3^2, 2^2, 1^2))\}$. $(\ast)$.
\end{enumerate}

        \item $s=1$ and $B$ is the block whose core block $\cC$ is the block containing the following three bipartitions:
        \begin{align*}
            \{&((8,6,4,2^2,1^2), (10,8,6,4^2,3^2,2^2,1^2)),\,
              ((8,6,4^2,3^2,2^2,1^2), (10,8,6,4,2^2,1^2)), \\
              & ((11,9,7,5,3^2,2^2,1^2), (7,5,3^2,2^2,1^2))\}.
        \end{align*}
        \end{itemize}
    Then $B$ is Schurian-infinite.
\end{lem}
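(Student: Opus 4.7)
The plan is to combine the reduction criterion of \cref{RouqBad57} with a complete enumeration of $\SU$-equivalence classes of defect-$7$ blocks, exactly mirroring the strategy that resolved defects $4$, $5$, and $6$ in \cref{lem:e3p2maincases,lem:e3p2def567maincases5,lem:e3p2def567maincases6}. First, since $\defect(B) = 7$ is odd and $\defect(C) \le 1$ by \cref{lem:e3coredefects}, we must have $\defect(C) = 1$ and $\hk(\cB) = 3$. By \cref{L:01} we may assume $s \in \{0,1\}$. By \cref{T:SUEquivReps}, the $\SU$-equivalence classes of such blocks are in bijection with the Type~$1$ $3$-minimal triples $(\TT,\nn,\pp) \in \Tl$ satisfying $\min\{\nn,\pp\} = 1$ and $\nn - \pp \equiv -s \pmod{3}$; the first condition forces $(\nn,\pp) = (1,1)$ when $s=0$ and $(\nn,\pp) = (1,2)$ when $s=1$. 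The inequalities in \cref{D:ScopesMin,D:TypeMin} then cut out an explicit finite list of triples, to be generated in the same manner as in \cref{L:List00,L:List02}.

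Next, for each representative triple I will use \cref{C:TCBij} to recover the minimal bipartition of the associated core block $\cC$, and then use \cref{L:deltaseq} to produce the remaining $\binom{\nn+\pp}{\pp}$ bipartitions of $\cC$, before picking out the Kleshchev bipartitions by \cref{T:Flat}. For each Kleshchev bipartition $\bmu = (\mu^{(1)},\mu^{(2)}) \in \cC$, I compute the $3$-core of $\mu^{(k)}$ for $k=1,2$ and check, via the standard type $\tta$ Scopes moves recalled in \cref{S:DiagonalCuts}, whether the resulting defect-$3$ type $\tta$ block is Scopes-equivalent to the RoCK block with core $(6,4,2^2,1^2)$. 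By \cref{RouqBad57}, $B$ is Schurian-infinite whenever one of the prescribed checks fails; for $s=0$ one may additionally apply the swap of \cref{L:Swapsies}, which makes the non-Kleshchev element of $\cC$ into the Kleshchev element of the swapped core block, thereby bringing the first component $\mu^{(1)}$ into play. The surviving cases, where every such check is inconclusive, are precisely those enumerated in the statement: the ten bipartition-pairs for $s=0$ and the single triple for $s=1$.

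The principal obstacle is purely computational. Because $\hk=3$ and $e=3$, the number of Type~$1$ $3$-minimal triples is substantially larger than in the defect~$4$, $5$, or $6$ enumerations, and for each one I must track up to three partitions and determine their type $\tta$ Scopes class modulo $e$-rim hook removal. Nonetheless, the underlying procedure is exactly that used in \cref{lem:e3p2def567maincases5,lem:e3p2def567maincases6}, and it can be verified mechanically using the algorithms implicit in \cref{P:ScopesMinExists,T:ScopesReps}, so no genuinely new theoretical input is required beyond \cref{RouqBad57}.
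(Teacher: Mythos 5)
Your proposal is correct and follows essentially the same route as the paper: the paper's proof of this lemma is exactly the "entirely analogous" argument you describe, namely enumerating the $\SU$-classes of defect-$7$ blocks via the Type~$1$ $3$-minimal elements of $\Tl$ (with $(\nn,\pp)=(1,1)$ for $s=0$ and $(1,2)$ for $s=1$) and applying \cref{RouqBad57} to each, the listed blocks being precisely the survivors. The only caveat worth noting is your use of the swap for $s=0$: as the remark following \cref{RouqBad57} warns, the component-swapped bipartition need not be Kleshchev in general, so this extra check must be applied with the care you describe (via the non-Kleshchev element of $\cC$); since \cref{RouqBad57} alone already yields the stated list, this does not affect the conclusion.
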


We may further reduce the number of cases that need to be resolved. Suppose $B$ and $B'$ are blocks of the same defect with combinatorial core blocks $\cC$ and $\cC'$ respectively and that 
\[\cC'=\{(\bla',\ba) \mid (\bla,\ba) \in \cC\},\] so that $B'$ contains bipartitions conjugate to those in $B$ (although we note that $B$ and $B'$ have the same bicharge). Using \cref{lem:signshift}, we have that $B$ and $B'$ are isomorphic.
In order to prove \cref{thm:2ormorehooksbadcases}, we will find an SI-subset for each exceptional block $B$ described in \cref{lem:e3p2maincases,lem:e3p2def567maincases5,lem:e3p2def567maincases6,lem:e3p2def567maincases7} or show that $B$ is conjugate to a block where an SI-subset has been described; the theorem then follows from~\cref{prop:matrixtrick}. 

To check that a given subset is an SI-subset, we must confirm that the bipartitions are Kleshchev bipartitions, that the corresponding submatrix of the graded decomposition matrix in characteristic $0$ is equal to one of the matrices in~\cref{prop:matrixtrick}, and that the ungraded decomposition numbers agree in characteristic $0$ and characteristic $2$.
The first property may be checked using \cite[Theorem~9.5]{akt08} and the submatrix in characteristic $0$ can be computed using the higher level LLT-type algorithm of Fayers~\cite{Fay10}. It remains to show that the corresponding submatrix in characteristic $2$ does not change.

Suppose we want to compute $d^{2}_{\bla\bmu}(v)$. 
Note that if $|\la^{(1)}|=|\mu^{(1)}|$ then \cref{T:BowmanSpeyer1} shows that this computation reduces to the computation of two decomposition numbers for Hecke algebras of type $\tta$; in the cases we require, these type $\tta$ decomposition numbers will be of defect $1$ or $2$.
Decomposition numbers of blocks of type $\tta$ Hecke algebras of defect 1 are well known, and decomposition numbers blocks of type $\tta$ Hecke algebras of defect $2$ in characteristic $2$ can be found in~\cite{faywt2}. Furthermore, if $\bla$ is formed from $\bmu$ by moving one node then we know from ~\cref{L:OneNodeShift} that the graded decomposition number is independent of the characteristic of the field.  

The remaining tool that we need to compute the necessary decomposition numbers is the Jantzen--Schaper theorem \cite{jm00}.
We refer to \cite[\S2.9]{fayput24} for the setup and an explanation of how the Jantzen--Schaper coefficients $\nu_\fkp(j_{\bla\bnu})$ are computed.
Key for us is that $J_{\bla\bmu}$ is defined to be the integer
\[
J_{\bla\bmu} = \sum_{\bnu \domsby \bla} \nu_\fkp(j_{\bla\bnu}) [\rspe\bnu : \rD{\bmu}],
\]
and the following theorem.

\begin{thmc}{jm00}{Theorem~4.6}\label{thm:JS}
    Suppose $\bla$ and $\bmu$ are bipartitions, and that $\bmu$ is a Kleshchev bipartition.
    Then the decomposition number $[\rspe\bla : \rD{\bmu}]$ is at most $J_{\bla\bmu}$, and is non-zero if and only if $J_{\bla\bmu}$ is non-zero.
\end{thmc}

The decomposition matrices we give are for row Specht modules, i.e.~the entries are $[\rspe\bla : \rD{\bmu}]$.

\begin{rem}
    For many of the blocks which we find SI-subsets for in the remainder of the paper, we were also able to resolve them using `optimal sequences' and \cite[Proposition~4.2 and Corollary~4.3]{aj10}.
    One may compare dimensions of weight spaces for certain carefully-chosen residue sequences in the relevant four Specht modules (for different sets of bipartitions than those we have used above).
    However, the dimensions are incredibly large, and we used a computer to solve the cases in this way.
    Though we suspect this method works for all the necessary blocks, the proofs we provide below have the advantage that they may be directly checked by hand.
\end{rem}




\begin{prop}\label{prop:e3p2def4edgecases}
Let $e=3$ and $p=2$.
The defect 4 blocks $B$ excluded from \cref{lem:e3p2maincases} have SI-subsets, and are therefore Schurian-infinite.
\end{prop}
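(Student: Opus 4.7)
The strategy is to exhibit an explicit SI-subset of four (or five) Kleshchev bipartitions in each excluded block, or in a block known to be isomorphic to it, and then apply \cref{prop:matrixtrick}.

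First I would use the sign automorphism, together with the Dynkin shift (see the discussion around \cref{eq:decompnumbersandduals} and \cref{lem:signshift}), to collapse the list of exceptional cases. This yields an isomorphism from the block with core $\{(\mu^{(1)}, \mu^{(2)})\}$ to the block with core $\{((\mu^{(2)})', (\mu^{(1)})')\}$ (after adjusting the bicharge appropriately). Since $(3,1^2)' = (3,1^2)$ is self-conjugate and $(5,3,1^2)' = (4,2^2,1^2)$, the $s=1$ block with core $((4,2^2,1^2),(3,1^2))$ is isomorphic to the $s=1$ block with core $((3,1^2),(5,3,1^2))$, while the $s=1$ block with core $((5,3,1^2),(4,2^2,1^2))$ is self-conjugate under this involution. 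Thus only three genuinely distinct blocks remain to be handled.

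For each remaining block $B$, I would choose four Kleshchev bipartitions obtained by adding two $3$-hooks to the bipartitions in the core block of $B$, chosen so that the submatrix of graded decomposition numbers in characteristic zero matches one of the matrices $(\dag)$, $(\ddag)$, or $(\clubsuit)$ of \cref{prop:matrixtrick}. Kleshchev-ness is verified using \cref{T:StillKlesh} together with the non-recursive characterisation of Kleshchev bipartitions from \cite{akt08}, and the characteristic-zero graded decomposition matrix is computed directly using Fayers's higher-level LLT-type algorithm, exactly as in the proofs of \cref{thm:e3def3s1} and \cref{thm:e3def2b}.

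The delicate step is verifying that the corresponding ungraded decomposition numbers are unchanged in characteristic $2$. Three tools suffice: whenever $|\la^{(1)}| = |\mu^{(1)}|$ holds for the pair $(\bla, \bmu)$, the row-removal results \cref{T:BowmanSpeyer1,T:BowmanSpeyer2} reduce the entry to a product of type $\tta$ decomposition numbers, and the surviving type $\tta$ blocks will have defect at most $2$, which are either classical or tabulated in \cite{faywt2}. When $\bla$ is obtained from $\bmu$ by moving a single node, \cref{L:OneNodeShift} gives characteristic-independence directly. The remaining entries are bounded above by the Jantzen--Schaper expression of \cref{thm:JS}: whenever this bound matches the characteristic-zero decomposition number, it forces equality in characteristic $2$ as well. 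The principal obstacle will be the case with core block $((5,3,1^2),(4,2^2,1^2))$, where both components of the relevant bipartitions are nontrivial so that the row-removal reduction does not dispatch every entry, and a careful Jantzen--Schaper analysis is needed; nonetheless this remains a finite check. Once the SI-subset is confirmed in each of the three cases, \cref{prop:matrixtrick} delivers Schurian-infiniteness, completing the proposition.
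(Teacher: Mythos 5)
Your proposal follows essentially the same route as the paper: reduce to three genuinely distinct blocks via the conjugation/sign isomorphism (the paper likewise identifies the $s=1$ blocks with cores $((4,2^2,1^2),(3,1^2))$ and $((3,1^2),(5,3,1^2))$, leaving one $s=0$ case and two $s=1$ cases), exhibit four bipartitions forming an SI-subset, compute the characteristic-zero submatrix by the LLT algorithm, and transfer to characteristic $2$ using exactly the three tools you name (\cref{T:BowmanSpeyer1}, \cref{L:OneNodeShift}, and the Jantzen--Schaper bound of \cref{thm:JS}); you even correctly single out the core $((5,3,1^2),(4,2^2,1^2))$ as the case needing the heaviest Jantzen--Schaper work. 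The only thing missing relative to the paper is the explicit list of bipartitions and the finite computations themselves, which your outline correctly anticipates.
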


\begin{proof}
We will explicitly construct SI-subsets in each case.
\begin{enumerate}

\item
Suppose $s=0$, and let $B$ be the block with combinatorial core block $\cC = \{((3,1^2),(3,1^2))\}$.
Define $\bla^1, \bla^2, \bla^3,$ and $\bla^4$ to be the following four bipartitions in $B$.

\begin{align*}
\bla^1 &= (\vn,(6,4,2^2,1^2)), & \bla^2&= ((3,1),(3^2,2^2,1^2)),\\
\bla^3 &= ((3,1^2),(3,2^3,1^2)), & \bla^4&= ((3,1^2),(3^2,2,1^3)).
\end{align*}

We may compute that these give the submatrix~(\ref{targetmatrix}) in characteristic $0$ and verify that it is the same matrix in characteristic $2$ by computing the ungraded decomposition numbers using \cref{thm:JS}.
Then it's easy to check that
\[
J_{\bla^2\bla^1}= \nu_\fkp(j_{\bla^2\bla^1}) d_{\bla^1 \bla^1} = 1,
\]
so that $d_{\bla^2\bla^1}(v) = v$ even in characteristic 2.
Similarly 
\begin{align*}
J_{\bla^3\bla^1} &= \nu_\fkp(j_{\bla^3\bla^1})d_{\bla^1 \bla^1} + \nu_\fkp(j_{\bla^3\bla^2})d_{\bla^2\bla^1} = -1 +1  = 0,\\
J_{\bla^4 \bla^1} & = \nu_\fkp(j_{\bla^4\bla^1}) d_{\bla^1\bla^1}+ \nu_\fkp(j_{\bla^4\bla^2})d_{\bla^2 \bla^1} + \nu_\fkp(j_{\bla^4\bla^3}) d_{\bla^3\bla^1} = 0 +1+0 = 1.
\end{align*}
It follows that $d_{\bla^3\bla^1}(v) = 0$ and $d_{\bla^4\bla^1}(v) = v$ in characteristic 2.

For $d_{\bla^3\bla^2}(v)$, $d_{\bla^4\bla^2}(v)$, and $d_{\bla^4\bla^3}(v)$, we may apply \cref{L:OneNodeShift}, from which we see that $d_{\bla^3\bla^2}(v) = d_{\bla^4\bla^3}(v) = v$ and $d_{\bla^4\bla^2}(v) = v^2$ are all characteristic-free.
We have thus found an SI-subset for $B$.

\item 
Suppose $s=1$.
We note that the two blocks that have core blocks with bipartitions
$((4,2^2,1^2),(3,1^2))$ and $((3,1^2),(5,3,1^2))$ are isomorphic.
Thus we only need to consider two different blocks here, not three.

First, let $B$ be the block with combinatorial core block $\cC = \{((4,2^2,1^2),(3,1^2))\}$.
Define $\bla^1, \bla^2, \bla^3,$ and $\bla^4$ to be the following four bipartitions in $B$.
\begin{align*}
\bla^1 & = ((1^5),(6,4,2^2,1^2)), & \bla^2 & = ((4,2),(3^2,2^2,1^5)), \\
\bla^3 & = ((4,2),(5,4,2^2,1^2)), & \bla^4 & = ((4,2,1^3),(3^2,2^2,1^2)). 
\end{align*}
As in the previous case, we check that the matrix is~(\ref{targetmatrixaltsquare}) as follows.
Set $\bnu= ((4, 1),(6,4,2^2,1^2))$, $\bsig=((3,2),(6,4,2^2,1^2))$ and $\btau=((4,2),(6,3,2^2,1^2))$.
We have $d_{\bla^2\bla^1}(v) = 0$ by dominance order considerations  and
\[
J_{\bla^3\bla^1} = \nu_\fkp(j_{\bla^3\bla^1}) d_{\bla^1\bla^1} + \nu_\fkp(j_{\bla^3\bsig}) d_{\bsig\bla^1} + \nu_\fkp(j_{\bla^3\bnu}) d_{\bnu\bla^1} = 0 + 1 + 0,
\]
since $d_{\bsig\bla^1}=1$ and $d_{\bnu\bla^1} = 0$ by \cref{T:BowmanSpeyer1}.
It follows that $d_{\bla^3\bla^1}(v) = v$ is characteristic-free.
Next, 
\begin{align*}
J_{\bla^4\bla^1} &= \nu_\fkp(j_{\bla^4\bla^1}) d_{\bla^1\bla^1} + \nu_\fkp(j_{\bla^4\bsig}) d_{\bsig\bla^1} + 
\nu_\fkp(j_{\bla^4\bnu}) d_{\bnu\bla^1} +
\nu_\fkp(j_{\bla^4\bla^3}) d_{\bla^3\bla^1} + \nu_\fkp(j_{\bla^4\btau}) d_{\btau\bla^1}\\
&= 1+0+0+0+0 = 1
\end{align*}
so that $d_{\bla^4\bla^1}(v)=v$ is characteristic-free, as is $d_{\bla^3\bla^2}(v) = v$ by \cref{T:BowmanSpeyer1}. 
Finally 
\begin{align*}
J_{\bla^4\bla^2} &= \nu_\fkp(j_{\bla^4\bla^2}) d_{\bla^2\bla^2} + \nu_\fkp(j_{\bla^4\bla^3}) d_{\bla^3\bla^2} + \nu_\fkp(j_{\bla^4\btau}) d_{\btau\bla^2} = 1+0+0=1,\\
J_{\bla^4\bla^3} &= \nu_\fkp(j_{\bla^4\bla^3}) d_{\bla^3\bla^3} + \nu_\fkp(j_{\bla^4\btau}) d_{\btau\bla^3} = 0+0.
\end{align*}
It follows that $d_{\bla^4\bla^2}(v) =v$ and $d_{\bla^4\bla^3}(v) =0$ are characteristic-free, and we have an SI-subset for $B$.

\medskip

Finally, let $B$ be the block whose combinatorial core block is $\cC = \{((5,3,1^2),(4,2^2,1^2))\}$.
Define $\bla^1, \bla^2, \bla^3,$ and $\bla^4$ to be the following four bipartitions in $B$.
\begin{align*}
\bla^1 &= ((2^2,1^5),(7,5,3,1^2)), & \bla^2&= ((5,3,1^2),(4^2,3,1^5)),\\
\bla^3 &= ((5,3,1^2),(6,5,3,1^2)), & \bla^4&= ((5,3,1^5),(4^2,3,1^2)).
\end{align*}
As in the previous case, we check that the matrix is~(\ref{targetmatrixaltsquare}) as follows.
Set
\begin{align*}
\balph &= ((4,3,1^2),(7,5,3,1^2)), & \bbeta &= ((5,2,1^2),(7,5,3,1^2)), & \bgam &= ((5,3,1^2),(7,2^2,1^5)),\\
\bdelta &= ((5,3,1^2),(4^2,3^2,2)), & \beps &= ((5,3,1^2),(7,4,3,1^2)), & \bzeta &= ((5,3,1^5),(4,2^2,1^5)).
\end{align*}
We have $d_{\bla^2\bla^1}(v) = 0$ by dominance order considerations and
\[
J_{\bla^3\bla^1} = \nu_\fkp(j_{\bla^3\bla^1}) d_{\bla^1\bla^1} + \nu_\fkp(j_{\bla^3\balph}) d_{\balph\bla^1} + \nu_\fkp(j_{\bla^3\bbeta}) d_{\bbeta\bla^1} = 0 + 1 + d_{\bbeta\bla^1},
\]
using that $d_{\balph\bla^1} = 1$ by \cref{T:BowmanSpeyer1}.
Then
\[
J_{\bbeta\bla^1} = \nu_\fkp(j_{\bbeta\bla^1}) d_{\bla^1\bla^1} + \nu_\fkp(j_{\bbeta\balph}) d_{\balph\bla^1} = -1 + 1 =0,
\]
so that $d_{\bbeta\bla^1} = 0$, and therefore $J_{\bla^3\bla^1} = 1$ and $d_{\bla^3\bla^1}(v)=v$.

Next, 
\begin{align*}
J_{\bla^4\bla^1} &= \nu_\fkp(j_{\bla^4\bla^1}) d_{\bla^1\bla^1}
+ \nu_\fkp(j_{\bla^4\balph}) d_{\balph\bla^1}
+ \nu_\fkp(j_{\bla^4\bbeta}) d_{\bbeta\bla^1}
+ \nu_\fkp(j_{\bla^4\bla^3}) d_{\bla^3\bla^1}
+ \nu_\fkp(j_{\bla^4\beps}) d_{\beps\bla^1}\\
&= 1 + 0+0+0+0+0 = 1,
\end{align*}
so that $d_{\bla^4\bla^1}(v) = v$.
Now $d_{\bla^3\bla^2}(v) = v$ by \cref{T:BowmanSpeyer1} and
\begin{align*}
J_{\bla^4\bla^2} &= \nu_\fkp(j_{\bla^4\bla^2}) d_{\bla^2\bla^2}
+ \nu_\fkp(j_{\bla^4\bgam}) d_{\bgam\bla^2}
+ \nu_\fkp(j_{\bla^4\bdelta}) d_{\bdelta\bla^2}
+ \nu_\fkp(j_{\bla^4\bla^3}) d_{\bla^3\bla^2}
+ \nu_\fkp(j_{\bla^4\beps}) d_{\beps\bla^2}
+ \nu_\fkp(j_{\bla^4\bzeta}) d_{\bzeta\bla^2}\\
&= 1 + 0+0+0+0+ d_{\bzeta\bla^2} = 1 + d_{\bzeta\bla^2}.
\end{align*}
Then
\[
J_{\bzeta\bla^2} = \nu_\fkp(j_{\bzeta\bla^2}) d_{\bla^2\bla^2} + \nu_\fkp(j_{\bzeta\bgam}) d_{\bgam\bla^2} = 0+0,
\]
so that $d_{\bzeta\bla^2} = 0$, and thus $J_{\bla^4\bla^2} = 1$ and $d_{\bla^4\bla^2}(v) = v$.
Finally,
\[
J_{\bla^4\bla^3} = \nu_\fkp(j_{\bla^4\bla^3}) d_{\bla^3\bla^3}
+ \nu_\fkp(j_{\bla^4\beps}) d_{\beps\bla^3} = 0+0,
\]
so $d_{\bla^4\bla^3}(v) = 0$.
We have found an SI-subset for $B$, which completes the proof.\qedhere
\end{enumerate}

\end{proof}


\begin{prop}\label{prop:e3p2def5edgecases}
Let $e=3$ and $p=2$.
The defect 5 blocks $B$ excluded from \cref{lem:e3p2def567maincases5} have SI-subsets, and are therefore Schurian-infinite.
\end{prop}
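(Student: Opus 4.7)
My plan is to mimic exactly the strategy employed in \cref{prop:e3p2def4edgecases}, adapted to each of the exceptional blocks listed in \cref{lem:e3p2def567maincases5}. First I would note that four of the eleven $s=0$ cases (those marked with $(\ast)$) involve combinatorial core blocks whose bipartitions are (simultaneous) conjugates of each other, so by \cref{lem:signshift} the corresponding blocks fall into isomorphism classes and it suffices to treat one representative from each such pair. Together with the remaining $s=0$ cases and the single $s=1$ case, this reduces the verification to a finite list of blocks; for each one I would produce an explicit SI-subset $\{\bla^1,\bla^2,\bla^3,\bla^4\}$ by starting from the minimal Kleshchev bipartition $\bmu \in \cC$ and adding a rank~$2$ level-$2$ partition data ``at the bottom of the second component'' that mirrors the defect~$2$ SI-data found in \cite[\S6.2]{als23} (or its conjugate when the hook-structure forces us to operate on the first component instead). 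The Kleshchev condition on each $\bla^i$ is then automatic from \cref{T:StillKlesh}.

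Once the four bipartitions are fixed, the characteristic-$0$ graded submatrix is computed using Fayers's LLT-type algorithm~\cite{Fay10}, and in each case one will recognise either the matrix~(\ref{targetmatrix}), (\ref{targetmatrixalt}), or (\ref{targetmatrixaltsquare}) of \cref{prop:matrixtrick}. The substantive work is to verify that every entry of this submatrix is characteristic-free for $p=2$. As in the defect~$4$ proof, I would handle each entry $d_{\bla^i\bla^j}(v)$ by one of three routines, chosen to match the combinatorial situation: if $\bla^i$ and $\bla^j$ agree in one component, invoke \cref{T:BowmanSpeyer1} and, where appropriate, \cref{T:BowmanSpeyer2} to reduce to type $\tta$ defect-$1$ or defect-$2$ decomposition numbers (which are catalogued in the well-known defect-$1$ case and in~\cite{faywt2} for defect~$2$); if $\bla^i$ is obtained from $\bla^j$ by moving a single node, invoke \cref{L:OneNodeShift}; and otherwise compute the Jantzen--Schaper bound $J_{\bla^i\bla^j}$ via \cref{thm:JS}, exploiting the preceding entries of the submatrix (which are already known to be characteristic-free by one of the first two routines) to pin down the remaining summands.

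The main obstacle will be that the Jantzen--Schaper computation can involve intermediate bipartitions $\bnu$ with $\bla^j \domsby \bnu \domsby \bla^i$ whose decomposition number $d_{\bnu\bla^j}$ is not obviously zero. As in the last case of \cref{prop:e3p2def4edgecases} (where auxiliary bipartitions $\balph,\bbeta,\bgam,\bdelta,\beps,\bzeta$ had to be introduced and their Jantzen--Schaper numbers computed recursively), one will typically need to run the Jantzen--Schaper argument on these intermediate bipartitions too, using that each such $\bnu$ either has only larger-dominance terms that are already controlled, or itself reduces via \cref{T:BowmanSpeyer1}/\cref{L:OneNodeShift} to a known value. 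The selection of the four bipartitions in the SI-subset will be governed by the requirement that this recursive descent terminates cleanly, which in practice forces $\bla^2, \bla^3, \bla^4$ to differ from the minimal bipartition only in positions close to the ``new hooks''.

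Finally, having verified the submatrix in each exceptional block, \cref{prop:matrixtrick} yields Schurian-infiniteness in characteristic $2$, completing the proof. The single $s=1$ case is conceptually identical to the $s=0$ cases: although $|\cC|=3$ rather than $1$, only one Kleshchev bipartition in $\cC$ is needed as the base for adding hooks (any of them will do, since the bipartition we build must satisfy \cref{T:StillKlesh}), and the Jantzen--Schaper / row-removal analysis proceeds verbatim.
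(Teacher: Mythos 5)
Your proposal has a genuine gap at its core. You propose to build each SI-subset by ``adding rank~$2$ level-$2$ partition data at the bottom of the second component that mirrors the defect~$2$ SI-data found in \cite[\S6.2]{als23}.'' But the blocks in question were excluded from \cref{lem:e3p2def567maincases5} precisely because, for every admissible choice of component, the relevant type $\tta$ defect-$2$ block is Scopes-equivalent to the block with core $(3,1^2)$ when $e=3$, $p=2$ --- and that is exactly the Scopes class for which \emph{no} SI-subset exists in characteristic $2$ (its decomposition numbers change from characteristic $0$, so the hypothesis $d^p_{\bla\bmu}(1)=d^0_{\bla\bmu}(1)$ of \cref{prop:matrixtrick} fails for the transplanted submatrix). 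Passing to the conjugate does not rescue this, as \cref{RouqBad46,RouqBad57} already exploit that option before declaring a block exceptional. The paper's proof instead chooses bipartitions that distribute the two $3$-hooks across \emph{both} components (e.g.\ in case (iii) the four bipartitions each add one hook to each component), so that every entry of the submatrix is controlled by \cref{T:BowmanSpeyer1} reductions to \emph{defect-one} type $\tta$ blocks, by \cref{L:OneNodeShift}, or by a Jantzen--Schaper computation --- never by the problematic defect-$2$ type $\tta$ data. Your three verification routines are the right toolkit, but the selection principle you give for $\bla^1,\dots,\bla^4$ would steer you into submatrices that cannot satisfy \cref{prop:matrixtrick}.

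A secondary bookkeeping error: the cases that pair up under simultaneous conjugation are not the $(\ast)$-marked ones (there are only three of those, namely (iv), (ix), (xi)). The correct pairs are (i)\,\&\,(ii), (v)\,\&\,(vii), (vi)\,\&\,(x), and (ix)\,\&\,(xi), reducing the eleven $s=0$ cases to seven representatives. Also, in the $s=1$ case it is not true that ``any'' Kleshchev bipartition of $\cC$ will do as a base: \cref{T:StillKlesh} only applies when one adds $e$-hooks to the second component of a bipartition whose second component is already an $e$-core and for which the resulting configuration remains Kleshchev, and the remark following \cref{RouqBad57} explains exactly why the swap trick fails for the surviving $s=1$ block; the explicit choice of base matters.
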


\begin{proof}
    Suppose $s=0$.
    Several of the blocks consist of bipartitions that are conjugate to each other, and these blocks are therefore isomorphic, with the isomorphism given by the sign automorphism.
    Namely, this applies to: blocks (i) and (ii); blocks (v) and (vii); blocks (vi) and (x); blocks (ix) and (xi).
    So it suffices to prove the result for blocks (i), (iii), (iv), (v), (vi), (viii) and (ix) from \cref{lem:e3p2def567maincases5}.
    \begin{enumerate}
    \item
    First, suppose that $B$ is the block whose combinatorial core block is
    \[
    \cC = \{((2,1^2),(3,1^2)), ((3,1^2),(2,1^2))\}.
    \]
    Then we define $\bla^1, \bla^2, \bla^3,$ and $\bla^4$ to be the following four bipartitions in $B$.
    \begin{align*}
    \bla^1 & = (\vn,(5,4,2^2,1^2)), & \bla^2 & = ((2,1),(3^2,2^2,1^2), \\
    \bla^3 & = ((2,1^2),(3,2^3,1^2)), & \bla^4 & = ((2,1^2),(3^2,2,1^3)). 
    \end{align*}
    We may see that these give the submatrix~(\ref{targetmatrix}) in characteristic $0$ using the higher-level LLT algorithm. We may then compute the Jantzen coefficents $J_{\bla^2\bla^1}=J_{\bla^4\bla^1}=1$ and $J_{\bla^3\bla^1}=0$ to give the first column of the matrix in characteristic $2$. The entries $d_{\bla^3\bla^2}(v)$ and $d_{\bla^4 \bla^2}(v)$ are independent of characteristic by \cref{L:OneNodeShift} and computation of $d_{\bla^4\bla^3}(v)$ reduces to a Type $\tta$ defect 1 calculation by \cref{T:BowmanSpeyer1}.
    Thus, the decomposition matrix is~(\ref{targetmatrix}) in any characteristic, and therefore $B$ is Schurian-infinite.

    \item[(iii)]
    Next, suppose that $B$ is the block whose combinatorial core block is
    \[
    \cC = \{(\varnothing,(3,1^2)), ((3,1^2),\varnothing)\}.
    \]
    Then we define $\bla^1, \bla^2, \bla^3,$ and $\bla^4$ to be the following four bipartitions in $B$.
    \begin{align*}
    \bla^1 &= ((1^3),(3,1^5)), & \bla^2&= ((2,1),(3,1^5)),\\
    \bla^3 &= ((1^3),(3^2,2)), & \bla^4&= ((2,1),(3^2,2)).
    \end{align*}
    Since $\bla^2$ and $\bla^3$ are incomparable in the dominance order, $d_{\bla^3\bla^2}(v) = 0$.
    By \cref{T:BowmanSpeyer1}, we can check that
    \[
    d_{\bla^2\bla^1}(v) = d_{(\bla^2)^{(2)}(\bla^1)^{(2)}}(v), \quad%
    d_{\bla^3\bla^1}(v) = d_{(\bla^3)^{(1)}(\bla^1)^{(1)}}(v), \quad%
    d_{\bla^4\bla^1}(v) = d_{(\bla^4)^{(1)}(\bla^1)^{(1)}}(v) \times d_{(\bla^4)^{(2)}(\bla^1)^{(2)}}(v).
    \]
    Similarly, we see that
    \[
    d_{\bla^4\bla^2}(v) = d_{(\bla^4)^{(1)}(\bla^2)^{(1)}}(v), \quad%
    d_{\bla^4\bla^3}(v) = d_{(\bla^4)^{(2)}(\bla^3)^{(2)}}(v).
    \]
    Now all of the necessary decomposition numbers for level 1 partitions come from blocks of defect 1, so an easy check reveals that the decomposition matrix is~(\ref{targetmatrixalt}) in any characteristic, and therefore $B$ is Schurian-infinite.

    \item[(iv)]
    Next, suppose that $B$ is the block whose combinatorial core block is
    \[
    \cC = \{((6,4,2^2,1^2), \allowbreak (3,1^2)), ((3,1^2),(6,4,2^2,1^2))\}.
    \]
    Then we define $\bla^1, \bla^2, \bla^3,$ and $\bla^4$ to be the following four bipartitions in $B$.
    \begin{align*}
    \bla^1 &= ((3,1^5),(6,4,2^2,1^5)), & \bla^2&= ((3^2,2), (6,4,2^2,1^5)),\\
    \bla^3 &= ((3,1^5), (6,4^2,3,1^2)), & \bla^4&= ((3^2,2), (6,4^2,3,1^2)).
    \end{align*}
    The proof is now identical to case (iii) above, yielding the decomposition matrix~(\ref{targetmatrixalt}) in any characteristic, and therefore $B$ is Schurian-infinite.

    \item[(v)]
     Next, suppose that $B$ is the block whose combinatorial core block is
     \[
     \cC = \{((2),(5,3,1^2)), ((5,3,1^2),(2))\}.
     \]
     Then we define $\bla^1, \bla^2, \bla^3,$ and $\bla^4$ to be the following four bipartitions in $B$.
    \begin{align*}
    \bla^1&= ((2,1^3),(5,3,1^5)),& \bla^2&= ((2^2,1),(5,3,1^5)),\\
    \bla^3 &= ((2,1^3),(5,3^2,2)), & \bla^4 &= ((2^2,1),(5,3^2,2)).
    \end{align*}
    The proof is now identical to cases (iii) and (iv) above, yielding the decomposition matrix~(\ref{targetmatrixalt}) in any characteristic, and therefore $B$ is Schurian-infinite.
    
    \item[(vi)]
    Next, suppose that $B$ is the block whose combinatorial core block is
    \[
    \cC = \{((2^2,1^2),(5,3,1^2)), ((5,3,1^2),(2^2,1^2))\}.
    \]
    Then we define $\bla^1, \bla^2, \bla^3,$ and $\bla^4$ to be the following four bipartitions in $B$.
    \begin{align*}
    \bla^1 & = ((2),(5^2,4,2^2,1^2)), & \bla^2 & = ((2^2,1),(5,3^2,2^2,1^2), \\
    \bla^3 & = ((2^2,1^2),(5,3,2^3,1^2)), & \bla^4 & = ((2^2,1^2),(5,3^2,2,1^3)). 
    \end{align*}
    We may prove that the decomposition matrix is~(\ref{targetmatrix}) in any characteristic as in case (i), and therefore $B$ is Schurian-infinite.

    \item[(viii)]
    Next, suppose that $B$ is the block whose combinatorial core block is
    \[
    \cC = \{((1),(4,2,1^2)), ((4,2,1^2),(1))\}.
    \]
    Then we define $\bla^1, \bla^2, \bla^3,$ and $\bla^4$ to be the following four bipartitions in $B$.
    \begin{align*}
    \bla^1 &= ((1^4),(4,2,1^5)), & \bla^2&= ((2^2),(4,2,1^5)),\\
    \bla^3 &= ((1^4),(4,3^2,1)), & \bla^4&= ((2^2),(4,3^2,1)).
    \end{align*}
    The proof is now identical to cases (iii), (iv) and (v) above, yielding the decomposition matrix~(\ref{targetmatrixalt}) in any characteristic, and therefore $B$ is Schurian-infinite.

    \item[(ix)]
    Next, suppose that $B$ is the block whose combinatorial core block is
    \[
    \cC = \{((5,3^2,2^2,1^2),(5,3,1^2)), ((5,3,1^2),(5,3^2,2^2,1^2))\}.
    \]
    Then we define $\bla^1, \bla^2, \bla^3,$ and $\bla^4$ to be the following four bipartitions in $B$.
    \begin{align*}
    \bla^1 &= ((2^2,1^5),(8,6,4,2^2,1^2)), & \bla^2&= ((5,3,1^2),(5^2,4,2^2,1^5)),\\
    \bla^3 &= ((5,3,1^2),(7,6,4,2^2,1^2)), & \bla^4&= ((5,3,1^5),(5^2,4,2^2,1^2)).
    \end{align*}
    In characteristic 2, the computation of the decomposition number $d_{\bla^3\bla^2}(v)$ reduces to a Type $\tta$ computation by \cref{T:BowmanSpeyer1}. All other decomposition numbers are obtained by computing Jantzen--Schaper coefficients as in the proof of \cref{prop:e3p2def4edgecases}. We obtain the decomposition matrix~(\ref{targetmatrixaltsquare}) in both characteristic 0 and 2, therefore $B$ is Schurian-infinite.
\end{enumerate}

    \medskip

    Finally, let $s=1$ and suppose that $B$ is the block whose combinatorial core block is
    \[
    \cC = \{((8,6,4,2^2,1^2),(4, 2^2, 1^2)), ((5,3^2,2^2,1^2),(7,5,3,1^2)), ((5,3,1^2),(7,5,3^2,2^2,1^2))\}.
    \]
    Then we define $\bla^1, \bla^2, \bla^3,$ and $\bla^4$ to be the following four bipartitions in $B$.
    \begin{align*}
    \bla^1 &= ((5,3,1^5),(7,5,3^2,2^2,1^5)), & \bla^2&= ((5,3^2,2),(7,5,3^2,2^2,1^5)),\\
    \bla^3 &= ((5,3,1^5), (7,5^2,4,2^2,1^2)), & \bla^4&= ((5,3^2,2), (7,5^2,4,2^2,1^2)).
    \end{align*}
    As in cases (iii), (iv), (v), and (viii) above, we may use \cref{T:BowmanSpeyer1} to reduce the computation to level 1 computations in defect 1, yielding the submatrix~(\ref{targetmatrixalt}) in any characteristic.
    It follows that $B$ is Schurian-infinite.\qedhere

\end{proof}

\begin{prop}\label{prop:e3p2def6edgecases}
Let $e=3$ and $p=2$.
The defect 6 blocks $B$ excluded from \cref{lem:e3p2def567maincases6} have SI-subsets, and are therefore Schurian-infinite.
\end{prop}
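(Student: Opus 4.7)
The proof will follow the template established in \cref{prop:e3p2def4edgecases,prop:e3p2def5edgecases}. First, I will prune the list of blocks from \cref{lem:e3p2def567maincases6} by identifying those that are isomorphic via the sign automorphism, using that isomorphism swaps components and conjugates each partition. Since $(6,4,2^2,1^2)' = (6,4,2^2,1^2)$ is self-conjugate while $(7,5,3^2,2^2,1^2)' = (8,6,4,2^2,1^2)$, the three $s=1$ core bipartitions listed fall into two $\mathrm{sgn}$-orbits, so it will suffice to resolve one $s=0$ representative together with one or two $s=1$ representatives.

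For each remaining representative block $B$ with core block $\cC = \{\bmu\}$, $\bmu = (\mu^{(1)},\mu^{(2)})$, I will exhibit an SI-subset $\bla^1,\dots,\bla^4$ obtained by adjoining pairs of length-$3$ hooks to the components of $\bmu$, along the same combinatorial pattern used in the defect-$4$ and defect-$5$ proofs (namely, one quartet swaps which component the hooks attach to, or which rows receive the extra nodes). Kleshchevness of each $\bla^i$ will be verified via \cref{T:StillKlesh}, after checking that the relevant base and roof data still satisfy the containment condition in \cref{akt thm}. Applying Fayers's higher-level LLT algorithm then produces the graded submatrix of $B$ in characteristic $0$; the expected outputs are the matrices \eqref{targetmatrix} and \eqref{targetmatrixalt}, with \eqref{targetmatrixaltsquare} potentially appearing for the symmetric $s=1$ case whose core block has a self-conjugate bipartition.

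To transfer the submatrix to characteristic $2$, I will use exactly the toolkit of the preceding propositions. Entries $d^{2}_{\bla^i\bla^j}(v)$ whose row and column labels share the size of their first component factor through \cref{T:BowmanSpeyer1} into a product of two type-$\tta$ decomposition numbers; these in turn lie in defect-$1$ or defect-$2$ blocks, for which characteristic-$2$ values are known (the former trivially, the latter from~\cite{faywt2}). Whenever $\bla^i$ is obtained from $\bla^j$ by moving a single node, \cref{L:OneNodeShift} gives characteristic independence directly. For the remaining entries, the Jantzen--Schaper theorem \cref{thm:JS} together with the already-established vanishings will pin down the characteristic-$2$ value, as in the computations of $d_{\bla^2\bla^1}$, $d_{\bla^3\bla^1}$, $d_{\bla^4\bla^1}$ in \cref{prop:e3p2def4edgecases}(i) and \cref{prop:e3p2def5edgecases}(ix).

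The principal obstacle, relative to the lower-defect proofs, is that the Jantzen--Schaper sums $J_{\bla^i\bla^j}$ now involve noticeably longer lists of intermediate bipartitions $\bnu$ with $\bla^j \domsby \bnu \domsby \bla^i$. Several of the intermediate decomposition numbers $[\rspe{\bnu}:\rD{\bla^j}]$ are not immediately accessible from \cref{T:BowmanSpeyer1} or \cref{L:OneNodeShift}, so the computation must proceed by a short cascade of auxiliary Jantzen--Schaper estimates, along the lines of the vanishings of $[\rspe{\bbeta}:\rD{\bla^1}]$ and $[\rspe{\bzeta}:\rD{\bla^2}]$ in the proof of \cref{prop:e3p2def4edgecases}(ii). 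The saving grace is that all cores involved here are RoCK partitions, so the Jantzen coefficients $\nu_2(j_{\bla\bnu})$ involved exhibit significant cancellation coming from the explicit staircase-like shape of $(6,4,2^2,1^2)$ and $(7,5,3^2,2^2,1^2)$, reducing each cascade to at most one or two auxiliary vanishings per row of the submatrix.
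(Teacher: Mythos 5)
Your strategy is the paper's strategy: prune the $s=1$ list via the sign automorphism, exhibit a quartet of Kleshchev bipartitions per surviving block, compute the graded submatrix in characteristic $0$ with the LLT algorithm, and transfer it to characteristic $2$ using \cref{T:BowmanSpeyer1}, \cref{L:OneNodeShift} and \cref{thm:JS}. Your conjugation bookkeeping is also correct (the second and third $s=1$ core bipartitions are swapped by the sign automorphism, the first is fixed, and the $s=0$ core is self-conjugate), so two $s=1$ representatives plus the $s=0$ block remain. The genuine gap is that you never produce the SI-subsets. The proposition is precisely an existence claim about explicit quartets whose decomposition submatrix equals one of the matrices of \cref{prop:matrixtrick}; saying you will adjoin ``pairs of length-$3$ hooks along the same combinatorial pattern'' neither determines the bipartitions nor guarantees that any such choice yields a target matrix --- that verification is the entire content of the proof, and without the explicit $\bla^1,\dots,\bla^4$ and the entry-by-entry check nothing has been established.

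Two further points. First, your anticipated obstacle (long Jantzen--Schaper cascades) and proposed remedy (cancellation of the coefficients $\nu_2(j_{\bla\bnu})$ because the cores are ``RoCK'') are misdirected: no such cancellation principle is available in the paper, and that sentence cannot stand in for a computation. The paper avoids cascades altogether for the two surviving $s=1$ blocks by choosing ``grid'' quartets --- two first components of equal size crossed with two second components of equal size --- so that \emph{every} entry factors through \cref{T:BowmanSpeyer1} into a product of type $\tta$ defect-$1$ decomposition numbers and is therefore characteristic-free, giving the matrix~(\ref{targetmatrixalt}); only the $s=0$ block requires the fuller toolkit, and there the quartet produces~(\ref{targetmatrixaltsquare}). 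Second, your guess that~(\ref{targetmatrixaltsquare}) would arise from a ``symmetric $s=1$ case'' is backwards: it is the $s=0$ block that yields it.
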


\begin{proof}
The proof proceeds as in \cref{prop:e3p2def4edgecases,prop:e3p2def5edgecases}.

Let $s=0$, and suppose that $B$ is the block with combinatorial core block
\[
\cC = \{((6,4,2^2,1^2),(6,4,2^2,1^2))\}.
\]
Then we define $\bla^1, \bla^2, \bla^3,$ and $\bla^4$ to be the following four bipartitions in $B$.
\begin{align*}
\bla^1 &= ((3,1^5),(9,7,5,3^2,2^2,1^2)), & 
\bla^2&= ((6,4,2),(6,4^2,3^2,2^2,1^5)),\\
\bla^3 &= ((6,4,2),(6^2,5,3^2,2^2,1^2)), & 
\bla^4&= ((6,4,2,1^3),(6,4^2,3^2,2^2,1^2)).
\end{align*}
These yield the submatrix~(\ref{targetmatrixaltsquare}) in any characteristic -- we leave the explicit computations as an exercise for the reader.
It follows that $B$ is Schurian-infinite.

\medskip

When $s=1$, the second and third blocks consist of bipartitions that are conjugate to each other, and are therefore isomorphic.
So it suffices to prove the result for the first two blocks.

First, suppose that $B$ is the block with combinatorial core block
\[
\cC = \{((8,6,4,2^2,1^2), (7,5,3^2,2^2,1^2))\}.
\]
Then we define $\bla^1, \bla^2, \bla^3,$ and $\bla^4$ to be the following four bipartitions in $B$.
\begin{align*}
\bla^1 &= ((5,3^2,2^2,1^5), (10,8,6,4,2^2,1^5)), & \bla^2&= ((5^2,4,2^2,1^2), (10,8,6,4,2^2,1^5)),\\
\bla^3 &= ((5,3^2,2^2,1^5), (10,8,6,4^2,3,1^2)), & \bla^4&= ((5^2,4,2^2,1^2), (10,8,6,4^2,3,1^2)).
\end{align*}
By cutting to level 1 defect 1 blocks using \cref{T:BowmanSpeyer1}, these yield the submatrix~(\ref{targetmatrixalt}) in any characteristic, and $B$ is therefore Schurian-infinite.

Finally, suppose that $B$ is the block with combinatorial core block
\[
\cC = \{((7,5,3^2,2^2,1^2),(6,4,2^2,1^2))\}.
\]
Then we define $\bla^1, \bla^2, \bla^3,$ and $\bla^4$ to be the following four bipartitions in $B$.

\begin{align*}
\bla^1 &= ((4,2^2,1^5), (9,7,5,3^2,2^2,1^5)), & \bla^2&= ((4^2,3,1^2), (9,7,5,3^2,2^2,1^5)),\\
\bla^3 &= ((4,2^2,1^5), (9,7,5^2,4,2^2,1^2)), & \bla^4&= ((4^2,3,1^2), (9,7,5^2,4,2^2,1^2)).
\end{align*}
By cutting to level 1 defect 1 blocks using \cref{T:BowmanSpeyer1} again, these yield the submatrix~(\ref{targetmatrixalt}) in any characteristic, and $B$ is therefore Schurian-infinite.\qedhere

\end{proof}

\begin{prop}\label{prop:e3p2def7edgecases}
Let $e=3$ and $p=2$.
The defect 7 blocks $B$ excluded from \cref{lem:e3p2def567maincases7} have SI-subsets, and are therefore Schurian-infinite.
\end{prop}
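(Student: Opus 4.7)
The plan is to mirror the strategy used in \cref{prop:e3p2def4edgecases,prop:e3p2def5edgecases,prop:e3p2def6edgecases}: for each Scopes class not already handled by \cref{lem:e3p2def567maincases7}, either reduce to another class via the sign automorphism, or exhibit an explicit SI-subset of four (or five) Kleshchev bipartitions whose associated submatrix of the graded decomposition matrix is one of \eqref{targetmatrix}, \eqref{targetmatrixalt}, \eqref{targetmatrixaltsquare}, \eqref{targetmatrixstar} in any characteristic.

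First, I would reduce the list of ten $s=0$ cases in \cref{lem:e3p2def567maincases7} using \cref{lem:signshift} by pairing off blocks that are conjugate under the sign automorphism. In particular, cases (i) and (ii) are related in this way because $(3^2,2^2,1^2)' = (6,4,2)$ and $(6,4,2^2,1^2)$ is self-conjugate; case (iii) is self-conjugate (up to a swap of components); and the remaining cases pair off similarly, so that only a handful of genuinely distinct blocks remain to be analysed, together with the single $s=1$ block. For each remaining block $B$ with combinatorial core block $\cC$ containing a minimal Kleshchev bipartition $\bmu = (\mu^{(1)},\mu^{(2)})$, I would construct the SI-subset by analogy with the defect 5 and defect 6 proofs: pick four bipartitions $\bla^1, \bla^2, \bla^3, \bla^4$ obtained from $\bmu$ by grafting one or two additional $3$-hooks onto one component, chosen so that dominance order cleanly controls the subsequent Jantzen--Schaper analysis and so that the pattern of shifts between $\bla^1, \bla^2, \bla^3, \bla^4$ matches the Gabriel arrows of the appropriate $A^{(1)}_3$ or $D^{(1)}_4$ quiver.

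Next, having fixed the candidate SI-subset, I would verify the submatrix in characteristic zero with the higher-level LLT-type algorithm of Fayers~\cite{Fay10}, using \cref{akt thm} to confirm each $\bla^i$ is a Kleshchev bipartition. To upgrade to characteristic~2, I would split the nine decomposition numbers among the three available tools: entries with $|\la^{(1)}| = |\mu^{(1)}|$ or $\la^{(2)} = \mu^{(2)}$ reduce to a product of two type~$\tta$ decomposition numbers via \cref{T:BowmanSpeyer1} (and the resulting type $\tta$ blocks here have defect $1$, $2$, or $3$, which are either well-known or available from~\cite{faywt2}); entries where $\bla^i$ and $\bla^j$ differ by a single node are characteristic-free by \cref{L:OneNodeShift}; and for the remaining entries, invoke \cref{thm:JS} to bound $[\rspe{\bla^i}:\rD{\bla^j}]$ by the Jantzen--Schaper sum $J_{\bla^i\bla^j}$, exactly as in \cref{prop:e3p2def4edgecases}. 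In most positions $J_{\bla^i\bla^j}$ will vanish or equal $1$ by direct computation from the definition of $\nu_\fkp(j_{\bla^i\bnu})$, but where an intermediate unknown decomposition number $d_{\bnu\bla^j}$ enters the sum, a secondary Jantzen--Schaper computation for $J_{\bnu\bla^j}$ (as was done for $\bzeta$ in case~(ii) of \cref{prop:e3p2def4edgecases}) will pin it down.

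The main obstacle, as already flagged in the remark preceding \cref{prop:e3p2def567maincases5}, is that in defect~$7$ the Specht modules involved are considerably larger and the dominance intervals $[\bla^i, \bla^j]$ contain many more intermediate bipartitions, so the Jantzen--Schaper sums may receive contributions from a much longer list of $\bnu$'s. I therefore expect the bookkeeping to be the principal difficulty, rather than any new conceptual obstruction: the cases marked $(\ast)$ in \cref{lem:e3p2def567maincases7}, where the core itself is a $3$-core of large $\defect_\La$-weight, are the ones where the greatest care will be needed to select the SI-subset so that the intervening Jantzen--Schaper contributions cancel. Once each exceptional block has been matched with a working SI-subset and the characteristic-2 decomposition numbers have been verified to agree with those in characteristic~$0$, the result follows from \cref{prop:matrixtrick}.
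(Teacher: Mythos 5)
Your proposal is a plan rather than a proof: the substance of the proposition is the exhibition of explicit SI-subsets for each exceptional block together with verification that the relevant $4\times4$ submatrices are characteristic-free, and you have not produced any of the bipartitions. Saying you would "pick four bipartitions obtained from $\bmu$ by grafting one or two additional $3$-hooks onto one component, chosen so that dominance order cleanly controls the subsequent Jantzen--Schaper analysis" is exactly the step that needs to be carried out; without it there is nothing to check. Your reduction via the sign automorphism is correct (cases (i)/(ii), (v)/(viii), (vi)/(ix), (vii)/(x) pair off, and (iii), (iv) and the $s=1$ block are self-conjugate or must be treated directly), which matches the paper, but that only shrinks the list to seven blocks that still require explicit data.

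There is also a tactical point worth flagging: the difficulty you anticipate — long Jantzen--Schaper sums over crowded dominance intervals, especially in the $(\ast)$ cases — does not arise in the paper's treatment, because the SI-subsets there are chosen in a $2\times 2$ "product" configuration $\{(\alpha,\gamma),(\beta,\gamma),(\alpha,\delta),(\beta,\delta)\}$ where $\alpha,\beta$ (resp.\ $\gamma,\delta$) lie in a common level-one block of defect $1$. With that choice \emph{every} entry of the submatrix factors via \cref{T:BowmanSpeyer1} into a product of two type $\tta$ defect-$1$ decomposition numbers, which are well known and independent of the characteristic, so no Jantzen--Schaper computation is needed at all in defect $7$ (in contrast with some of the defect $4$ and $5$ cases). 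Your proposed toolkit would in principle work, but the verification would be substantially harder than necessary, and in any case the proof cannot be considered complete until the SI-subsets are written down and checked.
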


\begin{proof}
    Suppose $s=0$.
    Several of the blocks consist of bipartitions that are conjugate to each other, and these blocks are therefore isomorphic, with the isomorphism given by the sign automorphism.
    Namely, this applies to: blocks (i) and (ii); blocks (v) and (viii); blocks (vi) and (ix); blocks (vii) and (x).
    So it suffices to prove the result for blocks (i) and (iii)--(vii) from \cref{lem:e3p2def567maincases7}.
    Now, all blocks may be proved as in \cref{prop:e3p2def5edgecases} cases (iii), (iv), (v), (viii), and (ix), so that we will write the combinatorial core blocks $\cC$ in each case and then give the necessary bipartitions.
    In each case, we get the submatrix~(\ref{targetmatrixalt}) in any characteristic.
    
    \begin{enumerate}
    \item
    $\cC=\{((3^2, 2^2, 1^2),(6, 4, 2^2, 1^2)),\, ((6, 4, 2^2, 1^2),(3^2, 2^2, 1^2))\}$.
    \begin{align*}
    \bla^1 &= ((3,1^5), (6,4^2,3^2,2^2,1^5)), & \bla^2&= ((3^2,2), (6,4^2,3^2,2^2,1^5)),\\
    \bla^3 &= ((3,1^5), (6^2,5,3^2,2^2,1^2)), & \bla^4&= ((3^2,2), (6^2,5,3^2,2^2,1^2)).
    \end{align*}
    
    \item[(iii)]
    $\cC=\{((3, 1^2),(6, 4, 2^2, 1^2)),\,((6, 4, 2^2, 1^2),(3,1^2))\}$. 
    \begin{align*}
    \bla^1 &= ((3^2,2,1^3), (6,4,2^2,1^5)), & \bla^2&= ((5,4,2), (6,4,2^2,1^5)),\\
    \bla^3 &= ((3^2,2,1^3), (6,4^2,3,1^2)), & \bla^4&= ((5,4,2), (6,4^2,3,1^2)).
    \end{align*}
    
    \item[(iv)]
    $\cC=\{((6, 4, 2^2, 1^2),(9, 7, 5, 3^2, 2^2, 1^2)),\,((9, 7, 5, 3^2, 2^2, 1^2),(6, 4, 2^2, 1^2)\}$.
    \begin{align*}
    \bla^1&= ((6,4,2^2,1^5), (9,7,5^2,4,2^2,1^5)),
  & \bla^2&= ((6,4^2,3,1^2), (9,7,5^2,4,2^2,1^5)),\\
    \bla^3 &= ((6,4,2^2,1^5), (9,7,5^2,4^2,3,1^2)), &   
    \bla^4 &= ((6,4^2,3,1^2), (9,7,5^2,4^2,3,1^2)).
    \end{align*}
    
    \item[(v)]
    $\cC=\{((5, 3, 1^2),(8, 6, 4, 2^2, 1^2)),\, ((8, 6, 4, 2^2, 1^2),(5, 3, 1^2))\}$.
    \begin{align*}
    \bla^1 &= ((5,3^2,2,1^3), (8,6,4,2^2,1^5)), & \bla^2&= ((5^2,4,2), (8,6,4,2^2,1^5)),\\
    \bla^3 &= ((5,3^2,2,1^3), (8,6,4^2,3,1^2)), & \bla^4&= ((5^2,4,2), (8,6,4^2,3,1^2)).
    \end{align*}
    
    \item[(vi)]
    $\cC=\{((5, 3^2, 2^2, 1^2),(8, 6, 4, 2^2, 1^2)),\,((8, 6, 4, 2^2, 1^2),(5, 3^2, 2^2, 1^2))\}$. 
    \begin{align*}
    \bla^1 &= ((5,3,1^5), (8,6,4^2,3^2,2^2,1^5)), & \bla^2&= ((5,3^2,2), (8,6,4^2,3^2,2^2,1^5)),\\
    \bla^3 &= ((5,3,1^5), (8,6^2,5,3^2,2^2,1^2)), & \bla^4&= ((5,3^2,2), (8,6^2,5,3^2,2^2,1^2)).
    \end{align*}
    
    \item[(vii)]
    $\cC=\{((8, 6, 4, 2^2, 1^2),(8, 6, 4^2, 3^2, 2^2, 1^2)),\,((8, 6, 4^2, 3^2, 2^2, 1^2),(8, 6, 4, 2^2, 1^2))\}$. 
    \begin{align*}
    \bla^1 &= ((5,3^2,2^2,1^5), (11,9,7,5,3^2,2^2,1^5)), & 
    \bla^2&= ((5^2,4,2^2,1^2), (11,9,7,5,3^2,2^2,1^5)),\\
    \bla^3 &= ((5,3^2,2^2,1^5), (11,9,7,5^2,4,2^2,1^2)), & 
    \bla^4&= ((5^2,4,2^2,1^2), (11,9,7,5^2,4,2^2,1^2)).
    \end{align*}
    \end{enumerate}

    \medskip
 
    Finally, let $s=1$ and suppose that $B$ is the block with combinatorial core block
    \begin{align*}
    \cC = \{&((11,9,7,5,3^2,2^2,1^2), (7,5,3^2,2^2,1^2)),
    ((8,6,4^2,3^2,2^2,1^2),(10,8,6,4,2^2,1^2)),\\
    &((8,6,4,2^2,1^2), (10,8,6,4^2,3^2,2^2,1^2))\}.
    \end{align*}
    Then we define $\bla^1, \bla^2, \bla^3,$ and $\bla^4$ to be the following four bipartitions in $B$.
    \begin{align*}
    \bla^1 &= ((8,6,4,2^2,1^5), (10,8,6,4^2,3^2,2^5,1^2)), & \bla^2&= ((8,6,4^2,3,1^2), (10,8,6,4^2,3^2,2^5,1^2)),\\
    \bla^3 &= ((8,6,4,2^2,1^5), (10,8,6^2,5,3^2,2^2,1^5)), & \bla^4&= ((8,6,4^2,3,1^2), (10,8,6^2,5,3^2,2^2,1^5)).
    \end{align*}
    By the same argument as the first block in this proof, these yield the submatrix~(\ref{targetmatrixalt}) in any characteristic, and $B$ is therefore Schurian-infinite.
\qedhere

\end{proof}

Thus, combining \cref{prop:e3p2defectlarge,lem:e3p2maincases,lem:e3p2def567maincases5,lem:e3p2def567maincases6,lem:e3p2def567maincases7,prop:e3p2def4edgecases,prop:e3p2def5edgecases,prop:e3p2def6edgecases,prop:e3p2def7edgecases}, we have proved \cref{thm:2ormorehooksbadcases}.
Combined with \cref{thm:2ormorehookscasesNot2}, we have that when $e=3$, all blocks $B$ satisfying $\defect(B) \geq 4$ are Schurian-infinite.

\section{Cyclotomic $q$-Schur algebras}\label{sec:Schuralg}

Finally, we will discuss the cyclotomic $q$-Schur algebras $\mathcal{S}_n$ corresponding to the type $\ttb$ Hecke algebras studied in this paper.
Recall we briefly discussed these in \cref{subsec:Schur functor}, along with the graded lifts $S^\La(\beta)$ for their blocks.
We end with the following theorem.

\begin{thm}\label{thm:Schuralg}
Let $e\geq 3$ and let $B$ be a block of the cyclotomic $q$-Schur algebra $\mathcal{S}_n$.
If $e=3$, then $B$ is Schurian-infinite if and only if $\defect(B)\geq 2$.

Suppose $e\geq 4$, and that the corresponding block of $\hhh$ is not a defect $3$ block with core block $C$ of defect $1$, and is not a defect $4$ block with core block $C$ of defect $2$ with $s=0$.
Then $B$ is Schurian-infinite in any characteristic.
\end{thm}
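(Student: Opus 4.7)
The plan is to derive \cref{thm:Schuralg} from the main Hecke algebra theorems \cref{thm:maine3,thm:main} via the classical Schur functor.

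For the Schurian-infinite direction, I will use the standard fact that the Schur functor $F: \mathcal{S}_n\text{-mod} \to \hhh\text{-mod}$ is given by $F(M) = eM$ for an idempotent $e \in \mathcal{S}_n$ with $\hhh \cong e\, \mathcal{S}_n\, e$, and that this isomorphism respects the block decomposition. Consequently, if $B$ is a block of $\mathcal{S}_n$ and $B_\hhh$ is the corresponding block of $\hhh$, there exists an idempotent $\bar{e} \in B$ with $B_\hhh \cong \bar{e}\, B\, \bar{e}$. Applying \cref{reduction}(ii) contrapositively, whenever $B_\hhh$ is Schurian-infinite so is $B$. Combining this with \cref{thm:maine3} in the case $e = 3$, and with \cref{thm:main} in the case $e \geq 4$, yields all the Schurian-infinite statements of the theorem, in any characteristic.

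For the converse direction in the case $e = 3$, it remains to show that every block $B$ of $\mathcal{S}_n$ of defect $0$ or $1$ is Schurian-finite. The defect $0$ case is immediate: the block contains a unique Weyl module which equals its simple head, so $B$ is semisimple. For defect $1$, the block $B$ is a quasi-hereditary cover of the corresponding defect $1$ Hecke block $B_\hhh$, which is a Brauer tree algebra whose tree is a line. Such defect $1$ blocks of $\mathcal{S}_n$ are classically known to be representation-finite; alternatively, using the description of defect 1 graded decomposition matrices from \cref{T:Flat}, one can read these off explicitly and then compute the quiver and relations of the basic algebra of $B$ directly, after which verifying that the result has finite representation type is a routine exercise along the lines of the defect 2 bound quiver calculations of \cref{subsec:core-def-2-equal,subsec:def2s1=s2}.

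The main obstacle is the explicit verification that defect $1$ blocks of $\mathcal{S}_n$ are representation-finite. The Schurian-infinite direction reduces immediately to the corresponding Hecke algebra statements via the Schur functor, and the defect $0$ case is trivial; hence the only non-trivial remaining work lies in this defect $1$ computation. This should be straightforward given the small defect, and presents no new obstacles beyond those already overcome in the Hecke algebra analysis earlier in the paper.
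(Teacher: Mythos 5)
Your reduction of the Schurian-infinite direction to the Hecke algebra results via the Schur functor and \cref{reduction}(ii) is the correct first step, and matches the paper; your treatment of defect $0$ and $1$ is also fine (the paper simply invokes finite representation type). However, there is a genuine gap in the $e=3$ case.

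You claim that combining the idempotent-truncation argument with \cref{thm:maine3} "yields all the Schurian-infinite statements of the theorem". It does not. For $e=3$, \cref{thm:maine3} (see also \cref{thm:e3def2b,thm:e3def3s0}) explicitly states that several defect $2$ blocks of $\hhh$ are Schurian-\emph{finite} (e.g.\ $R^{2\La_0}(\delta)$, $R^{\La_0+\La_1}(\delta)$, $R^{\La_0+\La_1}(\delta+\alpha_1)$, $R^{\La_0+\La_1}(\delta+2\alpha_0+2\alpha_1)$), and that eight Scopes classes in defect $3$ with $s=0$ remain unresolved. For all of these, the implication "$F(B)$ Schurian-infinite $\Rightarrow$ $B$ Schurian-infinite" gives no information whatsoever, since the hypothesis fails (or is unknown). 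Yet \cref{thm:Schuralg} asserts that the corresponding Schur algebra blocks \emph{are} Schurian-infinite, because they have defect $\geq 2$. This is precisely the content that cannot be transferred from the Hecke algebra: the Schur algebra block can be Schurian-infinite while its idempotent truncation is Schurian-finite, and \cref{reduction} only goes one way.

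The bulk of the paper's proof consists of handling exactly these exceptional blocks by working directly in $\mathcal{S}_n$: for each of the four Schurian-finite defect $2$ Hecke blocks and the four (up to Morita equivalence) unresolved defect $3$ Hecke blocks, the paper exhibits an explicit SI-subset of Weyl modules in the Schur algebra block, computing the relevant graded decomposition numbers $[\Delta(\bla):L(\bmu)]_v$ via the Jantzen--Schaper formula, \cref{L:OneNodeShift}, and the row-removal results \cref{T:BowmanSpeyer1} (which apply to $\mathcal{S}_n$ without the Kleshchev restriction, since every bipartition labels a simple module there), and then applies \cref{prop:matrixtrick}. Without this construction your argument proves only the weaker statement that $B$ is Schurian-infinite whenever the corresponding Hecke block is known to be, which does not establish the "only if $\defect(B)\geq 2$" equivalence claimed for $e=3$.
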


\begin{proof}
If $\defect(B) \leq 1$, then $B$ has finite representation type, and therefore is Schurian-finite.

Next, we note that each block of $\hhh$ is an idempotent truncation of a block of $\mathcal{S}_n$ -- this is precisely how the Schur functor $F$ is defined.
It therefore follows from \cref{reduction} that whenever the block $F(B)$ of $\hhh$ is Schurian-infinite, so is the corresponding block $B$ of $\mathcal{S}_n$.
By \cref{thm:maine3,thm:main}, the result follows for most blocks.

It remains to examine the blocks $F(B)$ of $\hhh$ that we could not show to be Schurian-infinite.
These include Schurian-finite blocks in defect $2$ when $e=3$, and blocks for which we have not yet determined that they are Schurian-infinite -- several blocks in defect $3$ when $e=3$, as well as the defect 3 and 4 families excluded in the statement of the theorem.
Recall from \cref{thm:e3def2b} that the Schurian-finite blocks in defect $2$ when $e=3$ are, up to Morita equivalence, the blocks $R^{2\La_0}(\delta)$, $R^{\La_0+\La_1}(\delta)$, $R^{\La_0+\La_1}(\delta+\alpha_1)$, and $R^{\La_0+\La_1}(\delta+2\alpha_0 + 2\alpha_1)$.
For each, we may construct SI-subsets as follows.
We note that whenever we apply \cref{T:BowmanSpeyer1} for decomposition numbers of $\mathcal{S}_n$, we may drop the assumption that $\bmu \in \Kl$, as all bipartitions index simple modules for $\mathcal{S}_n$.
The result~\cite[Main Theorem]{bs16} applies directly to $\mathcal{S}_n$.
\begin{enumerate}
    \item Let $B$ be the block such that $F(B) \cong R^{2\La_0}(\delta)$. Then we choose bipartitions
    \begin{align*}
    \bla^1 &= ((2,1), \vn), & \bla^2&= ((1^3), \vn),\\
    \bla^3 &= (\vn, (2,1)), & \bla^4&= (\vn, (1^3)).
    \end{align*}
    Then, using the Janzten--Schaper formula in conjunction with the higher-level LLT algorithm (to obtain the grading) as in \cref{prop:e3p2def4edgecases}, it is easy to show that this yields all but one entry of the submatrix (\ref{targetmatrixalt}).
    The exception is that $J_{\bla^4 \bla^1} = 2$, not 1.
    However, the entry $[\Delta(\bla^4):L(\bla^1)] = d_{\bla^4\bla^1} = q^2$ is easily obtained by the fact that $\spe{\bla^4}$ is one-dimensional.

    \item Let $B$ be the block such that $F(B) \cong R^{\La_0+\La_1}(\delta)$. Then we choose bipartitions
    \begin{align*}
    \bla^1 &= ((3), \vn), & \bla^2&= ((2,1), \vn),\\
    \bla^3 &= ((1^3),\vn), & \bla^4&= ((1^2), (1)).
    \end{align*}
    In this case, we may reduce to level 1 computations for the first 3 rows of the submatrix by \cref{T:BowmanSpeyer1}, and may deduce $[\Delta(\bla^4):L(\bla^2)]$ and $[\Delta(\bla^4):L(\bla^3)]$ by \cref{L:OneNodeShift}.
    The final entry may be deduced using the Jantzen--Schaper formula, yielding the submatrix (\ref{targetmatrix}).

    \item Let $B$ be the block such that $F(B) \cong R^{\La_0+\La_1}(\delta+\alpha_1)$. Then we choose bipartitions
    \begin{align*}
    \bla^1 &= ((2,1), (1)), & \bla^2&= ((1^3), (1)),\\
    \bla^3 &= (\vn,(2^2)), & \bla^4&= (\vn, (1^4)).
    \end{align*}
    Then the result follows exactly as in case (i) to yield the submatrix (\ref{targetmatrixalt}).

    \item Let $B$ be the block such that $F(B) \cong R^{\La_0+\La_1}(\delta+2\alpha_0 + 2\alpha_1)$. Then we choose bipartitions
    \begin{align*}
    \bla^1 &= ((2^2,1), (1^2)), & \bla^2&= ((2,1^3), (1^2)),\\
    \bla^3 &= ((2),(3,2)), & \bla^4&= ((2), (1^5)).
    \end{align*}
    In this case, we may reduce to level 1 computations for $[\Delta(\bla^2):L(\bla^1)]$ and $[\Delta(\bla^4):L(\bla^3)]$ by \cref{T:BowmanSpeyer1}, while the remaining decomposition numbers may be computed by the Jantzen--Schaper formula, yielding the submatrix (\ref{targetmatrixalt}).
\end{enumerate}

\medskip

Next, recall from \cref{thm:e3def3s0} that the blocks $F(B)$ in defect $3$ when $e=3$ that we have not yet determined to be Schurian infinite are, up to Morita equivalence, the blocks $R^{2\La_0}(\delta+\alpha_0+\alpha_1)$, $R^{2\La_0}(\delta+2\alpha_0+\alpha_1)$, $R^{2\La_0}(2\delta+\alpha_1)$, and $R^{2\La_0}(2\delta+3\alpha_0 + 2\alpha_1)$.
For each, we may construct SI-subsets for $B$ as follows.
\begin{enumerate}
    \item[(v)]
    Let $B$ be the block such that $F(B) \cong R^{2\La_0}(\delta+\alpha_0+\alpha_1)$. Then we choose bipartitions
    \begin{align*}
    \bla^1 &= ((5), \vn), & \bla^2&= ((2^2,1), \vn),\\
    \bla^3 &= ((2,1^3),\vn), & \bla^4&= ((2,1^2), (1)).
    \end{align*}
    This yields the submatrix (\ref{targetmatrix}) exactly as in case (ii) above.

    \item[(vi)]
    Let $B$ be the block such that $F(B) \cong R^{2\La_0}(\delta+2\alpha_0+\alpha_1)$. Then we choose bipartitions
    \begin{align*}
    \bla^1 &= ((1), (5)), & \bla^2&= ((1), (2^2,1)),\\
    \bla^3 &= ((1), (2,1^3)), & \bla^4&= (\vn, (2^2,1^2)).
    \end{align*}
    This yields the submatrix (\ref{targetmatrix}) exactly as in case (ii) above.

    \item[(vii)]
    Let $B$ be the block such that $F(B) \cong R^{2\La_0}(2\delta+\alpha_1)$. Then we choose bipartitions
    \begin{align*}
    \bla^1 &= ((3^2,1), \vn), & \bla^2&= ((3,1^2), (2)),\\
    \bla^3 &= ((2,1^2), (3)), & \bla^4&= ((2,1^2), (2,1)).
    \end{align*}
    We may deduce $[\Delta(\bla^2):L(\bla^1)]$, $[\Delta(\bla^3):L(\bla^1)]$, and $[\Delta(\bla^4):L(\bla^1)]$ using the Jantzen--Schaper formula.
    The entries $[\Delta(\bla^3):L(\bla^2)]$ and $[\Delta(\bla^4):L(\bla^2)]$ follow by \cref{L:OneNodeShift}, while $[\Delta(\bla^4):L(\bla^3)]$ follows by cutting to level one using \cref{T:BowmanSpeyer1}.
    This yields the submatrix (\ref{targetmatrix}).

    \item[(viii)]
    Let $B$ be the block such that $F(B) \cong R^{2\La_0}(2\delta+3\alpha_0 + 2\alpha_1)$. Then we choose bipartitions
    \begin{align*}
    \bla^1 &= ((2^2,1^2), (5)), & \bla^2&= ((2^2,1^2), (2^2,1)),\\
    \bla^3 &= ((2^2,1^2), (2,1^3)), & \bla^4&= ((2^2,1), (2^2,1^2)).
    \end{align*}
    This yields the submatrix (\ref{targetmatrix}) exactly as in case (ii) above.\qedhere
\end{enumerate}
\end{proof}

\begin{rmk}
    If we are able to prove that all defect $3$ blocks of $\hhh$ are Schurian-infinite when $e=3$, then the result for those blocks follows for free without needing to find the SI subsets above.
    Similarly, if we are able to complete the proof of \cref{conj:e>3}, then \cref{conj:mainSchuralg} will follow.
\end{rmk}

\bibliographystyle{lspaper}  
\addcontentsline{toc}{section}{\refname}
\bibliography{master}

\end{document}